\begin{document}
\newcommand{\comment}[1]{}

\newcommand{\comments}[1]{} 

\newtheorem{proposition}{Proposition}[section]
\newtheorem{lemma}[proposition]{Lemma}
\newtheorem{sublemma}[proposition]{Sublemma}
\newtheorem{theorem}[proposition]{Theorem}

\newtheorem{maintheorem}{Main Theorem}
\newtheorem{corollary}[proposition]{Corollary}

\newtheorem{ex}[proposition]{Example}

\theoremstyle{remark}

\newtheorem{remark}[proposition]{Remark}

\theoremstyle{definition}
\newtheorem{definition}[proposition]{Definition}
\def\real{\mathbb{R}}
\def\integer{\mathbb{Z}}
\def\complex{\mathbb{C}}
\def\supp{\mathrm{supp}}
\def\var{\mathrm{var}}
\def\sgn{\mathrm{sgn}}
\def\sp{\mathrm{sp}}
\def\id{\mathrm{id}}
\def\Imm{\mathrm{Image}}
\def\cc{\Subset}
\def\Lip{\mathrm{Lip}}
\def\BB{\mathcal{B}}
\def\CC{\mathcal{C}}
\def\DD{\mathcal{D}}
\def\EE{\mathcal{E}}
\def\FF{\mathcal{F}}
\def\GG{\mathcal{G}}
\def\II{\mathcal{I}}
\def\JJ{\mathcal{J}}
\def\KK{\mathcal{K}}
\def\LL{\mathcal{L}}
\def\LLL{\mathbb{L}}
\def\MM{\mathcal{M}}
\def\NN{\mathcal{N}}
\def \OO{\mathcal {O}}
\def \PP{\mathcal {P}}
\def \QQ{\mathcal {Q}}
\def \RR{\mathcal {R}}
\def\SS{\mathcal{S}}
\def\TT{\mathcal{T}}
\def\YY{\mathcal{Y}}
\def\ZZ{\mathcal{Z}}
\def\FFF{\mathbb{F}}
\def\PPP{\mathbb{P}}

\title[Linear response  for  deformations
of generic unimodal maps]{Linear response for smooth
deformations of  generic nonuniformly
hyperbolic unimodal maps}
\author{Viviane Baladi and Daniel Smania} 
\address{D.M.A., UMR 8553,\'Ecole Normale Sup\'erieure,  75005 Paris, France}
\email{viviane.baladi@ens.fr}

\address{ Departamento de Matem\'atica,
ICMC-USP, Caixa Postal 668,  S\~ao Carlos-SP,
CEP 13560-970 S\~ao Carlos-SP, Brazil}
\email{smania@icmc.usp.br}
\date{October 2, 2011, more typos corrected, Banach spaces and Lasota-Yorke cleaned up, details for overlap control and
commutation added} 
\begin{abstract} We consider $C^2$ families $t\mapsto f_t$
of $C^{4}$ unimodal maps $f_t$ whose critical point
is slowly recurrent, and we show that the unique absolutely
continuous invariant measure $\mu_t$ of $f_t$
depends differentiably on $t$, as a distribution of
order $1$. The proof uses transfer operators on towers whose
level boundaries are mollified via smooth cutoff functions, in
order to avoid artificial discontinuities.
We give a new representation of $\mu_t$ for
a Benedicks-Carleson map $f_t$, in terms of a single
smooth function and the inverse branches of $f_t$ along
the postcritical orbit.
Along the way, we prove that the twisted cohomological
equation $v=\alpha \circ f - f' \alpha$ has a continuous
solution $\alpha$, if $f$ is Benedicks-Carleson and $v$
is horizontal for $f$.
\end{abstract}
\thanks{We thank D. Ruelle for conversations and
e-mails, and D. Schnellmann for pointing out many typos in
a previous version of this text.
This work was started while D. Smania was visiting
the DMA of Ecole Normale Sup\'erieure in 2008-09 and
finished while V. Baladi participated in a semester at Institut Mittag-Leffler 
(Djursholm, Sweden) in 2010.
We
gratefully acknowledge the hospitality of both 
institutions. 
V.B. is  partially supported by GDRE GREFI-MEFI
and ANRBLAN08-2\_313375, DynNonHyp.
D.S. is partially supported by 
FAPESP 2008/02841-4, CAPES 1364-08-1, CNPq 310964/2006-7 and 303669/2009-8.}


\maketitle

\section{Introduction}
The linear response problem for discrete-time
dynamical systems
can be posed in the following way. Suppose that for each parameter $t$  (or many parameters $t$)
in a smooth family of maps $t\mapsto f_t$ with
$f_t \colon M \to M$,
($M$ a compact Riemann manifold, say) there 
exists  a unique physical (or SRB)
measure $\mu_t$. (See \cite{youngsrb} for a discussion
of SRB measures.) One can ask for conditions which ensure the differentiability, possibly in the sense of Whitney, of the function
 $\mu_t$
in a weak sense (in
the weak $*$-topology, i.e., as a distribution of
order $0$, or possibly as a distribution
of higher order). 
Ruelle has discussed this problem in several survey papers
\cite{ruelle0}, \cite{ruelle00}, \cite{RuRe}, to which we refer for motivation.

The case of smooth hyperbolic dynamics has been settled
over a decade ago (\cite{KKPW}, \cite{rsbr}), although recent
technical progress in the functional analytic tools (namely,
the introduction of anisotropic Sobolev spaces on which
the transfer operator has a spectral gap) has allowed for
a great simplification of the proofs (see, e.g., \cite{BuLi}):
For smooth Anosov diffeomorphisms $f_s$
and a $C^1$ observable $A$,
letting 
$$X_s=\partial_t f_t|_{t=s}\circ f_s^{-1}\, , 
$$ 
Ruelle \cite{rsbr}, \cite{rsbr'} obtained the following explicit
{\it linear response formula}  (the derivative here is in the usual sense)
$$
\partial_t \int A \, d \mu_t|_{t=0}=\Psi_A(1)\, ,
$$
where $\Psi_A(z)$ is the {\it susceptibility function}
$$\Psi_A(z)=
\sum_{k=0}^\infty \int z^k \langle X_0, \mbox{grad}\,  (A \circ f_0^k)
\rangle  \,
d\mu_0 \, ,
$$
and the series $\Psi_A(z)$  at $z=1$ converges exponentially.
In fact, in the Anosov case, the susceptibility function is holomorphic in
a disc of radius larger than $1$. This is related to the fact
(see \cite{baladi} for a survey and references) that
the transfer operator of each $f_s$ has a spectral gap on a space
which contains not only the product
of the distribution $\mu_s$ and the smooth vector field $X_s$, but also the derivative of that product, that is,
$\langle X_s, \mbox{grad}\,  \mu_s\rangle + (\mbox{div}\,  X_s )\, \mu_s$.

One feature of smooth hyperbolic dynamics is structural stability:
Each $f_t$, for small $t$,
is topologically conjugated to $f_0$ via
a homeomorphism $h_t$, which turns out to depend smoothly
on the parameter $t$.
With the exception of a deep result of Dolgopyat \cite{dolgo}
on rapidly mixing partially hyperbolic systems (where structural
stability may be violated, but where there are no critical points
and shadowing holds for
a set of points of large measure, so
that the bifurcation structure is
relatively mild), the study of linear response in the absence of
structural stability, or in the presence of critical points, has
begun only recently.

However, the easier property
of {\it continuity} of $\mu_t$ with respect to $t$ (in other words,
{\it statistical stability}) has been established also in the
presence of critical points: For piecewise expanding unimodal interval maps,
Keller \cite{Ke} proved in 1982 that the density $\phi_t$ of $\mu_t$, viewed as an element
of $L^1$, has a modulus of continuity  at least $t\log t$, so that
$t\mapsto \phi_t$
is  $r$-H\"older, for any  exponent $r\in (0,1)$. For nonuniformly
smooth unimodal maps, in general not all nearby
maps $f_t$  admit an
SRB measure  even if $f_0$ does. Therefore,
continuity of $t\mapsto \mu_t$  can only
be proved in the sense of Whitney, on a
set of ``good" parameters. This was done by Tsujii
\cite{tsujiicont} and Rychlik--Sorets \cite{RySo}
in the 90's. More recently, Alves et al. \cite{ACF}, \cite{ACF2}
proved that for H\'enon maps,
 $t\mapsto \mu_t$ is continuous 
in the sense of Whitney in the weak $*$-topology.
(We refer, e.g., to \cite{BalT} for more references.)

Differentiability of $\mu_t$, even in the sense of Whitney,
is a more delicate issue, even in dimension one.
For nonuniformly hyperbolic smooth unimodal maps $f_t$ with
a quadratic critical point ($f''_t(c)<0$),
it is known
\cite{young92}, \cite{keno} that
the density $\phi_t$ of the absolutely continuous invariant measure $\mu_t$ of $f_t$ has singularities of the form $\sqrt{x-c_{k,t}}^{-1}$, where the
$c_{k,t}=f^k_t(c)$ are the points along
the forward  orbit of  the critical point $c$. 
(Following  Ruelle\cite{ruelle},  we call these  singularities {\it spikes}.)
Thus, the derivative $\phi'_t$ of the invariant 
density has nonintegrable singularities, and  the transfer operator cannot have a spectral
gap in general on a space
containing $(X_t \phi_t)'$. In fact,  the radius of convergence of the
susceptibility function $\Psi_A(z)$ is very likely
strictly smaller than $1$ in general.
Ruelle \cite{redherring}
observed however that, in the case of a subhyperbolic
(preperiodic) critical
point for a real analytic
unimodal map, $\Psi_A(z)$ is meromorphic in a disc of radius larger than $1$,
and that $1$ is not a pole of $\Psi_A(z)$.
He expressed the hope that the value $\Psi_A(1)$ obtained by analytic
continuation could correspond to the actual derivative
of the SRB measure, at least in the sense of Whitney.

This analytic continuation phenomenon in the subhyperbolic smooth unimodal
case (where a finite Markov partition exists)
could well be a red herring, in view
of the
linear response theory for the ``toy model"
of piecewise expanding interval maps  that we recently established in a series of papers
\cite{baladi}, \cite{bs1}, \cite{bs2}, \cite{bs4}:
 Unimodal piecewise expanding interval  maps $f_t$ 
 have a unique SRB measure, 
 whose density $\phi_t$ is a function of bounded variation (since
 $\phi'_t$ is a measure, the situation is much easier than 
 for smooth unimodal
 maps).
 In \cite{baladi}, \cite{bs1}, we showed that 
 Keller's \cite{Ke} $t\log t$ modulus of continuity
 was optimal (see also \cite{mazzo}): In fact,
 there exist smooth families
 $f_t$ so that $t\mapsto \mu_t$ is not Lipschitz
 (all sequences $t_n \to0$ so that the critical
 point is not periodic under $f_{t_n}$ are allowed), even when
 viewed as a distribution of arbitrarily high order, and even in
 the sense of Whitney. Such counter-examples
 $f_t$ are transversal to the topological class of $f_0$.
 If, on the contrary, the family $f_t$ is tangent at $t=0$
 to the topological
 class of $f_0$ (we say that $f_t$ is {\it horizontal}) then
 (\cite{bs1}, \cite{bs2}) we proved that the map $t\mapsto \mu_t$ is  differentiable for the weak $*$-topology.
 The series for $\Psi_A(1)$ may diverge (for the preperiodic
case, see \cite[\S 5]{baladi}), but can be resummed
 under the horizontality condition  \cite{baladi}, \cite{bs1}. This
 gives an explicit linear response formula.
 In fact, the susceptibility function $\Psi_A(z)$ is holomorphic
 in the open unit disc, and, under a condition
 slightly stronger than horizontality,
  $\partial_t \int A \, d\mu_t|_{t=0}$
 is the Abel limit of 
$ \Psi_A(z)$ as $z \to 1$.
 
  Worrying about lack of differentiability of the SRB measure is not just a mathematician's pedantry: Indeed, this phenomenon 
can be observed numerically, for example in the guise of fractal
transport coefficients. We refer, e.g., to the work of Keller et al. \cite{khk}
(see also references therein), who obtained a $t\ln (t)$ modulus of continuity
compatible with the results of \cite{Ke}, for drift and diffusion coefficients of models 
related to those analysed in \cite{bs1}.

  \medskip
   
Let us move on now to the topic of the present work, linear response
for smooth unimodal interval maps:
 Ruelle recently
obtained  a linear response  formula for real analytic families of  analytic unimodal maps of Misiurewicz type \cite{ruelle}, that is, assuming $\inf_k |f^k(c)-c|>0$, a nongeneric condition
which implies the existence of a hyperbolic Cantor set. 
(Again, this linear response formula can be viewed as a resummation of the generally divergent series $\Psi_A(1)$.)
In \cite{bs3}, we showed that $t\mapsto \mu_t$ is real analytic in the weak sense for complex analytic families of Collet-Eckmann quadratic-like maps (the -- very rigid -- holomorphicity assumption allowed
us to use tools from complex analysis).
Both  these  recent
results  are for families $f_t$ in the conjugacy class of a single
(analytic) unimodal map, and the assumptions were somewhat ungeneric.

The main result of the present work, Theorem~\ref{linresp},
is a linear response formula for $C^2$
families $t\mapsto f_t$ of $C^{4}$
unimodal maps
\footnote{The $C^4$ regularity is only used to get
$H^2_1$ regularity in Proposition~\ref{acip} and Lemma~\ref{truncspec},
and one can perhaps weaken this to
$C^{3+\eta}$.}  with quadratic critical points satisfying the so-called {\it topological slow recurrence} (TSR) condition (\cite{sandst},\cite{tsujii},\cite{luzzatto}, see \eqref{tsr} below).  (We assume that the maps have negative Schwarzian and
are symmetric, to limit technicalities, and we only consider infinite postcritical
orbits, since the preperiodic case is much easier.)
The topological slow recurrence condition is much weaker
than Misiurewicz, so that we give a new proof
of Ruelle's result \cite{ruelle} in the symmetric infinite
postcritical case (this may shed light on the
informal study in \S17 there).
Topological slow recurrence implies the well-known 
Benedicks-Carleson and Collet-Eckmann  conditions. Furthermore,  
the work of Tsujii \cite{tsujii} and Avila-Moreira
\cite{am} gives that  real-analytic
unimodal maps with a quadratic critical point satisfying the TSR condition are 
measure-theoretical {\it generic} among non regular parameter in non trivial real-analytic families
unimodal maps. (See Remark \ref{generic2}.)
If all maps in a family of unimodal maps $f_t$ satisfy the topological  slow recurrence condition then 
\cite{Str} this family is a {\it deformation,} that is, the family
$\{f_t\}$ lies entirely in the topological
class of $f_0$ (there exist
homeomorphisms $h_t$ such that $h_t(c)=c$ and
$h_t\circ f_0 = f_t\circ h_t$). In particular, horizontality holds.

\smallskip
We next briefly discuss a few new ingredients of our arguments, as well
as a couple of additional results we obtained along the way.
A first remark is that we need  uniformity of the hyperbolicity constants
of $f_t$ for all small $t$. We deduce this uniformity from previous work of Nowicki, making
use of the TSR assumption (Section~\ref{recurr}). 

When one moves the parameter $t$, the orbit of the critical point also moves, and so do  the spikes. Therefore, in order to understand
$\partial_t \mu_t$, we need upper bounds on
 $$
\partial_t c_{k,t}|_{t=0}=\partial_t f_t^k(c)|_{t=0}=\partial_t h_t(f_0^k(c))|_{t=0}=\partial_t h_t(c_{k,0})|_{t=0}\, ,
$$
uniformly  in $k$. It is not very difficult to show 
(Lemma~\ref{first}, see also Proposition~\ref{htC1})
that $\partial_t c_{k,t}|_{t=0}=\alpha(c_{k,0})$
if $\alpha$ solves the twisted cohomological equation (TCE) for $v=\partial_t f_t |_{t=0}$, given by,
$$
v=\alpha \circ f_0 + f'_0 \cdot \alpha \, ,
\quad \alpha(c)=0 \, .
$$
(Such a function $\alpha$ is called an {\it infinitesimal conjugacy.)}
In fact, we prove in Theorem~\ref{alphabded} that if $f_0$ is Benedicks-Carleson
and $v$ satisfies
a horizontality condition for $f_0$, then the TCE above
has a unique solution $\alpha$. In addition, $\alpha$ is continuous.

In  the case of piecewise expanding maps on the interval, the invariant density $\phi_t$ is a fixed point of a Perron-Frobenius
type transfer operator $\LL_t$ in an appropriate space, where $1$ is a simple isolated eigenvalue. 
So if we are able to verify some (weak) smoothness in the family $t \to \LL_t$, 
then we can  show  (weak) differentiability of $\mu_t$ by using perturbation theory. (We may use
different  norms in the range and the domain, in the spirit of Lasota-Yorke or Doeblin-Fortet inequalities.)
This is, roughly speaking, what was done in \cite{bs1} and \cite{bs4}
(as already mentioned, a serious additional difficulty in the presence
of critical points, which had to be overcome even in
the toy model, is the absence of a spectral gap on a space containing
the derivative of the invariant density). For Collet-Eckmann unimodal maps $f_t$, however,  
an inducing procedure or a
tower construction 
(\cite{keno}, \cite{young92}, \cite{young98})
is needed to obtain
good spectral properties  for the transfer operator
and to  properly analyse the density $\phi_t$, even for a single map.

We use the tower construction from \cite{BV}, under a Benedicks-Carleson
assumption.   However,
when we consider a one-parameter
family of maps $f_t$, the phase space of the tower moves
with $t$. To compare the operators
for $f_t$ and $f_0$,  it is convenient
to work with a finite part of the tower, the height of which
goes exponentially to infinity as $t\to 0$.
(We use results
of Keller and Liverani \cite{kellerliverani} to control the spectrum of the
truncated operator.)
The uniform boundedness of
$\alpha(c_k)$ is instrumental  in working with such truncated towers and operators.
In fact,  the tower construction in \cite{BV}  also has
a key role in the proof of boundedness for $\alpha$: The natural candidate for
the solution is a divergent series, but, under the horizontality
condition, we devise a dynamical resummation  
(the mantra being: ``don't perform a partial sum for the series
while you are climbing the
tower, unless you are ready to fall").

The tower from \cite{BV} has a drawback: The orbits of the edges
of the tower levels apparently create ``artificial discontinuities" in
the functions. To eliminate these potential
discontinuities, we modify the construction of the
Banach spaces and transfer operators on the towers by introducing
smooth cutoff functions (called $\xi_k$ below, see Section~\ref{spectralstuff}).
As a consequence, we obtain a 
new expression for the invariant density of a Benedicks-Carleson
unimodal map (Proposition~\ref{acim}), in terms
of a single smooth function and of the dynamics.

\medskip

We would like to list now a few directions for further
work. Several of them can be explored by
exploiting the techniques developed in the present article (see \cite{BalT} for other open problems):

\smallskip

$\bullet$ In the setting of the present paper, e.g., can one show that
$\partial_t \mu_t(A)|_{t=0}$ is a resummation of the divergent series
$\Psi_A(z)$ at $z=1$? (Presumably, a dynamical resummation is possible,
maybe using the operator $\PP (\psi)=\psi\circ f$
dual to $\LL$ acting on dual Banach spaces, and using, e.g., the proof of the main result in
\cite{HM}.) Can one get an Abelian limit along the real axis? The radius of convergence
of $\Psi_A(z)$ is strictly smaller than $1$ in general. There
appears to be an essential boundary, except in the subhyperbolic cases
when the critical point is preperiodic.  Analytic continuation in the usual sense is thus probably not available,
some kind of Borel or Abelian continuation seems necessary.
(In subhyperbolic
cases $\Psi_A(z)$ is  meromorphic, and
horizontality very likely implies vanishing of the residue of the pole in
$[0,1]$.)

$\bullet$ Can one replace the topological slow recurrence condition on $f_0$
by
Be\-ne\-dicks-Carleson, Collet-Eckmann, or possibly just a summability
condition on the inverse of the postcritical derivative
(see \cite{nssum} and \cite{BLS}), and still get differentiability
\footnote{Perturbation theory of isolated eigenvalues cannot be used
if there is no spectral gap, 
but the analysis  in Hairer-Majda \cite{HM}, e.g., indicates that
existence of the resolvent
$(\id-\LL)^{-1}$  (up to replacing $\LL$ by $\PP$ if necessary)
should be enough.}
of $t\mapsto \mu_t$, as a distribution of order
$1$, at $t=0$?

$\bullet$
If $f_t$ is a smooth
family of  quadratic unimodal maps, with $f_0$ 
a good map (summable, or Collet-Eckmann,
or Benedicks-Carleson, or TSR), and if
$v=\partial f_t|_{t=0}$ is horizontal for $f_0$, that is, \eqref{hor} holds
\footnote{By \cite{ALM}, we can heuristically view $f_t$ as tangent to the
topological class of $f_0$.},  is
$t\mapsto \mu_t$ differentiable, as
a distribution of order $1$, in the sense of Whitney, at $t=0$?

$\bullet$
If $f_t$ is a smooth (possibly transversal, that is, not horizontal)
family of  quadratic unimodal maps, with $f_0$ a good map, is
$t\mapsto \mu_t$ always $r$-H\"older in the sense of Whitney for 
$r\in (0,1/2)$ at $t=0$? Which is the strongest topology one can
use in the image? (Possibly, one could show H\" older continuity
in the sense of Whitney of the Lyapunov exponent.)

$\bullet$ Can one construct a (non-horizontal) smooth
family $f_t$ of  quadratic unimodal maps, with $f_0$ 
a good map, so that
$t\mapsto \mu_t$, as a distribution of
any order, is not differentiable (even
in the sense of Whitney, at least for large subsets) at $t=0$?
So that it is not H\"older for any exponent $>1/2$?

$\bullet$
What about H\'enon-like maps? Note that even the formula defining horizontality
is not available in this case, see \cite{BalT}
(Numerical results of Cessac \cite{cessac} indicate that $\Psi_A(z)$
has a singularity in the interior of the unit disc. In view of the
above discussion, we expect that this singularity
is not an isolated pole in general.)

$\bullet$
The dynamical zeta function associated to a
Collet-Eckmann map $f$ and describing
part of the spectrum of
$\LL$ was studied by Keller and Nowicki \cite{keno}.
Can one study the analytic properties of 
a {\it dynamical determinant} for $\LL$ 
in the spirit of what was done for subhyperbolic analytic
maps  \cite{BJR}? (Analyticity would hold only
in a disc of finite radius, and the correcting rational factor from
\cite[Theorem B]{BJR} would be replaced by
an infinite product, corresponding to the essential boundary
of convergence within this disc.) Can one find and describe a
dynamical determinant playing for $\Psi_A(z)$ the part
that $\LL$ plays for the Fourier transform
of the correlation  function of the SRB measure of $f$?
(See  \cite{baladi} for piecewise expanding interval
maps.)
\medskip

The structure of paper is as follows. In Section ~\ref{formall}, we give precise definitions
and state our main results formally. Section~\ref{alphabdedproof} is devoted to the
proof (by dynamical resummation)
that horizontality implies that the TCE has a continuous solution $\alpha$
(Theorem~\ref{alphabded}). 
In particular, we recall in Subsection~\ref{tower} the construction of the
tower map $\hat f :\hat I \to \hat I$ from \cite{BV} which will be used in later sections.
We also show (Subsection~\ref{prdiv})
that the formal candidate for $\alpha$ diverges at countably
many points (Proposition~\ref{div}).
In Section ~\ref{spectralstuff}, we revisit the tower construction, introducing
Banach spaces and a transfer operator $\widehat \LL$ involving the smooth cutoff
functions discussed above. In particular, Proposition~\ref{acip}, which immediately
implies our new expression for
the invariant density (Proposition ~\ref{acim}), is proved in
Subsection~\ref{gap}. Also, we study truncations $\widehat \LL_M$ on finite
parts of the tower in Subsection~\ref{trunk}.
Uniformity in $t$ of the hyperbolicity
constants of $f_t$ involved in the construction of Sections~\ref{alphabdedproof}
and ~\ref{spectralstuff}, is the topic of Section~\ref{recurr}, the main result
of which is Lemma~\ref{est1aunif} (proved by exploiting previous work of Nowicki).
Finally, our linear response result, Theorem~\ref{linresp}, is proved in
Section~\ref{finalproof}. The argument borrows some ideas from \cite{bs1}, but their implementation
required several nontrivial innovations, as explained above.
The three appendices contain proofs of a more technical nature. 

\section{Formal statement of our results}
\label{formall}
\subsection{Collet-Eckmann, Benedicks-Carleson, and topologically
slowly recurrent (TSR) unimodal maps}

We start by formally defining the classes of maps that
we shall consider. Note  that
we shall sometimes write $[a, b]$ with 
$b<a$ to represent $[b, a]$. Another frequent abuse of notation is that
we sometimes use $C>0$ to denote different (uniform)
constants in the same
formula.

Let $I=[-1,1]$. We say that $f$ is  {\it S-unimodal} if   $f:I\to I$ is a $C^3$ map with negative Schwarzian derivative  such that 
$f(-1)=f(1)=-1$, $f'|_{[-1,0)} >0$, $f'|_{(0,1]} <0$, and
$f''(0)<0$ (i.e., we only consider the
quadratic case).  The following notation will be convenient throughout: For
$k\ge 1$, we let $J_+$ be the monotonicity
interval of $f^k$ containing $c$ and to the right of $c$,
$J_-$ be the monotonicity
interval of $f^k$ containing $c$ and to the left of $c$,
and we put 
\begin{equation}
\label{deff+}
f^{-k}_+:=(f^k|_{J_+})^{-1}\, ,
\, \, f^{-k}_-:=(f^k|_{J_-})^{-1}\, .
\end{equation}

\begin{remark} It is likely that the negative Schwarzian derivative assumption
is not needed for our results, see \cite{kozlo}. Note however that we cannot
apply trivially the work of Graczyk-Sands-\'{S}wi\c{a}tek \cite{GSS} to study linear
response:  If $f_t$ is
a one-parameter family of $C^3$ unimodal maps, the 
smooth changes of coordinates
which make their Schwarzian derivative  negative will depend on $t$,
and this dependency will require a precise study. In view of keeping
the length of this paper within reasonable bounds, we refrained
from considering the more general case.
\end{remark}

Let $c=0$ be the critical point of $f$,
and
put $c_k =f^k (c)$ for all $k \ge 0$. 
We say that  an $S$-unimodal map $f$ is
{\it $(\lambda_c,H_0)$-Collet-Eckmann (CE)} if $\lambda_c > 1$, $H_0\geq 1$, and 
\begin{equation}\label{00}
|(f^k)' (f(c)) |\ge  \lambda_c ^k\, , \quad \forall k \ge H_0 \, .
\end{equation}
All periodic orbits of Collet-Eckmann maps are repelling, and  \cite[Theorem B]{NoS} gives that
for {\it any} $C^2$ unimodal (or multimodal) map without periodic attractors
there exists $\gamma >0$
so that $|f^n(c)-c| \ge e^{-\gamma n}$ for all
large enough $n$. Benedicks and Carleson \cite{BC} showed 
that $S$-unimodal Collet-Eckmann maps which satisfy the  following {\it Benedicks-Carleson} 
assumption
\begin{equation}\label{BeC}
\exists 0<\gamma <\frac{\log( \lambda_c)}{4}
\mbox{ so that } |f^k(c)-c| \ge e^{-\gamma k}\, , \quad \forall k \ge H_0\, 
\end{equation}
form
a positive measure set of parameters of non degenerate families. 
The Be\-ne\-dicks-Carleson assumption will suffice for some of our results, sometimes up to replacing $4$  in the denominator
by a larger constant.

A stronger condition, topologically slow recurrence (TSR), will allow us to obtain linear response.
To define TSR, we shall use the following
auxiliary sequence:
Let $f$ be an $S$-unimodal map whose critical point is not preperiodic. The itinerary of a point $x \in I$ is 
the sequence  $\sgn (f^i(x))\in \{-1,0,1\}$.  We put 
\begin{equation}\label{Rf}
R_f(x):= \min \{j\mid \sgn (f^j(c))\neq \sgn (f^j(x)), \ j\geq 1   \} \, .
\end{equation}

We say that an $S$-unimodal map $f$  with non preperiodic critical
point
satisfies the {\it topological slow recurrence (TSR)} condition  if 
\begin{equation}\label{tsr}
\lim_{m\rightarrow \infty} \limsup_{n\rightarrow \infty} 
\frac{1}{n} \sum_{\substack{1\leq j\leq n  \\ R_f(f^j(c)) \ge m}} R_f(f^j(c))=0 \, .
\end{equation}

It follows from the definition  that any $S$-unimodal map topologically
conjugated with a map $f$  satisfying
TSR also satisfies  TSR. (Indeed,
$R_f(c_n)=j$ if and only if
  $f^{j}$ is a diffeomorphism on $(c,c_n)$ and $c \in f^{j}(c,c_n)$.)  
We also have the much less trivial result below: 
\begin{proposition}[\cite{sandst}, \cite{wang}. See also \cite{luzzatto}] 
An $S$-unimodal map $f$ with non preperiodic
critical point satisfies the TSR condition if and only if $f$ is a Collet-Eckmann map and 
\begin{equation}\label{sr}
\lim_{\eta\rightarrow 0^+} \liminf_{n\rightarrow \infty} 
\frac{1}{n} 
\sum_{\substack{1\leq j\leq n  \\ |f^j(c) -c|< \eta }} \log |f'(f^j(c))|=0
\, .
\end{equation}
\end{proposition}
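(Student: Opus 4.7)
The plan is to establish a dictionary between the symbolic quantity $R_f(c_n)$ and the metric quantity $|c_n-c|$, after which each implication reduces to straightforward bookkeeping. The geometric input is that $R_f(c_n)=m$ forces $f^m$ to be a diffeomorphism on $J=(c,c_n)$ whose image $f^m(J)$ contains $c$, hence has size bounded above by $|I|$. Because $f$ has negative Schwarzian, the Koebe principle yields bounded distortion of $f^m$ on $J$, so that for any $\xi\in J$,
$$|c_n-c|\cdot |(f^m)'(\xi)|\asymp |f^m(J)|\leq |I|.$$
Since $f$ is quadratic at $c$, $|f'(\xi)|\asymp|\xi-c|\lesssim |c_n-c|$ for $\xi\in J$. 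Under Collet--Eckmann, $|(f^{m-1})'(c_1)|\geq\lambda_c^{m-1}$, and distortion transports this estimate to $f(\xi)$, yielding $|c_n-c|^2\cdot\lambda_c^{m-1}\lesssim 1$, i.e.\ $|c_n-c|\lesssim\lambda_c^{-m/2}$. Conversely, a standard contracting-pullback argument gives $|c_n-c|<\eta\Rightarrow R_f(c_n)\gtrsim -\log\eta$. Combined with $\log|f'(c_n)|\asymp\log|c_n-c|$ (again by quadraticity), these bounds yield the dictionary
$$R_f(c_n)\asymp -\log|f'(c_n)|\qquad\text{(up to multiplicative constants).}$$

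Given CE, the direction CE $+$ SR $\Rightarrow$ TSR is then routine: matching $\eta\asymp\lambda_c^{-m/2}$, the index set $\{j\leq n:R_f(c_j)\geq m\}$ coincides with $\{j\leq n:|c_j-c|<\eta\}$ up to an $O(1)$ boundary error per level, and the weights $R_f(c_j)$ and $\log|f'(c_j)|$ agree up to a multiplicative constant. Dividing by $n$, taking $\limsup_n$, and then sending $m\to\infty$ (equivalently $\eta\to 0^+$) translates the vanishing of \eqref{sr} into the vanishing of \eqref{tsr}.

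The main obstacle is the reverse direction, TSR $\Rightarrow$ CE, since the derivation of the dictionary used CE. Here I would follow the scheme of \cite{sandst} and \cite{wang}: decompose the postcritical orbit into \emph{free} iterates (where $c_j$ lies outside a fixed neighborhood $V$ of $c$) and \emph{close returns} (where $c_j\in V$, and therefore $R_f(c_j)$ is large by the half of the dictionary that requires only Koebe and the Ma\~n\'e expansion on $I\setminus V$, not CE). A Ma\~n\'e-type hyperbolicity lemma --- valid for $S$-unimodal maps with no attracting cycle, which is forced by TSR together with the non-preperiodicity of $c$ via Singer's theorem --- provides uniform exponential expansion of $f$ on $I\setminus V$, so the free iterates contribute linear logarithmic growth of the derivative. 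At each close return the derivative loses at most $|f'(c_j)|^{-1}\leq\exp(CR_f(c_j))$. TSR then gives $\sum_{\text{close returns}\leq n}R_f(c_j)=o(n)$, so the cumulative loss is subexponential, and combined with the linear gain from free iterates this produces \eqref{00}. Once CE is in hand, SR follows from TSR by the first paragraph. Making the free/recurrent dichotomy quantitative and verifying the Ma\~n\'e expansion in this non-uniformly hyperbolic setting is the technical heart of \cite{sandst} and \cite{wang}.
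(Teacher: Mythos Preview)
The paper does not prove this proposition; it is stated with attribution to \cite{sandst}, \cite{wang}, and \cite{luzzatto} and used as a black box. So there is no ``paper's own proof'' to compare against.

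Your outline is a fair summary of how the cited references proceed: the symbolic--metric dictionary $R_f(c_n)\asymp -\log|c_n-c|$ (via Koebe on the diffeomorphic branch over $(c,c_n)$ and the quadratic critical point), and the free/bound decomposition with a Ma\~n\'e-type expansion estimate off a neighborhood of $c$ for the direction TSR $\Rightarrow$ CE. Two points where your sketch is thinner than it should be: first, the ``converse'' half of your dictionary, $|c_n-c|<\eta\Rightarrow R_f(c_n)\gtrsim -\log\eta$, is not automatic without some expansion input---it is exactly the statement that close orbits shadow for a time proportional to $-\log(\mbox{distance})$, which in this setting is usually obtained \emph{after} one has CE (or is folded into the inductive argument establishing CE). You use this direction in ``CE $+$ SR $\Rightarrow$ TSR'' where CE is available, so this is fine there, but it is worth flagging that the dictionary is not symmetric in its hypotheses. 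Second, to invoke Ma\~n\'e expansion you need all periodic orbits repelling; you derive this from TSR plus non-preperiodicity ``via Singer'', but Singer only says that a non-repelling cycle attracts the critical orbit---you still need an argument that TSR excludes this, which is easy (a critical orbit converging to a cycle has $R_f(c_j)$ eventually constant along an arithmetic progression, contradicting \eqref{tsr}), but should be said. With these caveats, your sketch accurately reflects the architecture of the cited proofs.
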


In Section ~ \ref{recurr}, we shall prove that TSR implies Collet-Eckmann
and Benedicks-Carleson-type conditions, uniformly
in a subset of small enough $C^3$ diameter
of a topological class.

\begin{remark}[TSR is generic]\label{generic2}
Avila and Moreira \cite{bif}  proved that for almost every parameter $s$ in a non-degenerate 
analytic family of quadratic unimodal maps $f_s$, the map $f_s$ is either regular 
or Collet-Eckmann with subexponential recurrence of its critical orbit
(i.e., for every $\gamma >0$, there is $H_0$ so that
$|c_k-c|> \exp(-\gamma k)$ for all $k\ge H_0$).  
(Non-degenerate, or transversal, means that the family is not contained in a topological class.)
Tsujii \cite{tsujii} had previously proved
that the set of Collet-Eckmann and subexponentially recurrent
parameters $s$  in a transversal family $f_s$ of $S$-unimodal maps  
has positive Lebesgue measure.
By combining the results of Avila and Moreira \cite{bif} and Tsujii \cite{tsujii},
we can see that TSR is  a generic condition:
In a nondegenerate analytic family $f_s$
of $S$-unimodal maps, almost every parameter
is either regular or TSR.
\end{remark}

\subsection{Boundedness and continuity of the infinitesimal conjugacy $\alpha$}
\label{sscont}

Let $f$ be an $S$-unimodal Collet-Eckmann map, and let $v : I \to \complex$ be bounded.
We want to find   a bounded solution $\alpha: I \to \complex$ 
of the {\it twisted cohomological equation (TCE):}
\begin{equation}\label{tce}
v(x)= \alpha(f(x))- f'(x) \alpha(x)  \, , \forall  x \in I \, .
\end{equation}

By analogy with the piecewise expanding unimodal case (that we studied
in previous works \cite{bs1}, \cite{bs2}), a candidate $\alpha_{cand}$ for the solution $\alpha$
of \eqref{tce}  is defined, for those
$x \in I$ so that $f^j(x)\ne c$ for all $j \ge 0$, by the  {\it formal series}
\begin{equation}\label{defa}
\alpha_{cand}(x)=- \sum_{j=0}^\infty \frac{v(f^j(x))}{(f^{j+1})'(x)}  \, ,
\end{equation}
and, for those $x \in I$ so that there  exists $j \ge 0$ 
with  $f^j(x)= c$, but $f^\ell (x) \ne c$ for $0 \le \ell\le j-1$, 
by the sum
\begin{equation}\label{defa'}
\alpha_{cand}(x)=- \sum_{\ell=0}^{j-1} \frac{v(f^\ell(x))}{(f^{\ell+1})'(x)}  \, .
\end{equation}
(In particular,  $\alpha_{cand}(c)=0$.) 
Clearly, the series \eqref{defa} converges absolutely at every point $x$ for which 
the Lyapunov exponent
$$
\Lambda(x)=\lim_{j\to \infty} \log |(f^j)'(x)|^{1/j}
$$ 
is well-defined and strictly positive.
In particular, \eqref{defa} converges absolutely  for $x$ in  the forward orbit 
$\{ c_k , k \ge 1\}$
of the critical point  of 
the Collet-Eckmann $S$-unimodal map $f$, and also on the set of its
preperiodic  points.

We say that $v$  satisfies the {\it horizontality condition}  if
\begin{equation}\label{hor}
v(c)=- \sum_{j=0}^{\infty} \frac{v(f^j(c_1))}{(f^{j+1})'(c_1)}  \, ,
\end{equation}  
(note that
the right-hand-side of the above identity is just $\alpha_{cand}(c_1)$). If  $v$  satisfies the horizontality, 
then it is easy to see that whenever the  formal series \eqref{defa}
for $\alpha_{cand}(x)$ converges absolutely, then the corresponding series
 $\alpha_{cand}(f(x))$  also converges absolutely, and $\alpha_{cand}$ satisfies the
twisted cohomological equation \eqref{tce} at $x$.
Violation of horizontality (that is,
$v(c)\ne \alpha_{cand}(c_1)$)  is a {\it transversality} 
condition which
has been used for a long time 
in one-parameter families
$f_t$  of smooth unimodal maps  with $v(x)=\partial_t f_t|_{t=0}$
(see, e.g., \cite{TTY} for the transversality
condition, see \cite{tsujii} for the transversality condition expressed as a postscritical sum,
see, e.g., \cite[\S 5]{bs2} for the link between the two expressions, see \cite{ALM} for a recent
occurrence, and see \cite{ruelle} for its use in linear response).

Nowicki and van Strien \cite{nssum}  showed that the absolutely continuous invariant probability
measure  $\mu$ of a quadratic Collet-Eckmann map satisfies 
$$\mu(A)\leq C m(A)^{1/2}\, ,
$$ 
where $m$ is the Lebesgue measure. 
In particular $\log |f'|$ is $\mu$-integrable, and for  
Lebesgue almost every point $x$ the Lyapunov exponent 
$\Lambda(x)$
is well-defined and positive and
coincides with $\int \log |f'| \ d\mu$
(see Keller \cite{keller}). So  the series $\alpha_{cand}(x)$ converges absolutely at Lebesgue almost every point
$x$, and if $v$ is horizontal then
$\alpha_{cand}$ satisfies the TCE \eqref{tce}
along  the forward orbit of each such good $x$. However it is not clear a priori that there exists an upper bound for $|\alpha_{cand}(x)|$ on the set where $\alpha_{cand}(x)$ converges absolutely (for example,  $c_k$ may
be very close to $c$).

One can ask whether the formal series $\alpha_{cand}(x)$ converges everywhere. We shall show in Proposition \ref{div}
that for fairly general  $v$ (see Remark ~ \ref{rkdiv}), the series
\eqref{defa} for $\alpha_{cand}(x)$ diverges on a uncountable and dense subset (this set has Lebesgue measure
zero, however, by the observations in the previous paragraph). 
This lack of convergence is a new phenomenon with respect
to \cite{bs1}, \cite{bs2}. 
In order to prove that the TCE nevertheless has
a bounded solution in the horizontal case, we shall make
a Benedicks-Carleson assumption \eqref{BeC}
 on $f$, and we shall group  the terms 
of the formal series  $\alpha_{cand}(x)$  
to obtain an absolutely convergent series. The resummation procedure depends on $x$ through its  dynamics with respect
to  an induced map on the tower introduced in
\cite{BV}, using strong expansion properties available in the
Benedicks-Carleson case. 
This dynamical resummation will allow us to prove our first main result:

\begin{theorem}[Boundedness and continuity of $\alpha$]\label{alphabded}
Assume that $f$ is a $(\lambda_c,H_0)$-Collet-Eckmann $S$-unimodal map satisfying the Benedicks-Carleson condition \eqref{BeC}.

For any bounded function $v:I \to \complex$, if the TCE \eqref{tce} admits
a bounded solution $\alpha: I \to \complex$ with $\alpha(c)=0$, then this solution is unique and $v$  satisfies the horizontality condition   \eqref{hor}.

Let $X: I \to \complex$ be Lipschitz, and let  $v=X \circ f$.
If   $v$  satisfies the horizontality condition   \eqref{hor},
then there exists a  continuous function $\alpha: I \to \complex$ with $\alpha(c)=0$ solving the TCE  \eqref{tce}. 
In addition, $\alpha(x)=\alpha_{cand}(x)$ for all
$x$ so that $f^j(x)=c$ for some
$j\ge 0$, or so that the infinite series $\alpha_{cand}(x)$ 
in \eqref{defa} converges absolutely.
\end{theorem}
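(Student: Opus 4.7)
\textbf{Proof plan for Theorem \ref{alphabded}.}

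\emph{Uniqueness and forced horizontality.} For two bounded solutions $\alpha_1,\alpha_2$ with $\alpha_i(c)=0$, the difference $\beta:=\alpha_1-\alpha_2$ satisfies the homogeneous equation $\beta\circ f=f'\cdot\beta$ and $\beta(c)=0$. Inductively $\beta(c_k)=0$ for every $k\ge 0$, and iterating yields $\beta(f^n(x))=(f^n)'(x)\beta(x)$. At any $x$ whose forward orbit avoids $c$, the orbit lifts to the Bruin--Viana tower, on which $|(f^n)'(x)|\to\infty$ by the uniform expansion coming from Benedicks--Carleson; boundedness of $\beta$ then forces $\beta(x)=0$. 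At the remaining (countable) exceptional points -- the forward orbit of $c$ and its preimages -- $\beta$ vanishes by direct forward/backward iteration from $\beta(c_k)=0$, since the relevant inverse branches have nonzero derivative. For the horizontality conclusion, iterating the TCE from $\alpha(c)=0$ gives $\alpha(c_{k+1})=\sum_{j=0}^{k}v(c_j)(f^{k-j})'(c_{j+1})$; dividing by $(f^k)'(c_1)=(f^j)'(c_1)(f^{k-j})'(c_{j+1})$ and reindexing produces
\[
\frac{\alpha(c_{k+1})}{(f^k)'(c_1)}=v(c)+\sum_{m=0}^{k-1}\frac{v(f^m(c_1))}{(f^{m+1})'(c_1)}\, ,
\]
and the Collet--Eckmann lower bound $|(f^k)'(c_1)|\ge\lambda_c^k$ sends the LHS to $0$, which is exactly \eqref{hor}.

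\emph{Existence by dynamical resummation.} Since Proposition~\ref{div} below says that $\alpha_{cand}$ in \eqref{defa} diverges on an uncountable dense set, it cannot serve as a pointwise definition. Instead I would resum on the tower $\hat f\colon\hat I\to\hat I$ of \cite{BV} recalled in Subsection~\ref{tower}. For $x$ whose forward orbit avoids $c$, choose a lift $\hat x_0$ and let $0=N_0<N_1<\cdots$ be its successive returns to the base, with block lengths $R_k:=N_{k+1}-N_k$ and basepoints $y_k:=f^{N_k}(x)$. Set
\[
S_k(x)\;:=\;-\frac{1}{(f^{N_k})'(x)}\sum_{l=0}^{R_k-1}\frac{v(f^l(y_k))}{(f^{l+1})'(y_k)}\,,\qquad \alpha(x)\;:=\;\sum_{k\ge 0}S_k(x)\,,
\]
extending by $\alpha(c):=0$ and by \eqref{defa'} on preimages of $c$. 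The grouping of terms is dictated by the tower: one never breaks off a partial sum mid-excursion. The crucial estimate is $|S_k(x)|\le C\lambda^{-R_k}$ for some $\lambda>1$. To obtain it I would use the horizontality identity \eqref{hor} applied at $c$ to rewrite the inner partial sum of length $R_k$ as a tail of the horizontal series plus a shadowing remainder, invoke the Lipschitz assumption $v=X\circ f$ (which yields $|v(y)-v(c)|\le C|y-c|^2$) to compare the inner sum along the excursion with the horizontal sum along the postcritical shadow, and use the Benedicks--Carleson expansion $|(f^k)'(c_1)|\ge\lambda_c^k$ together with the bounded distortion built into the tower construction to dominate $1/(f^{N_k})'(x)$. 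Standard tail control of BV return times then gives absolute and uniform convergence of $\sum_k S_k$.

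\emph{TCE, continuity, and agreement with $\alpha_{cand}$.} That $\alpha$ satisfies the TCE follows by comparing the block decompositions at $x$ and at $f(x)$: the latter is a shift of the former, and a direct rearrangement, legitimate by absolute convergence, isolates the missing leading term $v(x)$. Continuity on $I\setminus\bigcup_{j\ge 0}f^{-j}(c)$ is immediate from uniform convergence of $\sum_k S_k$ on compacta disjoint from these preimages, each $S_k$ being smooth there. Continuity at $c$ reduces to $\sum_k S_k(y)\to 0$ as $y\to c$: the first-return time $R_0(y)$ blows up, and the block estimate (fed by horizontality and the Benedicks--Carleson quantitative bounds from \eqref{BeC}) makes $S_0(y)$ vanish, while the contribution of later blocks is controlled uniformly. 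At a preimage $x_0$ with $f^{j_0}(x_0)=c$ and $f^\ell(x_0)\ne c$ for $\ell<j_0$, one has for nearby $x$
\[
\alpha(x)=-\sum_{l=0}^{j_0-1}\frac{v(f^l(x))}{(f^{l+1})'(x)}+\frac{\alpha(f^{j_0}(x))}{(f^{j_0})'(x)}\,,
\]
and $\alpha(f^{j_0}(x))\to\alpha(c)=0$ (just established) while $(f^{j_0})'(x_0)\ne 0$, so the quotient stays bounded, the formula is continuous at $x_0$, and its value there matches \eqref{defa'}. Wherever $\alpha_{cand}(x)$ converges absolutely, rearrangement of the absolutely convergent series identifies it with the block sum $\alpha(x)$; on preimages of $c$ the two definitions agree by construction. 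The main obstacle throughout is the geometric bound $|S_k(x)|\le C\lambda^{-R_k}$: individual terms $v(f^l(y_k))/(f^{l+1})'(y_k)$ can blow up when $y_k$ is close to $c$, and only the exact cancellation furnished by horizontality, combined with the expansion and distortion estimates built into the \cite{BV} tower, makes the block sums summable.
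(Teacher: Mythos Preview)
Your overall strategy---resum the formal series along tower return times, use horizontality to control the bound-period contribution---is exactly the paper's. The uniqueness/horizontality paragraph is essentially correct (modulo the minor point that only $\limsup_n |(f^n)'(x)| = \infty$ is available, not the full limit; this still suffices). But two steps in the existence half do not go through as written.

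First, the block estimate $|S_k(x)| \le C\lambda^{-R_k}$ is the wrong shape and does not give summability: the return-block lengths $R_k$ can stay bounded (e.g.\ equal to $H(\delta)$ for every $k$), so $\sum_k \lambda^{-R_k}$ can diverge. What actually happens is that the inner sum $\sum_{l=0}^{R_k-1} v(f^l(y_k))/(f^{l+1})'(y_k)$ is \emph{uniformly bounded}---trivially on the free part, and on the bound part via the horizontality cancellation you describe (this is the paper's Proposition~\ref{abs}, fed by the key Proposition~\ref{ubalpha} and Corollary~\ref{errorii})---while the outer factor $1/(f^{N_k})'(x)$ decays like $\rho^{-N_k}$ by Lemma~\ref{expiii}. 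Since $N_k \ge k\,H(\delta)$, summability follows; the decay lives in the cumulative time $N_k$, not in $R_k$. The paper in fact splits each of your blocks into a free piece $[S_{i-1},T_i]$ and a bound piece $[T_i,S_i]$ and estimates them separately.

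Second, the continuity argument has a real gap: the block sums $S_k(x)$ are \emph{not} smooth on $I\setminus\bigcup_j f^{-j}(c)$. The return times $N_k(x)$---hence each $S_k$---jump also at preimages of $\pm\delta$ and of the tower-level endpoints $a_j,b_j$, not only at preimages of $c$. The paper handles this by choosing $\pm\delta$ and the $a_j,b_j$ to have positive Lyapunov exponent (see \eqref{condek} and the surrounding discussion), so that at every such discontinuity point the formal series $\alpha_{cand}$ converges absolutely; then the two one-sided limits $\alpha^\pm(x)$ and $\alpha(x)$ are merely different groupings of one absolutely convergent series and must agree. Your argument needs this extra ingredient. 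Relatedly, comparing block decompositions at $x$ and $f(x)$ to verify the TCE is delicate for the same reason (the block structure can reorganise under $f$); the paper sidesteps this by checking the TCE on repelling periodic points---where $\alpha=\alpha_{cand}$ trivially solves it---and extending by density and the continuity just established.
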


The condition $v=X \circ f$ can be weakened but the term
$k=0$ of $(I)$  in \eqref{I-II-III}  in the proof of Proposition~\ref{abs}
shows that we need something like 
$v'(c)=0$. (If we allowed $f''(c)=0$, then we would 
need $v''(c)=0$, etc.)

We do not know whether Theorem ~ \ref{alphabded}
holds for all  $S$-unimodal Collet-Eckmann maps, i.e., whether the Benedicks-Carleson
assumption is needed. In any case, we shall use the stronger, but
still generic (recall Remark ~\ref{generic2}), TSR assumption 
in Section ~ \ref{recurr} to show uniformity of the various
hyperbolicity constants. This uniformity is required
to prove linear response, the other main result of this paper.

The proof of Theorem \ref{alphabded} is given in Section \ref{alphabdedproof} 
and  organised as follows: In Section \ref{tower}, we recall the tower construction from 
Baladi and Viana  \cite{BV}. We study its properties in Section~\ref{tower'}, which also
contains two new (and key) estimates,
Proposition ~\ref{ubalpha} and its Corollary ~ \ref{errorii}. 
In  Section \ref{resume},  we  define a function
$\alpha(x)$ by 
grouping the terms of the formal series \eqref{defa} to obtain an absolutely convergent series (Definition ~\ref{resumdef} and Proposition~\ref{abs}). The  
resummation procedure for $\alpha(x)$ depends on the dynamics of $x$ on the tower. Finally, in Section \ref{alphac} 
we complete the proof of Theorem \ref{alphabded}:
We show that $\alpha(x)$ is a continuous function, that it satisfies the TCE,
and that if the TCE admits a bounded solution then it is unique.

We end this section with a result on the lack of convergence of
the formal power series  for $\alpha_{cand}(x)$
(recall that it converges at Lebesgue almost every  $x$):

\begin{proposition}\label{div} Let $f$ be an $S$-unimodal map, with all its periodic points repelling and an
infinite postcritical orbit.  Let $v$  be a $C^1$ function on $I$, with $v'(c)=0$, such that $v(f^{i_0}(c))\neq 0$ for some $i_0$. Let $\Sigma$ be the set of points $x$ such that $f^n(x)\neq c$ for every $n\geq 0$ and so that the series 
$\alpha_{cand}(x)=-\sum_{i=0}^{\infty}  \frac{v(f^i(x))}{(f^{i+1})'(x)}$ diverges.
Then for every non empty open set $A \subset I$, the intersection $A \cap \Sigma$
contains a Cantor set. 
\end{proposition}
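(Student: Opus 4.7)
\emph{Proof plan.} The idea is to construct, inside any nonempty open $A\subset I$, a Cantor set of points $x$ whose orbit returns to $c$ at rates tuned so that the individual terms of $\alpha_{cand}(x)$ do not go to $0$; the series then diverges by the $n$-th term test.

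\emph{Step 1 (a uniform lower bound on a chosen term).} Let $y^\star:=f^{i_0}(c)$ and pick $\delta,r>0$ with $|v|\ge\delta$ on $B(y^\star,r)$. Since $f'(c)=0$, $f''(c)<0$, and $(f^{i_0})'$ is continuous at $c_1$, there are constants $C_0,\eta^\star>0$ such that whenever $\eta:=|f^j(x)-c|\le \eta^\star$, Taylor expansion at $c$ gives
\begin{equation*}
|f^{j+i_0}(x)-y^\star|\le r,\qquad
|(f^{j+i_0+1})'(x)|\le C_0\,|(f^j)'(x)|\,\eta,
\end{equation*}
and therefore
\begin{equation*}
\left|\frac{v(f^{j+i_0}(x))}{(f^{j+i_0+1})'(x)}\right|\ge\frac{\delta}{C_0\,|(f^j)'(x)|\,\eta}.
\end{equation*}
Consequently, if we can produce a point $x$ and a sequence $j_k\to\infty$ with $\eta_k:=|f^{j_k}(x)-c|\to 0$ and $|(f^{j_k})'(x)|\,\eta_k\le M$ for some fixed $M$, then the terms of $\alpha_{cand}(x)$ indexed by $i=j_k+i_0$ are bounded below by $\delta/(C_0 M)$, so the series $\alpha_{cand}(x)$ diverges.

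\emph{Step 2 (Cantor construction by inverse branches).} First reduce to $A$ lying inside an invariant interval on which $f$ (or a suitable iterate of it, in the renormalizable case) is topologically mixing; every open set is carried by some iterate into such a dynamical core, and $\Sigma$ is stable under taking preimages that avoid the critical orbit, so it suffices to find the Cantor set inside the core. Then I build inductively a dyadic tree of closed subintervals $I_{a_1\ldots a_k}\subset A$ (indexed by $\{0,1\}^k$) together with iterates $j_k$ and scales $\eta_k\downarrow 0$ such that: (a) $f^{j_k}$ maps $I_{a_1\ldots a_k}$ diffeomorphically onto $[c-\eta_k,c+\eta_k]$; (b) by the Koebe principle (using negative Schwarzian, and a macroscopic extension obtained from the fact that preimages of repelling periodic points are dense in the core), $f^{j_k}|_{I_{a_1\ldots a_k}}$ has uniformly bounded distortion; (c) $|(f^{j_k})'(x)|\,\eta_k\le M$ on $I_{a_1\ldots a_k}$. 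For the induction step, topological mixing yields some $n_{k+1}$ so that $f^{n_{k+1}}([c-\eta_k,c+\eta_k])$ covers $[-1,1]$; choose $\eta_{k+1}\ll \eta_k^{2}$ small and extract two disjoint inverse branches of $f^{n_{k+1}}$ from $[c-\eta_{k+1},c+\eta_{k+1}]$ into $(c-\eta_k,c+\eta_k)$, each with Koebe-controlled distortion and preimage long enough that $|(f^{n_{k+1}})'|\,\eta_{k+1}$ stays small; pulling these back through $(f^{j_k}|_{I_{\vec a}})^{-1}$ produces the two children $I_{\vec a,0},I_{\vec a,1}$ of $I_{\vec a}$, and $j_{k+1}:=j_k+n_{k+1}$. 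At each step we also remove the (discrete, hence negligible) set of points in $I_{\vec a}$ whose orbit hits $c$ at or before time $j_{k+1}$, so that the resulting points satisfy $f^n(x)\ne c$ for all $n\ge 0$.

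\emph{Step 3 (conclusion).} For each $\mathbf{a}\in\{0,1\}^\mathbb{N}$, the nested intersection $\bigcap_k I_{a_1\ldots a_k}$ is a single point $x(\mathbf{a})\in A$; the map $\mathbf{a}\mapsto x(\mathbf{a})$ is continuous and injective, so its image $K\subset A$ is a Cantor set. By construction $f^n(x(\mathbf{a}))\ne c$ for all $n\ge 0$ and the hypotheses of Step 1 hold along the sequence $j_k$, so $\alpha_{cand}(x(\mathbf{a}))$ diverges. Thus $K\subset A\cap\Sigma$. The main obstacle is the Koebe/distortion control at each inductive step: one has to guarantee that the selected inverse branch of $f^{n_{k+1}}$ landing near $c$ admits a definite extension on which it is monotone; this is where the $S$-unimodal hypothesis, the density of preimages of repelling periodic points (which uses the assumption that all periodic orbits are repelling and the postcritical orbit is infinite), and, in the renormalizable case, a prior restriction to a minimal periodic renormalization interval, all enter.
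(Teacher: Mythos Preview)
Your core mechanism—forcing the general term $\bigl|\frac{v(f^{i}(x))}{(f^{i+1})'(x)}\bigr|$ not to tend to zero by making the orbit visit small neighbourhoods of $c$—is exactly the paper's. But the paper's implementation is far more elementary and sidesteps the obstacle you yourself flag at the end. Instead of tracking $|(f^{j_k})'(x)|\,\eta_k$ via Koebe, the paper simply uses that if $f^n(x_0)=c$ then $(f^{n+i_0+1})'(x_0)=0$ while $v(f^{n+i_0}(x_0))=v(f^{i_0}(c))\ne 0$, so $\bigl|\frac{v(f^{n+i_0}(y))}{(f^{n+i_0+1})'(y)}\bigr|\to\infty$ as $y\to x_0$; one then takes two disjoint closed subintervals of a punctured neighbourhood of $x_0$ on which this single term is $\ge 2$. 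The induction runs over \emph{all} $n$: a component $C$ of $\mathcal K_n$ is kept unchanged if $c\notin f^n(C)$, and split into two children as above when $c\in f^n(C)$. The only dynamical input needed is that $f^n$ cannot remain a diffeomorphism on a fixed interval for all $n$ (no wandering intervals, no attracting cycles), so every component eventually splits. No distortion control, no derivative bookkeeping, no pullback construction.

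Your route has two gaps as written. First, the reduction to a topologically mixing core fails when $f$ is infinitely renormalizable, a case the hypotheses allow (all periodic points repelling, infinite postcritical orbit). This is repairable: what you actually use is only that preimages of $c$ are dense, which already follows from the absence of wandering intervals and periodic attractors. Second, and more seriously, the Koebe space is only asserted. For uniform distortion you need the chosen inverse branch of $f^{n_{k+1}}$ to extend monotonically to an interval whose image contains a \emph{definite} neighbourhood of $[c-\eta_{k+1},c+\eta_{k+1}]$; but the endpoints of the maximal monotone extension map to postcritical values $c_j$ with $j\le n_{k+1}$, which may lie arbitrarily close to $c$. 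Density of repelling-periodic preimages does not by itself supply this space; making it work would require restricting to first-return branches on a nested sequence of nice intervals, which is substantially more than the proposition needs. The paper's argument avoids all of this by never requiring any quantitative derivative control.
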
 

\begin{remark}\label{rkdiv}
If $f$ is a Collet-Eckmann map whose critical orbit is not preperiodic, an open and dense set of horizontal vectors $v$ satisfies the conditions of Proposition~ \ref{div}. Indeed, the set 
$\{v\mid v(f^i(c))=0, \forall \ i\}$
is a subspace of infinite codimension, and the subspace of horizontal directions $v$ has codimension one.
\end{remark}

The proof of Proposition ~\ref{div} is to be found in Section ~ \ref{prdiv}.


\subsection{A new expression for the a.c.i.m. of a Benedicks-Carleson unimodal map}
 
It is well-known that an $S$-unimodal map which is Collet-Eckmann  admits an absolutely continuous invariant measure.
The following  expression for the invariant density of a Benedicks-Carleson
unimodal map appears to be new. It is a byproduct of our
proof, and follows immediately from
Proposition ~\ref{acip} and the definitions in Section~\ref{spectralstuff}
(the case when the critical point is preperiodic  can
be obtained by a much more elementary proof).
The remarkable feature of \eqref{smoothrho} is that
the defining function $\psi_0$ is {\it smooth,} and that the
square-root singularities appear dynamically, through
the inverse interates of $f$ and their jacobians.

\begin{proposition}\label{acim}
Let $f$ be a $(\lambda_c, H_0)$-Collet-Eckmann $S$-unimodal map 
satisfying the  Benedicks-Carleson condition
\eqref{BeCxtra}, with $c$ not preperiodic. 
If $f$ is $C^{4}$, then there exist
\begin{itemize}
\item
a $C^1$ function $\psi_0 : I \to \real_+$, which belongs to the
Sobolev space $H^2_1$,
\item 
for each $k\ge 1$,
neighbourhoods 
$V_k \subset W_k$ of $c=0$, so that $f^k|_{W_k\cap [0,1]}$ and
$f^k|_{[-1,0]\cap W_k}$ are injective, 
\item
for each $k\ge 1$, a
$C^\infty$ function
$\xi_k: I \to [0,1]$,  supported in $W_k$ and $\equiv 1$ on $V_k$,
\end{itemize}
so that
the density $\phi$
of the unique absolutely continuous invariant
probability measure of $f$ satisfies
\begin{equation}\label{smoothrho}
\phi(x)=\psi_0(x) + \sum_{k = 1}^{\infty}\sum_{\varsigma\in \{+,-\}}
 \frac{\prod_{j=0}^{k-1}\xi_j (f^{-k}_\varsigma(x))}{|(f^{k})'(f^{-k}_\varsigma(x))|} 
 \chi_k(x) \psi_0(f^{-k}_\varsigma(x)) 
\, ,
\end{equation}
where  $\chi_k=1_{[-1, c_k]}$ if
$f^k$ has a local maximum at $c$, while $\chi_k=1_{[ c_k,1]}$ if
$f^k$ has a local minimum at $c$.
\end{proposition}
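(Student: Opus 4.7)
The plan is to derive \eqref{smoothrho} by pushing the tower invariant density produced by Proposition~\ref{acip} down to $I$ through the projection $\pi\colon\hat I\to I$ introduced in Section~\ref{spectralstuff}. Writing $\hat I=\bigsqcup_{k\ge 0}\hat I_k$ with $\hat I_0=I$, setting $\pi_k:=\pi|_{\hat I_k}$, and letting $\hat\phi=\widehat{\LL}\hat\phi$ denote the tower invariant density whose level-$0$ component is the $C^1\cap H^2_1$ function $\psi_0$ provided by Proposition~\ref{acip}, one has $\pi_k(\hat y)=f^k(\hat f^{-k}(\hat y))$, where $\hat f^{-k}(\hat y)\in\hat I_0=I$ is the unique level-$0$ ancestor of $\hat y$. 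Push-forward of $\hat\phi$ by $\pi$ then reads
\[
\phi(x)=\psi_0(x)+\sum_{k\ge 1}\sum_{\hat y\in\pi_k^{-1}(x)}\frac{\hat\phi_k(\hat y)}{|(f^k)'(\hat f^{-k}(\hat y))|},
\]
and it remains to identify $\hat\phi_k$ and to enumerate $\pi_k^{-1}(x)$.

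For the first task I would iterate the identity $\widehat{\LL}\hat\phi=\hat\phi$. In the mollified Baladi--Viana tower the upward part of $\hat f$ fixes the level-$0$ base point and moves $\hat y$ up one floor with smooth weight $\xi_{k-1}\colon I\to[0,1]$ supported in $W_{k-1}\ni c$; in tower coordinates it has unit Jacobian. The return branch is confined to level $0$ and does not feed $\hat\phi_k$ for $k\ge 1$. An induction on $k$ starting from $\psi_0$ at level $0$ then yields
\[
\hat\phi_k(\hat y)=\psi_0(\hat f^{-k}(\hat y))\prod_{j=0}^{k-1}\xi_j(\hat f^{-k}(\hat y)),
\]
which is the desired closed form.

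For the second task I would transport back to $I$. The level-$k$ base domain is a two-sided neighbourhood of $c$, and $f^k$ maps each side monotonically with image contained in either $[-1,c_k]$ or $[c_k,1]$ according as $f^k$ has a local maximum or minimum at $c$; the corresponding interval is exactly the support of $\chi_k$. On it, the preimages of $x$ under $\pi_k$ have base coordinates $f^{-k}_\varsigma(x)$ for $\varsigma\in\{+,-\}$ as defined in \eqref{deff+}, and the product $\prod_{j=0}^{k-1}\xi_j$ vanishes outside a small neighbourhood of $c$, so that every term is well defined. Substituting into the push-forward identity above produces \eqref{smoothrho} term by term.

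All the real work sits upstream: Proposition~\ref{acip} must supply both the $C^1$ and $H^2_1$ regularity of $\psi_0$, and the definition of $\widehat{\LL}$ in Section~\ref{spectralstuff} must decouple the upward branch (carrying the smooth weights $\xi_j$) from the return branch so that the simple recursion used above is exact. Once those are granted, the present proposition reduces to bookkeeping. The conceptual content worth recording is that the hard indicator ``the level-$0$ ancestor remains in the critical neighbourhood for $k$ iterates'', which would appear in an unmollified Young tower, has been replaced by the smooth product $\prod_{j=0}^{k-1}\xi_j$, which is precisely what avoids the artificial boundary discontinuities highlighted in the Introduction.
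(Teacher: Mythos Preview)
Your approach is correct and is exactly the route the paper takes: the paper states that Proposition~\ref{acim} ``follows immediately from Proposition~\ref{acip} and the definitions in Section~\ref{spectralstuff},'' and what you have written is precisely the unpacking of that sentence---iterate the fixed-point equation $\widehat\LL\hat\phi=\hat\phi$ on levels $k\ge 1$ to express $\hat\phi_k$ as a product of the cutoffs times $\psi_0=\hat\phi_0$, then apply the projection to $I$.

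One point of precision: in the paper's Definition~\ref{opp} the upward weight is $\xi_{k-1}/\lambda$, not $\xi_{k-1}$, so the induction actually gives $\hat\phi_k=\lambda^{-k}\psi_0\prod_{j=0}^{k-1}\xi_j$; correspondingly, the map taking $\hat\phi$ to $\phi$ is not the bare push-forward $\pi_*$ but the operator $\Pi$ of \eqref{defproj}, which carries an extra factor $\lambda^k$ at level $k$. These two $\lambda$-factors cancel, which is why your final formula is right, but as written your intermediate expressions for $\hat\phi_k$ and for the push-forward do not match the paper's definitions. It would be cleaner to invoke $\Pi$ directly (this is what Proposition~\ref{acip} hands you) and keep track of the $\lambda$'s until they cancel.
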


The length of $W_k$ must decay exponentially,
but there is some flexibility in
choosing the intervals $V_k$, $W_k$, and the functions $\xi_k$, see Definition
~\ref{defxi}
for details, noting also the parameter $\delta$ used in the
construction of the tower.
The function $\psi_0$
depends on these choices. 

By Lemma~ \ref{rootsing}, which describes the nature of the
singularities of $|(f^{k})'f^{-k}_\pm(x)|$
on the support of $\xi_k(f^{-k}_\pm(x))$, the expression for $\phi$ belongs to
$L^p(I)$ for all $p < 2$. 
In fact,  Lemma~ \ref{rootsing} (see also \eqref{kappa'})
implies that the invariant density of $f$ can be written as
$$
\psi_0+\sum_{k\ge 1}\phi_k  \frac{\chi_k }{\sqrt{|x-c_k|}} \, ,
$$
where the $C^1$ norms of the $\phi_k$  decay exponentially with $k$. 
(A slightly weaker version of this  result, replacing
differentiable by bounded variation, was first proved by L.S.Young
\cite{young92}. Ruelle obtained a formula involving differentiable
objects in the analytic
Misiurewicz case \cite{ruelle}, but his expression is somewhat less dynamical.)


\subsection{Uniformity of hyperbolicity constants in deformations of 
slowly recurrent maps}

We shall  study one-parameter families $t \mapsto f_t$
of $S$-unimodal maps 
which stay in a topological class, i.e., {\it deformations:}

\begin{definition}($C^r$ deformations $f_t$. Notations $v_t$, $X_t$, $h_t$.)
Let $f:I \to I$ be an  $S$-unimodal 
Collet-Eckmann map.
For $r \ge 1$, a {\it $C^r$ one-parameter family} through $f$ is a 
$C^r$ map 
$$t \mapsto f_t\, , \, \, t \in [-\epsilon, \epsilon]
\, , 
$$
(taking the topology of $C^3$ endomorphisms 
of $I$ in the image),
with
$f_0=f$, and so that
each $f_t$ is $S$-unimodal. We use the notations: 
$$
c_{k,t}=f_t^k(c)\, , k \ge 1 \, , \, v_s:= \partial_t f_t|_{t=s}\, ,
v=v_0\, , \, c_k=c_{k,0}\, .
$$

A {\it $C^r$ deformation of $f$} is  a $C^r$ one-parameter family through $f$
so that, in addition, for each $|t|\le \epsilon$, there exists
a homeomorphism $h_t:I \to I$ with
\begin{equation}\label{starstar}
h_0(x)\equiv x\, , \mbox{ and }
 f_t \circ h_t = h_t \circ f_0\, ,\forall t \in [-\epsilon, \epsilon]\, ,
\end{equation}
and
\footnote{This is mostly a technical
assumption, see also the remark after Theorem ~ \ref{alphabded}.}
$v_s = X_s \circ f_s$ for each $|s|\le \epsilon$,
with $X_s : I \to \real$ a $C^2$ function.
(We write $X=X_0$.)
\end{definition}

\begin{remark}
\label{defhor}
If $f_t$ is a deformation
then each $v_s$ is horizontal. (This was proved by Tsujii \cite{tsujii}.)
\end{remark}

Given Theorem ~ \ref{alphabded},
the next lemma is  easy to prove. It is essential
in our argument:

\begin{lemma}\label{first}
Let $f_t$ be a $C^1$ one-parameter family of Collet-Eckmann
$S$-unimodal maps
through $f=f_0$. Assume that $v$ is horizontal, that is,
$\alpha_{cand}(c_1)=v(c)$. 

Then, we have for all $k \ge 1$
$$
\lim_{t \to 0}\frac{c_{k,t}-c_k}{t}=\alpha_{cand}(c_k)\, .
$$

If, in addition, $f_t$ is a $C^1$ deformation of $f_0$
then 
$$\partial_t h_t(c_k)|_{t=0}=\alpha_{cand}(c_k)=\alpha(c_k)
\, , \quad \forall k \ge 1\, .
$$
\end{lemma}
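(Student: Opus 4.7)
My plan is to differentiate the critical orbit recursion at $t=0$, solve the resulting linear recursion, and use horizontality to identify the limit with $\alpha_{cand}(c_k)$.

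Since $t\mapsto f_t$ is $C^1$ into $C^3$ endomorphisms and the critical point $c=0$ is fixed in $t$, an easy induction on $k$ shows that $c_{k,t}=f_t^k(c)$ is $C^1$ in $t$ at $t=0$. Writing $D_k:=\partial_t c_{k,t}|_{t=0}$ and differentiating $c_{k+1,t}=f_t(c_{k,t})$ at $t=0$ gives the linear recursion $D_{k+1}=v(c_k)+f'(c_k)\,D_k$ with $D_0=0$. Unwinding this and factoring out $(f^{k-1})'(c_1)=\prod_{i=1}^{k-1}f'(c_i)$, which crucially does not contain the vanishing factor $f'(c)$, one obtains, with the convention $(f^0)'(c_1):=1$,
$$
D_k=(f^{k-1})'(c_1)\Bigl[\,v(c)+\sum_{j=1}^{k-1}\frac{v(c_j)}{(f^j)'(c_1)}\,\Bigr].
$$

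Horizontality \eqref{hor} reads, after a shift of index, $v(c)+\sum_{j=1}^\infty v(c_j)/(f^j)'(c_1)=0$, a series that converges absolutely thanks to the Collet--Eckmann bound $|(f^j)'(c_1)|\ge \lambda_c^j$ for $j\ge H_0$ together with the boundedness of $v$. Replacing the bracketed partial sum by minus its infinite tail yields
$$
D_k=-(f^{k-1})'(c_1)\sum_{j=k}^\infty \frac{v(c_j)}{(f^j)'(c_1)}.
$$
The cocycle identity $(f^j)'(c_1)=(f^{j-k+1})'(c_k)\cdot(f^{k-1})'(c_1)$ and the substitution $\ell=j-k$ then identify the right-hand side as $-\sum_{\ell\ge 0}v(f^\ell(c_k))/(f^{\ell+1})'(c_k)=\alpha_{cand}(c_k)$, which converges absolutely at $c_k$ for the same Collet--Eckmann reason.

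For the deformation statement, any topological conjugacy between $S$-unimodal maps sends critical point to critical point, so $h_t(c)=c$. Iterating $f_t\circ h_t=h_t\circ f_0$ then gives $h_t(c_k)=f_t^k(h_t(c))=c_{k,t}$, whence $\partial_t h_t(c_k)|_{t=0}=D_k=\alpha_{cand}(c_k)$ by the first part. The equality $\alpha_{cand}(c_k)=\alpha(c_k)$ is exactly the last assertion of Theorem~\ref{alphabded}. There is no real obstacle: the only ingredient is that horizontality is precisely designed so that the finite partial sum $\sum_{j<k}$ can be replaced by the absolutely convergent tail $\sum_{j\ge k}$, on which the strong Collet--Eckmann expansion at $c_1$ acts.
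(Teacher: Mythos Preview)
Your proof is correct and follows essentially the same approach as the paper's: both differentiate the recursion $c_{k+1,t}=f_t(c_{k,t})$ at $t=0$ to obtain $D_{k+1}=v(c_k)+f'(c_k)D_k$ with $D_1=v(c)$, and then use horizontality to identify $D_k$ with $\alpha_{cand}(c_k)$. The paper phrases this last step as ``$a$ satisfies the TCE, and by horizontality $a=\alpha_{cand}$ on $\{c_k\}$'' (that is, both sequences obey the same forward recursion with the same initial value $\alpha_{cand}(c_1)=v(c)$), whereas you write out the closed formula for $D_k$ and convert the partial sum into the tail via horizontality; these are the same computation viewed two ways.
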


\begin{proof}
Our assumptions ensure that for each $k\ge 1$ the limit
$$
a(c_k)= \lim_{t \to 0} \frac{c_{k,t}-c_k}{t}
$$
exists. Clearly,  $a(c_1)=v(c)$. More generally, it is easy to check
that we have
$$
a(c_k)=\sum_{j=0}^{k-1} (f^j)' (c_{k-j}) v(c_{k-j-1}) \, ,
$$
so that $a$  satisfies the TCE (\ref{tce}). By the horizontality
condition, this implies that
$a=\alpha_{cand}$ on $\{ c_k\}$.

The additional assumption that $f_t$ and $f_0$ are
conjugated via $h_t$ implies that
$h_t(c_k)=c_{k, t}$, for all $t$ and
all $k\ge 0$.
The last statement
of Theorem ~ \ref{alphabded} implies that $\alpha_{cand}(c_k)=\alpha(c_k)$.
\end{proof} 

The following fact is an immediate consequence of 
van Strien's remark on ``robust chaos" \cite[Theorem 1.1]{Str},
using the well-known fact that 
Collet-Eckmann maps do not have any attracting periodic
orbit:

\begin{lemma}\label{robustchaos}
Let $f:I \to I$ be a  $S$-unimodal $(\lambda_c, H_0)$
Collet-Eckmann map and
let $f_t$ be   a $C^1$ one-parameter family through $f$.
If each $f_t$ is Collet-Eckmann for some
parameters $\lambda_c(t)$ and $H_0(t)$, then $f_t$ is  a
$C^1$ deformation of $f$.
\end{lemma}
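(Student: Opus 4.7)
The plan is to reduce the statement to van Strien's robust chaos theorem \cite[Theorem 1.1]{Str}, which is the hint given just before the lemma. Recall the standard fact, noted in the paper just after \eqref{00}, that Collet-Eckmann $S$-unimodal maps have all periodic orbits repelling; in particular none of the $f_t$ admits an attracting periodic orbit. Van Strien's theorem asserts that the topological conjugacy class of an $S$-unimodal map without attracting periodic orbits is open in the $C^3$ topology, and that a topological conjugacy to any sufficiently close map can be chosen to depend continuously on the parameter and to reduce to the identity at the basepoint. Applied to $f_0$, this directly produces homeomorphisms $h_t : I \to I$ with $h_0 = \mathrm{id}$ and $f_t\circ h_t = h_t\circ f_0$, which is exactly \eqref{starstar}.

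The remaining half of the definition of a $C^1$ deformation is the factorization $v_s = X_s\circ f_s$ with $X_s \in C^2(I,\real)$. Here I invoke the blanket assumption, stated in the introduction, that all $S$-unimodal maps under consideration are symmetric about $c=0$. Each $f_s$ is then even in $x$; differentiating in $s$ preserves evenness, so $v_s$ is even as well. Since $f_s$ identifies $x$ with $-x$, the value $v_s(x)$ depends only on $y = f_s(x)$, and we may unambiguously set $X_s(y) := v_s\bigl((f_s|_{[0,1]})^{-1}(y)\bigr)$ on the range $[-1, c_{1,s}]$ of $f_s$, extending by any $C^2$ function to the rest of $I$. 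The quadratic expansion $f_s(x) - c_{1,s} = a_2(s)\, x^2 + O(x^4)$ with $a_2(s) \ne 0$ exhibits $x^2$ locally as a $C^2$ function of $y$; combined with the $C^4$ smoothness of each $f_s$ and the $C^2$ dependence of the family on $s$, this gives the required $C^2$ regularity of $X_s$.

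There is no serious obstacle in either step: the topological-conjugacy half is a direct citation, and the factorization half is automatic in the symmetric quadratic setting fixed throughout the paper. This matches the author's remark that the statement is an ``immediate consequence'' of \cite[Theorem 1.1]{Str} together with the no-attracting-orbit property of Collet-Eckmann maps. The only mildly delicate point worth double-checking is the continuous $t$-dependence of the conjugacies $h_t$ (and the normalization $h_0 = \mathrm{id}$), which is part of van Strien's construction but could also be produced by hand via kneading data and the postcritical orbit.
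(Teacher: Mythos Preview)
Your reduction to van Strien's theorem via the absence of attracting periodic orbits is exactly the paper's argument; the paper offers nothing beyond that citation.

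You go further by addressing the factorization $v_s = X_s \circ f_s$ with $X_s \in C^2$, which the paper does not verify. This is a genuine observation: van Strien's theorem supplies only the conjugacies $h_t$, so strictly speaking the lemma as written does not follow from it alone. Your factorization argument is correct in outline but imports the symmetry and $C^4$ hypotheses from Theorem~\ref{linresp} rather than from the lemma itself. The symmetry is defensible as a standing assumption (the introduction declares it, though later statements re-impose it explicitly). The $C^4$ regularity, however, is genuinely needed for your argument: with only $f_s, v_s \in C^3$ even, the Whitney decomposition $v_s(x) = w_s(x^2)$ yields $w_s$ merely $C^1$ in general, so $X_s$ would only be $C^1$ near $c_{1,s}$. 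In short, you have located a small imprecision in the paper rather than made an error: the factorization clause in the definition of deformation is best read as an additional standing hypothesis (as the footnote attached to it suggests), not as a consequence of van Strien.
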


In the other direction, although topological invariance of
the Collet-Eckmann condition is known,
we do not know how to prove topological invariance of
Be\-ne\-dicks-Car\-leson conditions \eqref{BeC} (or variants of the
type \eqref{stBeC}--\eqref{BeCxtra}).
Since we also need uniformity of the various constants in
the definitions, we shall work with the stronger, but still generic
(see Remark ~ \ref{generic2}),
assumption of topological slow recurrrence TSR (recall \eqref{tsr}).
In Section ~\ref{recurr}, assuming  for simplicity
that the maps are symmetric, we shall prove that if $f_s$
is a $C^0$ deformation of a TSR map $f_0$
then the various hyperbolicity constants of $f_s$ (that is
$\lambda_{c}(f_s)$ and $H_0(f_s)$, 
 but
also $\sigma(f_s)$, $c_{f_s}(\delta)^{-1}$, $\rho(f_s)$
from Subsection ~ \ref{tower'},
and, especially, $\gamma(f_s)$)
are uniform in small $s$.
We refer to Lemma ~ \ref{est1aunif} for a precise statement.
Also, it will follow from Propositions~\ref{unce} and \ref{ubc} that (TSR) 
implies (CE) and (BeC).

Uniformity of constants implies the following result, essential in
many places in the proof of Theorem ~ \ref{linresp}:

\begin{lemma}\label{first'''}
Let $f_t$ be a $C^1$  deformation of symmetric
$S$-unimodal maps so that $f_0$ enjoys
topological slow recurrence TSR. Then there exist $\epsilon >0$ and $L < \infty$ 
so that
\begin{equation}\label{uniforma}
\sup_x \sup_{|s|< \epsilon} |\alpha_s(x)|\le L\, ,
\end{equation}
and
\begin{equation}\label{Lbound}
|c_k-c_{k,t}| \le L |t|\, \, \quad \forall k \ge 1\, ,\quad
\forall |t|< \epsilon \, .
\end{equation}
\end{lemma}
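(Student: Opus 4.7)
The plan is to combine the uniformity of hyperbolicity constants furnished by Lemma~\ref{est1aunif} with the construction of $\alpha$ from Theorem~\ref{alphabded}, and then to deduce \eqref{Lbound} from \eqref{uniforma} by integrating along the parameter, after re-centering Lemma~\ref{first} at an arbitrary base point $s$.

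For the first statement, Lemma~\ref{est1aunif} provides some $\epsilon>0$ together with uniform hyperbolicity data $(\lambda_c, H_0, \gamma, \sigma, \rho, c_{f_s}(\delta))$, valid for all $|s|<\epsilon$. Propositions~\ref{unce}--\ref{ubc} then ensure that each $f_s$ is $(\lambda_c,H_0)$-Collet-Eckmann and Benedicks-Carleson. By Remark~\ref{defhor}, the vector $v_s=\partial_t f_t|_{t=s}$ is horizontal for $f_s$; since $v_s=X_s\circ f_s$ with $X_s$ of class $C^2$, Theorem~\ref{alphabded} applies to each $f_s$ and yields a unique continuous solution $\alpha_s$ of the TCE for $f_s$ with $\alpha_s(c)=0$. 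Inspecting the dynamical resummation of Section~\ref{alphabdedproof}, in particular Proposition~\ref{ubalpha} and Corollary~\ref{errorii}, one sees that the resulting estimate takes the form $\|\alpha_s\|_\infty\le C\|X_s\|_{\mathrm{Lip}}$, with $C$ depending only on the hyperbolicity data of $f_s$. By Lemma~\ref{est1aunif} the constant $C$ can be chosen independent of $|s|<\epsilon$. Because $t\mapsto f_t$ is $C^1$ with values in $C^3$-endomorphisms, the map $s\mapsto v_s$ is continuous with values in $C^2$ and hence uniformly bounded in $C^2$ on $[-\epsilon,\epsilon]$, and thus $\sup_{|s|<\epsilon}\|X_s\|_{\mathrm{Lip}}<\infty$. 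This yields \eqref{uniforma}.

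For \eqref{Lbound}, I reapply Lemma~\ref{first} after re-centering the family at parameter $s$: set $g_u:=f_{s+u}$ and $H_u:=h_{s+u}\circ h_s^{-1}$, which defines a $C^1$ deformation of $f_s$ with $H_0=\mathrm{id}$, initial velocity $v_s$ (still horizontal for $f_s$), and $H_u(c_{k,s})=c_{k,s+u}$. Lemma~\ref{first} then gives
\begin{equation*}
\partial_t c_{k,t}\big|_{t=s}=\alpha_s(c_{k,s}),\qquad \forall k\ge 1,\ |s|<\epsilon.
\end{equation*}
Since $t\mapsto c_{k,t}$ is $C^1$ for each fixed $k$, the fundamental theorem of calculus and \eqref{uniforma} give
\begin{equation*}
|c_{k,t}-c_k|=\Bigl|\int_0^t \alpha_s(c_{k,s})\,ds\Bigr|\le L|t|,
\end{equation*}
uniformly in $k$, as required.

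The main delicate point is verifying that the $L^\infty$ bound on $\alpha_s$ really depends only on the hyperbolicity constants stabilized by Lemma~\ref{est1aunif}: one must check that every quantity entering the resummation in Section~\ref{alphabdedproof} (the return times on the tower, the exponential expansion factors, the telescoping controls of Proposition~\ref{ubalpha} and Corollary~\ref{errorii}) is either a geometric constant of $I$ or one of the hyperbolicity constants uniformly controlled under the TSR-deformation hypothesis. Once this is granted, both statements of the lemma follow as above.
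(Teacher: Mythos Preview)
Your proof is correct and follows essentially the same approach as the paper: invoke Lemma~\ref{est1aunif} and Theorem~\ref{alphabded} to get a uniform $L^\infty$ bound on $\alpha_s$ by tracking that the constants in Proposition~\ref{abs} depend only on the stabilized hyperbolicity data and on $\sup|v_s|$, $\Lip X_s$; then use Lemma~\ref{first} re-centered at each $s$ to compute $\partial_t c_{k,t}|_{t=s}=\alpha_s(c_{k,s})$. The only cosmetic difference is that the paper concludes via the mean value theorem while you integrate and invoke the fundamental theorem of calculus (which is fine since $t\mapsto c_{k,t}=f_t^k(c)$ is $C^1$ directly from the $C^1$ regularity of $t\mapsto f_t$).
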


\begin{proof}
We may assume that the critical point is not preperiodic.
(If it is, the proof is much easier.)

Denote by $\alpha_s$  the continuous solution to the
TCE given by Theorem ~\ref{alphabded} applied to
each $f_s$ (the assumptions of the theorem are satisfied
because of Lemma~ \ref{est1aunif}).
The proof of Proposition ~ \ref{abs}
shows that for each
fixed $s$ the supremum $\sup_x |\alpha_s(x)|$ 
may be estimated in terms
of $c_{f_s}(\delta)^{-1}$, $\sup |v_s|$, $\Lip X_s$,
$(1-\sigma(f_s)^{-1})^{-1}$, $(1-(\rho(f_s))^{-1})^{-1}$, in the
notation of Lemma~\ref{est1a}. 
By Lemma~ \ref{est1aunif}, this implies \eqref{uniforma}.

Next, applying Lemma ~\ref{first} to each $f_s$,
we get
$$
 \lim_{t \to 0} \frac{c_{k,s+t}-c_{k,s}}{t}=\alpha_s(h_s(c_k))
 =\alpha_s(c_{k,s})\, .
$$
In other words, $t\mapsto h_t(c_k)$ is differentiable
on $[-\epsilon, \epsilon]$ (with $\epsilon$ independent of
$k$),
with derivative $\alpha_s(c_{k,s})$.

Then, for each $k \ge 1$
and each  $|t|< \epsilon$ 
the mean value theorem gives $s$ with $|s|\le |t|$ so that
$$
\frac{c_{k,t}-c_k}{t}=
\frac{h_t(c_k)-h_0(c_k)}{t}= \alpha_s(c_{k,s}) \, .
$$
\end{proof}

\subsection{Linear response}

Our main result will be proved in Section ~ \ref{finalproof}:

\begin{theorem}[Linear response and linear response formula]\label{linresp}
Let $\eta>0$ and let $f_t$  be a $C^2$ 
deformation of a $C^{4}$ $S$-unimodal map $f_0$ which satisfies TSR. 
Assume that  all maps $f_t$ are symmetric.
Write $\mu_t=\phi_t\, dx$ for the unique absolutely continuous invariant
probability of $f_t$. Then, letting $(C^1(I))^*$
be the dual of $C^1(I)$, the map
\begin{equation}\label{mapp}
t \mapsto \mu_t \in (C^{1}(I))^*\, , \,\, t \in [-\epsilon, \epsilon]
\end{equation}
is differentiable. In addition, for any $A \in C^1(I)$,
\begin{equation}\label{ruelleformula}
\int_I (A- A \circ f_s) \partial_t \mu_t |_{t=s}
=\int_I A' X_s \phi_s \, dx \, ,
\end{equation}
\end{theorem}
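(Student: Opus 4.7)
The plan is to work with the tower $\hat f_t:\hat I_t\to\hat I_t$ and the modified transfer operator $\widehat\LL_t$ of Section~\ref{spectralstuff}, for which Proposition~\ref{acip} produces the fixed point $\hat\phi_t$ projecting to $\phi_t$ on $I$, together with a spectral gap on a Banach space $\widehat\BB$. Formally differentiating the fixed-point equation $\widehat\LL_t\hat\phi_t=\hat\phi_t$ at $t=0$ yields
\[
(\id-\widehat\LL_0)\,\partial_t\hat\phi_t|_{t=0}=\bigl(\partial_t\widehat\LL_t|_{t=0}\bigr)\hat\phi_0,
\]
from which, after projecting to $I$ and using the duality $\int A\,(\id-\widehat\LL_0)g=\int (A-A\circ f_0)\,g$, one reads off \eqref{ruelleformula} provided the right-hand side is identified, on the base of the tower, with the lift of $-(X\phi_0)'$. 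The difficulty is that $\widehat\LL_t$ is not smooth in $t$ in the strong norm: the tower phase space $\hat I_t$, the nested cutoff intervals $V_{k,t}\subset W_{k,t}$, the smooth cutoffs $\xi_{k,t}$, the inverse branches $f_{t,\pm}^{-k}$ and the critical values $c_{k,t}$ all move with $t$, and a strong-norm derivative would resurrect exactly the nonintegrable spikes whose removal motivated the construction.

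To bypass this I would adopt the Doeblin--Fortet/Keller--Liverani point of view used in \cite{bs1},\cite{bs4}: measure the $t$-smoothness of $t\mapsto\widehat\LL_t$ in a weaker norm than the strong norm on $\widehat\BB$, while keeping Lasota--Yorke-type bounds in the strong norm uniform in $t$. The uniformity of the hyperbolicity constants provided by Lemma~\ref{est1aunif} makes Proposition~\ref{acip} applicable for all small $t$ with constants independent of $t$, and Lemma~\ref{first'''} supplies the two quantitative inputs needed downstream: the bound $|c_{k,t}-c_k|\le L|t|$ uniform in $k$, and the derivative formula $\partial_t c_{k,t}|_{t=s}=\alpha_s(c_{k,s})$ with $\alpha_s$ continuous and bounded. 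Since every object entering the tower construction (level boundaries, $V_{k,t}$, $W_{k,t}$, $\xi_{k,t}$, branches, jacobians) is built from the postcritical orbit, this Lipschitz dependence on $t$ transfers to all of them.

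Because the full infinite tower cannot be controlled uniformly in $t$, the next step is to work with the finite truncations $\widehat\LL_{t,M}$ of Subsection~\ref{trunk} and let $M=M(t)$ grow as a multiple of $\log|t|^{-1}$, balancing the tail error (exponentially small in $M$, by the spectral gap of $\widehat\LL_0$) against the perturbation size on the truncated part (affine in $|t|$ with $M$-dependent coefficients). The Keller--Liverani framework \cite{kellerliverani} then delivers, for each truncation, the weak-norm differentiability of $t\mapsto\hat\phi_{t,M(t)}$ at $t=0$ together with the identity obtained by formally differentiating $\widehat\LL_{t,M}\hat\phi_{t,M}=\hat\phi_{t,M}$; passing to the limit $M(t)\to\infty$ yields the corresponding identity for the full operator.

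The main obstacle is the quantitative expansion
\[
\widehat\LL_{t,M}-\widehat\LL_{0,M}=t\,\widehat\MM_{0,M}+o(t)
\]
in the weak operator norm, together with the identification, on the base of the tower, of $\widehat\MM_0\hat\phi_0$ with the distribution $-(X_0\phi_0)'$. Level by level this amounts to differentiating compositions of cutoffs, inverse branches and jacobians at $t=0$, each factor being Lipschitz in $t$ by the previous paragraph; the delicate point is to perform the integrations by parts that absorb the derivatives of $\xi_{k,t}$ into the smooth factor $\psi_0$ from Proposition~\ref{acip}, so that only $C^1$ regularity of the test function is required in the final pairing. Once this expansion and the identification are in hand, projecting to $I$ and pairing with $A\in C^1(I)$ gives
\[
\int_I (A-A\circ f_0)\,\partial_t\mu_t|_{t=0}=\int_I A'\,X\,\phi_0\,dx,
\]
and the same argument applied at a general $s\in[-\epsilon,\epsilon]$, using that $f_s$ is itself TSR with uniform constants and the $C^2$ regularity of $t\mapsto f_t$, produces \eqref{ruelleformula} for every $s$.
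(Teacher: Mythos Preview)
Your outline captures the perturbation-theoretic half of the argument (the paper's Steps~1--2), but it misses a second contribution that is not contained in $\partial_t\widehat\LL_t$. The invariant density is recovered as $\phi_t=\Pi_t(\hat\phi_t)$, and the projection $\Pi_t$ defined in \eqref{defpit} depends on $t$ through the inverse branches $f^{-k}_{t,\pm}$ and the characteristic functions $\chi_{k,t}$ of $[-1,c_{k,t}]$. Hence $\phi_t-\phi_0$ splits into $\Pi_t(\hat\phi_t-\hat\phi_0)$ \emph{and} $(\Pi_t-\Pi_0)(\hat\phi_0)$, and only the first piece is governed by the operator expansion $\widehat\LL_{t,M}-\widehat\LL_{0,M}=t\,\widehat\MM_{0,M}+o(t)$ that you describe. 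The second piece is the paper's Step~3: it carries the moving spikes $|x-c_{k,t}|^{-1/2}$, produces nonintegrable terms if differentiated na\"ively, and is handled by pairing with $A\in C^1$, integrating by parts in $x$, and checking that the boundary terms at $c_{k,t}$ and $c_k$ cancel against the ``new interval'' contributions $\int_{c_{k,t}}^{c_k}$. This step uses horizontality again (via \eqref{est2beq'}) and yields \eqref{formula2}, which must be added to \eqref{formula1} coming from your operator perturbation to obtain \eqref{ruelleformula}. In particular, your proposed identification of $\widehat\MM_0\hat\phi_0$ with $-(X_0\phi_0)'$ on the base is not correct: $\DD=-(\TT_0\widehat\LL(\hat Y\hat\phi))'$ is only part of the answer, and the remainder comes precisely from $\partial_t\Pi_t$.

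Two smaller points also need attention. First, the spaces $\BB_t$ themselves depend on $t$ (the support conditions \eqref{defban} involve the postcritical orbit of $f_t$), so $\widehat\LL_{t,M}$ and $\widehat\LL_{0,M}$ do not a priori act on the same space; the paper fixes this via Proposition~\ref{bottomok}, forcing the first $M$ tower levels and the cutoffs $\xi_k$ to coincide for all $|t|$ satisfying \eqref{bd2'}. Once this is done, $\xi_{k,t}=\xi_k$ for $k\le M$, so the ``delicate point'' you describe (absorbing derivatives of $\xi_{k,t}$) does not arise. Second, the Keller--Liverani machinery gives convergence of $\hat\phi_{t,M}$ to $\hat\phi_M$ only in the weak norm $\BB^{L^1}$, so one also needs a $C^0$ estimate like \eqref{notenough'} to control the cross term $(\Pi_t-\Pi_0)(\hat\phi_{t,M}-\hat\phi_M)$; this is an easy consequence of \eqref{lllast} and the Sobolev embedding, but it has to be stated.
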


The formula \eqref{ruelleformula} is an easy consequence of
the differentiability of \eqref{mapp}, as was pointed out to
us by Ruelle \cite{ruellepriv}.

We next give an  explicit formula  for 
$\partial_t \mu_t|_{t=0}$ (we choose $t=0$
for definiteness). For this, we need further notation.
Introduce
$Y_{k,s}=\lim_{t \to s}\frac{ f^k_t- f^k_s}{t-s}$ for $k \ge 1$
(we  write $Y_k$ instead of $Y_{k,0}$). Then
$Y_{1,s}= X_s \circ f_s$, $Y_{2,s}=X_s \circ f^2_s+ (f'_s\circ f_s) X_s \circ f_s$, and
\begin{equation}\label{defvk}
Y_{k,s}= \sum_{j=1}^{k} ((f_s^{k-j})'\circ f_s^{j})
\cdot
X_s \circ f_s^{j}\, , \, k \ge 1 \, . 
\end{equation}
Put (note the shift in indices!)
\begin{equation}\label{hatY}
\hat Y_s=(\hat Y_{s}(x,k)=Y_{k+1,s}(x), k\ge 0) \, ,
\quad \hat Y=\hat Y_0\, .
\end{equation}
Referring to Section ~\ref{spectralstuff}
for the definitions 
of $\lambda$, $\widehat \LL$, $\hat \phi$,  $\Pi$,
and $\TT_0$, and
summing
\eqref{formula1} and \eqref{formula2} from the proof of 
Theorem~\ref{linresp}, we get
\begin{align*}
\nonumber
&\int A \partial_t \mu_t |_{t=0}\\
&\quad\qquad=-\int A\cdot 
\Pi((\id - \widehat \LL)^{-1}\TT_0(\widehat \LL (\hat Y \hat \phi))'
\, dx
-\lambda
\int A' \cdot \Pi  (( \id- \TT_0) ( \widehat \LL (\hat Y \hat \phi)) )\, dx\, .
\end{align*}
Using the definitions of $\widehat \LL$, $\hat \phi$,  $\Pi$,
and $\TT_0$,
the linear response formula 
above  can be rewritten in terms
of $f$ and the functions $\psi_0$, $\xi_k$ and $\chi_k$ from
Proposition~\ref{acim}. The reader 
 can then  compare this rewriting to the  expression  in 
\cite[\S 17 and \S18]{ruelle},
obtained under the additional assumptions
that $f_0$ is  Misiurewicz and
all the $f_t$ are real analytic.
(See also Remark~\ref{resolv} below.)

We next discuss \eqref{ruelleformula}.
Using \eqref{formula1''}, we find
$$
\int (A-A \circ f) \partial_t \mu_t |_{t=0}
=\int A'
\TT_0(\widehat \LL (\hat Y \hat \phi))\,  dx
-\lambda
\int (A' -
(A\circ f)')\cdot \Pi ( ( \id- \TT_0) \widehat \LL (\hat Y \hat \phi)  )  \, dy \, .
$$
By Theorem~\ref{linresp},
the right-hand-side above coincides 
with the expression \eqref{ruelleformula}.
We sketch here a direct proof of this fact: 
The left-hand-side above
being independent of the parameter $\delta$ used in the construction
of the tower (note however that
$\Pi$, $\widehat \LL$, and $\hat \phi$ depend on
$\delta$), we can let $\delta\to 0$. 

We expect that, when $\delta\to 0$, the function
$ \phi_{0}$ converges to $\phi$
in the $L^1$ topology, and that
for any continuous function $B$, on the one hand, we have
\begin{equation}\label{ruelleok1}
\lim_{\delta \to 0}
\int_I B \cdot \Pi ( ( \id- \TT_0)  (\widehat \LL
( \hat Y \hat \phi))) \, dy=0\, ,
\end{equation}
and on the other hand, using in particular \eqref{claim0}
and the facts that 
\footnote{Note also that, as $\delta \to 0$, the smallest 
level 
from which points may fall from the tower
tends to infinity.}
$Y_1=X_0 \circ f_0$
and $\widehat \LL (\hat\phi)=\hat\phi$, we get
\begin{equation}\label{ruelleok2}
\lim_{\delta \to 0}
\int B
\TT_0(\widehat \LL (\hat Y \hat \phi))\, dx
=\int B X \phi\, dx\, .
\end{equation}

\begin{remark}\label{resolv}
If $\LL$ denotes the transfer operator
defined on distributions $\upsilon$ of order
one  by $\int A \LL \upsilon=\int (A \circ f) \upsilon$,
for all $C^1$ functions $A$, 
expression \eqref{ruelleformula} can be written
as a  left inverse  (both sides should be viewed as
distributions of order one)
\begin{equation}\label{implicit}
\partial_t \mu_t|_{t=0}=
-(\id -\LL)^{-1}_L ( X \phi)'\, dx \, .
\end{equation}
Indeed
\begin{align*}
\int (A- A \circ f) \partial_t \mu_t|_{t=0}
&=\int A (\id - \LL)\partial_t \mu_t|_{t=0}\, ,
\end{align*}
and, in the sense of distributions (writing $\mu=\mu_0$
as usual), 
$$
\int A' X \phi \, dx=
-\int A( X \phi)' \, dx=
-\int A \mbox{div}_{\mu} (X)\,  d\mu\, .
$$
We next explain the heuristics of the connection with the susceptibility
function.
In situations where more information is available (such as
smooth expanding circle maps), the following
formal manipulations become licit (they are not licit in the
present case of smooth unimodal maps, in particular the sum below
diverges in general):
\begin{align*}
\int A (\id -\LL)^{-1}_L ( X \phi)'\, dx
&=\int A \sum_{j=0}^\infty \LL^j ( X \phi)'\, dx\\
&=\int \sum_{j=0}^\infty \LL^j ((A\circ f^j) ( X \phi)')\, dx
=-\sum_{j=0}^\infty\int  (A\circ f^j)'  X \phi\, dx\, ,
\end{align*}
so that

$$
\int A \partial_t d\mu_t|_{t=0}
=\sum_{j=0}^\infty\int  (A\circ f^j)'  X d\mu=\Psi_A(1)
$$
\end{remark}

\medskip

We end this section
with a  result that we shall not need, but which
is
of independent interest (the proof is given in Appendix ~ \ref{htC1etc}):

\begin{proposition}[The solution of the TCE is an infinitesimal conjugacy]\label{htC1}
Let $t \mapsto f_t$  be a 
$C^1$
deformation of the $S$-unimodal $(\lambda_c, H_0)$
Collet-Eckmann map $f_0$. Assume furthermore that for each $|t|\le \epsilon$ there exists a unique
continuous function $\alpha_t$ on $I$ which solves the TCE
\eqref{tce} for $v=v_t := \partial_s f_s|_{s=t} $ and $f=f_t$, 
and in addition that the family $\{ \alpha_t\}_{|t|\le \epsilon}$ of 
\footnote{By compactness of $I$ and $[-\epsilon,\epsilon]$,
continuity is equivalent to uniform continuity here.}
continuous maps is equicontinuous.
Then for each $x\in I$ the function $t \mapsto h_t(x)$
is $C^1$, and  
$$\partial_s h_s(x)|_{s=t}=\alpha_t (h_t(x))\, ,\qquad
\forall \,  t \in (-\epsilon, \epsilon)\, .
$$
\end{proposition}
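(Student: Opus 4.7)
The plan is to first establish $C^1$ dependence of $t\mapsto h_t(x)$ on a dense subset of $I$ by an application of the implicit function theorem, and then to extend to all of $I$ by a uniform-convergence argument driven by the equicontinuity assumption. First, equicontinuity of $\{\alpha_s\}_{|s|\le\epsilon}$, together with the continuity of each $\alpha_s$ on the compact interval $I$, yields joint continuity of $(s,y)\mapsto \alpha_s(y)$ on $[-\epsilon,\epsilon]\times I$. Since each $f_t$ is $S$-unimodal Collet-Eckmann (I treat the case of infinite postcritical orbit, the preperiodic case being easier), $f_0$ is topologically exact on $I$, so
\[
P\;:=\;\bigcup_{k\ge 1} f_0^{-k}(c)\setminus\{c\}
\]
is dense in $I$. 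Moreover $h_t$ is the unique orientation-preserving homeomorphism with $h_t(c)=c$ satisfying \eqref{starstar}, and $s\mapsto h_s$ is continuous from $[-\epsilon,\epsilon]$ into $C^0(I)$ by standard structural-stability arguments (itinerary coding plus continuity of $f_t$ in $t$); by Arzel\`a-Ascoli the family $\{h_s\}_{|s|\le\epsilon}$ is then equicontinuous in $x\in I$, uniformly in $s$.

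For $x\in P$, let $k\ge 1$ be minimal with $f_0^k(x)=c$; then $(f_0^k)'(x)=\prod_{j=0}^{k-1}f_0'(f_0^j(x))\ne 0$ since $f_0^j(x)\ne c$ for $0\le j<k$. Set $F(t,y):=f_t^k(y)-c$, which is $C^1$ in $(t,y)$ since $t\mapsto f_t$ is $C^1$ into $C^3$-endomorphisms. The conjugacy identity gives $F(t,h_t(x))=h_t(f_0^k(x))-c=0$, and by continuity $\partial_y F(t,h_t(x))=(f_t^k)'(h_t(x))\ne 0$ for $|t|$ small. The implicit function theorem therefore delivers the $C^1$ dependence of $t\mapsto h_t(x)$, with
\[
\partial_t h_t(x)\;=\;-\frac{Y_{k,t}(h_t(x))}{(f_t^k)'(h_t(x))}\;=\;-\sum_{j=0}^{k-1}\frac{v_t(f_t^j(h_t(x)))}{(f_t^{j+1})'(h_t(x))},
\]
where the second equality uses \eqref{defvk} together with $(f_t^k)'(y)=(f_t^{k-j})'(f_t^j(y))(f_t^j)'(y)$. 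Because $f_t^k(h_t(x))=c$ and $f_t^j(h_t(x))=h_t(f_0^j(x))\ne c$ for $0\le j<k$, this finite sum is exactly the formal series \eqref{defa'} defining $\alpha_{cand,t}(h_t(x))$, which by the last statement of Theorem~\ref{alphabded} coincides with $\alpha_t(h_t(x))$.

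To reach an arbitrary $x\in I$, I would pick $x_n\in P$ with $x_n\to x$. The previous step together with the fundamental theorem of calculus yields
\[
h_t(x_n)-x_n\;=\;\int_0^t \alpha_s(h_s(x_n))\,ds.
\]
Uniform equicontinuity of $\{h_s\}$ in $x$ gives $h_s(x_n)\to h_s(x)$ uniformly in $s\in[-\epsilon,\epsilon]$, and joint continuity of $\alpha$ then provides uniform convergence of the integrands; passing to the limit produces
\[
h_t(x)-x\;=\;\int_0^t \alpha_s(h_s(x))\,ds.
\]
The integrand is continuous in $s$ (by joint continuity of $\alpha$ and continuity of $s\mapsto h_s(x)$), so the fundamental theorem of calculus yields that $t\mapsto h_t(x)$ is $C^1$ with derivative $\alpha_t(h_t(x))$, as required. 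The principal technical obstacle is justifying the uniform equicontinuity of $\{h_s\}_{|s|\le\epsilon}$, which relies on the continuity of $s\mapsto h_s$ in $C^0(I)$—a structural-stability fact for Collet-Eckmann $S$-unimodal maps with non-preperiodic critical point; a subsidiary point is that the implicit-function branch selected in the middle paragraph is automatically pinned down by the $C^0$-continuity of $t\mapsto h_t(x)$, so no ambiguity arises in identifying the derivative with $\alpha_t$.
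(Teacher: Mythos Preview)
Your approach is correct and genuinely different from the paper's. The paper argues via ODE theory: it invokes Peano's theorem to produce solutions of $\partial_t u_t = \alpha_t(u_t)$, shows that for \emph{eventually periodic} $x_0$ the hyperbolic continuation $x_t = h_t(x_0)$ is such a solution, establishes uniqueness of ODE solutions through these points by a shadowing argument (iterates $f_t^n(w_t)$ and $f_t^n(h_t(x_0))$ of two solutions stay $O(|t-t_0|)$-close, forcing coincidence since the orbit lands on a repeller), and then squeezes an arbitrary $x$ between two sequences of periodic points $p_n<x<q_n$. You instead use \emph{precritical} points and the implicit function theorem directly, obtain the integral formula on that dense set, and pass to the limit. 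The paper's squeezing is economical in that it uses only continuity of each fixed $h_t$ in $x$ (given, since $h_t$ is a homeomorphism), whereas you require joint equicontinuity; conversely, your route dispenses with ODE existence theory and the shadowing-based uniqueness, and the derivative formula drops out of the implicit function theorem in one line.

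Two remarks on the points you flag as obstacles. First, the equicontinuity of $\{h_s\}$ need not be imported as an external structural-stability fact: once your middle paragraph yields $h_t(x)-h_{t'}(x)=\int_{t'}^t\alpha_s(h_s(x))\,ds$ for $x\in P$, boundedness of $\alpha$ gives $|h_t(x)-h_{t'}(x)|\le(\sup|\alpha|)\,|t-t'|$ on the dense set $P$, and continuity of each $h_t$ in $x$ extends this to all of $I$; the Lipschitz-in-$t$ bound then gives you the equicontinuity you need. Second, the identification of the implicit-function branch with $h_t(x)$ does not require a priori $C^0$-continuity of $t\mapsto h_t(x)$ either: the finitely many exact $k$-th preimages of $c$ under $f_t$ vary as non-crossing $C^1$ curves in $t$ (crossings would contradict local uniqueness in the implicit function theorem), and the order-preserving homeomorphism $h_t$ maps the ordered set $f_0^{-k}(c)$ to $f_t^{-k}(c)$, so the branch through $(0,x)$ is forced to be $h_t(x)$. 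With these two refinements your argument is self-contained.
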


Note that if each $f_t$ satisfies the Benedicks-Carleson 
assumption \eqref{BeC} 
then Theorem ~ \ref{alphabded} ensures that the TCE associated to
$f_t$ and $v_t$ has a unique solution $\alpha_t$, which is continuous
(recall Remark ~ \ref{defhor}).
We expect that equicontinuity of the family $\alpha_t$
can be obtained, possibly under  the topological slow recurrence condition
TSR.


\section{Proof of Theorem \ref{alphabded}: Boundedness and continuity
of the solution $\alpha$ of the TCE}\label{alphabdedproof}

\subsection{\label{tower} The tower map $\hat f$, the times $S_i(x)$
and $T_i(x)$, and the intervals $I_j$.}

Before  recalling the tower construction from \cite{BV}, 
we mention crucial expansion properties
of Collet-Eckmann maps which improve
\footnote{The improvements are: The Benedicks-Carleson condition is not needed,
the expansion factor $\rho$ in  \eqref{estlaeq2} can be taken arbitrarily close to
$\sqrt {\lambda_c}$, and  $c(\delta) > C|\delta|$.
The flexibility on $\rho$ means we can 
take any $\rho \in (e^\gamma, e^{-\gamma}\sqrt {\lambda_c})$
in  Lemma ~\ref{expiii}. This makes 
\eqref{BeC} sufficient for Proposition ~\ref{ubalpha}, with no condition relating
$\sigma$ and $\gamma$.} 
over \cite[Lemma 1]{BV}:

\begin{lemma}[Collet-Eckmann maps expansion]\label{est1a}  
Let $f$ be an 
$S$-unimodal $(\lambda_c,H_0)$-Collet-Eckmann map.

There exist $\sigma > 1$  and $C>0$
and for every small 
$\delta > 0$ there exists  $c(\delta) > C\delta$  such that
\begin{equation}
\label{estlaeq1} 
|(f^i)'(x)|\geq c(\delta) \sigma^i\, , \, \forall 0 \le i \le j \, , \,
\forall x \mbox{ so that }
|f^k(x)|>\delta\, , \,
\forall 0 \le k < j
\, . 
\end{equation}
 
For  every 
$1<\rho < \sqrt {\lambda_c}$ 
there exists $C_1=C_1(\rho)\in (0,1]$
and for each $\delta_0>0$ there exists
$\delta \in (0, \delta_0)$   such that  
 \begin{equation}
\label{estlaeq2}|(f^j)'(x)|\geq C_1  \rho^j\, , \, 
\forall
x \mbox{ so that } |f^i(x)|> \delta\, , \, \forall 0 \le i < j
\, , \, |f^j(x)|\leq \delta \, .
\end{equation}
In addition, we can assume that either $\pm \delta$
are preperiodic points, or that they have infinite orbits and that their Lyapunov exponents
exist and are strictly positive.
\end{lemma}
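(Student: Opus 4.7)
The plan is to deduce both inequalities from classical refinements of Mañé's hyperbolicity theorem for $S$-unimodal Collet-Eckmann maps. Throughout, I exploit three consequences of our assumptions: a CE map has no attracting periodic orbit; the critical point is quadratic, so $|f'(x)| \sim |x-c|$ near $c$; and the Schwarzian is negative, so the Koebe principle is available with distortion constant tending to $1$ as the Koebe space grows.

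For the first inequality \eqref{estlaeq1}, my starting point is the standard Mañé-type expansion for $S$-unimodal maps with no attracting cycles (see de Melo--van Strien): there exist $\sigma>1$, $C_0>0$, and a reference scale $\delta_0>0$ such that whenever $|f^k(x)|>\delta_0$ for $0\le k<i$, one has $|(f^i)'(x)|\ge C_0\sigma^i$. To pass from $\delta_0$ down to an arbitrary small $\delta$, I note that along an orbit only known to stay outside $(-\delta,\delta)$, the factors in the chain $\prod_{k<i}f'(f^k(x))$ which are not already controlled by the Mañé estimate correspond to indices $k$ with $\delta\le|f^k(x)|\le\delta_0$. At each such $k$, the quadratic nature of $f$ at $c$ gives $|f'(f^k(x))|\ge C|f^k(x)|\ge C\delta$. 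A combinatorial argument (counting these near-$\delta$ visits, absorbing one corresponding factor of order $\delta$ into the multiplicative constant and slightly reducing $\sigma$ to absorb any others) then yields $c(\delta)\ge C\delta$, uniformly in $i\le j$, as required.

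For the second inequality \eqref{estlaeq2}, the improvement of the exponential rate from the crude $\sigma$ to any $\rho<\sqrt{\lambda_c}$ is a genuine use of the CE hypothesis and is due to Nowicki. I would carry out the classical argument as follows. Writing $z_k=f^k(x)$, the hypothesis gives $|z_k|>\delta$ for $k<j$ and $|z_j|\le\delta$, so $z_{j+1}$ lies within $O(\delta^2)$ of $c_1$. Consider the maximal interval $J_0\ni x$ on which $f^j|_{J_0}$ is monotonic; by maximality $f^j(J_0)$ abuts $c$. The Koebe principle, applied to a suitable extension of $f^j(J_0)$ pulled back along the monotonic branch of $f^j$ through $x$, provides bounded distortion with constant arbitrarily close to $1$ as $\delta\to 0$. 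Combining this with the CE bound $|(f^{j-1})'(c_1)|\ge \lambda_c^{j-1}$ along the critical orbit, the ``split-in-half'' trick that distributes the CE gain between the bound period following the return to a neighborhood of $c$ and the free segment preceding it, and paying a single factor $|f'(z_j)|\sim|z_j|$ for the final quadratic step, one obtains $|(f^j)'(x)|\ge C_1(\rho)\rho^j$ for any fixed $\rho<\sqrt{\lambda_c}$, provided $\delta$ is chosen small enough depending on $\rho$. This is the main obstacle of the proof: one must carefully track how the Koebe distortion constant depends on $\delta$ and on the combinatorics of successive close returns to $c$, so that all losses can be absorbed into $C_1(\rho)$ while keeping $\rho$ arbitrarily close to $\sqrt{\lambda_c}$.

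Finally, the additional statement about $\pm\delta$ is a matter of genericity. The constructions above only require $\delta$ to lie in a suitable non-empty small subinterval of $(0,\delta_0)$. Within any such interval the set of preperiodic points is dense (all periodic orbits of a CE map are repelling by \cite[Theorem B]{NoS}), while by Nowicki--van Strien together with the ergodicity of the absolutely continuous invariant measure $\mu$ of the CE map $f$, Lebesgue-almost every point has a well-defined Lyapunov exponent equal to the positive number $\int\log|f'|\,d\mu$; I would therefore select $\pm\delta$ to be either preperiodic or to have infinite orbits with positive Lyapunov exponents, as claimed.
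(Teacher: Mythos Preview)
Your plan for \eqref{estlaeq2} and for the choice of $\pm\delta$ is close to the paper's. For \eqref{estlaeq2} the paper cites Proposition~3.2(6) in \cite{No}, giving $|(f^j)'(y)|\ge\widetilde C\rho^j$ whenever $f^j(y)=c$, and Theorem~3.2 in \cite{martens} plus Koebe to produce an interval $J\ni x$ with $f^j(J)=[-\delta,\delta]$ and uniformly bounded distortion; picking $y\in J$ with $f^j(y)=c$ transfers the bound to $x$. Your ``split-in-half trick'' is an outline of Nowicki's argument rather than a citation of it. (Your claim that the maximal monotonicity interval satisfies ``$f^j(J_0)$ abuts $c$'' is imprecise---its endpoints map to postcritical points $c_m$, not to $c$---but this is harmless once one uses Martens' interval $J$ instead.) For the additional statement on $\pm\delta$ the paper invokes the principal nest of \cite{lyu}; your Lebesgue-a.e.\ Lyapunov argument is a valid alternative.

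The genuine gap is in your argument for \eqref{estlaeq1}. The lemma fixes $\sigma>1$ \emph{before} $\delta$, so you cannot ``slightly reduce $\sigma$'' in a $\delta$-dependent way. Concretely, an orbit segment of length $i$ staying outside $(-\delta,\delta)$ can visit the annulus $\{\delta\le|y|\le\delta_0\}$ up to $i/N$ times, where $N=N(\delta_0)$ is the fixed minimum return time to $[-\delta_0,\delta_0]$. Each such visit contributes only a factor $\ge C\delta$ to the chain-rule product, while the intervening Ma\~n\'e blocks contribute $C_0\sigma^{\cdots}$. The resulting lower bound is at best $C_0\sigma^i\,(C_0C\delta/\sigma)^{i/N}$, whose effective rate $\sigma\,(C_0C\delta/\sigma)^{1/N}$ drops below $1$ once $\delta$ is small. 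The paper bypasses this entirely by citing a sharper input, Theorem~7.7 in \cite{ns}, which already gives
\[
|(f^i)'(x)|\ge K\sigma^i\,\min_{0\le k<i}|f'(f^k(x))|
\]
with $K,\sigma$ absolute; the single minimum factor is $\ge\widetilde K\delta$, yielding $c(\delta)\ge C\delta$ in one line. To rescue your route without this citation you would need a bound/free-period analysis showing that a visit with $|f^k(x)|\approx\eta$ is followed by $\sim\log(1/\eta)$ iterates of compensating Collet--Eckmann growth along the postcritical orbit; your sketch does not supply this.
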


\begin{remark}
Except in remarks~\eqref{ruelleok1}
and \eqref{ruelleok2}, and, more importantly, in Remark~\ref{overlap}
(which is used in Appendix~\ref{app2}),
we do not use  that $c(\delta)\ge C \delta$, only
that $c(\delta) >0$ if $\delta >0$.
\end{remark}

\begin{proof} By Theorem 7.7 in \cite{ns}, there exist $\sigma > 1$ and $K > 0$ such that
$$|(f^i)'(x)|\geq K  \sigma^i \min_{0\leq k < i} |f'(f^k(x))|\, .
$$
Since $|f'(y)|\geq \widetilde K |y|$ and $|f^k(x)|\geq \delta$ for $k < j$ and $i \le j$, we have (\ref{estlaeq1}). 

To prove (\ref{estlaeq2}), we use Proposition 3.2(6) in \cite{No} which says that
for every $1<\rho < \lambda_c^{1/2}$ there exists $\widetilde C$ such that if $f^j(y)=c$ then 
\begin{equation} 
\label{invim} |(f^j)'(y)|\geq \widetilde C \rho^j \, .
\end{equation}
By Theorem 3.2 in \cite{martens} and the Koebe lemma, there exist $K > 0$ and arbitrarily small 
$\delta > 0$  such that the following holds: if $|f^i(x)| > \delta$ for $0\le i < j$ and $|f^j(x)|\leq \delta$ 
then there exists
an interval $J$, with $x \in J$,  such that $f^j(J)=[-\delta,\delta]$,
 $f^j$ is a diffeomorphism on 
$J$, and 
\begin{equation}
\label{invimbd} \frac{1}{K} \leq \frac{|(f^j)'(y)|}{|(f^j)'(x)|}\leq K\, ,\, 
\forall y \in J \, .
\end{equation}
Let $y \in J$ be such that $f^j(y)=c$. By (\ref{invim}) and (\ref{invimbd})  it follows that 
$$|(f^j)'(x)|\geq \frac{\widetilde C}{K} \rho^{j  } \, .   
$$
By the principal nest construction
in \cite{lyu}, we  can choose $\delta$
so that $\pm \delta$ are preperiodic points, or $\pm \delta$  non preperiodic
with $\Lambda(\pm\delta)$  well-defined and positive.
\end{proof}

We now recall the tower 
$\hat f: \hat I \to \hat I$ associated in \cite{BV}
to a $(\lambda_c, H_0)$-Collet Eckmann $S$-unimodal map $f$
satisfying the Benedicks-Carleson assumption \eqref{BeC}.
{\it As the (so-called subhyperbolic) case of a finite postcritical orbit is much simpler, we shall
assume in this construction that this orbit is infinite.}
Choose $\rho$   so that
\begin{equation}\label{2.1}
e^{\gamma}< \rho < e^{-\gamma} \sqrt {\lambda_c }\, ,
\end{equation}
and  fix 
\footnote{Our lower bound on $\beta_1$ is stronger than the
one in \cite{BV} because we use some estimates
in \cite{V}.} two constants
\begin{equation}\label{betas}
\frac{3}{2} \gamma< \beta_1 < \beta_2 < 2\gamma \, .
\end{equation}
The {\it tower} $\hat I$ is the union
$\hat I = \cup_{k \ge 0} E_k$
of levels $E_k = B_k \times \{k\}$ satisfying the following properties:
The ground floor interval $B_0=[a_0,b_0]$ is just the interval $I$.
For $k \ge 1$, the interval $B_k = [a_k, b_k]$ is such that
\begin{equation}\label{betaas}
[c_k - e^{-\beta_2 k}, c_k + e^{-\beta_2 k}] \subset B_k
\subset [c_k - e^{-\beta_1 k}, c_k + e^{-\beta_1 k}] \, .
\end{equation}
(Observe that $0=c \notin B_k$ for all $k \ge H_0$.)
Fix  $\delta > 0$ such that the Lyapunov exponents $\Lambda(\pm \delta)$
are well defined and strictly positive,
so that both claims of
Lemma~ \ref{est1a} hold (for our present choice of $\rho$),
and small enough so that
\begin{equation}\label{2.5}
|f^j(x)-c_j|<\min\{|c_j| e^{-\gamma j},\, e^{-\beta_2 j}\}
\quad
\text{ for all } 1 \le j \le  H_0 \text{ and } |x|\le\delta \, .
\end{equation}
(Just after \eqref{cass},
and later on, in  Section ~ \ref{tower'}, we may need to take a smaller
choice of $\delta$ still assuming that $\Lambda(\pm \delta)>0$
and that both claim of
Lemma~ \ref{est1a} hold.)

We may assume that the Lyapunov exponents $\Lambda(a_k)>0$
and $\Lambda(b_k)>0$ for all $k$, recalling that the set of points with a positive
Lyapunov exponent has full Lebesgue measure.
Let us write
$$
\{ 0, \pm \delta\}
\cup \{a_{j} \mid j \ge 0\} \cup \{b_{j} \mid j \ge 0\}
=\{e_0=c, e_1=\delta, e_2=-\delta, e_3, \ldots \} \, .
$$
We may  and do require additionally that 
\begin{equation}\label{condek}
f^j(e_k)\ne e_k \mbox { and  }
f^j(e_k) \ne f^i(e_\ell) \quad \forall i, j\ge 1 \, ,
k\ne \ell \ge 0 \, .
\end{equation}
(Indeed, \eqref{condek} is a co-countable set of conditions, while the set
of points $x$ with Lyapunov exponent $\Lambda(x)$  well defined
and strictly positive has full Lebesgue measure, as recalled in Section ~ \ref{sscont}.)
The positivity  condition on the Lyapunov
exponents $\Lambda(e_k)$ ($k\ne 0$)
ensures that $\alpha_{cand}(e_k)$ converges absolutely for each
$k\ge 1$, and this will be used in the proof of Theorem ~ \ref{alphabded}.

For $(x,k) \in E_k$  we set
\footnote{With respect to the definition in \cite{BV}, note that
we replaced $(-\delta, \delta)$ by $[-\delta, \delta]$, this is
not essential but convenient, e.g. in \eqref{a}.}
\begin{equation}
\label{cass}
\hat f (x,k) =\begin{cases}
(f(x),k+1) &\text{ if } k \ge 1 \text{ and } f(x) \in B_{k+1}\, ,\cr
(f(x),k+1) &\text{ if } k = 0 \text{ and } x \in [-\delta,\delta]\, , \\
(f(x),0) &\text{ otherwise.}
\end{cases}
\end{equation}
Denoting $\pi : \hat I \to I$ the projection to the first factor,
we have $f \circ \pi = \pi \circ \hat f$ on $\hat I$.

Define $H(\delta)$ to be the minimal
$k \ge 1$ such that there exists
some $x\in (-\delta,\delta)$
such that $\hat f^{k+1}(x,0) \in E_0$.
By continuity, $H(\delta)$ can be made
arbitrarily large by choosing small enough $\delta$, and we assume that $H(\delta) \ge \max(2,H_0)$.

Having defined the tower $\hat f$, we next introduce
notations $I_j$, $T_i(x)$ and $S_i(x)$ which will play a key part in the proof.
\smallskip
We decompose $(-\delta,\delta)\setminus\{0\}$ as a disjoint union of intervals
\begin{align}
\nonumber &(-\delta, \delta)\setminus \{0\}=
\cup_{j \ge  H(\delta)} I_j\, , \quad I_j:= I_{j}^+ \cup I_{j}^-
\, , \\
\label{ints} &I_{j}^\pm:=\{ |x|<\delta, \pm x > 0,
\hat f^\ell(x,0)\in E_\ell, 0 \le \ell < j,
\hat f^j(x,0)\in E_0\}\, .
\end{align}
(Note that $I_{j}^\pm$ can be empty for some $j$.)
For any $k\ge H(\delta)$ both sets $J^+_{k}:=\cup_{H(\delta)\le  j \le k} I_{j}^+$
and $J^-_{k}:= \cup_{H(\delta)\le  j \le k} I_{j}^-$ are intervals.

For each $x \in I$ we next define inductively an infinite non decreasing sequence
$$
0=S_0(x) \leq T_1(x) < S_1(x) \leq  \dots < S_i(x) \leq T_{i+1}(x) < S_{i+1}(x) \leq \dots \, , 
$$
with $S_i(x), T_i(x) \in \mathbb{N}\cup\{\infty\}$ as follows: Put $T_0(x)=S_0(x)=0$ for every $x \in I$. 
Let $i \ge 1$ and assume recursively that $S_j(x)$ 
and $T_j(x)$ have been defined for $j\le i-1$.
Then, we set (as usual, we put $\inf \emptyset = \infty$)
$$
T_i(x)= \inf \{ j\geq S_{i-1}(x) \mid \ |f^j(x)|\le  \delta \} \, .
$$
If $T_i(x)=\infty$ for some $i \ge 1$ then we set  $S_i(x)=\infty$. 
Otherwise, either $f^{T_i(x)}(x)=c$, and then we put $S_i(x)=\infty$,
or  $f^{T_i(x)}(x)\in I_j$, for some $j \ge H(\delta)$, and  we put $S_i(x)=T_i(x)+j$.

Note that if $T_i(x) < \infty$ for some $i\geq 1$ then
\begin{align*}
&\hat f^{j}(x,0) \notin E_0\, , \, \, T_{i}(x)+1\le j \le S_{i}(x)-1 \, , \\ 
&
\hat f^{\ell}(x,0) \in E_0 \, , \,  S_{i-1}(x) \le \ell \le T_{i}(x) \, .
\end{align*}
If $T_{i_0}(x)=\infty$ for  $i_0\ge 1$,  minimal
with this property, we have  $\hat f^\ell(x,0) \in E_0$ for all $\ell \ge S_{i_0-1}$
(that is, $|f^\ell(x)|> \delta$ for all $\ell \ge S_{i_0-1}$).

In other words, $T_i$ is the beginning of the $i$-th {\it bound period}
and $S_i-1$ is the end of the $i$-th bound period,
\footnote{Bound period refers to the fact that the orbit is
bound, i.e., sufficiently exponentially close, to the postcritical orbit.} and if
$S_i < T_{i+1}$ then
$S_i$ is the beginning of the $i+1$-th {\it free period}
(which ends when the $i+1$-th bound  period starts).

In order to give a meaning to some expressions below, e.g. when
$S_i=\infty$ or $T_i=\infty$, we set
$$S_i-T_i=0
\mbox { if }
S_i=T_i=\infty\, , \, \, \,\,\,\,
T_i-S_{i-1}=0 \mbox{ if } S_{i-1}=T_i=\infty\, ,
$$ 
and, for all $x \in I$, we  set
$
(f^\infty)'(x):=\infty$ and  
$f^\infty(x):=c_1$.


\subsection{\label{tower'} Properties of the tower map.}

After recalling in Proposition ~\ref{cd} and
Lemma~\ref{sizeIj} some results of \cite{BV},
we shall  state in Lemma ~\ref{expiii} expansion and distorsion control properties of the tower map $\hat f$
 (invoking  Lemma \ref{est1a}
instead of \cite[Lemma 1]{BV}). 
Then we shall  prove two new estimates (Proposition ~ \ref{ubalpha}
and its Corollary \ref{errorii}) which will play a key part in the resummation argument
of Proposition ~ \ref{abs}.

For the sake of completeness (we shall use an estimate from the proof
later on), we first recall how to obtain
distorsion bounds (see \cite{BV} or \cite[Lemma 5.3(1)]{V}):

\begin{lemma}[Bounded distortion in the bound period]\label{cd}
Let $f$ be an $S$-unimodal $(\lambda_c,H_0)$-Collet-Eckmann map satisfying the Benedicks-Carleson 
condition \eqref{BeC}, with non preperiodic critical point.
 Then, if $\delta$ is small enough, there exists $C > 0$ such that for every $j \ge 1$, 
and every $k\leq j-1$, recalling  \eqref{ints} 
\begin{equation}\label{infprodgena}
 C^{-1} \leq  \frac{ |(f^k)'(x)|}{|(f^k)'(y)|}\le C \, ,\, 
\forall  x,y \in U_j:= f\bigl (\{c\}\cup \bigcup_{m\geq j} I_m\bigr )\, .
\end{equation}
\end{lemma}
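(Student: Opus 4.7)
\medskip

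\noindent\textbf{Proof plan for Lemma \ref{cd}.}

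The plan is to show that for $y \in U_j$ and $0 \le i \le k-1 \le j-2$, the iterate $f^i(y)$ stays extremely close to the postcritical point $c_{i+1}$, and then to control the distortion by a telescoping sum using the Benedicks-Carleson lower bound on $|c_{i+1}|$. First, I would unwind the definition of $U_j$: if $y = c_1$ then trivially $f^i(y) = c_{i+1}$, while if $y \in f(I_m)$ with $m \ge j$, write $y = f(x)$ for some $x \in I_m$. The definition \eqref{ints} of $I_m$ says that $\hat f^{\ell}(x,0) \in E_{\ell}$ for $0 \le \ell < m$, which by the tower construction \eqref{cass} forces $f^{\ell}(x) \in B_{\ell}$ for $1 \le \ell \le m-1$. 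Since $i+1 \le k \le j-1 \le m-1$, we conclude $f^i(y) = f^{i+1}(x) \in B_{i+1}$, and in both cases \eqref{betaas} gives
\[
|f^i(y)-c_{i+1}| \le e^{-\beta_1(i+1)}.
\]

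Next I would use the Benedicks-Carleson condition \eqref{BeC} together with \eqref{betas} (so that $\beta_1 > \gamma$). For $i+1 \ge H_0$ this gives $|c_{i+1}| \ge e^{-\gamma(i+1)}$, which dominates $e^{-\beta_1(i+1)}$, so for all sufficiently large $i$ every point $z$ on the segment $[f^i(x),f^i(y)]$ satisfies $|z| \ge \tfrac12 |c_{i+1}| \ge \tfrac12 e^{-\gamma(i+1)}$. The remaining finitely many indices $i+1 < H_0$ are handled by noting that $c_{i+1} \neq 0$ on this finite range (the critical orbit is infinite) and shrinking $\delta$ if necessary using \eqref{2.5}, so the same kind of lower bound holds with a uniform constant.

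Now I would estimate the distortion by a telescoping sum. Since
\[
\log\frac{|(f^k)'(x)|}{|(f^k)'(y)|} = \sum_{i=0}^{k-1}\bigl(\log|f'(f^i(x))|-\log|f'(f^i(y))|\bigr),
\]
and since $(\log|f'|)'(z) = f''(z)/f'(z)$ together with $f'(z) \asymp z$ near $c=0$ (because $f'(c)=0$ and $f''(c)\neq 0$) yields $|f''(z)/f'(z)| \le C/|z|$ on a neighbourhood of $c$, the mean value theorem applied on $[f^i(x),f^i(y)]\subset B_{i+1}\cup\{c_{i+1}\}$ gives
\[
\bigl|\log|f'(f^i(x))|-\log|f'(f^i(y))|\bigr|\;\le\; \frac{C\,|f^i(x)-f^i(y)|}{|c_{i+1}|}\;\le\; \frac{2C\,e^{-\beta_1(i+1)}}{e^{-\gamma(i+1)}}.
\]
Summing over $i$ and using $\beta_1 > \gamma$, the geometric series converges to a bound independent of $x$, $y$, $j$, and $k$, so exponentiating yields the claimed constant $C$.

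The main obstacle is really conceptual rather than technical: one must verify that the orbit segment of $y$ during its first $k \le j-1$ iterates is genuinely shadowing the postcritical orbit at the sharper scale $e^{-\beta_1(i+1)}$, not just at some arbitrary small scale; the strict inequality $\beta_1 > \gamma$ is exactly what makes the shadowing scale finer than the Benedicks-Carleson recurrence rate, which is what allows the geometric series to converge. All other steps are routine $C^2$ distortion estimates.
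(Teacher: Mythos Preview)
Your proof is correct and follows essentially the same route as the paper's: both arguments use that iterates of points in $U_j$ lie in $B_{i+1}$, invoke the Benedicks--Carleson bound $|c_{i+1}|\ge e^{-\gamma(i+1)}$, and sum the resulting geometric series in $e^{-(\beta_1-\gamma)(i+1)}$ (the paper uses the product form $\prod(1+\epsilon_\ell)$ rather than your logarithmic telescoping, which is an inessential difference). One small notational slip: you use the letter $x$ both for the preimage in $I_m$ and for the second point of the pair $x,y\in U_j$; the intended meaning is clear, but it would be cleaner to rename one of them.
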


Note that $U_j$ is the  set of points in $(-\delta, \delta)\times \{0\} \subset E_0$ 
which climb at least up to level
$j-1$ before their first return to $E_0$.

\begin{proof} 
For $1 \le \ell \le k  \le j-1$, pick $x_\ell$ and $y_\ell$  in $\cup_ {m \ge j}(f^\ell(I_m))
\subset B_\ell$. We have 
\begin{align}\label{infprod}
\prod_{\ell=1}^{k}
\frac{|f'(x_\ell)|}{|f'(y_\ell)|}
\nonumber&
\le  \prod_{\ell=1}^{k}
\biggl (1+\frac{\sup  |f''|}{|f'(y_\ell)|}  |x_\ell-y_\ell|\biggr )
\le  \prod_{\ell=1}^{k}
\biggl (1+C\sup  |f''| \frac{|x_\ell-y_\ell|}{|y_\ell|}\biggr )
\\
&
\le  \prod_{\ell=1}^{\infty}
 (1+\tilde C C\sup  |f''| e^{-\beta_1 \ell} )< \infty \, ,
\end{align}
uniformly in $m\ge j$. We used that $|x_\ell-y_\ell|\le e^{-\beta_1 \ell}$ and, if
$\ell \ge H_0$, that
$|y_\ell|\geq e^{-\gamma \ell}-e^{-\beta_1 \ell}$
with $\beta_1 > 3 \gamma/2$, but any summable condition would be enough here. If we choose  $y_\ell=f^{\ell-1}(y)$ and $x_\ell=f^{\ell-1}(x)$ we get the upper bound in (\ref{infprod}). If
we pick $y_\ell=f^{\ell-1}(x)$ and $x_\ell=f^{\ell-1}(y)$ then we obtain the lower bound in 
(\ref{infprod}). 
\end{proof}

The following upper and lower bounds from \cite{BV}, about 
points which climb for exactly $j-1$ steps, will be used several times:

\begin{lemma}[The $j$-bound intervals $I_j^\pm$]\label{sizeIj}
Let $f$ be an $S$-unimodal $(\lambda_c,H_0)$-Collet-Eckmann map satisfying the Benedicks-Carleson
condition \eqref{BeC} and with non preperiodic critical point.
Then there exist $C$ 
and $C_2$ so that for any $j \ge H(\delta)$, recalling \eqref{ints},  we have
\begin{equation}\label{upbdx}
 |x-c |\le  C 
e^{\frac{-3\gamma (j-1)}{4} } | (f^{j-2})'(c_1)|^{-1/2}\, , \, \, 
\forall x\in  I_j\, ,
\end{equation}
and
\begin{equation}\label{expii} 
 |(f^j)'(x)|\geq C_2 e^{-\frac{\beta_2 j}{2}} |(f^{j-1})'(c_1)|^{1/2}
\, , \, \forall
x \in I_j \, ,
\end{equation}
and, finally,
\begin{equation}\label{eight'} |f'(x)|\ge C^{-1} e^{-\gamma j} |(f^{j-1})'(c_1)|^{-1/2}\, ,
\, \forall
x \in I_j \, .
\end{equation}
\end{lemma}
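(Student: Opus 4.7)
The plan rests on three tools which I would apply in tandem: (i) the mean value theorem, relating iterates of $f$ along the postcritical orbit $c_1, c_2, \dots$ to iterates applied at $f(x)$; (ii) the bounded distortion statement of Lemma~\ref{cd}, allowing me to replace a derivative $(f^k)'(\xi)$ at an intermediate point $\xi\in U_j$ by $(f^k)'(c_1)$ up to a multiplicative constant, for all $1\le k\le j-1$; and (iii) the quadratic tangency at $c$, which gives $c_0^{-1}|x-c|^2\le|f(x)-c_1|\le c_0|x-c|^2$ and $c_0^{-1}|x-c|\le|f'(x)|\le c_0|x-c|$ for $x\in(-\delta,\delta)$ with $\delta$ small. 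The key geometric input is the pair of complementary inclusions defining $I_j$: on the one hand $\hat f^{j-1}(x,0)\in E_{j-1}$ gives $|f^{j-1}(x)-c_{j-1}|\le e^{-\beta_1(j-1)}$; on the other hand $\hat f^j(x,0)\in E_0$ forces $f^j(x)\notin B_j$, hence $|f^j(x)-c_j|\ge e^{-\beta_2 j}$ by \eqref{betaas}. Note that $f(x),c_1\in U_j$ because $x\in I_j\subset\cup_{m\ge j}I_m$.

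For \eqref{upbdx}, I would write $f^{j-1}(x)-c_{j-1}=f^{j-2}(f(x))-f^{j-2}(c_1)$, apply the mean value theorem, and invoke Lemma~\ref{cd} on $U_j$ to obtain
\[
|f^{j-1}(x)-c_{j-1}|\ge C^{-1}|(f^{j-2})'(c_1)|\cdot|f(x)-c_1|.
\]
Combined with $|f^{j-1}(x)-c_{j-1}|\le e^{-\beta_1(j-1)}$ and with $|f(x)-c_1|\ge c_0^{-1}|x-c|^2$, this yields $|x-c|\le C\,e^{-\beta_1(j-1)/2}|(f^{j-2})'(c_1)|^{-1/2}$, which is stronger than \eqref{upbdx} since $\beta_1>3\gamma/2$ and thus $\beta_1/2>3\gamma/4$.

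For the lower bounds \eqref{expii} and \eqref{eight'}, I would run the same strategy starting from $|f^j(x)-c_j|\ge e^{-\beta_2 j}$. Writing $f^j(x)-c_j=f^{j-1}(f(x))-f^{j-1}(c_1)$, applying the mean value theorem, and using Lemma~\ref{cd} in the other direction gives $|f^j(x)-c_j|\le C\,|(f^{j-1})'(c_1)|\cdot|f(x)-c_1|$, whence
\[
|f(x)-c_1|\ge C^{-1}e^{-\beta_2 j}|(f^{j-1})'(c_1)|^{-1}.
\]
The quadratic tangency then produces $|x-c|\ge C''\,e^{-\beta_2 j/2}|(f^{j-1})'(c_1)|^{-1/2}$. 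Since $|f'(x)|\ge c_0^{-1}|x-c|$ and $\beta_2<2\gamma$ so that $e^{-\beta_2 j/2}\ge e^{-\gamma j}$, this immediately gives \eqref{eight'}. For \eqref{expii}, I would factor $(f^j)'(x)=f'(x)\cdot(f^{j-1})'(f(x))$, replace $(f^{j-1})'(f(x))$ by $(f^{j-1})'(c_1)$ using Lemma~\ref{cd} one more time, and multiply the lower bound on $|f'(x)|$ by $|(f^{j-1})'(c_1)|$ to obtain $|(f^j)'(x)|\ge C_2\,e^{-\beta_2 j/2}|(f^{j-1})'(c_1)|^{1/2}$, exactly as claimed.

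The main (mild) obstacle is not conceptual but rather a careful verification that each mean value point $\xi$ used above lies in $U_j$ so that Lemma~\ref{cd} genuinely applies. This reduces to checking that the entire interval $[f(x),c_1]$ is contained in $U_j$, which follows from the fact that $f$ is monotone on each side of $c$ and maps the half of $I_j$ containing $x$ onto a subinterval of $U_j$ having $c_1$ as an endpoint; the definition $U_j=f(\{c\}\cup\bigcup_{m\ge j}I_m)$ is tailored precisely for this. The constants can be made uniform in $j\ge H(\delta)$ since $H(\delta)\ge H_0$ ensures the Collet--Eckmann lower bound $|(f^{j-1})'(c_1)|\ge\lambda_c^{j-1}$ is available throughout.
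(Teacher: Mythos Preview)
Your proof is correct and follows essentially the same strategy as the paper: mean value theorem together with the bounded distortion of Lemma~\ref{cd} and the quadratic tangency at $c$, combined with the two complementary inclusions $|f^{j-1}(x)-c_{j-1}|\le e^{-\beta_1(j-1)}$ and $|f^j(x)-c_j|\ge e^{-\beta_2 j}$ coming from the definition of $I_j$.

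The only noteworthy difference is organizational. For \eqref{expii} the paper simply cites \cite[(3.10), Proof of Lemma~2]{BV} (or \cite[Lemma~5.3]{V}) and then derives \eqref{eight'} from \eqref{expii} by dividing out $|(f^{j-1})'(f(x))|\asymp|(f^{j-1})'(c_1)|$. You instead give the self-contained argument directly: from $|f^j(x)-c_j|\ge e^{-\beta_2 j}$ and bounded distortion you extract the lower bound on $|x-c|$, obtain \eqref{eight'} first, and then multiply by $|(f^{j-1})'(c_1)|$ to recover \eqref{expii}. Your route makes the proof of \eqref{expii} explicit rather than deferred to a reference, and it reveals that \eqref{eight'} and \eqref{expii} are really the same estimate read in two different ways; the paper's order slightly obscures this.
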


\begin{proof}
If $I_j$ is empty, there is nothing to prove. Otherwise,
our definitions and the mean value theorem
imply that there exists $y$ with
$f(y)\in [f(x), c_1]$ so that
$$
|(f^{j-2})'(f(y))| |f(x)-c_1| \le C e^{-\beta_1 (j-1)}\, .
$$ 
Therefore, since $\beta_1 > 3\gamma/2$ (recall \eqref{betas})
 the lower bound in  \eqref{infprodgena} and the fact that
$|f(x)-c_1|\ge C^{-1} |x-c|^2$ yield \eqref{upbdx}.

The bound  (\ref{expii}) follows from \cite[(3.10), Proof of Lemma 2]{BV}
(see  top of p. 495 there, or see \cite[Lemma 5.3, eq. after (5.15)]{V}).

For \eqref{eight'},  use \eqref{expii} and
that $\beta_2<2\gamma$ from \eqref{betas}.
\end{proof}

Recall the times $S_i$, $T_i$ from Subsection~\ref{tower}, for
suitably small $\delta$.
The following lemma gives expansion at the end 
of the free period $T_i-1$ (just before climbing 
the tower), at the end $S_i-1$  of the
bound period (after falling from the tower), and  during the free period
(when staying at level zero):

\begin{lemma}[Tower expansion for Benedicks-Carleson maps]\label{expiii}
Let $f$ be an $S$-unimodal $(\lambda_c,H_0)$-Collet-Eckmann map satisfying the Benedicks-Carleson condition \eqref{BeC}, with non preperiodic critical
point, and let $\rho$ satisfy \eqref{2.1}.
For every small enough $\delta_0>0$,  if $\delta < \delta_0$,   
$\sigma >1$,  $C_1=C_1(\rho)\in(0, 1]$,  and $c(\delta)>0$ are
as in Lemma ~\ref{est1a}, letting
$S_i(x)$ and $T_i(x)$ be the times associated to the tower for
$\delta$,  then
\begin{equation}\label{star'}
|(f^{S_i(x)})'(x)| \geq  \rho^{S_i(x)}\, , \qquad  |(f^{T_i(x)})'(x)| \geq  C_1 \rho^{T_i(x)} 
\, , \quad \forall x \in I \, , \quad \forall i \ge 0\, ,
\end{equation}
and 
$$
|(f^{S_i(x)+j})'(x)| \geq  c(\delta) \rho^{S_i(x)}\sigma^j\, , 
\qquad \forall x \in I \, ,\qquad 
\forall i \ge 0 \, , \, \forall 0 \le  j < T_{i+1}(x)-S_i(x) \, .
$$
\end{lemma}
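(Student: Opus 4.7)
The plan is to prove the first estimate by induction on $i$, telescoping the expansion along the alternation of free and bound periods of $x$:
\[
|(f^{S_i})'(x)| = \prod_{k=1}^{i} |(f^{T_k-S_{k-1}})'(f^{S_{k-1}}(x))| \cdot |(f^{S_k-T_k})'(f^{T_k}(x))|\, .
\]
The base case $i=0$ is trivial since $S_0=T_0=0$. Assume the first bound at rank $i-1$.

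For the free block of length $\ell=T_i-S_{i-1}$, the very definition of $T_i$ ensures $|f^{S_{i-1}+m}(x)|>\delta$ for $0\le m<\ell$ and $|f^{T_i}(x)|\le \delta$; applying \eqref{estlaeq2} of Lemma~\ref{est1a} to $y=f^{S_{i-1}}(x)$ yields $|(f^\ell)'(y)|\ge C_1\rho^\ell$. Combined with the induction hypothesis $|(f^{S_{i-1}})'(x)|\ge \rho^{S_{i-1}}$, this already gives the second estimate $|(f^{T_i})'(x)|\ge C_1\rho^{T_i}$.

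For the bound block of length $j=S_i-T_i\ge H(\delta)$, we have $f^{T_i}(x)\in I_j$, so \eqref{expii} of Lemma~\ref{sizeIj} together with the Collet-Eckmann lower bound $|(f^{j-1})'(c_1)|\ge \lambda_c^{j-1}$ gives
\[
|(f^{j})'(f^{T_i}(x))| \ge C_2\lambda_c^{-1/2}\bigl(\sqrt{\lambda_c}\,e^{-\beta_2/2}\bigr)^{j}\, .
\]
By \eqref{2.1} and \eqref{betas}, the base $\sqrt{\lambda_c}\,e^{-\beta_2/2}$ is strictly larger than $\rho$, with a uniform excess factor $\kappa:=(\sqrt{\lambda_c}\,e^{-\beta_2/2})/\rho>1$ per iterate (this is exactly why $\beta_2<2\gamma$ is imposed). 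Since $j\ge H(\delta)$ and $H(\delta)\to\infty$ as $\delta\to 0$, one can shrink $\delta_0$ so that $\kappa^{H(\delta)}\ge (C_1 C_2\lambda_c^{-1/2})^{-1}$; the expansion on one free-plus-bound pair is then at least $\rho^{(T_i-S_{i-1})+j}$ with no leftover constant, and $|(f^{S_i})'(x)|\ge \rho^{S_i}$ follows.

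The third estimate is then immediate: on $[S_i,S_i+j]$ with $j<T_{i+1}-S_i$, we have $|f^{S_i+m}(x)|>\delta$ for $0\le m<j$, so \eqref{estlaeq1} of Lemma~\ref{est1a} applied at $f^{S_i}(x)$ gives $|(f^j)'(f^{S_i}(x))|\ge c(\delta)\sigma^j$, and multiplying by the already proven first bound yields $|(f^{S_i+j})'(x)|\ge c(\delta)\rho^{S_i}\sigma^j$. The main technical obstacle is thus the quantitative bookkeeping in the bound block: one must absorb the constants $C_1, C_2\lambda_c^{-1/2}$ accumulated at each alternation of free and bound periods into the strict exponential excess $\kappa>1$, uniformly in $i$. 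This is exactly where the strict inequality $\beta_2<2\gamma$ from \eqref{betas} and the freedom to make $H(\delta)$ arbitrarily large by shrinking $\delta_0$ are simultaneously essential; the cases $T_i=\infty$ or $S_{i-1}=T_i$ are handled by the same argument with an empty block.
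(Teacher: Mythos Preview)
Your proof is correct and follows essentially the same approach as the paper's: both decompose $|(f^{S_i})'(x)|$ as a product over alternating free and bound blocks, use \eqref{estlaeq2} on each free block and \eqref{expii} on each bound block, and absorb the accumulated constants $C_1C_2$ by exploiting the strict excess $\sqrt{\lambda_c}\,e^{-\beta_2/2}>\rho$ together with $j\ge H(\delta)$ large. The paper writes the telescoping product out directly and states the required smallness of $\delta$ as the single inequality $C_1C_2\,e^{-\beta_2 j/2}\lambda_c^{(j-1)/2}\ge\rho^j$ for $j\ge H(\delta)$, whereas you frame it as an induction and isolate the per-step excess factor $\kappa$; these are cosmetic differences only.
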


\begin{remark} \label{liminf}
An immediate consequence of Lemma \ref{expiii} is that,  for every $x$ such that 
$f^n(x)\neq c$, for every $n$, we have
$\limsup_n |(f^n)'(x)|^{1/n}\geq \xi >1$, 
where $\xi=\min (\rho,\sigma)$. 
\end{remark}

\begin{proof}  [Proof of Lemma \ref{expiii}]
The lemma will easily follow from Lemma ~ \ref{est1a} and 
\eqref{expii} from
Lemma~\ref{sizeIj}.

Choose $\delta< \delta_0$ as in the second claim of 
Lemma \ref{est1a}, small enough so  that 
$$
C_1 C_2 \cdot  e^{-\frac{\beta_2 j}{2}} \lambda_c^{\frac{j-1}{2}} \geq \rho^{j} \, ,
\quad \forall j \ge H(\delta) \, .
$$
Let now $x \in I$. Recall that for any $\ell\ge 1$, the definitions imply
$f^{S_{\ell-1}(x)+k}(x) \in I\setminus [-\delta,\delta]$
for all  $0\le k < T_\ell(x) - S_{\ell-1}(x)$ and
$f^{T_\ell(x)}(x)\in I_j$ with  $j=S_\ell(x)-T_\ell(x)\ge H(\delta)$.
Therefore, the second claim of   Lemma  \ref{est1a} and 
 (\ref{expii}) give for all $i\ge 0$
$$
|(f^{S_i})'(x)|= \prod_{\ell=1}^{i} |(f^{S_\ell(x)-T_{\ell}(x)})'(f^{T_\ell(x)}x)| |(f^{T_\ell(x)-S_{\ell-1}(x)})'(f^{S_{\ell-1}(x)}x)|\geq  \rho^{S_i(x)} \, ,
$$
and 
$$
|(f^{T_i})'(x)|= |(f^{T_i(x)-S_{i-1}(x)})'(f^{S_{i-1}(x)}x)|
|(f^{S_i})'(x)| \geq C_1  \rho^{T_i(x)-S_i(x)}\rho^{S_i(x)} \, .
$$
Using in addition the first claim of   Lemma  \ref{est1a}, we get, for $0 \le  j \le T_{i+1}(x)-S_i(x)$, 
$$
|(f^{S_i(x)+j})'(x)|=| (f^j)'(f^{S_i(x)}(x))||(f^{S_i})'(x)|
\geq c(\delta) \sigma^j  \rho^{S_i(x)} \, .
$$
\end{proof}

The information on the tower will allow us to prove the next proposition, which is a crucial ingredient
\footnote{Proposition \ref{ubalpha} will also be
used in an essential way in the proof of Theorem ~ \ref{linresp},
in particular
in the proof of Lemma~\ref{rootsing}, and also in
Lemma~\ref{truncspec}.} to show that $\alpha_{cand}$ can be resummed to a bounded
function (Proposition~\ref{abs} and Definition~\ref{resumdef}):

\begin{proposition}[Key estimate for Benedicks Carleson maps]\label{ubalpha}
Let $f$ be an $S$-unimodal $(\lambda_c,H_0)$-Collet-Eckmann map satisfying the Benedicks-Carleson
condition \eqref{BeC},
with non preperiodic critical point. There exists $C> 0$ such that for every $j \geq 0$ we have
 \begin{equation}\label{serieb}
  \sum_{k =j+1}^\infty \frac{1}{|(f^{k-j})'(f^j(c_1))|} \leq C e^{\gamma j} \, .
\end{equation}
\end{proposition}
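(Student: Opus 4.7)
The plan is to apply Lemma~\ref{expiii} to the point $y:=c_{j+1}$ and to split the sum $\sum_{n\ge 1}|(f^n)'(y)|^{-1}$ along the tower times $T_i=T_i(y)$, $S_i=S_i(y)$, treating the free periods $[S_i+1,T_{i+1}]$ (and the initial one $[1,T_1]$ when $|y|>\delta$) and the bound periods $[T_i+1,S_i]$ separately. The required factor $e^{\gamma j}$ will be produced by bounding $|f'(f^{T_i}(y))|=|f'(c_{j+1+T_i})|$ via Benedicks--Carleson applied at the postcritical point $c_{j+1+T_i}$ itself.

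For each free piece, Lemma~\ref{expiii} gives $|(f^n)'(y)|\ge c(\delta)\rho^{S_i}\sigma^{n-S_i}$ (with $S_0=0$), so geometric summation contributes at most $C\rho^{-S_i}$; because the $S_i$ are strictly increasing in $i$, these pieces sum to a constant independent of $j$.

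The heart of the argument lies in the bound periods. Set $p_i:=S_i-T_i$ and $A_k:=|(f^k)'(c_1)|$. Since $f^{T_i}(y)\in I_{p_i}$ we have $f^{T_i+1}(y)\in f(I_{p_i})\subset U_{p_i}$, and of course $c_1=f(c)\in U_{p_i}$; Lemma~\ref{cd} then yields a constant $C$ such that $|(f^{n-T_i-1})'(f^{T_i+1}(y))|\ge C^{-1}A_{n-T_i-1}$ for $n-T_i-1\le p_i-1$. The chain-rule decomposition
\[|(f^n)'(y)|=|(f^{T_i})'(y)|\cdot|f'(f^{T_i}(y))|\cdot|(f^{n-T_i-1})'(f^{T_i+1}(y))|,\]
combined with $|(f^{T_i})'(y)|\ge C_1\rho^{T_i}$ from Lemma~\ref{expiii} and the decisive bound
\[|f'(f^{T_i}(y))|=|f'(c_{j+1+T_i})|\ge C^{-1}|c_{j+1+T_i}|\ge C^{-1}e^{-\gamma(j+1+T_i)},\]
obtained from \eqref{BeC} and the quadratic vanishing of $f'$ at $c$ (the finitely many indices $j+1+T_i<H_0$ being absorbed into constants), gives after summation
\[\sum_{n=T_i+1}^{S_i}\frac{1}{|(f^n)'(y)|}\le C\,\frac{e^{\gamma(j+T_i)}}{\rho^{T_i}}\sum_{k=0}^{p_i-1}\frac{1}{A_k}\le C\,e^{\gamma j}\Bigl(\frac{e^\gamma}{\rho}\Bigr)^{T_i},\]
where Collet--Eckmann guarantees $\sum_{k\ge 0}A_k^{-1}<\infty$. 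Since $\rho>e^\gamma$ by \eqref{2.1} and $T_i$ is strictly increasing in $i$, summing over $i$ yields a geometric series bounded by $Ce^{\gamma j}$, completing the estimate.

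The main obstacle is choosing the correct lower bound on $|f'(f^{T_i}(y))|$ during bound periods. The tower-style bound \eqref{eight'} involves $A_{p_i-1}^{-1/2}$ and does not mesh cleanly with the required geometric summation in $i$; using \eqref{BeC} directly at $c_{j+1+T_i}$ instead produces precisely the factor $e^{-\gamma(j+T_i)}$, which simultaneously generates the prefactor $e^{\gamma j}$ and leaves the summable geometric tail $(e^\gamma/\rho)^{T_i}$.
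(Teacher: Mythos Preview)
Your argument is correct and follows essentially the same route as the paper's own proof: both set $y=c_{j+1}$, split the sum along the tower times $S_i(y),T_i(y)$, bound the free pieces geometrically via Lemma~\ref{expiii}, and handle the bound pieces by combining $|(f^{T_i})'(y)|\ge C_1\rho^{T_i}$, the Benedicks--Carleson estimate $|f'(c_{j+1+T_i})|\ge C^{-1}e^{-\gamma(j+T_i)}$, and bounded distortion (Lemma~\ref{cd}) together with Collet--Eckmann, then sum using $\rho>e^\gamma$. Your closing remark on why \eqref{eight'} is the wrong lower bound for $|f'(f^{T_i}(y))|$ is a nice clarification, though not present in the paper.
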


The proof shows that $C=O((c(\delta))^{-1})$, where $\delta$ is the parameter used in the
construction of the tower and $c(\delta)$ is given by
Lemma~\ref{est1a}.
More importantly,  the proposition implies that $|\alpha_{cand}(c_j)|\le C \sup |v|
e^{\gamma j}$. This bound
is of course not uniform in $j$, but it will act as a bootstrap 
for the proof of the  Proposition~\ref{abs} which performs the resummation.

\begin{proof}
Fix $j\ge 1$. Since the coefficients of the series are all positive, we may (and shall) group them
in a convenient way, using the
times $T_i:=T_i(c_{j+1})$  and $S_i:=S_i(c_{j+1})$ defined in the tower construction
for a small enough $\delta$.
We have
\begin{align*}
 &\sum_{k =j+1}^\infty \frac{1}{|(f^{k-j})'(f^j(c_1))|}\\
&\quad =\sum_{i=0}^{\infty} \frac{1}{|(f^{S_i})'(c_{j+1})|} u_{T_{i+1}-S_i}(f^{S_i}(c_{j+1})) +\sum_{i=1}^{\infty} \frac{1}{|(f^{T_i})'(c_{j+1})|} u_{S_{i}-T_i}(f^{T_i}(c_{j+1})) \, ,
\end{align*}
where we use the notation
$$
u_n(y)= \sum_{\ell=1}^{n} \frac{1}{|(f^{\ell})'(y)|} \, .
$$
(In particular $u_0\equiv 0$.)
Since $T_{i+1}-S_i=T_1(f^{S_i}(c_{j+1}))$,  Lemma \ref{expiii} implies 
$$
u_{T_{i+1}-S_i}(f^{S_i}(c_{j+1}))\leq \frac{C }{c(\delta)(1-\sigma^{-1})} \, ,
$$
(in particular the series converges if  $n=T_{i+1}-S_i=\infty$).
Since $f''(c)\ne 0$, the Benedicks-Carleson assumption \eqref{BeC}
implies for all $i$
$$|f'(f^{T_i}(c_{j+1}))|= |f'(f^{T_i+j}(c_1))|\geq  C^{-1} e^{-\gamma(T_i+j)}\, .
$$
Therefore, the   bounded distortion estimate \eqref{infprod} in the proof of Lemma ~\ref{cd} gives, together with the Collet-Eckmann assumption,
\footnote{The constant $C$ above depends on
$[\sup_{1\le j < H_0} \lambda_c/| (f^j)'(c_1)|^{1/j}]^{H_0}$.
By Lemma~\ref{est1aunif}, this expression
is uniform for suitable families $f_t$.}
$$
u_{S_{i}-T_i}(f^{T_i}(c_{j+1}))\leq \frac{1}{|f'(f^{T_i}(c_{j+1}))|} \sum_{\ell=0}^{\infty} \frac{C}{|(f^\ell)'(c_1)|}\leq \frac{C^3 }{ (1-\lambda_c^{-1})}e^{\gamma(T_i+j)} \, . 
$$

By Lemma \ref{expiii} we have 
$|(f^{S_i})'(c_{j+1})|\geq C\rho^{S_i}$
and
$|(f^{T_i})'(c_{j+1})|\geq C_1\rho^{T_i}$.
Therefore, there exists constants $K_1(\delta)$, $K_2(\delta)$ so that
$$ 
 \sum_{k =j+1}^\infty \frac{1}{|(f^{k-j})'(f^j(c_1))|}
\leq K_1(\delta) e^{\gamma j} \biggl [  \sum_{i=0}^{\infty} \rho^{-S_i}   +  \sum_{i=1}^{\infty} \rho^{-T_i}e^{\gamma T_i}    \biggl ]\leq K_2(\delta) e^{\gamma j} \, .
$$
(We used that $\rho > e^{\gamma}$.)
\end{proof}

We end this section of preparations by a consequence
of Proposition ~ \ref{ubalpha} which will also be needed
in our resummation Proposition ~\ref{abs}:

\begin{corollary}\label{errorii} Let $f$ be an $S$-unimodal $(\lambda_c,H_0)$-Collet-Eckmann map satisfying the Benedicks-Carleson
condition \eqref{BeC},
with non preperiodic critical point.
Then there exists $C$ so that for any $j\ge 1$
\begin{equation} 
\frac{1}{|f'(x)|} \sum_{k=1}^{j-1}  \biggl | \frac{1}{|(f^k)'(c_1)|}- \frac{1}{|(f^k)'(f(x))|} \biggr | \leq  C\frac{e^{5\gamma j/4}}{|f^{j-2}(c_1)|^{1/2}}\, , \, \forall x \in I_j\, .
\end{equation}
\end{corollary}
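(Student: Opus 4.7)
The strategy is to bound each summand via distortion plus a chain-rule telescoping, then reduce the double sum to the bound of Proposition~\ref{ubalpha}, and finally insert the size of $I_j$ together with the end-of-free-period expansion from Lemma~\ref{sizeIj}.

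First, Lemma~\ref{cd} applied with $k\le j-1$ to the points $c_1,f(x)\in U_j$ shows that $(f^k)'(f(x))/(f^k)'(c_1)$ is uniformly comparable to $1$, so
$$
\biggl|\frac{1}{|(f^k)'(c_1)|}-\frac{1}{|(f^k)'(f(x))|}\biggr|\le \frac{C}{|(f^k)'(c_1)|}\,\biggl|\frac{(f^k)'(f(x))}{(f^k)'(c_1)}-1\biggr|.
$$
Writing the ratio as a telescoping product $\prod_{i=0}^{k-1}f'(f^{i+1}(x))/f'(c_{i+1})$, using $|\prod(1+\eta_i)-1|\le C\sum|\eta_i|$ (valid since $\sum|\eta_i|$ is bounded by distortion), together with $|f''|_\infty<\infty$ and $|f'(c_{\ell+1})|\asymp|c_{\ell+1}|$, I get
$$
\biggl|\frac{(f^k)'(f(x))}{(f^k)'(c_1)}-1\biggr|\le C\sum_{\ell=0}^{k-1}\frac{|f^{\ell+1}(x)-c_{\ell+1}|}{|c_{\ell+1}|}.
$$

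Second, since for $x\in I_j$ the segment $[f(x),c_1]$ is contained in $U_j$ (the monotone branch of $f$ adjacent to $c$ maps a one-sided neighborhood of $c$ onto a one-sided neighborhood of $c_1$ inside $U_j$), the Mean Value Theorem applied to $f^\ell$ with $\ell\le j-1$, combined again with Lemma~\ref{cd}, yields $|f^{\ell+1}(x)-c_{\ell+1}|\le C|(f^\ell)'(c_1)|\cdot|f(x)-c_1|$. Substituting into the previous bound, using the chain rule identity $|(f^\ell)'(c_1)|/|(f^k)'(c_1)|=1/|(f^{k-\ell})'(c_{\ell+1})|$, and swapping the order of summation via $m=k-\ell$, gives
$$
\sum_{k=1}^{j-1}\biggl|\cdots\biggr|\le C|f(x)-c_1|\sum_{\ell=0}^{j-2}\frac{1}{|c_{\ell+1}|}\sum_{m=1}^{j-1-\ell}\frac{1}{|(f^m)'(c_{\ell+1})|}.
$$
Since $c_{\ell+1}=f^\ell(c_1)$, Proposition~\ref{ubalpha} bounds the inner sum by $Ce^{\gamma\ell}$. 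Combined with $|c_{\ell+1}|^{-1}\le Ce^{\gamma(\ell+1)}$ from \eqref{BeC}, the $\ell$-sum is dominated by its last term and yields
$$
\sum_{k=1}^{j-1}\biggl|\cdots\biggr|\le C|f(x)-c_1|\,e^{2\gamma j}.
$$

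Third, I insert the two bounds from Lemma~\ref{sizeIj}. From \eqref{upbdx} and $|f(x)-c_1|\asymp|x-c|^2$ (using $f''(c)\neq 0$),
$$
|f(x)-c_1|\le Ce^{-3\gamma(j-1)/2}/|(f^{j-2})'(c_1)|.
$$
For $1/|f'(x)|$, instead of the stated form of \eqref{eight'} I use its sharper version coming from \eqref{expii} combined with bounded distortion on $[f(x),c_1]$, namely $1/|f'(x)|\le Ce^{\beta_2 j/2}|(f^{j-1})'(c_1)|^{1/2}$. Multiplying the three estimates and using $|(f^{j-1})'(c_1)|\le |f'|_\infty|(f^{j-2})'(c_1)|$, the total exponent of $e$ becomes $\beta_2/2+2\gamma-3\gamma/2=\beta_2/2+\gamma/2$, which by choosing $\beta_1,\beta_2$ close to their common infimum $3\gamma/2$ (permitted by \eqref{betas}) can be made arbitrarily close to $5\gamma/4$, yielding the desired
$$
\frac{1}{|f'(x)|}\sum_{k=1}^{j-1}\biggl|\cdots\biggr|\le \frac{Ce^{5\gamma j/4}}{|(f^{j-2})'(c_1)|^{1/2}}.
$$

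The main delicate point is calibrating the exponent: the simplified form of \eqref{eight'} with $e^{-\gamma j}$ would give only $3\gamma j/2$ in the end, so one must retain the sharper $e^{-\beta_2 j/2}$ coming from \eqref{expii}, with $\beta_2$ close to its lower bound $3\gamma/2$. Everything else is a bookkeeping exercise with distortion.
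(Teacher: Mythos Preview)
Your argument is essentially the paper's: telescoping via bounded distortion (Lemma~\ref{cd}), swapping sums, invoking Proposition~\ref{ubalpha}, and finishing with the size bounds of Lemma~\ref{sizeIj}. The one substantive difference is at the very end. The paper does \emph{not} estimate $|f(x)-c_1|$ and $1/|f'(x)|$ separately; it combines them at once via
\[
\frac{|f(x)-c_1|}{|f'(x)|}\ \le\ C\,|x-c|,
\]
which is immediate from $f''(c)\ne 0$ (so $|f(x)-c_1|\asymp |x-c|^2$ and $|f'(x)|\asymp |x-c|$). Plugging $|x-c|\le Ce^{-3\gamma(j-1)/4}|(f^{j-2})'(c_1)|^{-1/2}$ from \eqref{upbdx} then gives the exponent $-3\gamma/4+2\gamma=5\gamma/4$ directly, with no reference to $\beta_2$ at all.

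Your separate treatment produces the exponent $\beta_2/2+\gamma/2$, which for the already-fixed $\beta_2\in(3\gamma/2,2\gamma)$ lies strictly between $5\gamma/4$ and $3\gamma/2$. Saying ``choose $\beta_2$ close to $3\gamma/2$'' is not really legitimate here: $\beta_1,\beta_2$ were fixed in \eqref{betas} before the intervals $I_j$ in the hypothesis were defined, and even then you only get an exponent \emph{close to} $5\gamma/4$, never $5\gamma/4$ itself. The one-line fix above removes the issue entirely and matches the paper's bound for every admissible choice of $\beta_1,\beta_2$.
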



\begin{proof} 
For any $j\ge 1$, $x \in I_j$, and $1\le k < j$ we get by 
using \eqref{infprod} from the proof of Lemma ~\ref{cd} that
\begin{align*}
& \biggl |\frac{1}{(f^{k})'(f(x))}- \frac{1}{(f^{k})'(c_1)}\biggr |\\
&\quad \le  \sum_{n=0}^k \biggl |\frac{1}{f'(f^n(f(x)))}-\frac{1}{f'(f^n(c_1))} \biggr |\prod_{\ell=0}^{n-1}
\frac{1}{|f'(f^\ell(c_1))|} \prod_{i=n+1}^k \frac{1}{|f'(f^i(f(x)))|}  \\
 &\quad \le    \sum_{n=0}^k \sup_{y \in f^n[f(x),c_1]}
\frac{|f''(y)|}{|f'(y)|^2}  \, \,   \sup_{z\in [f(x), c_1]} | (f^n)'(z)  | |f(x)-c_1| \\
&\qquad\qquad\qquad\qquad\qquad\qquad 
\cdot \prod_{\ell=0}^{n-1} \frac{1}{|f'(f^\ell(c_1)|}
\prod_{i=n+1}^k \frac{1}{|f'(f^i(f(x)))|} \\
 &\quad \overset{(\ref{infprod})}{\leq}  C |f(x)-c_1|\sum_{n=0}^k \sup_{y \in f^n[f(x),c_1]}
\frac{|f''(y)|}{|f'(y)|}  \, \,   \frac{| (f^n)'(c_1)  |}{|(f^k)'(c_1) |} \\
 &\quad  
 {\leq} \frac{C}{1- e^{-\gamma/2}} |f(x)-c_1|\sum_{n=0}^k e^{\gamma n}  \, \,   \frac{1}{|(f^{k-n})'(f^n(c_1)) |}  
\, .
\end{align*}
(In the last inequality, we used that $f^n[f(x), c_1]\subset B_{n+1}$,
and that  the Benedicks-Carleson
assumption \eqref{BeC} implies $|y-c| >  e^{-\gamma(n+1)}-e^{-\beta_1(n+1)}$ 
for $y\in B_{n+1}$, together with \eqref{betas}.)

Therefore, for any $j\ge 1$ and $x \in I_j$, since $|f(x)-c_1|\le C |x-c|^2$,  
Proposition  ~ \ref{ubalpha} 
implies
\begin{align}\label{lastt'}
 &\frac{1}{|f'(x)|}
\sum_{k=0}^{j-1}
 \biggl |\frac{1}{(f^{k})'(f(x))}- \frac{1}{(f^{k})'(c_1)}\biggr | \\
\nonumber 
& \qquad \le  C|x-c|  
\sum_{k=0}^{j-1} \sum_{n=0}^{k} e^{\gamma n}  \, \,   \frac{1}{|(f^{k-n})'(f^n(c_1)) |} \\
\nonumber 
& \qquad \le  C|x-c| \sum_{n=0}^{j-1} e^{\gamma n} \sum_{i=0}^{j-n-1}  \, \,   \frac{1}{|(f^{i})'(f^n(c_1)) |} \\
\nonumber 
 &\qquad \le  C|x-c| \sum_{n=0}^{j-1} e^{\gamma n} \sum_{i=0}^{\infty}  \, \,   \frac{1}{|(f^{i})'(f^n(c_1)) |} \\
\nonumber 
 &\qquad \overset{(\ref{serieb})}{\leq}  C|x-c| \sum_{n=0}^{j-1} e^{2\gamma n} 
  \le  C|x-c| e^{2\gamma j}   
\le  C\frac{e^{-3\gamma (j-1)/4}}{|(f^{j-2})'(c_1)|^{1/2}}e^{2\gamma j}
\, , \end{align}
where we used  \eqref{upbdx} from Lemma ~\ref{sizeIj}  in the last inequality.
\end{proof}

\subsection{\label{resume} Resummation: Definition and boundedness of $\alpha$ 
for horizontal
$v$.}

Proposition~\ref{abs} is the heart of the proof of Theorem~\ref{alphabded}. This is where we define the dynamical resummation
for the series $\alpha_{cand}$, under a horizontality condition.

\begin{proposition}[Resummation] \label{abs}Let $f$ be an $S$-unimodal $(\lambda_c,H_0)$-Co\-llet-Eck\-mann map satisfying the Be\-ne\-dicks-Carleson
condition \eqref{BeC},
with non preperiodic critical point.
Let $v=X\circ f$, for $X$ a Lipschitz function, and assume
that $v$ satisfies the 
horizontality condition \eqref{hor} for $f$.

If $\delta$ is 
small enough,
then
for every $x \in I$, letting $T_i=T_i(x)$ and $S_i=S_i(x)$  be the times associated
to the tower for $\delta$, 
the following series  converges:
\begin{equation} 
\label{absser} 
\sum_{i=1}^{\infty} \biggl (  \frac{1}{|(f^{T_i})'(x)|} |w_{S_{i}-T_i}(f^{T_i}(x))|+ \frac{1}{|(f^{S_{i-1}})'(x)|} |w_{T_{i}-S_{i-1}}(f^{S_{i-1}}(x))| \biggr )\, ,
\end{equation}
where $w_\infty(c)=0$ and
\footnote{In particular, we claim that $w_n(y)$ converges if $n=S_i-T_i=\infty$ and $y=f^{T_i}(x)\ne c$,
or if $n=T_{i}-S_{i-1}=\infty$ and $y=f^{S_{i-1}}(x)$.}
$$
w_n(y):= \sum_{\ell=0}^{n-1} \frac{v(f^\ell(y))}{(f^{\ell+1})'(y)}\, ,\quad
y \ne c \, , 
n \in \integer_+ \cup \{\infty\} \, .
$$
Moreover the sum of the series \eqref{absser} is bounded uniformly in  $x\in I$.
\end{proposition}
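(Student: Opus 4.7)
My plan is to estimate the two types of terms in \eqref{absser} separately, so that each becomes the general term of a geometric series in $i$, by exploiting the exponential lower bounds $|(f^{S_{i-1}})'(x)|\ge\rho^{S_{i-1}}$ and $|(f^{T_i})'(x)|\ge C_1\rho^{T_i}$ from Lemma~\ref{expiii}. The free-period contribution is immediate: since $|f^\ell(f^{S_{i-1}}(x))|>\delta$ for $0\le\ell<T_i-S_{i-1}$, estimate \eqref{estlaeq1} of Lemma~\ref{est1a} gives $|(f^{\ell+1})'(f^{S_{i-1}}(x))|\ge c(\delta)\sigma^{\ell+1}$, so $|w_{T_i-S_{i-1}}(f^{S_{i-1}}(x))|\le \sup|v|/(c(\delta)(\sigma-1))$ uniformly in $i$ (the case $T_i=\infty$ being handled by passing to the limit). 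Dividing by $\rho^{S_{i-1}}$ and summing over $i$ then poses no problem.

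The bound-period terms are the heart of the matter. Writing $y:=f^{T_i}(x)\in I_j$ with $j:=S_i-T_i\ge H(\delta)$ and using $v=X\circ f$ together with the substitution $k=\ell+1$, I split
\begin{equation*}
w_j(y)=\frac{1}{f'(y)}\sum_{k=1}^{j-1}\frac{X(f^k(y))}{(f^{k-1})'(f(y))}+\frac{X(f^j(y))}{(f^j)'(y)}\, .
\end{equation*}
The second term, divided by $|(f^{T_i})'(x)|$, equals $X(f^j(y))/(f^{S_i})'(x)$ and is bounded by $\sup|X|\,\rho^{-S_i}$, which is summable. In the first sum I exploit the shadowing $f^k(y)\in B_k$ for $1\le k\le j-1$ (so $f^k(y)\approx c_k$) together with $X(c_k)=v(c_{k-1})$ to write it as
\begin{equation*}
\frac{1}{f'(y)}\sum_{m=0}^{j-2}\frac{v(c_m)}{(f^m)'(c_1)}\;+\;\mathrm{LipErr}\;+\;\mathrm{DistErr}\, ,
\end{equation*}
where $\mathrm{LipErr}$ encodes the replacements $X(f^k(y))\leftrightarrow X(c_k)$ and $\mathrm{DistErr}$ the replacements $(f^{k-1})'(f(y))\leftrightarrow(f^{k-1})'(c_1)$.

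The main term is where horizontality enters. The identity \eqref{hor}, after re-indexing, reads $\sum_{m=0}^\infty v(c_m)/(f^m)'(c_1)=0$, so the main sum equals $-\sum_{m=j-1}^\infty v(c_m)/(f^m)'(c_1)$. Factoring $(f^m)'(c_1)=(f^{j-1})'(c_1)\cdot(f^{m-j+1})'(c_j)$ and applying Proposition~\ref{ubalpha} (with $j-1$ in place of $j$, so that the base point becomes $c_j=f^{j-1}(c_1)$), this tail is bounded by $C\sup|v|\,e^{\gamma j}/|(f^{j-1})'(c_1)|$. Combined with the sharp lower bound $|f'(y)|\ge C^{-1}e^{-\gamma j}|(f^{j-1})'(c_1)|^{-1/2}$ from \eqref{eight'}, Collet-Eckmann $|(f^{j-1})'(c_1)|\ge\lambda_c^{j-1}$, and $|(f^{T_i})'(x)|\ge C_1\rho^{T_i}$, the main contribution is at most $C\rho^{-T_i}\cdot(e^{2\gamma}/\sqrt{\lambda_c})^{j}$; the Benedicks-Carleson assumption $\gamma<(\log\lambda_c)/4$ forces $e^{2\gamma}<\sqrt{\lambda_c}$, so this is geometrically summable in both $T_i$ and $j$.

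It remains to control the two correction terms, and this is the main obstacle: each must be absorbed by the same factor $(e^{c\gamma}/\sqrt{\lambda_c})^j$ (with $c\le 5/4$ or $c\le 3/2$) despite the apparent blow-up of $|(f^{j-1})'(c_1)|^{1/2}$ in $1/|f'(y)|$. The distortion error is precisely what Corollary~\ref{errorii} is designed for, giving $|\mathrm{DistErr}|\le C\sup|v|\,e^{5\gamma j/4}/|(f^{j-2})'(c_1)|^{1/2}$, and the Benedicks-Carleson bound $5\gamma/4<\log(\lambda_c)/2$ suffices for summability. For $\mathrm{LipErr}$, the mean value theorem together with bounded distortion \eqref{infprodgena} yields $|f^k(y)-c_k|\le C|(f^{k-1})'(c_1)|\cdot|f(y)-c_1|$, and the $|I_j|$-size estimate \eqref{upbdx} combined with $f'(c)=0$ gives $|f(y)-c_1|\le C\,e^{-3\gamma(j-1)/2}/|(f^{j-2})'(c_1)|$; after the $(f^{k-1})'(c_1)$ factor cancels against the bounded-distortion lower bound in the denominator and one divides by $|f'(y)|$ and $|(f^{T_i})'(x)|$, one obtains a bound of the same geometric form, and the triple series converges uniformly in $x$.
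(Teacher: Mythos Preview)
Your proposal is correct and follows essentially the same route as the paper. Both proofs handle the free-period terms by the direct expansion estimate \eqref{estlaeq1}, and for the bound-period terms both split $w_j(y)$ into a main piece controlled via horizontality, Proposition~\ref{ubalpha}, and \eqref{eight'}, a distortion correction handled by Corollary~\ref{errorii}, and a Lipschitz correction handled by bounded distortion and \eqref{upbdx}; the only cosmetic difference is that you peel off the boundary term $X(f^j(y))/(f^j)'(y)$ separately, whereas the paper absorbs it into term~$III$ of \eqref{I-II-III}.
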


The proposition allows us to give the following definition:
\begin{definition}\label{resumdef}
We  define $\alpha(x)$ for any $x \in I$ by 
\begin{equation}\label{alpdef}\alpha(x)= - \sum_{i=1}^{\infty} \biggl ( \frac{1}{(f^{T_i})'(x)} w_{S_{i}-T_i}(f^{T_i}(x))+ \frac{1}{(f^{S_{i-1}})'(x)} w_{T_{i}-S_{i-1}}(f^{S_{i-1}}(x)) \biggr ) \, .
\end{equation}
\end{definition}

If the formal
series \eqref{defa} is absolutely convergent at $x$, then \eqref{alpdef}
is just the sum of this series.
If $f^j(x)=c$ for some $j\ge 0$, minimal with
this property, then our notation ensures that  \eqref{alpdef} is just the finite sum 
\eqref{defa'}. In both these cases, $\alpha(x)=\alpha_{cand}(x)$.

\begin{proof}[Proof of Proposition \ref{abs}.] 
Choose $\delta$ small enough, as in Lemma \ref{expiii},
and $C_1(\rho)$,
$\sigma> 1$, and $c(\delta)$ from    Lemma \ref{est1a}.

For any $i \ge 1$ so that $S_{i-1}< \infty$,  since
 $T_{i}-S_{i-1}=T_1(f^{S_{i-1}}(x))$,   Lemma \ref{expiii} 
implies
 \begin{align}
\nonumber |w_{T_i-S_{i-1}}(f^{S_{i-1}}(x))|
&\leq \sum_{\ell=0}^{T_i-S_{i-1}-1} \biggl | \frac{v(f^\ell(f^{S_{i-1}}(x)))}{(f^{\ell+1})'(f^{S_{i-1}}(x))}\biggr |\\
\label{first'}&\le \frac{C }{c(\delta)(1-\sigma^{-1})} \sup |v|\, , \, \, 
 \forall  i\ge 0 \, .
 \end{align}
(Note that $T_i=\infty$ is allowed in the previous estimate.)

If $S_i=\infty$ then either $f^{T_i}(x)=c$
or $T_i=\infty$, in both cases
$w_{S_i-T_i}(f^{T_i}(x))=0$. 
Next, we  claim that whenever $S_i\ne \infty$, we have,
\begin{equation}\label{est2beq} |w_{S_i-T_i}(f^{T_i}(x))|
\le  C \max \{ \sup |v|, \Lip X\}  
\frac{e^{2\gamma (S_i-T_i)}}{| (f^{S_i-T_i-1})'(c_1)|^{1/2}}
\, ,
\forall i \ge 0 \, .
\end{equation}
We shall prove \eqref{est2beq}, which requires  horizontality as well as
the key Proposition \ref{ubalpha}  and its Corollary~ \ref{errorii},
at the end of this proof.

Putting together \eqref{first'} and \eqref{est2beq}, and recalling
Lemma \ref{expiii}, we find the following upper bound
for (\ref{absser}) :
$$
\sum_{i=1}^\infty
\frac{C \rho^{-S_{i-1}}}{ c(\delta)(1-\sigma^{-1})} \sup |v|
+  C \frac{\rho^{-T_{i}}}{C_1(\rho)} \max (\sup |v|, \Lip X )  \frac{e^{2\gamma (S_i-T_i)}}{| (f^{S_i-T_i-1})'(c_1)|^{1/2}}
\, ,
$$
Using again \eqref{BeC}, we are done.

It remains to prove \eqref{est2beq}.
The definitions imply   $f^{T_i}(x)\in I_j$ for  $j=S_i-T_i$, for all $i\ge 1$. So it suffices to show that
\begin{equation}\label{est2beq'} |w_{j}(y)|
\le  C \max ( \sup |v|, \Lip X) 
\frac{e^{2\gamma j}}{| (f^{j-1})'(c_1)|^{1/2}}\, , \quad
\forall y \in I_j \, .
\end{equation}
We shall use the decomposition
\begin{align}
\nonumber |w_j(y)|
\leq& \underbrace{ \frac{1}{|f'(y)|}
\biggr | \sum_{k=0} ^{j-1} \frac{v(c_k)}{(f^k)'(c_1)} \biggl |}_{I}
+ \underbrace{\frac{1}{|f'(y)|} \sum_{k=0}^{j-1} |v(c_k)|\biggl  |\frac{1}{(f^{k})'(f(y))}- \frac{1}{(f^{k})'(c_1)}\biggr |}_{II}\\
\label{I-II-III}&\qquad\qquad\qquad + \underbrace{\frac{1}{|f'(y)|}
\sum_{k=0}^{j-1}
\biggl  |\frac{v(f^{k}(y))-v(c_{k})}{(f^{k})'(y)} \biggr |}_{III} \, .
\end{align} 
We first consider $I$.
By the horizontality condition \eqref{hor} for $v$, we have
$$ 
\sum_{k=0}^{j-1} \frac{v(c_k)}{(f^k)'(c_1)} + \frac{1}{(f^j)'(c_1)} \sum_{k=j}^\infty \frac{v(c_k)}{(f^{k-j})'(c_{j+1})}=0 \,    .
$$
Therefore, \eqref{eight'} in  Lemma~\ref{sizeIj}
and  Proposition \ref{ubalpha} imply
\begin{align}
\label{discrep} \nonumber
 I&=
\frac{1}{|f'(y)|}\frac{1}{|(f^{j})'(c_1)|} 
\biggl |  \sum_{k=j}^\infty \frac{v(c_k)}{(f^{k-j})'(c_{j+1})}  \biggr |
\\
\nonumber 
&\leq C 
 e^{\gamma j } | (f^{j-1})'(c_1)|^{1/2}\frac{1}{| (f^{j})'(c_1)|} \sup |v| e^{\gamma j}
\\&
\le C \sup |v| \frac{e^{2\gamma j}}{| (f^{j-1})'(c_1)|^{1/2}} \, .
 \end{align}

Next, by Corollary \ref{errorii} 
we find
\begin{align}\label{lastt}
\nonumber II & \le \sup |v| \frac{1}{|f'(y)|}
\sum_{k=0}^{j-1}
\biggl |\frac{1}{(f^{k})'(f(y))}- \frac{1}{(f^{k})'(c_1)} \biggr | \\
 & \leq   C \sup |v| \frac{e^{5\gamma j/4}}{|(f^{j-2})'(c_1)|^{1/2}} \, .
\end{align}

Recalling our assumption $v=X\circ f$, we consider now
\begin{equation}
III=\frac{1}{|f'(y)|}
\sum_{k=0}^{j-1}
\biggl  |\frac{X(f^{k+1}(y))-X(c_{k+1})}{(f^{k})'(y)} \biggr |\, .
\end{equation}
Since $X$ is Lipschitz and $f$ is $C^1$,  for any $0\le k \le j-1$
there
exists $z$ between $c_1$ and $f(y)$ so that
\begin{align}\label{mvt}
|X(f^{k+1}(y))-X(f^{k+1}(c))|&
\le \Lip(X)| f^{k}(f(y))-f^{k}(c_1)|
\\ \nonumber 
&\le \Lip(X) | f(y)-c_1| |(f^{k})'(z)|\, .
\end{align}
Then \eqref{mvt}, together with   \eqref{infprodgena} from the proof
of Lemma ~\ref{cd} (recall $k\le j-1$ and $y\in I_j$), imply 
\begin{align*}
 \frac{|X(f^{k+1}(y))-X(f^{k+1}(c))|}{|(f^{k})'(c_1)|}
&\le \Lip X |f(y)-c_1|\sup_{z \in [f(y),c_1]} \frac{| (f^{k})'(z)|}
{(f^k)'(c_1)}\\
&\le C  \Lip X  |f(y)-c_1| \\
&\le C  \Lip X |y-c| \sup_{z \in [c,y]} |f'(z)|\, .
\end{align*}
Since $|f'(y)|\ge C |y-c|$ and $|f'(z)|\le C |z-c|$,
the bound \eqref{upbdx} in Lemma~\ref{sizeIj} gives
\begin{align}\label{needv'zero}
\nonumber III & \le C \frac{j}{|f'(y)|}  \Lip X |y-c|  \sup_{z \in [0,y]}|f'(z)|
 \le C j \Lip X   \sup_{z \in [0,y]} |z-c| \\
&  \le C j  \Lip X
 e^{-\frac{3 \gamma (j-1)}{4} } | (f^{j-2})'(c_1)|^{-1/2}
  \le C  \Lip X\frac{1}{| (f^{j-2})'(c_1)|^{1/2} }
 \, .
\end{align}
Putting  (\ref{discrep}),
(\ref{lastt}),  and (\ref{needv'zero}) together, we get (\ref{est2beq'}).
\end{proof}

\subsection{\label{alphac} Proof of Theorem ~ \ref{alphabded}: 
Continuity of $\alpha$ and checking the TCE}

We prove that Proposition ~\ref{abs} implies Theorem ~ \ref{alphabded}:

\begin{proof}[Proof of Theorem ~ \ref{alphabded}]
The so-called subhyperbolic case when the critical point is preperiodic  is  
easier and left to the reader.
For small enough $\delta$ (recall Sections ~ \ref{tower} and \ref{tower'}),
we construct a tower map
and associated times $T_i(x)$ and $S_i(x)$.

To show the uniqueness statement for bounded $v$, suppose that 
$\beta: I \to \complex$ is a bounded  function 
such that $v= \beta\circ f - f' \cdot \beta$ on $I$.
It is easy to see that for every $x$ and $n\ge 1$ such that 
$(f^n)'(x)\neq 0$ we have 
\begin{equation}\label{ite}
 \beta(x)= -\sum_{i=0}^{n-1} \frac{v(f^i(x))}{(f^{i+1})'(x)} 
 + \frac{\beta(f^n(x))}{(f^{n})'(x)} \, .
\end{equation}
If $f^n(x)\neq c$ for every $n$,  Remark \ref{liminf} implies that 
$
\limsup_n |(f^n)' (x)| = \infty 
$, so that there exists\footnote{Note that if $T_i(x)<\infty$
for all $i$, we can take $n_i(x)=T_i(x)$.}
$n_i(x)\rightarrow_i \infty$ with $\lim_k |(f^{n_i(x)})'(x)|=\infty$.
Since $\beta$ is bounded, it 
follows from (\ref{ite}) that
\begin{equation}\label{form2}
 \beta(x)=-\lim_{i\to \infty} \sum_{j=0}^{n_i} \frac{v(f^j(x))}{(f^{j+1})'(x)} \, .
\end{equation}
This proves that $\beta$ is uniquely defined on $\{ x \mid f^n (x)\ne c \, , \forall
n\}$. In particular, $\beta(c_1)=\alpha_{cand}(c_1)$, so that $v$ is horizontal
(using the TCE, that $f'(c)=0$, and that $\beta$ is bounded).

Now, if $f^i(x)\neq c$ for  $0\le i < n$ and $f^n(x)=c$,  then 
\eqref{ite} and $\beta(c)=0$ give
\begin{equation}\label{form1} 
\beta(x)= -\sum_{i=0}^{n-1} \frac{v(f^i(x))}{(f^{i+1})'(x)} \, .
\end{equation}
Therefore, $\beta(x)=\alpha_{cand}(x)$. This ends the proof of uniqueness.

\smallskip
From now on, we assume that $v=X \circ f$
with $X$ Lipschitz.
If $v$ is horizontal,  Proposition ~\ref{abs}
implies that the function $\alpha(x)$ defined by the series  (\ref{alpdef})
is bounded uniformly in $x \in I$. It remains to show that
$\alpha$ is continuous and satisfies the TCE.

The definitions easily imply 
that for every  $x \in I$ and all $i \ge 1$ the following limits exist:
\begin{align*}
&S_i^{+}(x)=\lim_{y\rightarrow x^+} S_i(y)\, ,
\quad S_i^{-}(x)=\lim_{y\rightarrow x^-} S_i(y)\, , 
\\ &
T_i^{+}(x)=\lim_{y\rightarrow x^+} T_i(y)\, ,
\quad  T_i^{-}(x)=\lim_{y\rightarrow x^-} T_i(y) \, .
\end{align*}

Writing $T_i^\pm$ and $S^\pm_i$ for $T_i^\pm(x)$ and $S^\pm_i(x)$, define
$$
\alpha^\pm(x):=-\sum_{i=1}^{\infty}  
\biggl ( \frac{1}{(f^{T_i^\pm})'(x)} w_{S_{i}^\pm-T_i^\pm}(f^{T_i^\pm}(x)) + 
\frac{1}{(f^{S_{i-1}^\pm})'(x)} w_{T_{i}^\pm-S_{i-1}^\pm}(f^{S_{i-1}^\pm}(x))\biggl )\, .
$$
We claim that for every $x \in I$ we have 
\begin{equation}\label{claimm}
 \lim_{y\rightarrow x^+}\alpha(y)= \alpha^+(x) \, ,
\mbox{ and } \lim_{y\rightarrow x^-}\alpha(y)= \alpha^-(x) \, .
\end{equation}
We shall  show \eqref{claimm} at the end of the proof of this theorem.

Let now $\SS$ be the set of  $x\in I$  so that there exists $\ell \ge 0$
with $\hat f^\ell(x,0) \in \partial E_k$ for some $k\ge 1$, or
$\hat f^\ell(x,0) =(\pm \delta,0)$, or $\hat f^\ell(x,0)=(c, 0)$. 
Clearly, if $x \not\in \SS$,  then $S_i(x)=S_i^+(x)=S_i^-(x)$ and 
$T_i(x)=T_i^+(x)=T_i^-(x)$, for every $i \ge 1$. Consequently 
$\alpha$ is continuous at $x \notin \SS$. 
If $x \in \SS$ but $\hat f^\ell(x,0)\ne (c, 0)$ for
all $\ell \ge 0$, then  the conditions on
$\{e_k\}$ in Section ~\ref{tower} imply that the series \eqref{defa}
converges absolutely at $x$.  If $\hat f^\ell(x,0)= (c, 0)$ for
some $\ell \ge 0$, then $\alpha(x)$ is the finite sum \eqref{defa'}.
Let now $x \in \SS$. The three series, or finite sums,  which 
define $\alpha(x)$, 
$\alpha^+(x)$, and $\alpha^-(x)$ are obtained by grouping together
in different ways the terms of the absolutely convergent series
\eqref{defa}, or of the sum
\eqref{defa'}. Therefore, 
$\alpha(x)=\alpha^+(x)=\alpha^-(x)$, and \eqref{claimm} implies that
$\alpha$ is continuous at $x$.

\smallskip
To show that $\alpha$ satisfies the twisted cohomological equation,
note that if $x$ is a repelling periodic point then $f^\ell(x)\ne c$ for all
$\ell$, and
the series  \eqref{defa} 
is absolutely convergent at $x$. Therefore, this series coincides with $\alpha(x)$.  
In particular one can easily check that $v(x)= \alpha(f(x))-f'(x)\alpha(x)$
for  repelling periodic points $x$. Since all periodic points of a Collet-Eckmann map are
repelling, since  the set of periodic points is dense,  and
since $\alpha$ is continuous, it follows that $\alpha$  satisfies 
the twisted cohomological equation everywhere.

The fact that $\alpha(x)=\alpha_{cand}(x)$ for all
$x$ so that $f^j(x)=c$ or such that the series $\alpha_{cand}(x)$ converges absolutely
follows from the remark after Definition ~ \ref{resumdef}.

\medskip
It remains to prove \eqref{claimm}.
We shall consider the limit as $y$ approaches $x$ from above (the proof of the other 
one-sided limit  is identical).  

Before we start, note that for any $\epsilon>0$, the uniform
constants and exponential rates in the proof of Proposition~\ref{abs}
(see \eqref{first'}, where $T_i=\infty$ is allowed, and \eqref{est2beq},
\eqref{star'})
imply that there is $n_0=n_0(\epsilon)\ge 1$ such that for all $x \in I$
\begin{equation}\label{tail}
 \sum_{i\colon T_i  \ge n_0 } \frac{1}{|(f^{T_i})'(x)|}| w_{S_{i}-T_i}(f^{T_i}(x))|
+\sum_{i\colon S_i \ge  n_0} \frac{1}{|(f^{S_i})'(x)} |w_{T_{i+1}-S_i}(f^{S_i}(x))|
 < \frac{\epsilon}{4}\, .
\end{equation}

Fix $x \in I$, and let
$T_i=T_i(x)$, $S_i=S_i(x)$, $T_i^+=T_i^+(x)$, $S_i^+=S_i^+(x)$. 
There are three cases to consider to prove \eqref{claimm} when $y \downarrow x$.
The first case occurs  when  $T_i < \infty$ and  $S_i < \infty$ for every $i$. 
This means that the forward $\hat f$-orbit of $x$  never hits the critical point $(c,0)$
and never gets trapped inside the base $E_0$. Then observe that there exists  $\epsilon_1>0$ 
such that if $x< y< x+\epsilon_1$ then, for every $i$ so that $S_i^+ < n_0$, we have  
$S_i(y)=S_i^+$, and  for every $i$ so that $T_i^+(x) < n_0$, we have $T_i(y)=T_i^+$. 
Clearly, for any $n_0 \ge 1$, the function
\begin{align*}
\tilde{\alpha}(y)=\tilde \alpha_{n_0}(y)&= -\sum_{i\colon T_i^+< n_0}
 \frac{1}{(f^{T_i^+})'(y)} w_{S_{i}^+-T_i^+}(f^{T_i^+}(y))
\\ 
&\qquad \qquad\qquad 
-\sum_{i\colon S_i^+ < n_0} \frac{1}{(f^{S_i^+})'(y)} 
w_{T_{i+1}^+(x)-S_i^+}(f^{S_i^+}(y))
\end{align*}
is continuous on $[x,x+\epsilon_1)$.
So for any $\epsilon >0$ there exists $0<\epsilon_2 < \epsilon_1$ (depending also on $n_0$) such that 
if $x\leq y< x+\epsilon_2$ then 
$|\tilde{\alpha}(y)-\tilde{\alpha}(x)|< \epsilon/2$.  
Clearly, $\tilde \alpha(x)$ is just the $n_0$-truncation of $\alpha^+(x)$ 
while the observation above implies that
$\tilde\alpha(y)$ is the $n_0$-truncation of $\alpha(y)$.
Thus, taking 
$n_0(\epsilon)$ as in  \eqref{tail}, we get   that
$$
|\alpha(y)-\alpha^+(x)|< |\tilde{\alpha}(y)-\tilde{\alpha}(x)| + \frac{2\epsilon}{4} < \epsilon\, ,\quad
\forall y \in [x, x+\epsilon_2)\, .
$$

The second case occurs when the forward $\hat f$-orbit of $x$ gets trapped in the first
level $E_0$. That is, there  exists  $i_0\ge 0$  such that $S_{i_0}< \infty$ but 
$T_{i_0+1}(x)=\infty$. Then, for any $n_0 \ge 1$, there exists  $\epsilon_1>0$ such that if $x< y < x+ \epsilon_1$ 
then    $S_i(y)=S_i^+$ and 
$T_i(y)=T_i^+$ for every $i \leq  i_0$, and $T_{i_0+1}(y)\geq n_0$. 
Clearly, the function
\begin{align*}&
\tilde{\alpha}(y)= -\sum_{i <i_0} \frac{1}{(f^{S_i^+(x)})'(y)} w_{T_{i+1}^+-S_i^+}(f^{S_i^+}(y)) 
- \frac{1}{(f^{S_{i_0}})'(y)} w_{n_0-S_{i_0}^+}(f^{S_{i_0}^+}(y)) \\
& \qquad\qquad\qquad\qquad\qquad
- \sum_{i \leq  i_0} \frac{1}{(f^{T_i^+})'(y)} w_{S_{i}^+-T_i^+}(f^{T_i}(y))  
\end{align*}
is continuous on $[x,x+\epsilon_1)$.  Choose $\epsilon_2 < \epsilon_1$ such that 
if $x\leq y < x+\epsilon_2$ then $|\tilde{\alpha}(y)-\tilde{\alpha}(x)|< \epsilon/4$. 
Using again the uniformity of the estimates in the proof
of Proposition~\ref{abs} (in particular of the exponentially decaying
term of the series \eqref{first'} for $i=i_0$), it is easy to see that
if $n_0$ is large enough then  $|\alpha(y)-\alpha^+(x)|< \epsilon$
for $y \in [x, x+\epsilon_2)$.

The third  case occurs when   $\hat f^{i_0}(x,0)=(c,0)$
for some $i_0\ge 0$.  That is, there  exists  $i_0\ge 1$  such that $T_{i_0}(x)<\infty$ but $S_{i_0}(x)=\infty$,
and $\alpha(x)$ is just a finite sum. 
(If $x=c$ then $i_0=1$.)
Then there exists  $\epsilon_2>0$ such that if 
$x< y < x+ \epsilon_2$ then  
$T_i(y)=T_i^+$ and   $S_{i-1}(y)=S_{i-1}^+$  for every $i  \leq i_0$, 
while $S_{i_0}(y)\geq n_0$. 
To finish, define
\begin{align*}
\tilde{\alpha}(y)&= -\sum_{i <i_0} \frac{1}{(f^{S_i^+})'(y)} 
w_{T_{i+1}^+-S_i^+}(f^{S_i^+}(y)) 
- \sum_{i < i_0}
 \frac{1}{(f^{T_i^+})'(y)} w_{S_{i}^+-T_i^+}(f^{T_i^+}(y)) \, .
\end{align*}
and adapt the arguments from the
first two cases. 
This ends the proof of  \eqref{claimm}
and of Theorem ~ \ref{alphabded}.
\end{proof}


\subsection{Divergence of the formal power series  (Proposition ~ \ref{div})}
\label{prdiv}

In this final subsection, we show that the formal power series  $\alpha_{cand}(x)$ diverges
for many $x$. 

\begin{proof}[Proof of Proposition ~ \ref{div}] It is enough to show that the set of points such that 
$\limsup_i \big|\frac{v(f^i(x))}{(f^{i+1})'(x)}\big| > 0$
has the desired property. We shall build a decreasing sequence of closed sets 
$$A\supset \mathcal{K}_n \supset \mathcal{K}_{n+1} \, ,
$$
where each connected component of $\mathcal{K}_n$ is a closed interval with positive length, and a sequence of functions
$$
\Gamma_{n+1}: \{ A\mid  \mbox{ $A$  connected  comp.  of } \KK_n\}\to \PP(\{1,\dots,n+i_0\}) \, ,
$$
where $\PP(\{1,\dots,n\})$ stands for the family of all subsets of $\{1,\dots,n+i_0\}$, with the following properties:
\begin{itemize}
 \item[i.] If $C$ is a connected component of $\KK_n$ and $x \in C$, then 
$\big|\frac{v(f^{j}(x))}{(f^{j+1})'(x)}\big|\geq 2$
for every $j \in \Gamma_n(C)$.
\item[ii.] If $C$ is a connected component of $\KK_n$, then $f^n$ is a diffeomorphism on $C$.
\item[iii.] If $C_{n+1} \subset C_n$ are connected components of $\KK_n$ and $\KK_{n+1}$ respectively, then $\Gamma_n(C_n)\subset \Gamma_{n+1}(C_{n+1})$.
\item[iv.] If $C_n$ is a connected component of $\KK_n$, then there exists  $m > n$ such that $\KK_m$ has at least two connected components contained in $C_n$.
\item[v.]  If $x \in \cap_n C_{n}$, where $C_n$ is a connected component of $\KK_n$, then 
$\{x\}=\cap_n C_{n}$ and $\lim_n \# \Gamma_n(C_n)=\infty$.
\end{itemize}
Note that (iv) and (v) imply that $\cap_n \KK_n$ is a Cantor set. If $x \in \cap_n C_n$ then 
$$\big|\frac{v(f^{j}(x))}{(f^{j+1})'(x)}\big|\geq 2
$$
holds for every $j \in \cup_n \Gamma_n(C_n)$. Due to (v), there are infinitely many $j$'s.
Denote
$$\mathcal{O}(c)=\{x \in I\mid \ f^i(x)=c, \mbox{ for  some } i\geq 0\}\, .
$$
Let $\KK_0 \subset A$ be a closed interval $[a,b]$, $a\not= b$, with $a,b\not\in \mathcal{O}(c)$, and $\Gamma_0(\KK_0)=\emptyset$. Suppose that we have defined $\KK_n$. Let $C$ be a connected component of $\KK_n$. If 
$c \not\in f^n(C)$ then $C$ is the unique connected component of $\KK_{n+1}$ 
which intersects $C$ and $\Gamma_{n+1}(C)=\Gamma_{n}(C)$. Otherwise, let $x \in C$ be such that $f^n(x)=c$. Since $c$ is the critical point we have $(f^{n+i_0+1})'(x)=0$. 
Moreover $v(f^{n+i_0}(x))=v(f^{i_0}(x))\not=0$, so 
$$\lim_{y\rightarrow x} \big| \frac{v(f^{n+i_0}(y))}{(f^{n+i_0+1})'(y)}\big|=\infty\, .
$$

Let  $\epsilon > 0$ be such that if $0< |y-x| \leq  \epsilon$ then $y \in C$ and 
$$\big| \frac{v(f^{n+i_0}(y))}{(f^{n+i_0+1})'(y)}\big|\geq 2 \, .
$$

Choose two closed disjoint  intervals $J_1$ and $J_2$  with positive lengths, such that $J_1\cup J_2 \subset  (x-\epsilon,x+\epsilon)\setminus \{x\}$  and $\mathcal{O}(c)\cap \partial (J_1\cup J_2) =\emptyset$. Then $J_1$  and $J_2$ will be the unique connected components of $\mathcal{K}_{n+1}$ that intercept $C$ and 
$\Gamma_{n+1}(J_1)=\Gamma_{n+1}(J_2)= \Gamma_n(C)\cup\{n+i_0\}$. Note that in this case 
$\Gamma_{n+1}(J_1)=\Gamma_{n+1}(J_2) \varsupsetneqq  \Gamma_n(C)$.

Properties (i)--(iii) follow from the definition of $\mathcal{K}_n$. To show (iv) and (v), consider 
$C_\infty=\cap_n C_n$, 
where the $C_n$ are the connected components of $\mathcal{K}_n$. The set $C_\infty$ is either a closed interval of positive length or $\{x\}$. In the first case, in particular we have that $f^n$ is a diffeomorphism on $C_\infty$, for every $n$. This is not possible, since $f$ has neither  wandering intervals nor periodic attractors.  Furthermore, note that if  $\lim_n \#\Gamma_n(C_n) < \infty$, then there exists $n_0$ such that $\Gamma_n(C_n)=\Gamma_{n_0}(C_{n_0})$ for every $n\geq n_0$, so by the construction of $\mathcal{K}_i$ this occurs only if $f^i$ is a diffeomorphism on $C_{n_0}$ for every $i$, which is not possible, as we saw above.  The proof of (iv) is similar. If (iv) does not hold for certain $C_{n_0}$, then by the construction of $\mathcal{K}_{i}$ we have that $f^i$ is a diffeomorphism on $C_{n_0}$ for every $i$, which contradicts the non-existence of  wandering intervals and periodic attractors.
\end{proof}


\section{Transfer operators $\widehat \LL$ and their spectra}\label{spectralstuff}

In this section, we study a transfer operator associated to a
Collet-Eckmann $S$-unimodal map $f$
 satisfying the Benedicks-Carleson condition.
More precisely,
in Subsection~\ref{gap},  we introduce a
Banach space $\BB=\BB^1$ of smooth ($H^1_1$) functions
(see Definition ~\ref{Bspace}) on
the tower $\hat I$ defined
in the previous section, maps $\Pi : \BB \to L^1(I)$ (see \eqref{defproj}),
as well as a transfer operator $\widehat \LL$ acting on $\BB$
(Definition ~\ref{opp}).
We shall prove that $\widehat \LL$ has essential spectral radius strictly
smaller than $1$ (Proposition ~ \ref{mainprop}), and then 
(Proposition ~\ref{acip})
that $1$ is a simple eigenvalue, and  that the fixed point
$\hat \phi$ of $\widehat \LL$ is such that $\Pi (\hat \phi)$ is the invariant density of $f$.
In Subsection~\ref{trunk}, we present results of truncated versions of
$\widehat \LL$, acting on finite parts of the tower.

The methods in this section are inspired from \cite{BV}, but we would
like to point out here that nontrivial modifications were needed
in view of proving Theorem~\ref{linresp}. (See
Remark~\ref{contrad} below for the comparison with \cite{ruelle}.) 
Also, the transfer operator we use here is slightly different from the
one in \cite{BV}: First, and this is the most
original ingredient, we introduce a smooth cutoff function
$\xi_k$ at each level on which there exist points which ``fall"
to the ground level; second, we do not compose with the
dynamics until we fall (this  strategy was
used by L.S. Young in \cite{young92}, \cite{young98}).
Finally,
our Banach spaces $\BB$ are not exactly the same as the space $\widehat \BB$
used in \cite{BV}: The functions in $\BB$ are smooth and locally supported
at each level (this is possible in view of the other two changes). These are the main new ideas in
the present section, and they allow to circumscribe the effect of the 
discontinuities and square root singularities
(called ``spikes" in \cite{ruelle}) to the maps $\Pi$ and $\Pi_t$
(see Step 3 in
the proof of Theorem~ \ref{linresp}).
See also the comments after Definition~\ref{opp}.

 \subsection{Spectral gap for a transfer operator $\widehat \LL$ associated
to the tower map}\label{gap}

Let 
 $f$ be a $(\lambda_c, H_0)$-Collet-Eckmann $S$-unimodal map, with
a non preperiodic critical point (the preperiodic case is easier and
left to the reader) satisfying the Benedicks-Carleson condition
\eqref{BeC}. (We shall need to strengthen the condition slightly later
on, see \eqref{stBeC}, \eqref{BeCxtra}, and also
\eqref{bd2}.)
We consider the tower map $\hat f : \hat I \to \hat I$ from 
Section ~\ref{tower},
for some small enough fixed $\delta$.
We shall not require the fact that the Lyapunov exponents
of $\pm \delta$, or
of the endpoints $a_k$ and $b_k$ of the tower levels $B_k$, are positive,
and we {\it remove}
\footnote{This additional
freedom will also be used in Subsection~\ref{Lt}.} this assumption.
(The positivity of the exponents was
useful only when proving that $\alpha_t$
is continuous, and one can use  different towers
for  $f_t$ when studying $\alpha$ and when considering
the transfer operator.)
In particular, we may take for all $k \ge H_0$
$$
B_k=[c_k-e^{-\beta_1k}, c_k+ e^{-\beta_1 k}]\, .
$$

The following refinement of the estimates
in Subsection ~ \ref{tower'} will play an important part in our argument (see Proposition~\ref{mainprop},
and---in view of \eqref{defproj}---Proposition ~\ref{acip},
see also Step 2 in the proof of
Theorem~\ref{linresp}):

\begin{lemma}\label{rootsing}
Let $f$ be an $S$-unimodal $(\lambda_c,H_0)$-Collet-Eckmann map satisfying the  Benedicks-Carleson condition \eqref{BeC},  and
with a non-preperiodic critical
point. 
Then there exists $C$ so that for any $k \ge 1$, we have
\begin{equation}\label{rootsing1}
\frac{1}{|(f^k)'(f^{-k}_\pm(x))|}  \le 
C \frac{1}{|(f^{k-1})'(c_1)|^{1/2}\sqrt{|x-c_k|}  } \, ,
\quad \forall x \in \pi(E_k\cap \hat f^k(E_0)) \, .
\end{equation}

In addition, recalling the intervals $I_k$ defined in
\eqref{ints}, we have for any $k\ge H(\delta)$
\begin{equation}
\label{rootsing2}
\sup_{x \in f^k(I_k)}
\biggl |
\partial_x \frac{1}{|(f^k)'(f^{-k}_\pm(x))|}
\biggr | \le C  \frac{e^{3 \gamma k}}
{|(f^{k-1})'(c_1)|^{1/2}}\, ,
\end{equation}
and
\begin{equation}
\label{rootsing3}
\sup_{x \in f^k(I_k)}
\biggl |
\partial^2_x \frac{1}{|(f^k)'(f^{-k}_\pm(x))|}
\biggr | \le C  C  \frac{e^{5 \gamma k}}
{|(f^{k-1})'(c_1)|^{1/2}} \, .
\end{equation}
\end{lemma}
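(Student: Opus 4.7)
The plan is to factorise $f^k = f^{k-1}\circ f$ on the relevant domain, use bounded distortion of $f^{k-1}$ on $f(I_k)$ from Lemma~\ref{cd}, and reduce the estimates to the quadratic Taylor expansion of $f$ at $c$ combined with the key derivative-sum estimate of Proposition~\ref{ubalpha} (which itself extracts from the Benedicks-Carleson condition the bound $1/|(f^{k-1-i})'(c_{i+1})| \le C e^{\gamma i}$).

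For \eqref{rootsing1}, I set $y := f^{-k}_\pm(x)$ and $u := f(y)$. Lemma~\ref{cd} gives $|(f^{k-1})'(u)| \asymp |(f^{k-1})'(c_1)|$; the quadratic shape of $f$ at $c$ gives $|f'(y)| \asymp |y-c|$ and $|u - c_1| \asymp (y-c)^2$; and the mean value theorem together with distortion produces $|x - c_k| = |f^{k-1}(u) - f^{k-1}(c_1)| \asymp |(f^{k-1})'(c_1)|(y-c)^2$. Solving for $|y-c|$ in terms of $|x-c_k|$ and substituting into $(f^k)'(y) = (f^{k-1})'(u) f'(y)$ immediately yields \eqref{rootsing1}.

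For \eqref{rootsing2} and \eqref{rootsing3}, I set $h(x) := 1/|(f^k)'(f^{-k}_\pm(x))|$ and differentiate via $(f^{-k}_\pm)'(x) = 1/(f^k)'(f^{-k}_\pm(x))$ to get $h'(x) = \mp (f^k)''(y)/|(f^k)'(y)|^3$ and $h''(x) = -(f^k)'''(y)/|(f^k)'(y)|^4 + 3((f^k)''(y))^2/|(f^k)'(y)|^5$. I then expand $(f^k)''(y)$ and $(f^k)'''(y)$ through the chain rule applied to $f^k = f^{k-1}\circ f$, and estimate each resulting piece using \eqref{rootsing1}, Lemma~\ref{cd}, and the lower bound $|y - c| \ge C^{-1} e^{-\gamma k}|(f^{k-1})'(c_1)|^{-1/2}$ from \eqref{eight'}. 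The crucial pointwise bounds $|(f^{k-1})''(u)|/|(f^{k-1})'(u)|^2 \le C e^{2\gamma k}$ and $|(f^{k-1})'''(u)|/|(f^{k-1})'(u)|^3 \le C e^{4\gamma k}$ on $f(I_k)$ follow from the telescoping identity $(f^{k-1})''/(f^{k-1})' = \sum_{i=0}^{k-2} f''(f^i(u))(f^i)'(u)/f'(f^i(u))$ (and its derivative, for the third-order estimate), with each term controlled by \eqref{BeC} (giving $|f'(c_{i+1})| \ge C^{-1} e^{-\gamma(i+1)}$), Lemma~\ref{cd} (giving $|(f^i)'(u)| \asymp |(f^i)'(c_1)|$), and Proposition~\ref{ubalpha}. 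Keeping $|y - c|$ as a variable throughout and using $|y - c|^j \ge C^{-j} e^{-\gamma j k}|(f^{k-1})'(c_1)|^{-j/2}$ for $j = 3, 5$ produces the desired bounds $C e^{3\gamma k}/|(f^{k-1})'(c_1)|^{1/2}$ and $C e^{5\gamma k}/|(f^{k-1})'(c_1)|^{1/2}$, respectively.

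The main obstacle is the pointwise control of the higher-order non-linearities of $f^{k-1}$ on $f(I_k)$: bounded distortion from Lemma~\ref{cd} only compares $|(f^{k-1})'|$ at different points, and the telescoping sum for $(f^{k-1})''/(f^{k-1})'$ contains terms $1/|(f^{k-1-i})'(c_{i+1})|$ which can grow exponentially in $k - i$ with rate close to the Collet-Eckmann rate. The resummation performed in Proposition~\ref{ubalpha} (mirroring the dynamical resummation of $\alpha_{cand}$ from Section~\ref{resume}) is what tames these sums and produces the sharp exponents $3\gamma k$ and $5\gamma k$.
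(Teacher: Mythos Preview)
Your proposal is correct and follows essentially the same approach as the paper. The only organisational difference is that you first factorise $f^k = f^{k-1}\circ f$ and then expand the log-derivative of $f^{k-1}$, whereas the paper expands the log-derivative of $f^k$ directly via
\[
\partial_y \frac{1}{|(f^k)'(y)|}
=\sum_{j=0}^{k-1}\frac{f''(f^j(y))}{(f^{k-j})'(f^j(y))\,f'(f^j(y))}\cdot\frac{1}{|(f^k)'(y)|}\, ,
\]
and uses $\partial_x h = h\cdot\partial_y(1/|(f^k)'(y)|)$ (your $j=0$ term is exactly the piece you isolate as $f''(y)/|f'(y)|^3$); both routes invoke the same three ingredients---Lemma~\ref{cd}, the bound $|f'(f^j(y))|\ge C^{-1}e^{-\gamma j}$ from \eqref{BeC}, and Proposition~\ref{ubalpha}---and produce identical exponents.
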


\begin{proof}
We consider the case $\varsigma=+$, the other case is identical.
Let us first show \eqref{rootsing1}.
Putting $z= f^{-(k-1)}_+(x)$, we decompose 
$$|(f^k)'(f^{-k}_+(x))|
= |(f^{k-1})'(z)| |f' (f^{-k}_+(x))|\, .
$$
By Lemma ~ \ref{cd}, the first factor 
can be estimated by
\footnote{The constant  $C$  depends on $H_0$, by
Lemma~\ref{est1aunif} it will be uniform within our families $f_t$.}
\begin{equation}\label{firstt}
|(f^{k-1})'(z)| \ge C^{-1} |(f^{k-1})'(c_1)|\, .
\end{equation}
For the second factor, we have
$$
|f' (f^{-k}_+(x))| \ge C^{-1} |f^{-k}_+(x)| \, .
$$
Put $w = f(f^{-k}_+(x))$.  Then, Lemma ~ \ref{cd} and the mean value
theorem imply
$$
|w-c_1| \ge   \frac{|x-c_k|} {C |(f^{k-1})'(c_1)| }\, .
$$
Next, noting that
$w \in \pi (E_1\cap \hat f(E_0))$, we have
$$
|f^{-k}_+(x)| =| f^{-1}_+ (w)|\ge C^{-1} \sqrt {|w-c_1|}
$$
(Just use that $f(y)=c_1 + f''(c) y^2 + f^{'''}(\tilde y) y^3$
for some $|\tilde y |\le \delta$,
if $|y|\le \delta$.)
The three previous inequalities give
\begin{equation}\label{secondd}
|f' (f^{-k}_+(x))| \ge C^{-1}\frac{ \sqrt {|x-c_k|}}{|(f^{k-1})'(c_1)|^{1/2}}\, .
\end{equation}
Putting together \eqref{firstt} and \eqref{secondd}, we get \eqref{rootsing1}.

To prove \eqref{rootsing2}, we first note that  there is
$C \ge 1$ so that 
\begin{equation}\label{firstnote}
|f'(f^j(y))|\ge C^{-1} e^{-\gamma j}\, , \, 
\forall y\in I_k\, , \, \forall 1 \le j \le k\, .
\end{equation}
Indeed, $|f^j(y)-c_j|\le e^{-\beta_1 j}$ and $|c_j-c| \ge e^{-\gamma k}$ for $j \ge H_0$,
with $\beta_1 \ge 3\gamma/2$, and we assumed that $f$ is $C^2$
with $f'(c)=0$ and $f''(c)\ne 0$.
Next, Lemma~\ref{cd} and
Lemma ~\ref{ubalpha} give $C>0$ so that
\begin{align}
\label{basic}
\sup_{y\in I_k}\frac{1}{|(f^{k-j})'(f^j(y))|}&\le C
\sum_{\ell = 1}^{\infty}
\frac{1}{  |(f^{\ell})'(f^{j-1}(c_1))|}\le   C e^{\gamma j} \,  , 
\, \forall 1 \le j \le k-1 \, .
\end{align}
Then, \eqref{expii} from Lemma~\ref{sizeIj} gives
\begin{equation}
\label{basic2}
\sup_{y \in I_k}\frac{1}{|(f^m)'(y)|}\le 
C_2^{-1}
e^{\gamma k}\frac{ 1}
{ |(f^{m-1})'(c_1)|^{1/2}}\, ,
\quad \forall  1\le m \le k \, .
\end{equation}

Applying \eqref{firstnote} and \eqref{basic},  we get $C>0$ so that
\begin{align}
\nonumber 
\sup_{y\in I_k}
\partial_y \frac{1}{|(f^k)'(y)|}&\le
\sup_{y\in I_k}
\sum_{j=0}^{k-1}
\frac{|f''(f^j(y))|}{|(f^{k})'(y)|}\frac {|(f^{j})'(y)|}{|f'(f^j(y))|}\\
\label{gluu} &= \sup_{y\in I_k}
\sum_{j=0}^{k-1}
\frac{|f''(f^j(y))|}{|(f^{k-j})'(f^j(y))|}\frac {1}{|f'(f^j(y))|}
\le C  e^{2\gamma k}  \, .
\end{align}
Finally,
\begin{equation}\label{finally}
\partial_x \frac{1}{|(f^k)'(f^{-k}_+(x))|}=\frac{1}{|(f^k)'(f^{-k}_+(x))|}\cdot
\partial_y \frac{1}{|(f^k)'(y)|} \, .
\end{equation}
The first factor in \eqref{finally} is bounded
by \eqref{basic2} for $m=k$, the second by \eqref{gluu}, so that
 we have proved \eqref{rootsing2}.

To prove \eqref{rootsing3}, we start from the decomposition
\eqref{finally}, and  we deduce from
\eqref{gluu}  that for any
$x\in f^k(I_k)$, setting $y=f^{-k}_+(x)$,
\begin{align}
\nonumber \partial^2_x \frac{1}{|(f^k)'(f^{-k}_+(x))|}
&\le C e^{2\gamma k} \partial_x \frac{1}{|(f^k)'(f^{-k}_+(x))|} 
 \\
\label{toprove} &\quad +
\frac{1}{|(f^k)'(f^{-k}_+(x))|}\cdot
 \sum_{j=0}^{k-1}
  \partial_x\biggl (
\frac{|f''(f^j(y))|}{|(f^{k-j})'(f^j(y))| |f'(f^j(y))|}\biggr )\, .
\end{align}
By \eqref{rootsing2},
the first term in the right-hand-side is bounded by
$C  e^{5\gamma k}|(f^{k-1})'(c_1)|^{-1/2}$.
For the  second term, we have,
for $0\le j \le k-1$,
$$
 \partial_x
\frac{|f''(f^j(y))|}{|(f^{k-j})'(f^j(y))||f'(f^j(y))|}
\le  
\frac{1}{|(f^k)'(f^{-k}_+(x))|}
\partial_y
\frac{|f''(f^j(y))|}{|(f^{k-j})'(f^j(y))||f'(f^j(y))|}\, .
$$
Since $f$ is $C^3$, the Leibniz formula  gives for $0\le j \le k-1$, 
\begin{align}
\nonumber
&
\frac{1}{|(f^k)'(f^{-k}_+(x))|}\partial_y
\frac{|f''(f^j(y))|}{|(f^{k-j})'(f^j(y))||f'(f^j(y))|}\\
\nonumber &\qquad\le
\frac{1}{|(f^k)'(f^{-k}_+(x))|}
\frac{1}{|(f^{k-j})'(f^j(y))||f'(f^j(y))|}
 \cdot 
\bigl [
|f'''(f^j(y))||(f^j)'(y)|]\\
\label{next}  &\qquad\qquad\qquad+
|f''(f^j(y))|
\biggl [ \frac{|f''(f^j(y))||(f^j)'(y)|}{|f'(f^j(y))|}+
\sum_{\ell=j}^{k-1}
\frac{|f''(f^\ell(y))| |(f^\ell)'(y)|}{|f'(f^\ell(y))|}
\biggr ]
\bigr ]\\
\nonumber &\le
\frac{C }{|(f^{k-j})'(f^j(y))|} \cdot 
\biggl [\frac{1}{|(f^{k-j})'(f^j(y))|}
\bigl (\frac{1}{|f'(f^j(y))|} +\frac{1}{|f'(f^j(y))|^2}\bigr )
\\ \nonumber &\qquad\qquad\qquad\qquad\qquad\qquad\qquad
\qquad\qquad
+\frac{1}{|(f^{k})'(y)|}
\sum_{\ell=j}^{k-1}
\frac{|(f^\ell)'(y)|}{|f'(f^\ell(y))|}
\biggr ]\, .
\end{align}
If $j\ge 1$, we may apply \eqref{firstnote}.
Then, \eqref{next}, together with 
\eqref{basic2} for 
$m=k-\ell$ and 
\eqref{basic} imply that
\begin{align*}
&
\frac{1}{|(f^k)'(f^{-k}_+(x))|}
\sum_{j=1}^{k-1}
\partial_y
\frac{|f''(f^j(y))|}{|(f^{k-j})'(f^j(y))||f'(f^j(y))|}
 \\ \nonumber &\qquad\qquad
\le
C e^{\gamma k} \cdot 
\bigl [e^{2\gamma k}+e^{3\gamma k} +
e^{2\gamma k} 
\bigr ]\, .
\end{align*}
If $j=0$, then \eqref{next} together with \eqref{eight'} and
\eqref{basic2} for $m=k$ imply
(distinguish between $\ell=0$ and $\ell \ge 1$)
\begin{align*}
&
\frac{1}{|(f^k)'(f^{-k}_+(x))|}\partial_y
\frac{|f''(y)|}{|(f^{k})'(y)||f'(y)|}
\\ \nonumber &\qquad\qquad
\le C \frac{e^{3\gamma k}}{|(f^k)'(c_1)|^{1/2} }(e^{\gamma k}+ 
e^{2\gamma k}|(f^k)'(c_1)|^{1/2} 
+  e^{2 \gamma k})\, .
\end{align*}
Putting the two above inequalities together with \eqref{toprove}
and \eqref{basic2} for $m=k$,
we get 
\eqref{rootsing3}.
\end{proof}

In view of the definition of our Banach space
$\BB$, we need further preparations. 
First, we assume from now on that $f$
satisfies the following strengthened Benedicks-Carleson condition:
\begin{equation}\label{stBeC}
\exists 0<\gamma <\frac{\log( \lambda_c)}{8}
\mbox{ so that } |f^k(c)-c| \ge e^{-\gamma k}\, , \quad \forall k \ge H_0 \, .
\end{equation}

\begin{remark}
In \cite{BV} we needed to assume that
$f$ was $C^4$ or symmetric (see the comments before
\cite[Lemma 5]{BV}) because of the more complicated form
of the cocycle used in the transfer operator there.
\end{remark}

In view of \eqref{stBeC}
we may choose $\lambda$ so that 
\begin{equation}\label{48}
1 <\lambda < e^{\gamma} \, , \mbox{ and }
e^{4\gamma} \lambda <  \sqrt \lambda_c \, .
\end{equation}

It does
not seem possible to work on spaces of sequences of $C^1$ functions $\psi_k$
(when summing
over critical inverse branches in the proof of Proposition~\ref{mainprop}, bounded distorsion
would  allow us to replace
$|(f^k)'(x)|^{-1}$ by the square root of the length of the corresponding
monotonicity interval, instead of the length itself), and 
it will be convenient to work with Sobolev spaces:
For integer $r\ge 0$, we recall that the generalised Sobolev
 norm of
 $\psi :I \to \complex$ is 
 $$
 \|\psi\|_{H^r_1}=\| \partial^r_x  \varphi(x) \|_{L^1}\, .
 $$
Compactly supported  $C^\infty$ functions are dense in $H^r_1$ for
$r\ge 1$ (our functions will compactly supported in the interior of $I$).
 The Sobolev embedding in dimension one gives
$\|\cdot\|_{C^0}\le C \|\cdot \|_{H^1_1}$ and
$\|\cdot\|_{C^1}\le C \|\cdot \|_{H^2_1}$.

\begin{definition}[The main 
Banach space $\BB=\BB^{H^1_1}$]\label{Bspace}
Let $\BB=\BB^{H^1_1}$ be the space of sequences
$\hat \psi=(\psi_k :I \to \complex,\,  k \in \integer_+)$,  so that
each $\psi_k$ is in $H^1_1$ and, in addition,
\begin{align}
\nonumber &\supp( \psi_0) \subset (-1, 1) \, , \qquad
\supp (\psi_k ) \subset  [-\delta, \delta]
\, , \, \forall 1 \le k  \le  \max (2, H_0)\, ,\\
\label{defban}
&\supp (\psi_k )
\subset  \cap_{H_0\le j \le k} (f^{-j}_+(B_j)\cup 
f^{-j}_-(B_j))
\, , \, \forall k > \max (2, H_0)\, , \end{align}
endowed with the norm 
\comment{would a $\lambda^k$ be useful here?}
$$
\| \hat \psi\|_{\BB^{H^1_1}}= 
\sum_{k \ge 0 }    \| \psi_k\|_{H^1_1}
\, .
$$
\end{definition}

We sometimes write $\hat \psi(x,k)$ instead of $\psi_k(x)$.

\begin{remark}\label{contrad}
In contradistinction to the piecewise expanding
case treated in \cite{bs1}, or to
the Misiurewicz {\it and analytic} case
studied in \cite{ruelle}, the postcritical data is not given here
by a finite set of complex numbers for each $c_k$ with $k \ge 1$: We need
a full ``germ'' $\psi_k$, which is supported
in a neighbourhood of $c$. 
Since we shall later consider 
$(\psi_k \chi_k )\circ f^{-k}_\pm$ (see \eqref{defproj}),  we can view
$\psi_k$ as the contribution in a one-sided neighbourhood 
of $c_k$.
\end{remark}

\begin{definition}[The projection $\Pi$]
Define $\Pi(\hat \psi)$ for $\hat \psi \in \BB$ by
\begin{align}\label{defproj}
\Pi(\hat \psi)(x)&=\sum_{k \ge 0, \varsigma\in \{+,-\}}
 \frac{\lambda^k}{|(f^{k})'(f^{-k}_\varsigma(x))|} \psi_k(f^{-k}_\varsigma(x)) 
 \chi_k(x)
   \, .
\end{align}
(We set $\chi_0\equiv 1$.
When the meaning is clear, we sometimes omit the factor
$\chi_k$ in the formula.)
\end{definition}

By \eqref{rootsing1}
in Lemma ~ \ref{rootsing} and our construction
\footnote{Uniformity of  $C$ within our families is important
here, it will follow  from Lemma~\ref{est1aunif}.},
setting $[c_k,d_k]=\pi(E_k \cap \hat f^k(E_0))$,
\begin{align}
\nonumber \int_{[c_k,d_k]}  \frac{\lambda^k}{|(f^{k})'f^{-k}_+(x)|}
 |\psi_k(f^{-k}_+(x)) | \, dx &\le 
C\lambda^k 
\lambda_c^{-k/2} \sqrt{|d_k-c_k|} \sup |\psi_k|  \\
\nonumber &\le C (\lambda \lambda_c^{-1/2} e^{-3\gamma/4} )^k 
 \sup |\psi_k|  \, .
\end{align}
Since $\lambda < \sqrt {\lambda_c}$, the above bound, and
its analogue for the branch $f^{-k}_-$,
imply that $\Pi (\hat \psi) \in L^1(I)$ for $\hat \psi \in \BB$.

We shall need a weak norm, in order to write Lasota-Yorke inequalities.
Set
\begin{equation}\label{cocycle}
w(x,k)=
\lambda^k 
\, , 
\quad x \in  I \, , k \ge 0\, ,
\end{equation}
and define $\nu$ to be the  nonnegative measure 
on $\cup_{k \ge 0}\{k\}  \times I$ whose density with respect to Lebesgue 
is $w(x,k)$.

\begin{definition}[Space $\BB^{L^1(\nu)}$]\label{oBspace}
Let $\BB^{L^1}=\BB^{L^1(\nu)}$ be the space of
sequences $\hat \psi$ of 
functions $\psi_k$,  with the norm
\begin{equation}\label{trunc}
\|\hat \psi \|_{\BB^{L^1}}=\sum_{k\ge 0}  \lambda^k \|\psi_k \|_{L^1(I)}= \|\hat \psi\|_{L^1(\nu)}\, .
\end{equation}
\end{definition}

In order to define the transfer operator $\widehat \LL$, we next 
introduce 
smooth cutoff functions $\xi_k$.
Recall the constants $3\gamma/2<\beta_1<\beta_2<2\gamma$ from \eqref{betas} and \eqref{betaas}.

\begin{definition}[The cutoff functions $\xi_k$]\label{defxi}
For each $k \ge 0$, let $\xi_k : I \to [0,1]$ be a $C^\infty$ function satisfying
the following conditions:  $\xi_k\equiv 1$ for those levels $k$ from which
no point falls to level $0$ 
and otherwise
\begin{align*}
&\supp(\xi_0)= [- \delta, \delta]\, , \qquad 
\xi_0|_{[-\frac{\delta}{2}, \frac{\delta}{2}]} \equiv 1\, ,
\\
&k\ge H(\delta)\, : \begin{cases}
\supp(\xi_k)=
f^{-(k+1)}_+(B_{k+1})\cup f^{-(k+1)}_-(B_{k+1})\, ,  
\\ 
\xi_k|_{f^{-(k+1)}_+[c_{k+1} - e^{-\beta_2(k+1)},c_{k+1} + e^{-\beta_2(k+1)}]} 
\equiv 1 \, ,\\
\xi_k \mbox{ is unimodal,}
\end{cases}
\end{align*}
$|\partial^j_x (\xi_0\circ f^{-1}_\pm(x))|\le c(\delta)^{-j}$ for $j=1,2,3$,  and, finally, that $\beta_1$ is close enough to $3\gamma/2$ and 
 $\beta_2$ is close enough to $2\gamma$ so that for some $C >0$  
\begin{equation}\label{kappa'}
\sup |\partial^j_x (\xi_k\circ f^{-(k+1)}_\pm(x))|\le C e^{2j \gamma k}
\, ,  \, \, j=1,2 ,3 \, .
\end{equation}
\end{definition}

\smallskip
Note that  $\xi_k(y)>0$ if and only if $\hat f (f^{k}(y),k)\in B_{k+1}\times
(k+1)$, 
and $\xi_k(y)=1$ implies that
$\pi \hat f(f^k(y),k)\in[c_{k+1} - e^{-\beta_2(k+1)},c_{k+1} + e^{-\beta_2(k+1)}]$. 
The low levels ($k\le H(\delta)$) will be taken care of by the condition
$\supp(\psi_k)\subset [-\delta,\delta]$.

\begin{definition}[Transfer operator]\label{opp}
The transfer operator $\widehat \LL$ is defined on  $\BB$ by
\begin{equation}\label{a}
(\widehat \LL  \widehat \psi ) (x,k)=
\begin{cases}
\frac{\xi_{k-1}(x)}{\lambda} \cdot \hat\psi (x,k-1)& k \ge 1 \,, \\
\sum_{j \ge 0, \varsigma\in \{+,-\}}
\frac{\lambda^j (1- \xi_j(f^{-(j+1)}_\varsigma(x)))} {|(f^{j+1})'(f^{-(j+1)}_\varsigma(x))|}\cdot
\hat \psi(f^{-(j+1)}_\varsigma(x),j)
& k=0 \, .
\end{cases}
\end{equation} 
\end{definition}

Some $j$-terms in the sum for  $(\widehat \LL \widehat \psi) (x,0)$ vanish, in particular, for all $1\le j <H_0$ because of
our choice of small $\delta$ giving $H(\delta)\ge H_0$.

As already mentioned
in the beginning of this section, there are two differences
between the present definition and the one used in \cite{BV}. First,
 $\widehat \LL$ does not act
via the dynamics when climbing the tower, only when falling.
Secondly, if  $0<\xi_j(y)<1$, 
then $y$ will contribute to  both
$(\widehat \LL \widehat \psi) (y,j+1)$
and 
$(\widehat \LL \widehat \psi) (f^{j+1}(y),0)$. In other words,
the transfer operator just defined 
is  associated to a multivalued (probabilistic-type)
tower dynamics.
For this multivalued dynamics, some points may fall from the tower a little
earlier than they would for $\hat f$. However, the conditions
on the  functions $\xi_k$s guarantee that they do not
fall {\it too} early. More precisely, if we define ``fuzzy"
analogues of the intervals $I_k$ from \eqref{ints} as follows
\begin{equation}\label{tildeint}
\widetilde I_k:=
\{ x \in I \mid \xi_k(x)<1 \, , \, \xi_j(x)>0\, , \forall
0\le j < k \}\, ,
\end{equation}
then we can replace $I_k$ by $\widetilde I_k$ in the
previous estimates, in particular in Lemma~\ref{rootsing}.
Indeed, just observe that if a point ``falls" according to our fuzzy
dynamics, it would have fallen for some choice of intervals $\widetilde B_k$ so that
$$
[c_k-e^{-\beta_2 k},c_k+e^{-\beta_2 k}]
\subset \widetilde B_k\subset B_k
\, .
$$ 

\begin{remark}
\label{overlap}
The intervals $\widetilde I_k$ do not have pairwise disjoint interiors. However, for each $k$, the cardinality
of those $\widetilde I_j$ whose interiors intersect the interior of
$\widetilde I_k$ is bounded  by $k$ for $k\ge N(f)$,
where $N(f)$ depends only on $\sigma$ from
Lemma~\ref{est1a} and on $\gamma$ (the smaller
$\gamma$, the shorter this waiting time). We
may assume by taking smaller $\delta$ that $N(f)\le H(\delta)$. Indeed, if a point
falls $x$ from level $k$ for the first time with the fuzzy dynamics, it will
fall for the last time at level $2k$,  because Lemma~\ref{est1a} 
for $\delta=e^{-4\gamma k}$ together with the Benedicks-Carleson assumption
give $|(f^k)'(x)|\ge c e^{-4\gamma k}\sigma^k$,
and $c e^{-4\gamma k}\sigma^k e^{-2\gamma k}> e^{-\frac{3}{2} 2k}$.
(The present remark will be used to get the Lasota-Yorke estimate at the heart
of Proposition~\ref{mainprop}, see Appendix~\ref{app2}.)
\end{remark}

These two modifications allow us to work with  Sobolev
spaces $H^r_1$  (as opposed to the BV functions
in \cite{BV}, where the jump singularities corresponding
to the edges of the levels are an artefact
of the construction), and will  simplify our spectral
perturbation argument in Section ~ \ref{finalproof}.

Before we continue, let us note that 
if we introduce the ordinary (Perron-Frobenius) transfer operator
$$
\LL : L^1(I)\to L^1(I)\, ,\qquad
\LL \varphi(x)=\sum_{f(y)=x} \frac{\varphi(y)}{|f'(y)|}\, ,
$$
then one easily shows that
\begin{equation}\label{commute}
\LL (\Pi (\hat \psi))=\Pi(\widehat \LL(\hat \psi))\, .
\end{equation}
Indeed, on the one hand, we have
(recall $\chi_0\equiv 1$)
\begin{align*}
&[\Pi \widehat \LL (\hat \psi)](x) =
 [\widehat \LL (\hat \psi)](x,0) \chi_0(x)\\
&\qquad+
\sum_{k\ge 1,\varsigma\in \{+,-\}}
 \frac{\lambda^k}{|(f^{k})'(f^{-k}_\varsigma(x))|} 
[\widehat \LL (\hat \psi)](f^{-k}_\varsigma(x),k) \chi_k(x)\\
\nonumber &=
\sum_{j \ge 0, \varsigma\in \{+,-\}} 
\frac{\lambda^j}{|(f^{(j+1)})'f^{-{(j+1)}}_\varsigma(x)|} 
\psi_j(f^{-{(j+1)}}_\varsigma(x)) 
(1- \xi_j)(f^{-{(j+1)}}_\varsigma(x))  \chi_1(x)\\
\nonumber&\quad+
\sum_{k\ge 1,\varsigma\in \{+,-\}}
 \frac{\lambda^{k-1}}{|(f^{k})'(f^{-k}_\varsigma(x))|} 
 \psi_{k-1}(f^{-k}_\varsigma(x)) \xi_{k-1}  (f^{-k}_\varsigma(x)) \chi_k(x)\, \\
 &=
\sum_{j \ge 0, \varsigma\in \{+,-\}} 
\frac{\lambda^j}{|(f^{(j+1)})'f^{-{(j+1)}}_\varsigma(x)|} 
\psi_j(f^{-{(j+1)}}_\varsigma(x)) 
(1- \xi_j)(f^{-{(j+1)}}_\varsigma(x))  \chi_1(x)\\
\nonumber&\quad+
\sum_{j\ge 0,\varsigma\in \{+,-\}}
 \frac{\lambda^{j}}{|(f^{(j+1)})'(f^{-(j+1)}_\varsigma(x))|} 
 \psi_{j}(f^{-(j+1)}_\varsigma(x)) \xi_{j}  (f^{-(j+1)}_\varsigma(x)) \chi_{j+1}(x)\, \, \, .
\end{align*}
On the other hand
\begin{align*}
&(\LL\Pi \hat\psi) (x)
=\sum_{f(y)=x}
\frac{\chi_1(x)}{|f'(y)|}
\biggl(\sum_{\ell \ge 0, \varsigma\in \{+,-\}}
 \frac{\lambda^\ell}{|(f^{\ell})'(f^{-\ell}_\varsigma(y))|} \psi_\ell(f^{-\ell}_\varsigma(y)) 
 \chi_\ell(y)\biggr)\\
 &\, \, =\sum_{f(y)=x}
\frac{\chi_1(x)}{|f'(y)|}
\biggl (\sum_{\ell \ge 0, \varsigma\in \{+,-\}}
 \frac{\lambda^\ell}{|(f^{\ell})'(f^{-\ell}_\varsigma(y))|} 
 \psi_\ell(f^{-\ell}_\varsigma(y)) 
 \bigl ( 1-\xi_\ell(f^{-\ell}_\varsigma(y) \bigr )
 \chi_\ell(y)\biggr ) \\
 &\, \, \, +
 \sum_{f(y)=x}
\frac{\chi_1(x)}{|f'(y)|}
\biggl ( \sum_{\ell \ge 0, \varsigma\in \{+,-\}}
 \frac{\lambda^\ell}{|(f^{\ell})'(f^{-\ell}_\varsigma(y))|} 
 \psi_\ell(f^{-\ell}_\varsigma(y)) 
 \xi_\ell(f^{-\ell}_\varsigma(y) )
 \chi_\ell(y)\biggr )\\
 &\, \, =\sum_{f(y)=x}
\biggl (\sum_{j \ge 0, \varsigma\in \{+,-\}}
 \frac{\lambda^j}{|(f^{(j+1)})'(f^{-j}_\varsigma(y))|} 
 \psi_j(f^{-j}_\varsigma(y)) 
 \bigl ( 1-\xi_j(f^{-j}_\varsigma(y) \bigr )
 \chi_1(x)\chi_j(y)\biggr ) \\
 &\, \, \, +
 \sum_{f(y)=x}
\biggl ( \sum_{j \ge 0, \varsigma\in \{+,-\}}
 \frac{\lambda^j}{|(f^{(j+1)})'(f^{-j}_\varsigma(y))|} 
 \psi_j(f^{-j}_\varsigma(y)) 
 \xi_j(f^{-j}_\varsigma(y) )
 \chi_1(x)\chi_j(y)\biggr )\, .
 \end{align*}
 Now,  our definitions ensure that
 $$
 \xi_j(f^{-j}_\varsigma(y)) \ne 0
 \Longrightarrow\chi_j(y) \chi_1(f(y))
 = \chi_{j+1}(f(y))\, ,
 $$
 and
 $$\psi_j(f^{-j}_\varsigma(y)) 
 \bigl ( 1-\xi_j(f^{-j}_\varsigma(y) )\bigr )\ne 0
 \Longrightarrow
 \chi_j(y)=1\, .
 $$
 Finally, the sums over inverse branches coincide:
 Distinguish between zero level --- where it is obvious --- and other levels --- where the last sum ($f(y)=x$)
 over two inverse branches has at most
 one nonzero contribution by the support properties
 of the $\psi_k$ and $\xi_k$. This proves (\ref{commute}).
 In particular, if $\widehat \LL( \hat \phi)=\hat \phi$
then $\LL(\Pi(\hat \phi))=\Pi(\hat \phi)$.

Our main result on the spectral properties of $\widehat \LL$ follows:

\begin{proposition}[Essential spectral radius of $\widehat \LL$]
\label{mainprop}
Let $f$ be an $S$-unimodal $(\lambda_c,H_0)$-Collet-Eckmann map satisfying the strengthened Benedicks-Carleson condition \eqref{stBeC}, 
with a non-preperiodic critical
point.  
Let $\lambda$  satisfy \eqref{48}.
Then the operator $\widehat \LL$ is bounded on 
$\BB$, with spectral radius equal to $1$.
The  dual of  $\widehat \LL$ fixes the measure $\nu$ defined after
\eqref{cocycle}.
Let
$\rho$ satisfy \eqref{2.1} and let
$\sigma >1$  be the constant from Lemma~ \ref{est1a}, and set
\begin{equation}\label{theta0}
\Theta_0:=
\min( \frac{ \lambda_c^{1/2}}{e^{4\gamma} \lambda}\, , \lambda \, , \sigma \, , \rho)>1\, .
\end{equation}
The  essential spectral radius of $\widehat \LL$
on $\BB$  is bounded by 
$\Theta_0^{-1}$.
\end{proposition}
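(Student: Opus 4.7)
\medskip

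\noindent\textbf{Proof proposal.} The plan is to combine three ingredients: (i) a direct verification that $\widehat{\LL}$ is bounded on both $\BB$ and $\BB^{L^1(\nu)}$ and that its dual fixes $\nu$, (ii) a Lasota--Yorke type inequality pairing these two norms, and (iii) compactness of the embedding $\BB \hookrightarrow \BB^{L^1(\nu)}$, so that Hennion's theorem yields the essential spectral radius bound $\Theta_0^{-1}$. The spectral radius equality $1$ will then follow from the dual statement together with the quasi-compactness of $\widehat{\LL}$.

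First, I would check that the dual of $\widehat{\LL}$ fixes $\nu$ by a direct change-of-variables computation. Splitting $\int \widehat{\LL}\hat{\psi}\,d\nu$ into the climbing contribution (levels $k\ge 1$) and the falling contribution (level $k=0$), the former gives $\sum_{j\ge 0} \lambda^{j}\int \xi_{j}\psi_{j}\,dx$, while after substituting $y=f^{-(j+1)}_\varsigma(x)$ in the latter and summing the two inverse branches (which cover the support of $\psi_j$) one obtains $\sum_{j\ge 0}\lambda^{j}\int (1-\xi_j)\psi_j\,dy$; adding the two pieces recovers $\int \hat{\psi}\,d\nu$. This immediately gives spectral radius $\ge 1$; the matching upper bound will be a byproduct of step (iii) below, combined with the fact that $\widehat{\LL}$ preserves the $\nu$-integral of nonnegative functions and hence is an $L^1(\nu)$-contraction on its spectral projection onto the unit circle. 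Boundedness of $\widehat{\LL}$ on $\BB$ is proved by handling the climbing part (a multiplier $\xi_{k-1}/\lambda$ whose Sobolev cost is controlled by \eqref{kappa'}) and the falling part (a sum over inverse branches whose $H^1_1$-norm is estimated using Lemma~\ref{rootsing}).

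The heart of the argument is the Lasota--Yorke inequality: there exist $n_0$ and $C,C_n$ such that for all $n\ge n_0$,
\begin{equation*}
\|\widehat{\LL}^{n}\hat{\psi}\|_{\BB}\;\le\;C\,\Theta_0^{-n}\,\|\hat{\psi}\|_{\BB}+C_n\|\hat{\psi}\|_{\BB^{L^{1}(\nu)}}\,.
\end{equation*}
I would iterate the definition of $\widehat{\LL}$ and decompose $(\widehat{\LL}^{n}\hat{\psi})(x,k)$ according to the itinerary of climbs and falls: a pure climb of length $n$ contributes exactly $\lambda^{-n}\prod_{j}\xi_{k-1-j}(x)\psi_{k-n}(x)$; a segment consisting of a climb of length $j$ followed by a fall produces, after substitution $y=f^{-(j+1)}_\varsigma(x)$, a term whose $L^1$ norm is bounded by $\lambda^{j}\|(1-\xi_j)\psi_j\|_{L^1}$ (which explains the exact $L^1(\nu)$-invariance), and whose derivative splits via the Leibniz rule into (a) a piece involving $\partial_x[(1-\xi_j)\circ f^{-(j+1)}_\varsigma/|(f^{j+1})'\circ f^{-(j+1)}_\varsigma|]$, controlled by \eqref{rootsing2} and \eqref{kappa'} by $Ce^{3\gamma j}|(f^{j-1})'(c_1)|^{-1/2}$, and (b) a composition-with-$\psi_j'$ piece, controlled after change of variables by $\sup_y \lambda^j/|(f^{j+1})'(y)|$, for which the estimate $\lambda^j \sup|(f^{j+1})'|^{-1}\lesssim (e^{4\gamma}\lambda/\lambda_c^{1/2})^{j}$ (again via Lemma~\ref{rootsing} and \eqref{stBeC}) matches $\Theta_0^{-1}$ under \eqref{48}. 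Overlaps of the ``fuzzy'' intervals $\widetilde{I}_k$ in the sum over histories are controlled by Remark~\ref{overlap}, which bounds the number of overlapping $\widetilde{I}_j$ by a polynomial factor absorbed into the geometric decay. The remaining rates from Lemma~\ref{expiii} ($\sigma$ during free periods, $\rho$ at climb-to-fall transitions) show that the ``fall--climb'' composite rate is bounded below by $\min(\sigma,\rho)$, so the overall decay is governed precisely by $\Theta_0=\min(\lambda_c^{1/2}/(e^{4\gamma}\lambda),\lambda,\sigma,\rho)$.

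Finally, the compactness of $\BB\hookrightarrow \BB^{L^1(\nu)}$ follows by truncating at large level $N$: the weight $\lambda^{N}$ makes the tail arbitrarily small in $\BB^{L^{1}(\nu)}$, and for each fixed $k\le N$ the Rellich--Kondrachov embedding $H^1_1\hookrightarrow L^1$ on $I$ supplies compactness level by level. Hennion's theorem then gives essential spectral radius at most $\Theta_0^{-1}$; combined with the dual invariance of $\nu$ one concludes that the spectral radius equals $1$. The main obstacle is the bookkeeping in the Lasota--Yorke estimate: keeping track of the derivative costs $e^{2\gamma k}$ from $\xi'_k$ and $e^{3\gamma j}$ from $\partial_x (|(f^{j+1})'|^{-1})$ simultaneously, and verifying that the strengthened Benedicks--Carleson condition \eqref{stBeC} together with \eqref{48} leaves the required margin to absorb both into the $\Theta_0^{-n}$ decay; this bookkeeping is deferred to Appendix~\ref{app2}.
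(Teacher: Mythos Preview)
Your proposal follows essentially the same architecture as the paper's proof: verify $\widehat\LL^*\nu=\nu$ by change of variables, establish a Lasota--Yorke inequality for the pair $(\BB^{H^1_1},\BB^{L^1(\nu)})$ with contraction rate $\Theta_0^{-1}$ (with the detailed bookkeeping deferred to Appendix~\ref{app2}), and conclude via Rellich--Kondrachov compactness and Hennion's theorem. Two small points should be corrected. First, in the compactness step you write that ``the weight $\lambda^N$ makes the tail arbitrarily small in $\BB^{L^1(\nu)}$''; this is backwards, since $\lambda>1$. What makes the tail small is that the support of $\psi_k$ (contained in $\cap_{j\le k}f^{-j}_\pm(B_j)$) has length decaying faster than $\lambda^{-k}$, so that $\lambda^k\|\psi_k\|_{L^1}\le C\lambda^{-k}\sup|\psi_k|\le C\lambda^{-k}\|\psi_k'\|_{L^1}$; this is exactly how the paper argues. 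Second, for the climbing contribution you invoke \eqref{kappa'} to control the Sobolev cost of the multiplier $\xi_{k-1}$, but \eqref{kappa'} bounds $\partial_x(\xi_k\circ f^{-(k+1)}_\pm)$, not $\xi_k'$ itself (which can be very large pointwise). The paper instead uses the unimodality of $\xi_\ell$ to get $\int|\xi_\ell'\psi_\ell|\,dx\le 2\sup|\psi_\ell|$ directly (the trick \eqref{dirtytrick}), which is what yields the clean bound \eqref{der0N} for the climbing part; you should invoke this rather than \eqref{kappa'} there.
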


\begin{proof}
Let $c(\delta)$  be the constant from Lemma~ \ref{est1a}.

For $\hat \psi \in \BB$, our assumptions on the 
$\xi_{j}$ ensure that   $(\widehat \LL (\hat \psi))_k\in H^1_1$ for all $k\ge 1$,
with $(\widehat \LL (\hat \psi))_k$  supported in the desired interval, 
and that  $(\widehat \LL (\hat \psi))_0$  is  supported in the desired interval.

Note that for any interval $U$ (not necessarily containing
the support of $\psi_j$), using the Sobolev embedding again,
\begin{equation}\label{sobeb}
\sup_U |\psi_j|\le \min (C \|\psi'_j\|_{L^1}, \int_{U}|\psi'_j|\, dx + |U|^{-1} \int_{U} |\psi_j|\, dx)\, .
\end{equation}
Since $\xi_\ell$ is unimodal if it is not $\equiv 1$, for each $\ell \ge 1$ there exist
$v_\ell >u_\ell$ in $B_\ell$ so that, setting $J_\ell=\{x \in \supp(\psi_j)\mid x\le u_\ell\}\cup \{x \in \supp(\psi_\ell)\mid
x \ge v_\ell\}$,
\begin{align}\label{dirtytrick}
\int_{B_\ell} |\xi_\ell'\psi_\ell|\, dx&= \int_{x \le u_\ell} \xi_\ell' |\psi_\ell|\, dx
- \int_{x \ge v_\ell} \xi_\ell' |\psi_\ell|\, dx\le (\xi_\ell(u_\ell)+\xi_\ell(v_\ell)) \sup_{J_\ell} |\psi_\ell|
\\
\nonumber &\le 2 \sup_{J_\ell} |\psi_\ell|
\end{align}
(this can be viewed as the analogue of the
``$2\sup\sup$" boundary term in \cite{BV}).
Therefore,
for all $k\ge 1$, using also \eqref{sobeb},
\begin{align}
\label{der0}&
\| (\widehat \LL (\hat \psi))_k'\|_{L^1} \le \frac{3C}{\lambda} \| \psi'_{k-1}\|_{L^1} \, . 
\end{align}
More generally, for $1\le n\le k$,
\begin{align}
\label{der0N}&
\| (\widehat \LL^n (\hat \psi))_k'\|_{L^1} \le \frac{3C n}{\lambda^n} \| \psi'_{k-n}\|_{L^1} \, . 
\end{align}

If $|\psi_k(y)|>0$ then $|f^{j}(y)-c_{j}|\le e^{-\beta_1 j}$
for all $j\le k$.
If $\xi_k(f^{-(k+1)}_\pm(x))<1$ then $|x-c_{k+1}|\ge e^{-\beta_2 (k+1)}$.
Thus,  changing  variables in the integrals, using \eqref{dirtytrick}
for the terms involving $\xi'_k$ for $k \ge 0$, and
recalling   \eqref{expii} from Lemma~\ref{sizeIj}, as well as
\eqref{gluu} and \eqref{finally} from Lemma~\ref{rootsing}, 
we see
that $(\widehat \LL (\hat \psi))_0$ belongs to  $H^1_1$ and
\begin{align}
\label{der00}\| (\widehat \LL (\hat \psi))'_0\|_{L^1} 
&\le   Cc(\delta)^{-1} (\|\psi_0'\|_{L^1}+  \|\psi_0\|_{L^1}+ \sup|\psi_0|)\\
\nonumber &\qquad +
\sum_{k\ge H(\delta)}   \frac{\lambda^k e^{2 \gamma k}}{|(f^{k+1})'(c_1)|^{1/2}}
(\|\psi'_k\|_{L^1} + \sup |\psi_k| + \|\psi_k\|_{L^1} )\, .
\end{align}
In view  of \eqref{48} and \eqref{sobeb}, we have proved that $\widehat \LL$ is bounded
on $\BB$.  (The claim on the spectral radius will be proved below.)

\smallskip
Observe that $\sum_k \int_{B_k} \hat \psi(x,k) w(x,k)\, dx$
is finite if $\hat \psi \in \BB$
(just recall that $|B_k|\le 2 e^{-\beta_1k}\le 2e^{-3\gamma k/2}$
and use the bound
$\lambda < e^{3\gamma/2}$
from \eqref{48}). So $\nu$ is an element of the dual
of $\BB$.
The fact that  $\widehat \LL^*(\nu)=\nu$  can easily be  proved using the change of variables
formula. Indeed,
\begin{align}
\label{miracle}&\int \widehat \LL (\hat \psi) \, d\nu=\int_{B_0} 
\widehat \LL (\hat \psi)(y,0) \, dy +
\sum_{k\ge 0} \int_{B_{k+1}}\widehat \LL (\hat \psi)(y,k+1)\, w(y,k+1)\, dy\\
\nonumber &\qquad=
\sum_{j \ge 0, \varsigma\in \{+,-\}} \int
\frac{\lambda^j}{|(f^{j+1})'f^{-{(j+1)}}_\varsigma(y)|} 
\psi_j(f^{-{(j+1)}}_\varsigma(y)) 
(1- \xi_j)(f^{-{(j+1)}}_\varsigma(y))  \, dy\\
\nonumber&\qquad\qquad\qquad\qquad
 +
\sum_{k\ge 0} \int
\frac{1}{\lambda}\psi_k(x) \xi_k  (x)\,  w(x,k+1)\, dx\\
\nonumber&\qquad=
\sum_{j \ge 0} \int \psi_j (x)\, (1-\xi_j)(x) 
 \, w(x,j) \, dx
 +
\sum_{k\ge 0} \int\psi_k(x) \xi_k  (x)\,  w(x,k)\, dx\, .
\end{align} 
Note for further use that $\widehat \LL^* (\nu)=\nu$ implies
\begin{equation}\label{l1bd}
\nu(|\widehat \LL^N(\hat \psi)|)\le \nu(\widehat \LL^N( |\hat \psi|))=\nu(|\hat \psi|)\, .
\end{equation}
\smallskip

We next estimate the spectral and essential spectral radii
of  $\widehat \LL$ on $\BB$.
Using \eqref{dirtytrick} and the overlap control of fuzzy intervals, it is  not very difficult 
(see Appendix~\ref{app2}) 
to adapt the proof of \cite[Sublemma]{BV}
to show inductively that for any $\Theta<\Theta_0$,
there exists $C$, and for all $n$ there exists $C(n)$, so that 
\begin{equation}
\|(\widehat \LL^n( \hat \psi))'_0(x)\|_{L^1(I)} \le
C \Theta^{-n}
\|\hat \psi\|_{\BB} + C(n) \|\hat \psi\|_{\BB^{L^1}}\, .
\end{equation}
Recalling \eqref{der0N}, and using \eqref{l1bd}, up to slightly decreasing
$\Theta$, one then finds
$C'$ so that for all $n\ge 1$
(see the proof of \cite[Variation Lemma]{BV})
\begin{equation}\label{keyLM}
\|\widehat \LL^n( \hat \psi)\|_{\BB} \le
C' \Theta^{-n}
\|\hat \psi\|_{\BB} + C' \|\hat \psi\|_{\BB^{L^1}}
\, .
\end{equation}
Since \eqref{defban} implies that the length
of the support of $\psi_k$ is (much) smaller than 
$\lambda^{-2k}$, we find $\|\hat \psi\|_{\BB^{L^1}}\le   \|\hat \psi\|_{\BB^{L^1}}
+C \lambda^{-M}\|\hat \psi\|_{\BB^{H^1_1}}$
for all $M\ge 1$ (we used again the Sobolev embedding to estimate
the supremum by the $H^1_1$ norm).
The bound \eqref{keyLM} implies that the spectral radius of $\widehat \LL$ on $\BB$
is at most one, and thus equal to one.

Finally,  since
Rellich--Kondrachov implies that $\BB^{H^1_1}$ 
is compactly included in $\BB^{L^1}$ (the total length of the tower
is bounded, even up to $\lambda^k$-expansion at
kevel $k$), the Lasota-Yorke estimate \eqref{keyLM} together with
Hennion's theorem \cite{Hen}  give the claimed bound on 
essential spectral radius of $\widehat \LL$ on $\BB=\BB^{H^1_1}$.
This ends the proof
of Proposition ~ \ref{mainprop}.
\end{proof}

We next state  
further spectral properties of $\widehat \LL$.

\begin{proposition}[Maximal eigenvalue of $\widehat \LL$]\label{acip}
Let $f$ be an $S$-unimodal $(\lambda_c,H_0)$-Collet-Eckmann map satisfying the strengthened Benedicks-Carleson condition \eqref{stBeC}, 
with a non-preperiodic critical
point.

The maximal eigenvalue $1$ is a simple eigenvalue of $\widehat \LL$, for
a nonnegative eigenvector $\hat \phi$.
If  $\nu(\hat \phi)=1$, then $\phi:=\Pi (\hat \phi)$
is the density of the unique absolutely continuous $f$-invariant
probability measure.
Finally,   if $f$ is $C^4$
and the Benedicks-Carleson condition \eqref{stBeC} is
strengthened to
\begin{equation}\label{BeCxtra}
0<\gamma< \frac{\log \lambda_c}{14}\, ,
\end{equation}
then one can choose the parameter $\lambda$ 
so that   $\hat \phi_0 \in H^2_1$.
\end{proposition}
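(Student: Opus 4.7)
The plan is to combine the quasi-compactness of $\widehat \LL$ on $\BB$ from Proposition~\ref{mainprop} with the commutation identity \eqref{commute} and the uniqueness of the absolutely continuous invariant probability measure for Collet-Eckmann $S$-unimodal maps.

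To produce a nonnegative fixed vector, observe that $\widehat \LL$ is positive, has spectral radius $1$ (since $\widehat \LL^*\nu=\nu$ prevents $\|\widehat \LL^n\|$ from decaying), and has essential spectral radius strictly less than $1$. Starting from a nonnegative $\hat \psi_*\in\BB$ with $\nu(\hat \psi_*)=1$ (for instance a smooth bump concentrated at the ground level $E_0$), I would form the Cesaro averages $\hat \psi_N=N^{-1}\sum_{n=0}^{N-1}\widehat \LL^n \hat \psi_*$. By \eqref{keyLM} these are uniformly bounded in $\BB$, and by Rellich-Kondrachov relatively compact in $\BB^{L^1}$, so standard spectral theory for quasi-compact operators yields convergence along a subsequence to a nonnegative fixed vector $\hat \phi\in\BB$ with $\nu(\hat \phi)=1$. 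The commutation \eqref{commute} then gives $\LL(\Pi(\hat \phi))=\Pi(\hat \phi)$ in $L^1(I)$, and a termwise change of variables $y=f^{-k}_\varsigma(x)$ in \eqref{defproj} (with the factor $\chi_k$ selecting the image of the appropriate monotonicity branch) yields
\[
\int_I\Pi(\hat \phi)(x)\,dx=\sum_{k\ge 0}\lambda^k\int \hat \phi(y,k)\,dy=\nu(\hat \phi)=1 \, .
\]
Uniqueness of the absolutely continuous $f$-invariant probability (Keller-Nowicki \cite{keno}) then identifies $\phi:=\Pi(\hat \phi)$ with its density.

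For simplicity of the eigenvalue $1$, note that the fixed-point equation at levels $k\ge 1$ collapses to the recursion $\hat \phi(x,k)=\lambda^{-k}\prod_{\ell=0}^{k-1}\xi_\ell(x)\hat \phi_0(x)$, so every fixed vector in $\BB$ is determined by its level-$0$ component. Two linearly independent fixed vectors $\hat \phi^{(1)},\hat \phi^{(2)}\in\BB$ would therefore project under $\Pi$ to two $\LL$-invariant elements of $L^1(I)$ whose linear combinations would contradict either one-dimensionality of the $\LL$-fixed space or the just-established identity $\int \Pi(\hat \psi)\,dx=\nu(\hat \psi)$: indeed, subtracting an appropriate multiple and invoking the level-$0$ equation (whose right-hand side coincides with $\Pi$ evaluated through inverse branches and whose kernel is thus determined by $\Pi$) forces the combination to vanish.

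Finally, for the $H^2_1$ regularity of $\hat \phi_0$ under $C^4$ regularity and \eqref{BeCxtra}, I would introduce the Banach space $\BB^{H^2_1}$, defined as in Definition~\ref{Bspace} but with $\|\psi_k\|_{H^2_1}$ at each level, and establish the second-order Lasota-Yorke inequality
\[
\|\widehat \LL^n\hat \psi\|_{\BB^{H^2_1}}\le C'\Theta^{-n}\|\hat \psi\|_{\BB^{H^2_1}}+C'\|\hat \psi\|_{\BB^{L^1}}
\]
by iterating a one-step bound. The new analytic input is \eqref{rootsing3}, which controls $|\partial_x^2[|(f^{k})'(f^{-k}_\varsigma(x))|^{-1}]|$ by $Ce^{5\gamma k}|(f^{k-1})'(c_1)|^{-1/2}$ (and uses $f'''$, whence the $C^4$ requirement); combined with the second derivatives of $\xi_j\circ f^{-(j+1)}_\varsigma$ bounded by \eqref{kappa'} for $j=2$ and the Jacobian from the $L^1$-change of variable, the contribution of level $k$ to $\|(\widehat \LL\hat \psi)''_0\|_{L^1}$ is of order $\lambda^k e^{5\gamma k}|(f^k)'(c_1)|^{-1/2}$, summable precisely when $\lambda e^{5\gamma}<\sqrt{\lambda_c}$, which together with $\lambda<e^\gamma$ from \eqref{48} amounts to $\lambda_c>e^{14\gamma}$, matching \eqref{BeCxtra}. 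The main obstacle I expect is the second-derivative version of the ``$2\sup\sup$'' boundary trick \eqref{dirtytrick} applied now to $(\xi_k\psi_k)''$, together with the overlap control from Remark~\ref{overlap} adapted to second derivatives; these are analogous to but more delicate than their first-order counterparts. Once the estimate is in place, quasi-compactness of $\widehat \LL$ on $\BB^{H^2_1}$ yields a fixed vector in $\BB^{H^2_1}\subset\BB$, which by the uniqueness established above must equal $\hat \phi$, so $\hat \phi_0\in H^2_1$ as claimed.
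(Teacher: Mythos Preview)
Your treatment of the first two claims is essentially the paper's: existence and simplicity of the nonnegative eigenvector are obtained from the quasi-compactness of Proposition~\ref{mainprop} by standard positivity arguments (the paper simply cites \cite{BV,V}), the change of variables gives $\int_I\Pi(\hat\phi)\,dx=\nu(\hat\phi)=1$, and \eqref{commute} together with uniqueness of the a.c.i.m.\ identifies $\Pi(\hat\phi)$ with the invariant density.

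The $H^2_1$ argument, however, has a fatal gap. Your plan is to prove a Lasota--Yorke inequality on the full space $\BB^{H^2_1}$ and locate the fixed vector there; but $\hat\phi\notin\BB^{H^2_1}$. The paper points this out explicitly in the remark immediately following the proposition: from the recursion $\hat\phi_k=\lambda^{-k}\prod_{j<k}\xi_j\,\hat\phi_0$ and $\xi_j(c)=1$ one gets $\hat\phi_k(c)=\lambda^{-k}\hat\phi_0(c)\ne 0$, whereas a uniform $H^2_1$ (hence $C^1$) bound would force $\sup|\hat\phi_k|\le C|\supp(\hat\phi_k)|$, and the supports shrink much faster than $\lambda^{-k}$. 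Thus $\sup_k\|\hat\phi_k\|_{H^2_1}=\infty$, and the obstacle you flagged as merely ``more delicate'' --- a second-derivative analogue of \eqref{dirtytrick} at the upper levels --- is in fact insurmountable: the cutoffs $\xi_k$ in the tower variable live on intervals of rapidly shrinking length, so their second derivatives blow up too fast for any level-wise $H^2_1$ control, and no fixed vector of $\widehat\LL$ can lie in $\BB^{H^2_1}$.

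The paper circumvents this by estimating \emph{only} the ground-floor component, and in $H^3_1$ rather than $H^2_1$. One starts from a smooth $\hat\psi$ concentrated entirely at level $0$ (so $\hat\psi_k=0$ for $k\ge 1$); then every inverse branch contributing to $(\widehat\LL^n\hat\psi)_0$ meets the cutoffs only through the composed factors $(1-\xi_j)\circ f^{-(j+1)}_\varsigma$, whose $x$-derivatives are controlled by \eqref{kappa'}, and the trick \eqref{dirtytrick} is not needed at all. Together with the third-derivative analogue of \eqref{rootsing3} (bound $Ce^{7\gamma k}|(f^{k-1})'(c_1)|^{-1/2}$, which is where $f\in C^4$ and the constant $14$ in \eqref{BeCxtra} enter), this yields $\sup_n\|(\widehat\LL^n\hat\psi)_0\|_{H^3_1}<\infty$. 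Since $\widehat\LL^n\hat\psi\to\hat\phi$ in $\BB^{H^1_1}$, Rellich--Kondrachov (bounded in $H^3_1$, convergent in $H^1_1$) gives $\hat\phi_0\in H^2_1$.
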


Note  that $\sup_k \|\hat\phi\|_{H^2_1}=\infty$, since otherwise $\sup_k\|\hat \phi_k\|_{C^1}<\infty$ 
so that $\sup|\hat \phi_k|< C |\supp(\hat\phi_k)|$, which is impossible
since $\sup| \hat\phi_k|=\lambda^{-k} |\hat \phi_0(c)|\ne 0$.

\begin{proof}
By Proposition ~ \ref{mainprop},  the essential
spectral radius of $\widehat \LL$ on $\BB$ is strictly smaller
than one and the spectral radius is equal to  $1$. The fact that $1$ is a simple eigenvalue
for a nonnegative eigenvector then follows from standard arguments
(see, e.g., \cite[Corollaries 1, 2]{BV} or \cite[Propositions 5.13, 5.14]{V}).
The normalisation $\nu(\hat \phi)=1$ implies that
$\int_I \phi\, dx=\int_I \Pi(\hat \phi)\, dx=1$.
Recalling \eqref{commute}, we get that $\LL(\Pi(\hat \phi))= \phi$
so that  $\Pi(\hat \phi)\in L^1$ is indeed the invariant density
of $f$ (which is known to be unique and ergodic).

It only remains to show that, under
a stronger Benedicks-Carleson condition, $\hat \phi_0 \in H^2_1$
if $f$ is $C^4$.
For this,  take $\hat\psi$ so that $\hat\psi_k=0$
for all $k\ge 1$ and $\hat\psi_0$ is $C^\infty$, of Lebesgue
average $1$ (we can even take $\hat \psi_0$ constant
in a neighbourhood of $[c_2,c_1]$), and use
that $\widehat \LL^n(\hat \psi)$ converges to $\hat \phi$
in the $\BB^{H^1_1}$ norm
(exponentially fast) as $n\to \infty$. 
We claim that
$\|(\widehat \LL^n(\hat \psi))_0\|_{H^3_1}\le C$ for all $n$, up to a suitable
modification of the conditions on
$\lambda$  in \eqref{48}.
Adapting the proof of \eqref{rootsing3}, one shows
$\sup_{x \in f^k(I_k)}\biggl |
\partial^3_x \frac{1}{|(f^k)'(f^{-k}_\pm(x))|}
\biggr | \le   C  \frac{e^{7 \gamma k}}{|(f^{k-1})'(c_1)|^{1/2}}$.
Then, in view of \eqref{BeCxtra}, one
can exploit (in addition to the properties already used in
the proof of the Lasota-Yorke estimates
for the $H^1_1$ norm in Proposition~\ref{mainprop})
the  conditions on  $\xi''_k$, $\xi'''_k$ in \eqref{kappa'}
to adapt \eqref{recurs} in Appendix~\ref{app2} (noting also
that $\hat\psi|\omega=0$
if the interval $\omega $ is in some level $E_k$ with $k>0$). 
Note that \eqref{dirtytrick} is not needed, since we only look at
the component of $\widehat \LL^n( \hat \psi)$ at level $0$.
\comment{recheck...\eqref{dirtytrick} could be used to find
Bspace containing $\hat \phi$}
 Details are straightforward, although tedious, and left to the reader.
  (We do not claim that the factor $14$ in 
 \eqref{BeCxtra} is optimal. In any case, we shall need to work
 with the stronger TSR condition soon.) 
To conclude, use that if a sequence  converging
to $\hat \phi_0$ in $H^1_1(I)$ has bounded $H^3_1(I)$ norms 
then $\hat \phi_0\in H^2_1(I)$ by Rellich Kondrakov.
\end{proof}


\subsection{Truncated transfer operators $\widehat \LL_{M}$}
\label{trunk}

We introduce 
for each $M \ge 0$ the {\it truncation operator}
$\TT_M$ defined  by 
\begin{align}\label{truncc}
\TT_M(\hat \psi)_k=
\begin{cases} \psi_k & k \le M\\
0 & k > M  \, .
\end{cases}
\end{align}
Clearly, $\|\TT_M\|_{\BB^{H^r_1}}\le 1$ for all $r\ge 0$ and $\|\TT_M\|_{\BB^{L^1}}\le 1$.
We  consider the
operator defined  by
$$
\widehat \LL_{M} = \TT_M \widehat \LL \TT_M \, .
$$

By using the results of Keller and Liverani \cite{kellerliverani},  we shall prove
the following result:

\begin{lemma}[Spectrum of the truncated operators]\label{truncspec}
Let $f$ be an $S$-unimodal $(\lambda_c,H_0)$-Collet-Eckmann map satisfying the strengthened Benedicks-Carle\-son condition \eqref{stBeC}, 
with a non-preperiodic critical
point.   Recall $\Theta_0$ from  \eqref{theta0}.

The essential spectral radius of $\widehat \LL_{M}$ acting on 
$\BB$ 
is not larger than $\Theta_0^{-1} <1$.

In addition,
there exists $M_0\ge 1$ so that
for all $M \ge M_0$  the operator $\widehat \LL_{M}$ has a real
nonnegative maximal eigenfunction $\hat \phi_{M}$,
for an eigenvalue $\kappa_{M}> \Theta_0^{-1}$,
the dual operator of  $\widehat \LL_{M}$ has a 
nonnegative maximal eigenfunction $\nu_{M}$, and, setting
\begin{equation}\label{tauM}
\tau_M=e^{3\gamma M} |(f^M)'(c_1)|^{-1 /2}<1\, ,
\end{equation}
for any $\eta <1$ 
there exists $C>0$ so that,
normalising by 
$\nu(1)=\nu_M(1)$ and
$\int \hat \phi_{M}\, d\nu_{M}=1$, we have
\begin{equation}\label{klbounds}
\|\hat \phi-\hat \phi_{M}\|_{\BB^{L^1}} \le C 
\tau_M^\eta \, , \, \, 
\| \nu- \nu_{M}\|_{(\BB^{H^1_1})^*} \le C \tau_M^\eta\, , \, \, 
|\kappa_{M} -1 | \le C \tau_M^\eta\, .
\end{equation}
If $f$ is $C^4$ and \eqref{BeCxtra} holds, we may choose
$\lambda>1$   so that
$\sup_M\|\hat \phi_{M,0}\|_{H^2_1} < \infty$.
\end{lemma}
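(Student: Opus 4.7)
The plan is to apply the Keller--Liverani perturbation theorem \cite{kellerliverani}, which requires two ingredients: a uniform Lasota--Yorke inequality for $\widehat\LL_M$, and a quantitative convergence $\|\widehat\LL - \widehat\LL_M\|_{\BB^{H^1_1}\to \BB^{L^1}}\to 0$. The spectral structure of $\widehat\LL_M$ near $1$ will then be controlled by that of $\widehat\LL$ obtained in Propositions~\ref{mainprop} and \ref{acip}, and a Perron--Frobenius-type argument will identify the limiting eigenvector as nonnegative.

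For the uniform Lasota--Yorke bound, I would observe that $\TT_M$ is a projection of norm $\le 1$ on both $\BB^{H^1_1}$ and $\BB^{L^1}$, and that, since $\TT_M^2=\TT_M$, one has $\widehat\LL_M^n=\TT_M(\widehat\LL\TT_M)^n$. Re-examining the derivation of \eqref{keyLM} in the proof of Proposition~\ref{mainprop} (and of Appendix~\ref{app2}), the sums defining $(\widehat\LL_M\hat\psi)_k$ are sub-sums of the corresponding ones for $\widehat\LL\hat\psi$: truncation only discards some climbing or falling contributions, so the pointwise estimates \eqref{der0}, \eqref{der00} and the fuzzy-overlap bookkeeping of Remark~\ref{overlap} carry through with the same or better constants. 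This yields
\begin{equation*}
\|\widehat\LL_M^n\hat\psi\|_{\BB^{H^1_1}}\le C'\Theta^{-n}\|\hat\psi\|_{\BB^{H^1_1}}+C'\|\hat\psi\|_{\BB^{L^1}}
\end{equation*}
uniformly in $M$, for any $\Theta<\Theta_0$. Combined with the compact embedding $\BB^{H^1_1}\hookrightarrow \BB^{L^1}$ from Rellich--Kondrachov and Hennion's theorem, this gives the claimed essential spectral radius bound $\Theta_0^{-1}$.

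For the perturbation bound, I would compute $(\widehat\LL-\widehat\LL_M)\hat\psi$ directly: it vanishes at levels $1\le k\le M$, it equals $\xi_{k-1}\psi_{k-1}/\lambda$ for $k>M$, and at level $0$ it is the sum over $j>M$ of the falling contributions from $\psi_j$. Performing the change of variables used in \eqref{miracle}, the level-$0$ part has $L^1$ norm at most $\sum_{j>M}\lambda^j\|\psi_j\|_{L^1}$, while the climbing part at levels $k>M$ contributes $\sum_{k>M}\lambda^{k-1}\|\psi_{k-1}\|_{L^1}$ after the $\BB^{L^1}$-weighting. The support condition \eqref{defban} forces $|\supp(\psi_k)|\le 2e^{-\beta_1 k}$, so the Sobolev embedding gives $\|\psi_k\|_{L^1}\le Ce^{-\beta_1 k}\|\psi_k'\|_{L^1}$; since $\lambda e^{-\beta_1}<e^{-\gamma/2}<1$ by \eqref{48} and \eqref{betas}, summation yields $\|(\widehat\LL-\widehat\LL_M)\hat\psi\|_{\BB^{L^1}}\le C(\lambda e^{-\beta_1})^M\|\hat\psi\|_{\BB^{H^1_1}}$. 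Alternatively, using instead the square-root estimate \eqref{rootsing1} to bound the falling Jacobian, one gets a bound involving $|(f^j)'(c_1)|^{-1/2}$, which combined with Collet--Eckmann matches a power of $\tau_M$; in either case, the perturbation is dominated by $C\tau_M^{\beta}\|\hat\psi\|_{\BB^{H^1_1}}$ for some $\beta>0$.

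Given these inputs, Keller--Liverani yields that, for $M$ sufficiently large, the spectrum of $\widehat\LL_M$ outside any disk of radius $>\Theta_0^{-1}$ is Hausdorff-close to that of $\widehat\LL$, and the corresponding spectral projectors converge in the operator norm $\BB^{H^1_1}\to \BB^{L^1}$ at the quantitative rate $\tau_M^{\eta}$ for any $\eta<1$ (the exponent $\eta$ being controlled by the spectral gap and the Lasota--Yorke constants). Since $1$ is a simple isolated eigenvalue of $\widehat\LL$ by Proposition~\ref{acip}, this produces the simple eigenvalue $\kappa_M$ together with eigenvectors $\hat\phi_M$ and $\nu_M$ satisfying \eqref{klbounds} after normalization. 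Nonnegativity of $\hat\phi_M$ and $\nu_M$ follows because $\widehat\LL_M$ and its dual preserve the cones of nonnegative functions and measures, so applying the corresponding rank-one spectral projector to a nonnegative test function gives a nonnegative eigenvector. Finally, for the uniform $H^2_1$ bound on $\hat\phi_{M,0}$, I would re-run the bootstrap of Proposition~\ref{acip}: take $\hat\psi$ smooth and concentrated at level $0$, and observe that the estimates on $\|(\widehat\LL^n\hat\psi)_0\|_{H^3_1}$ used only the expansion bounds of Lemma~\ref{rootsing} (which concern $f$ itself, not the truncated dynamics) together with the Lasota--Yorke mechanism from Proposition~\ref{mainprop}; these apply identically to $\widehat\LL_M^n$, so $\sup_n \|(\widehat\LL_M^n\hat\psi)_0\|_{H^3_1}$ is bounded uniformly in $M$, and a Rellich--Kondrachov extraction gives $\hat\phi_{M,0}\in H^2_1$ with uniformly bounded norm. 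The main obstacle I anticipate is matching the specific rate $\tau_M^\eta$ in \eqref{klbounds} to the natural perturbation rate produced by the above analysis, which will require careful bookkeeping of the Keller--Liverani constants in terms of the spectral gap $1-\Theta_0^{-1}$ and the two-norm structure.
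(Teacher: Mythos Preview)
Your proposal is correct and follows essentially the same route as the paper: uniform Lasota--Yorke estimates inherited from the proof of Proposition~\ref{mainprop}, a perturbation bound $\|\widehat\LL-\widehat\LL_M\|_{\BB^{H^1_1}\to\BB^{L^1}}\le C\tau_M$ (which the paper derives from $\|(\id-\TT_M)\hat\psi\|_{\BB^{L^1}}\le C\tau_M\|\hat\psi\|_{\BB^{H^1_1}}$ via Lemma~\ref{rootsing} and Proposition~\ref{ubalpha}, i.e.\ your route (b)), application of Keller--Liverani, and a repeat of the $H^2_1$ bootstrap from Proposition~\ref{acip}. One small caveat: your support bound $|\supp(\psi_k)|\le 2e^{-\beta_1 k}$ does not follow from the inclusion \eqref{defban} alone (that gives the size of $B_k$, not of its $f^{-k}$-preimage) and requires the expansion of $f^k$ near $c$, but the conclusion is valid and in fact much stronger, so your argument goes through.
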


\begin{proof}
The claim about the essential spectral radius can be obtained by
going over the proof of Proposition ~ \ref{mainprop} and checking that it applies
to $\widehat \LL_{M}$, and that the constants
are uniform in $M$. The reader is invited to do this, and
to check  that we have the following uniform
Lasota-Yorke estimates for $\widehat \LL$ and
$\widehat \LL_{M}$: There exists $C \ge  1$  so that for all
$N$ and all $M$
\begin{equation}\label{LYM}
\max(\|\widehat \LL^N (\hat \psi)\|_{\BB^{H^1_1}}, 
\|\widehat \LL_{M}^N (\hat \psi)\|_{\BB^{H^1_1}})
\le  C \Theta^{-N} \| \hat \psi\|_{\BB^{H^1_1}} + 
C  \|\hat \psi\|_{\BB^{L^1}} \, ,
\end{equation}
and  (recall \eqref{l1bd}
and note that $\nu(|\widehat \LL^N_M(\hat \psi)|)\le\nu(\widehat \LL^N_M( |\hat \psi|))\le
\nu(\widehat \LL^N( |\hat \psi|))=\nu(|\psi|)$)
$$
\| \widehat \LL^N  \|_{\BB^{L^1}}\le 1\, ,\quad
\|( \widehat \LL_{M})^N  \|_{\BB^{L^1}}\le 1 \, , \, \forall M \, ,
\forall N \, . 
$$ 
Finally,
there exists $C$ so that for all large enough $M$ 
$$
\|(\widehat   \LL - \widehat \LL_{M}) (\hat \psi)\|_{\BB^{L^1}}\le C 
\tau_M \|\hat \psi \|_{\BB^{H^1_1}}\, . 
$$ 
The last inequality is an easy consequence of
\begin{align*}
&\|\ (\id - \TT_ M) \hat \psi\|_{\BB^{L^1}}
\le C \tau_M \|\hat \psi\|_{\BB^{H^1_1}} \, ,
\end{align*} 
which follows from 
Lemma~\ref{rootsing} and Proposition~\ref{ubalpha}, since
$|\sup \psi_k|\le C \|\psi_k'\|_{L^1}$.
The bounds \eqref{klbounds} for  
$\eta \in (0, 1)$
then follow from 
 \cite[Theorem 1, Corollary 1]{kellerliverani}.

Note for use in Step 1 of the proof of Theorem~\ref{linresp}
in Section ~\ref{finalproof}
that the first claim of
\cite[Theorem 1]{kellerliverani} gives a small disc $B$ around $1$ so that
\begin{equation}\label{forrlater}
\sup_{M\ge M_0} \sup_{z \notin B}\| z -\widehat \LL_M\|^{-1} _{\BB^{H^1_1}}< \infty \, ,
\end{equation}
while, letting
$
\PPP_{M}(\hat \psi)=
\hat \phi_{M}  \nu_{M}(\hat \psi)
$ 
be
the  spectral projector corresponding to the maximal eigenvalue
of $\widehat \LL_M$, and setting
\begin{equation}\label{forrlater'}
 \NN_M:=(\kappa_{M} -\widehat \LL_M)^{-1}(\id-\PPP_M)- 
(\id -\widehat \LL_0)^{-1}(\id-\hat \phi \nu(\cdot))\, .
\end{equation}
the second claim of \cite[Theorem 1]{kellerliverani} 
with the first lines of \cite[Appendix B]{bs1} give
\begin{equation}\label{forrlater''}
\| \NN_M (\hat \psi)\|_{\BB^{L^1}}\le \widehat C\tau_M^{\eta} \|\hat \psi\|_{\BB^{H^1_1}}\, ,
\end{equation}
and
\begin{align}\label{newKL'}
\Delta&:=
\|(\kappa_{M} -\widehat \LL_{M})^{-1}(\id-\PPP_{M})\|_{\BB^{H^1_1}}
< \infty \, .
\end{align}

It follows from what has been done up to now and \cite{kellerliverani} that
$\sup _M \|\hat \phi_M\|_{\BB^{H^1_1}}<\infty$.
For the last claim of Lemma ~\ref{truncspec},
we proceed  like in the analogous statement
of Proposition ~ \ref{acip},
and get uniform bounds in $M$. 
\end{proof}


\section{Topological invariance 
and uniformity of constants for various recurrence conditions}
\label{recurr}

It is well-known that the Collet-Eckmann
property is an invariant of topological conjugacy,
and the fact that $\lambda_{c}(f_t)$ can be estimated uniformly
in $t$ for a smooth deformation $f_t$
of $f_0$
is explained, e.g., in \cite[Appendix]{bs3}.
Our argument requires more: We need a Benedicks-Carleson-type
condition of the form  \eqref{stBeC} or
\eqref{BeCxtra} and  uniform estimates
on the constants 
\begin{equation}
\label{csts}
\lambda_c(f_t)\, , \, H_0(f_t)\, , \, \gamma(f_t)\, ,
\mbox{ and also }
\sigma(f_t)\, , \, C_1(f_t)\, , \, c(\delta,f_t)\, , \, \rho(f_t)
\end{equation}
(recall  
Lemma \ref{est1a}), as $t$ varies.
The constant $\sigma(f_t)$ is bounded away from $1$ uniformly in small $t$, by
the proof of \cite[Theorem III.3.3]{dMvS}, in particular
the choice of $m$ and $\lambda$ there, and noting that
all $f_t$ have only repelling periodic orbits and are $S$-unimodal.
However, if $f_t$ is a smooth deformation of a
Benedicks-Carleson $S$-unimodal map,
we do not know how to estimate $\gamma(f_t)$ in general.

Lemma~\ref{est1aunif}, the main result of this section,
is proved in Subsection~\ref{uunif}:
It says that all  constants in \eqref{csts}
are uniform,
for deformations $f_t$ which satisfy the   TSR condition
 ~ \eqref{tsr}.  
 In Subsection~\ref{Lt}, we exploit a consequence of this uniformity
 which will play an important part in the proof of Theorem~\ref{linresp}: If $f_t$ is a deformation, one can use the same lower
 part of the tower for all
 operators $\widehat \LL_t$ with $|t|\le t_0$,
 up to some level depending on $t_0$.

In order to apply directly the results
of Nowicki, we shall assume  that $f$ is symmetric, i.e.,
\begin{equation}\label{symmetric}
f(x)=f(-x) \, .
\end{equation}

\subsection{Uniformity of constants}\label{uunif}

Recall that our definition of $S$-unimodal includes the condition
$f''(c)\ne 0$, that is, all our $S$-unimodal maps are quadratic.
Let $R_f(x)$ be the function from \eqref{Rf} in the definition of
the TSR condition.

\begin{proposition}[Uniform Collet-Eckmann condition \cite{luzzatto}]\label{unce} Let $f_0$ be a $C^3$
$S$-unimodal map satisfying the topological slow recurrence condition
\eqref{tsr}. Then there exist $\lambda_c >1$,
$\underline{\kappa}>0$, $K > 0$,  and $\epsilon > 0$ such that for every $S$-unimodal map $f$  in the topological class of $f_0$ such that $|f-f_0|_{C^3}< \epsilon$, we have   
\begin{equation}\label{topmet}  R_f(f^j(c))\geq -\underline{\kappa} \log |f^j(c)-c|\, ,
\quad \forall j \ge 0 \, , 
\end{equation}
and
\begin{equation}\label{topce}  |(f^j)'(f(c))|\geq K \lambda_c^j\, ,
\quad \forall j \ge 0 \, . 
\end{equation}

\end{proposition}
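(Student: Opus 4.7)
The plan rests on observing that both the hypothesis (TSR) and the conclusion \eqref{topmet} are combinatorial in nature: the function $R_f$ from \eqref{Rf} depends only on the itineraries $\sgn(f^n(x))$, and is therefore a topological invariant. Consequently every $S$-unimodal map in the topological class of $f_0$ satisfies TSR with identical values $R_f(f^j(c)) = R_{f_0}(f_0^j(c))$ for all $j \ge 0$, and any estimate that can be phrased in terms of $R_f$ alone is automatically uniform in $f$. This observation is the foundation of the whole argument.

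To establish \eqref{topmet}, I would combine a Koebe/Ma\~n\'e argument with this preservation of combinatorics. If $R_f(c_j) = k$ then by definition $f^k$ is a diffeomorphism on $(c,c_j)$ whose image contains $c$, and moreover the iterates $f^i(c,c_j)$ for $0 \le i < k$ stay on one side of $c$ throughout. Since TSR implies Collet--Eckmann (by the Proposition preceding \eqref{sr}), $f$ has no attracting periodic orbits, so Ma\~n\'e's hyperbolicity theorem gives uniform exponential expansion on orbit segments that avoid a fixed neighborhood of $c$. After extending $(c,c_j)$ slightly to secure Koebe distortion, one deduces $|f^k(c,c_j)| \ge C^{-1}\vartheta^{-k}|c_j-c|$ for some uniform $C>0$ and $\vartheta \in (0,1)$. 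Since $|f^k(c,c_j)| \le |I|$, this rearranges to \eqref{topmet} with $\underline{\kappa} = -1/\log\vartheta$. Uniformity of $C$ and $\vartheta$ over a small $C^3$-neighborhood of $f_0$ follows because the Koebe constants and Ma\~n\'e's expansion rate depend continuously on $f$ in the $C^3$ topology, once a uniform lower bound on the size of the relevant fundamental domains is secured (this lower bound itself is combinatorial).

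For \eqref{topce}, I would invoke the equivalence TSR $\Leftrightarrow$ CE\,$+$\,\eqref{sr} stated in the Proposition preceding \eqref{sr}. The Nowicki-style derivation of the Collet--Eckmann constants from the TSR data expresses $\log|(f^j)'(c_1)|$ as a sum of contributions away from $c$ (bounded below via the Ma\~n\'e expansion used above) plus contributions from close returns, each of which is linked to $R_f(c_i)$ through \eqref{topmet}. Since $R_f$ and the TSR sums in \eqref{tsr} are identical for all $f$ in the topological class, and the off-critical derivatives vary only by a small $C^3$-perturbation, this derivation yields uniform $\lambda_c > 1$, $K > 0$ in a $C^3$-neighborhood of $f_0$.

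The main obstacle is obtaining genuinely \emph{uniform} Collet--Eckmann constants: unlike $R_f$, the derivative product $|(f^j)'(f(c))|$ is not a topological invariant, and one could a priori imagine its exponential growth rate degenerating as $f$ varies within the (infinite-dimensional) topological class. The key mechanism making uniformity work is that the potentially dangerous close returns of the critical orbit to $c$ are recorded exactly by $R_f$ via \eqref{topmet}; topological invariance of $R_f$ then forces the sum of "bad" contributions to $\log|(f^j)'(c_1)|$ to be uniformly controlled, while the "good" contributions live on compact sets bounded away from $c$ and are controlled by $C^3$-closeness. Extracting this bootstrap rigorously is precisely the content of the cited arguments of Nowicki and Luzzatto.
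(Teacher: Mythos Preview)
Your opening observation that $R_f$ is a topological invariant is correct and is indeed the backbone of the uniformity argument. Your treatment of \eqref{topce} --- deferring to the cited Nowicki/Luzzatto machinery while tracking that the inputs (the TSR sums and the off-critical derivative bounds) are uniform --- is essentially what the paper does.

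However, your argument for \eqref{topmet} has a genuine error of direction. You claim an expansion estimate $|f^k(c,c_j)| \ge C^{-1}\vartheta^{-k}|c_j-c|$ and then bound $|f^k(c,c_j)|\le |I|$ to conclude. But this rearranges to $R_f(c_j)=k \le \underline\kappa\,(-\log|c_j-c|)+\text{const}$, which is an \emph{upper} bound on $R_f(c_j)$. The inequality \eqref{topmet} is the \emph{lower} bound $R_f(c_j)\ge -\underline\kappa\log|c_j-c|$: it asserts that points close to $c$ must shadow the critical orbit for a long time. An expansion estimate cannot yield this; what is needed is a \emph{growth} bound on the separation $|f^i(c_j)-c_i|$, showing that it stays small for many steps when $|c_j-c|$ is small. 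The relevant mechanism is $|f(c_j)-c_1|\le L|c_j-c|^2$ (quadratic critical point) followed by $|f^{i}(c_j)-c_i|\le D^{i-1}L|c_j-c|^2$ with $D=\sup|f'|$; this is why the constants $D$, $L$, $\theta$ appear in the paper's proof via Luzzatto's Sublemma~2.1, rather than any Ma\~n\'e constant.

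There is also a secondary gap in your invocation of Ma\~n\'e: the iterates $f^i((c,c_j))$ have $c_i$ as an endpoint, and $c_i$ may itself be arbitrarily close to $c$ (this is precisely what the Benedicks--Carleson/TSR hypotheses quantify). So ``staying on one side of $c$'' does not mean ``avoiding a fixed neighbourhood of $c$'', and Ma\~n\'e's theorem does not apply to these intervals. This reinforces that the mechanism behind \eqref{topmet} is the bounded-growth estimate above, not hyperbolic expansion away from the critical point.
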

\begin{proof} Except for the explicit statement on the dependence of  $\underline{\kappa}$,   (\ref{topmet})   is Lemma~ 2 in \cite{luzzatto}. 
We  say that $f \in V(D, L, \theta)$ if
\begin{align*}
&D_f =\max_{x \in I} |f'(x)|<  D\, , 
\,\, L_f=\sup_{x \in I} \frac{|f(x)-f(c)|}{|x-c|^2} <  L\, ,\,\, 
\theta_f=\sup_{f(x)=f(y)} \frac{|x-c|}{|y-c|}< \theta\, .
\end{align*}

For $\epsilon$ small enough, we have $f \in V(D,L,\theta)$, with $D=2D_{g_0}$, $L=2L_{f_0}$ and $\theta=2\theta_{f_0}$. The proof of Lemma ~2 relies on Sublemmas~ 2.1 and ~ 2.2
in \cite{luzzatto}. The constants  $C$ and $\underline{\kappa}$ in 
\cite[Sublemma~ 2.1]{luzzatto}  depend only on $D$, $L$ and $\theta$. The constant  $N_\varepsilon$ in \cite[Sublemma~ 2.2]{luzzatto} depends only on the topological class of $f$. In the proof of Lemma ~2 in \cite{luzzatto}, since $f$ has a unique critical point we can take $\delta_0=|I|$ and $N_0=1$  in  (5) and (6) of \cite{luzzatto}. Moreover, we can find $\epsilon > 0$ such that 
$$ \inf_{|f-f_0|_{C^3}< \epsilon} \min\{   |f^i(c)-c| \ s.t. \ 0< i\leq \max\{N_0,N_\varepsilon\}\} > 0\, .
$$ 
This shows  (\ref{topmet}). 

Except for the explicit statement on the dependence of  $K$
and  $\lambda_c$,   (\ref{topce})  is Corollary~ 5.1  in \cite{luzzatto}. The proof of this result relies on  (\ref{topmet}) above, and on Lemmas~ 3, 4, 5 and Sublemma~ 5.1 in \cite{luzzatto}.  The estimates obtained in Lemma~ 3  depend only on the topological class of $f$. Given $T > 0$, we can find $\epsilon > 0$ such that 
$$
 \inf_{|f-f_0|_{C^3}< \epsilon} 
\min\{   |x-y| \ s.t. \ x \not=y, \ x, y \in \{ f^i(c)\}_{i\leq T}
\cup \{z \colon \ f^i(z)=c   \}_{ i\leq T}    \} > 0\, ,
$$
so we can see from the proof of \cite[Lemma~ 4]{luzzatto} that there exists  $\gamma(T)$ that satisfies the estimates obtained in Lemma ~4 for every $S$-unimodal map $f$ in the topological class of $f_0$ satisfying $|f-f_0|_{C^3} < \epsilon$. Sublemma~ 5.1 
in \cite{luzzatto} follows directly from Lemmas ~3 and~ 4 for  every $f$ satisfying the same conditions, with the same constants $\eta$ and  $\gamma$.  Finally the proof of the estimates in Lemma~ 5 in \cite{luzzatto}
depends only on the topological class of $f$, estimates in Sublemma ~2.2,  (\ref{topmet}) above and $D$.  \end{proof}

\begin{proposition}[Uniform Benedicks-Carleson type conditions]\label{ubc} Let $f_0$ be a $S$-unimodal map satisfying the topological slow recurrence condition
\eqref{tsr}. Then for every $\gamma > 0$ there exist  $H_0>0$
and $\epsilon > 0$ such that for every $S$-unimodal map $f$ in the topological class of $f_0$ such that $|f-f_0|_{C^3}< \epsilon$, we have 
$$|f^k(c)-c|\geq e^{-\gamma k} \, , \quad \forall k \ge H_0\, .$$  
\end{proposition}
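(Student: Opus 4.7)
\medskip
\noindent\textbf{Proof plan.} The plan is to argue by contradiction using only the uniform topological-metric estimate \eqref{topmet} from Proposition \ref{unce}, combined with the topological invariance of the function $R_f$ and the TSR condition for $f_0$. The decisive observation for uniformity is that $R_f(f^j(c))$ does \emph{not} depend on $f$ within the topological class of $f_0$. Indeed, if $h\colon I\to I$ is the orientation-preserving conjugacy satisfying $h\circ f_0=f\circ h$, then $h(c)=c$ and $h$ preserves the sign of any $x\in I$, whence $R_f(x)=R_{f_0}(h^{-1}(x))$; applied to $x=f^j(c)=h(f_0^j(c))$, this yields $R_f(f^j(c))=R_{f_0}(f_0^j(c))$ for every $j\ge 1$. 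In particular the sum in \eqref{tsr} is literally the same for every $f$ in the topological class of $f_0$.

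Fix $\gamma>0$, and let $\underline{\kappa}>0$ and $\epsilon>0$ be as in Proposition \ref{unce}. Applying \eqref{tsr} to $f_0$, pick $m_0$ large enough that
$\limsup_n \frac{1}{n}\sum_{j\le n,\, R_{f_0}(f_0^j(c))\ge m_0}R_{f_0}(f_0^j(c))<\underline{\kappa}\gamma/2$,
and then $N_0\ge 1$ such that the same quantity is below $\underline{\kappa}\gamma/2$ for every $n\ge N_0$. Set
\[
H_0:=\max\bigl(N_0,\ \lceil m_0/(\underline{\kappa}\gamma)\rceil+1\bigr).
\]
Now suppose for contradiction that some $S$-unimodal map $f$ in the topological class of $f_0$ with $|f-f_0|_{C^3}<\epsilon$ satisfies $|f^k(c)-c|<e^{-\gamma k}$ for some $k\ge H_0$.

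By \eqref{topmet}, $R_f(f^k(c))\ge -\underline{\kappa}\log|f^k(c)-c|\ge \underline{\kappa}\gamma k$, and since $k\ge m_0/(\underline{\kappa}\gamma)$ we also have $R_f(f^k(c))\ge m_0$. Taking $n=k\ge N_0$ in the TSR sum and using topological invariance to identify the summands for $f$ and for $f_0$, the term $j=k$ contributes $R_{f_0}(f_0^k(c))=R_f(f^k(c))$, so
\[
\frac{1}{n}\sum_{\substack{1\le j\le n\\ R_{f_0}(f_0^j(c))\ge m_0}}R_{f_0}(f_0^j(c))\ \ge\ \frac{R_f(f^k(c))}{k}\ \ge\ \underline{\kappa}\gamma\, ,
\]
contradicting the choice of $m_0$ and $N_0$. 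This yields the claim.

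The only delicate point is the uniformity in $f$, and it comes essentially for free: the estimate \eqref{topmet} is uniform by Proposition \ref{unce}, while the TSR sums for $f$ and $f_0$ coincide by topological invariance, so both $H_0$ and $\epsilon$ depend only on $f_0$ and $\gamma$. No bound-period or Koebe-distortion argument is needed in the present statement---those will be used in the quantitative uniform estimates on $\sigma(f_s)$, $c_{f_s}(\delta)^{-1}$, $\rho(f_s)$ in Lemma~\ref{est1aunif}.
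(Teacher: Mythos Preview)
Your proof is correct and follows essentially the same approach as the paper's: both rely on the topological invariance of $R_f(f^j(c))$ along the postcritical orbit, the TSR condition for $f_0$, and the uniform estimate \eqref{topmet} from Proposition~\ref{unce}. Your contrapositive packaging is in fact slightly cleaner than the paper's direct argument: the paper splits into the cases $R_f(f^k(c))\ge m_0$ and $R_f(f^k(c))<m_0$, handling the latter by an auxiliary continuity argument that may require shrinking $\epsilon$ further, whereas you observe that the hypothesis $|f^k(c)-c|<e^{-\gamma k}$ together with \eqref{topmet} already forces $R_f(f^k(c))\ge \underline{\kappa}\gamma k\ge m_0$, so the second case never arises.
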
 

\begin{proof} Let $\underline{\kappa}$ and $\epsilon$ be as  in Proposition \ref{unce}. Choose $m_0$, $n_0$ large enough so that 
$$\frac{1}{n} \sum_{ \substack{1\leq j\leq n \\ R_{f_0}(f_0^j(c))\geq m_0}} R_{f_0}(f_0^j(c)) < \underline{\kappa} \gamma\, ,\forall n \ge n_0\, .
$$
Consequently, we have the same estimate for every map $f$ topologically conjugate to $f_0$, 
that is
$$\frac{1}{n} \sum_{ \substack{1\leq j\leq n \\ R_f(f^j(c))\geq m_0}} R_f(f^j(c)) < \underline{\kappa} \gamma \, ,\forall n \ge n_0\, .
$$
In particular, if $R_f(f^k(c))\geq m_0$ and $k\geq n_0$,  we have  
$$\frac{R_f(f^k(c))}{k} < \underline{\kappa} \gamma\, ,
$$
so by  (\ref{topmet}) we obtain 
$$-\frac{\log |f^k(c)-c|}{k} <  \gamma\, ,$$
so $|f^k(c)-c|\geq e^{-\gamma k}$. Since  $c$  is not periodic
for $f_0$, we can find $\eta, \epsilon > 0$ such that for each 
$S$-unimodal map $f$  such that $|f-f_0|_{C^3}< \epsilon$ and for every $x \in (c-\eta,c+\eta)$ we have  $|f^i(x)-c|> 0$ for  $1\leq i\leq 2m_0$.  In particular, 
$\mbox{dist}\, (\Omega_f,c) > \eta$,
where  
$$\Omega_f =\{ x \in I \colon \ R_f(x)<  m_0\}\, .$$
Let $H_0 > n_0$ be large enough such that $\eta > e^{-\gamma H_0}$. Then $|f^k(c)-c|\geq e^{-\gamma k}$ for every $k\geq H_0$. 
\end{proof}

We are going to use some results by Nowicki \cite{symmetric}. 
An interval $[a_1,a_2]$ is a  nice interval if $c \in (a_1,a_2)$ and
$f^j(a_i) \not\in (a_1,a_2)$, for every $j\geq 1$ and $i=1,2$.
We say that an interval $(c,b)$ is a $*(n)$ interval if $f^n$ is a diffeomorphism on $(c,b)$ and $f^n(b)=c$.

\begin{proposition}[Lemma 9 and Proposition 11 in \cite{symmetric}]\label{now2} 
Let $(c,b)$ be an $*(n)$ interval of a symmetric   $S$-unimodal map $f$ satisfying  (\ref{topce}).  Then $|f^n(c)-f^n(b)|> |c-b|$. Furthermore
$$|f^n(c)-f^n(b)|\geq K_1 \lambda_c^{n/4}|c-b|\, ,$$
where $K_1= (Km/4M)^{1/2}$, where  $K$ is as in  (\ref{topce}),
and $m$ and $M$ satisfy $m|x-c|\leq|f'(x)|\leq M|x-c|$.
\end{proposition}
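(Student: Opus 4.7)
My plan is to follow Nowicki's approach from \cite{symmetric}, combining negative Schwarzian derivative, the Collet-Eckmann condition \eqref{topce}, and the symmetry hypothesis $f(-x)=f(x)$.

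First I factor $f^n=f^{n-1}\circ f$ on $(c,b)$. Since $c=0$ is a non-degenerate maximum (because $f''(c)<0$), the restriction $f|_{(c,b)}$ is a monotone diffeomorphism onto $(f(b),c_1)$. This yields the identity
$$|f^n(c)-f^n(b)|=|g(c_1)-g(f(b))|,\qquad g:=f^{n-1}|_{[f(b),c_1]},$$
reducing the task to the diffeomorphism $g$, whose image is $[c,c_n]$. The Collet-Eckmann bound \eqref{topce} gives $|g'(c_1)|\ge K\lambda_c^{n-1}$ at the critical-value endpoint, and the quadratic pinch $m|x-c|\le|f'(x)|\le M|x-c|$ yields $\tfrac{m}{2}(b-c)^2\le|c_1-f(b)|\le\tfrac{M}{2}(b-c)^2$.

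The first claim $|f^n(c)-f^n(b)|>|b-c|$ will follow from the absence of attracting periodic orbits (automatic for Collet-Eckmann maps): if $f^n:(c,b)\to(c,c_n)$ were a contraction, then since $f^n(b)=c\in\overline{(c,b)}$ a standard fixed-point argument would produce an attracting fixed point of $f^n$ inside $\overline{(c,b)}$, contradicting the hypothesis. For the quantitative lower bound I will exploit the minimum principle for negative Schwarzian: since $g=f^{n-1}$ inherits $Sg<0$ from $f$, the function $|g'|$ has no positive local minimum in the interior of $[f(b),c_1]$. The symmetry $f(-x)=f(x)$ is crucial here: because $f(-b)=f(b)$, the interval $[f(b),c_1]$ is the $f$-image of the symmetric interval $[-b,b]$, which provides the Koebe space needed to transport the Collet-Eckmann estimate $|g'(c_1)|\ge K\lambda_c^{n-1}$ to a distortion-controlled lower bound on $|g'|$ throughout $[f(b),c_1]$. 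Integrating yields a bound of the form
$$|f^n(c)-f^n(b)|\ge C^{-1}Km\,\lambda_c^{n-1}(b-c)^2$$
for a distortion constant $C$ depending only on $m,M$.

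The final step, and main obstacle, produces the exponent $n/4$ via a squaring device. Combining the quadratic Collet-Eckmann bound with the first claim through the hierarchical $*$-structure of the $*(n)$-interval (Nowicki's Lemma~9 in \cite{symmetric}, which balances the expansion lost at $c$ against the Collet-Eckmann expansion along the postcritical orbit at intermediate $*$-scales), one arrives at
$$|f^n(c)-f^n(b)|^2\ge\frac{Km}{4M}\,\lambda_c^{n/2}(b-c)^2,$$
and extracting the square root yields the claim with $K_1=\sqrt{Km/(4M)}$. The exponent $n/4$ is the natural scale because the single quadratic fold at $c$ costs a square root in the expansion rate, but converting this heuristic into a clean inequality requires Nowicki's nesting argument to track the balance between distortion and Collet-Eckmann expansion through each level of the $*$-hierarchy; this is where the symmetry enters essentially and produces the clean constant $Km/(4M)$.
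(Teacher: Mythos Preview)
The paper does not prove this proposition at all: it is quoted verbatim from Nowicki \cite{symmetric} (Lemma~9 and Proposition~11 there), with no argument given. So there is no ``paper's own proof'' to compare against.

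Your plan correctly isolates the relevant ingredients (the factorisation $f^n=f^{n-1}\circ f$, the quadratic pinch $\tfrac{m}{2}|b-c|^2\le|c_1-f(b)|\le\tfrac{M}{2}|b-c|^2$, the Collet--Eckmann lower bound at $c_1$, the minimum principle for negative Schwarzian, and the absence of attracting cycles), but it is a sketch rather than a proof. Two points deserve comment. First, in your argument for $|f^n(c)-f^n(b)|>|b-c|$ you implicitly assume $c_n$ lies on the same side of $c$ as $b$; when it lies on the other side you need the symmetric interval $(-b,c)$ to run the self-mapping/attracting-orbit contradiction, so symmetry already enters the first claim, not only the second. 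Second, and more importantly, the decisive inequality
\[
|f^n(c)-f^n(b)|^2\ \ge\ \frac{Km}{4M}\,\lambda_c^{n/2}\,|b-c|^2
\]
is asserted, not derived: you explicitly invoke ``Nowicki's nesting argument'' to justify it. That is precisely the substance of Nowicki's Proposition~11, so at this key point your write-up, like the paper, is citing \cite{symmetric} rather than proving the statement. If you want a genuine self-contained argument you must actually carry out the inductive comparison over the hierarchy of $*$-intervals; the ingredients you list do not by themselves combine into the displayed inequality without that induction.
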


\begin{proposition}[Proposition 13 in \cite{symmetric}]\label{now3} Let $f$ be a symmetric $S$-unimodal map $f$ satisfying  (\ref{topce}). 
Let $b \in [-1,1]$ be such that $f^n(b)=c$. Then
$$|(f^n)'(b)|\geq \rho^n\, ,$$
for every   \footnote{Note that $\rho$ is called $\lambda_T$ in \cite{symmetric}.}
\begin{equation}\label{losecontrol}
\rho \leq \min (\inf_n \inf_{(c,b)\, \rm{ an }\, *(n)\,  \rm{interval}} 
\biggl|\frac{f^n(c)-f^n(b)}{c-b}\biggr|^{1/n}, |f'(-1)|^{1/2})\, .
\end{equation}
(The right-hand-side above is $> 1$ by Proposition~\ref{now2}.)
\end{proposition}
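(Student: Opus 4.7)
My plan is to follow Nowicki's strategy and combine: (i) the $*(m)$-interval expansion estimate from Proposition~\ref{now2}, (ii) the minimum principle coming from negative Schwarzian derivative, (iii) the Collet--Eckmann bound \eqref{topce} along the postcritical orbit, and (iv) the symmetry hypothesis \eqref{symmetric} to relate the two inverse branches of $f$. By symmetry, $f^k(-x)=f^k(x)$ for $k\ge 1$ and $f'(-x)=-f'(x)$, so $|(f^n)'(-b)|=|(f^n)'(b)|$, and one may assume $b\in (0,1)$.

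The idea is to decompose the set of indices $\{0,1,\dots,n-1\}$ into consecutive blocks, each of which is either a \emph{bound} block (during which the iterate $f^i(b)$ lies in a fixed small neighbourhood $(-\delta,\delta)$ of $c$) or a \emph{free} block. On a bound block of length $m$, if $b'$ is the endpoint of the nearest $*(m)$-interval associated to the block, then Proposition~\ref{now2} gives the mean-value bound $|f^m(c)-f^m(b')|/|c-b'|\ge \rho^m$. One then transfers this to a pointwise derivative estimate at the actual orbit point by invoking the minimum principle (negative Schwarzian implies $|(f^m)'|$ has no positive interior local minimum on any monotonicity interval) together with a Koebe-type extension, using the symmetry to embed $(c,b')$ in the critically-symmetric interval $(-b',b')$ on which $f^m$ factors through the two-to-one folding $f|_{(-b',b')}$ followed by a diffeomorphism $f^{m-1}$ whose expansion at the endpoint $c_1$ is controlled by the Collet--Eckmann bound \eqref{topce}.

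On a free block one has $|f^i(b)|\ge \delta$ throughout, so $|f'(f^i(b))|$ is uniformly bounded below; the only source of slow expansion is proximity to the fixed point $-1$ (or, by symmetry, $1$). Because the quadratic folding at $c$ costs a square root in any inverse-branch composition, the effective per-step expansion in a free block is $|f'(-1)|^{1/2}$ rather than $|f'(-1)|$, which is exactly what the second term in the min in \eqref{losecontrol} encodes. Multiplying the expansion factors over all blocks yields $|(f^n)'(b)|\ge \rho^n$ for any $\rho$ satisfying both inequalities in the min. The main obstacle is the bookkeeping at block transitions: one must show that the distortion incurred when comparing the actual orbit point $f^i(b)$ to the nearest $*(m)$-interval endpoint $b'$ is absorbed uniformly in $n$ and in the individual block lengths, which relies on the infima in the definition of $\rho$ being taken over \emph{all} $*(m)$-intervals and on negative-Schwarzian distortion control along the pull-backs used in the minimum principle argument.
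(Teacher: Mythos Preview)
The paper does not give a proof of this proposition; it is quoted from Nowicki~\cite{symmetric} (Proposition~13 there) and used as a black box. So there is no ``paper's own proof'' to compare yours against.

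As a standalone sketch, your proposal has a genuine gap and is not Nowicki's argument. Nowicki's proof is a direct induction on $n$ built from the minimum principle and the $*(m)$-interval structure; it introduces no auxiliary scale $\delta$ and no bound/free decomposition. Your scheme, by contrast, (i) introduces a threshold $\delta$ on which the conclusion must not depend, and block decompositions of this type yield bounds of the form $C(\delta)\rho^n$, not the constant-free $\rho^n$ asserted; (ii) at the key step of upgrading the mean-value expansion of Proposition~\ref{now2} on a $*(m)$-interval to a pointwise bound $|(f^m)'|\ge\rho^m$ at the actual orbit point, ``minimum principle plus Koebe extension'' does not suffice as stated, because the natural monotonicity interval has a critical preimage as one endpoint, where $(f^m)'=0$, so the minimum principle yields nothing there; (iii) your free-block claim that the effective per-step expansion is $|f'(-1)|^{1/2}$ is not justified: being $\delta$-away from $c$ gives only a $\delta$-dependent lower bound on $|f'|$, and your square-root heuristic (``the quadratic folding costs a square root'') pertains to passages through the critical region, not to free blocks. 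In Nowicki's argument the factor $|f'(-1)|^{1/2}$ enters for a different reason, tied to the symmetric two-to-one folding when the extension interval used in the induction reaches $\pm 1$, not from any free-period derivative bound.
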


\begin{proposition}\label{fe} Let $f_0$ be a $S$-unimodal map satisfying the topological slow recurrence condition \eqref{tsr}. Then for every $\beta \in (0,1)$ there exist  $\epsilon, \delta > 0$ and $\tilde{K} > 0$  with the following property:  Let $f$ be a symmetric $S$-unimodal map in the topological class of $f_0$ such that $|f-f_0|_{C^3}< \epsilon$, let  $[-q,q]\subset [-\delta,\delta]$ be a nice interval for $f$, and let 
$x \in [-1,1]\setminus[-q,q]$ be  such that $f^n(x) \in [-q,q]$ for some $n\geq 1$. 
Define
$$n_0(x)= \min\{n \geq 1 \ s.t. \ f^n(x) \in [-q,q]   \}\, .$$
Then there exist intervals $I_{n_0(x)} \subset J_{n_0(x)}$ such that 
\begin{itemize}
\item[1.] For every $y \in I_{n_0(x)}$ we have $n_0(y)=n_0(x)$  and $f^{n_0(x)}I_{n_0(x)}=[-q,q]$.
\item[2.]  The map  $f^{n_0(x)}\colon J_{n_0(x)}\rightarrow f^{n_0(x)}J_{n_0(x)}$ is a diffeomorphism, and each connected component of  $f^{n_0(x)}J_{n_0(x)}\setminus \{c\}$ is larger than $\tilde{K} q^{\beta}$. 
\end{itemize}
\end{proposition}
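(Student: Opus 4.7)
The plan is to construct $I_{n_0(x)}$ and $J_{n_0(x)}$ by monotone pullback along the orbit $x, f(x), \dots, f^{n_0(x)}(x)$, exploiting the uniform hyperbolicity supplied by Propositions \ref{unce}, \ref{ubc}, \ref{now2}, and \ref{now3}. First, I would fix the uniform constants. Given $\beta \in (0,1)$, Propositions \ref{unce} and \ref{ubc} yield $\epsilon, H_0, K, \lambda_c$ uniform in the topological class, so that the Collet-Eckmann bound $|(f^j)'(f(c))| \geq K\lambda_c^j$ and, for any preassigned $\gamma > 0$, the Benedicks-Carleson bound $|c_k - c| \geq e^{-\gamma k}$ ($k \geq H_0$) hold for every symmetric $f$ with $|f - f_0|_{C^3} < \epsilon$ in the topological class of $f_0$. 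Symmetry together with \eqref{losecontrol} and Propositions \ref{now2}, \ref{now3} gives a uniform expansion rate $\rho > 1$ at every critical preimage. I would choose $\gamma$ small enough that $4\gamma < (1-\beta)\log\rho$, then $\delta > 0$ so small that $|c_k| < \delta$ forces $k > H_0$.

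Second, write $n = n_0(x)$ and let $\tilde J = (y_-, y_+) \ni x$ be the maximal open interval on which $f^n$ is a diffeomorphism, i.e.\ such that $c \notin f^j(\tilde J)$ for $0 \leq j < n$. Each endpoint $y_\pm$ is either $\pm 1$ or satisfies $f^{k_\pm}(y_\pm) = c$ for the minimal $k_\pm \in \{0,\dots,n-1\}$, in which case $f^n(y_\pm) = c_{m_\pm}$ with $m_\pm := n - k_\pm \in \{1,\dots,n\}$. Define $J_{n_0(x)}$ to be the connected component of $\tilde J \cap f^{-n}([-\tilde K q^\beta, \tilde K q^\beta])$ containing $x$, and $I_{n_0(x)} \subset J_{n_0(x)}$ to be the connected component of $f^{-n}([-q,q])$ containing $x$, for a constant $\tilde K$ to be chosen. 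Since $\tilde K q^\beta > q$ for $q$ small, assertions (1) and (2) both reduce to the inclusion $f^n(\tilde J) \supset [-\tilde K q^\beta, \tilde K q^\beta]$, equivalently to the lower bound $|c_{m_\pm}| \geq \tilde K q^\beta$ at each critical endpoint.

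Suppose for contradiction $|c_m| < \tilde K q^\beta$ for some $m = m_\pm$. The Benedicks-Carleson bound and the choice of $\delta$ force $m \geq H_0$ and $m \geq \beta \gamma^{-1} \log(1/q) + O(1)$. Nowicki's Proposition \ref{now3} yields $|(f^{n-m})'(y_\pm)| \geq \rho^{n-m}$. Using the second-order Taylor expansion $f^m(z) - c_m \approx \tfrac{1}{2} f''(c)(f^{m-1})'(c_1)(z-c)^2$ near $z = c$ (valid because $f'(c) = 0$), the Collet-Eckmann bound $|(f^{m-1})'(c_1)| \geq K\lambda_c^{m-1}$, and a distortion estimate on $f^{n-m}$ restricted to the segment $[x, y_\pm] \subset \tilde J$, one derives an upper bound
\begin{equation*}
|x - y_\pm|^2 \,\leq\, \frac{C\, \tilde K q^\beta}{\lambda_c^{m}\,\rho^{2(n-m)}}\,.
\end{equation*}
Confronted with a lower bound for $|x - y_\pm|$ arising from $|f^n(x) - c_m| \gtrsim q$ (since $f^n(x) \in [-q, q]$ and $|c_m| < \tilde K q^\beta \ll q$) combined with the same CE and Nowicki bounds for $|(f^n)'|$ on $[x, y_\pm]$, the choice $4\gamma < (1-\beta)\log\rho$ forces a contradiction once $\tilde K$ and $q$ are sufficiently small.

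The main obstacle I anticipate is precisely the distortion estimate on $f^{n-m}|_{[x, y_\pm]}$: it is not a direct consequence of the Koebe principle, which would require exactly the kind of image-space we are trying to produce. Instead, one must telescope the cocycle in the spirit of Lemma \ref{cd}, using the Benedicks-Carleson condition to bound $\sum |f''/f'|$ along the intermediate orbit segments $f^j([x, y_\pm])$ (each of which is contained in a shrinking neighborhood of the postcritical orbit). Once this distortion estimate is in hand, uniformity of all constants across the $C^3$-neighborhood of $f_0$ follows at each step from the setup of the first paragraph.
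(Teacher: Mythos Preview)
Your reduction is correct: taking $J_{n_0(x)}$ to be the maximal monotonicity interval of $f^{n_0(x)}$ around $x$, the two claims follow once one shows $|c_m|\ge \tilde K q^\beta$ for $m=n_0(x)-k_\pm$. But the contradiction argument you sketch has two real problems.

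First, the inequality ``$\tilde K q^\beta \ll q$'' is in the wrong direction: for $\beta\in(0,1)$ and small $q$ one has $q^\beta\gg q$, hence $\tilde K q^\beta\gg q$. So the asserted lower bound $|f^n(x)-c_m|\gtrsim q$ does not follow from $f^n(x)\in[-q,q]$ and $|c_m|<\tilde K q^\beta$; indeed $c_m$ can lie arbitrarily close to $f^n(x)$. Second, the distortion estimate on $f^{n-m}|_{[x,y_\pm]}$ that you flag is a genuine obstacle: the telescoping argument of Lemma~\ref{cd} controls distortion along orbit segments that shadow the \emph{postcritical} orbit, not along an arbitrary segment like $[x,y_\pm]$ whose forward images may visit neither the critical neighbourhood nor the postcritical orbit in any controlled way.

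The paper bypasses both issues by a direct argument exploiting Proposition~\ref{now2} (length expansion for $*(n)$-intervals) rather than Proposition~\ref{now3} (pointwise derivative at critical preimages). Let $d\in I_{n_0(x)}$ be the point with $f^{n_0(x)}(d)=c$. Since $n_b<n_0(x)=n_0(d)$, one has $f^{n_b}(d)\notin[-q,q]$, so $|f^{n_b}(d)-c|\ge q$; moreover $(f^{n_b}(d),c)$ is a $*(m)$-interval with $m=n_0(x)-n_b$. Proposition~\ref{now2} then gives directly
\[
|c_m|=|f^m(c)-c|\;\ge\; K_1\,\lambda_c^{m/4}\,|f^{n_b}(d)-c|\;\ge\; K_1\,\lambda_c^{m/4}\,q\,.
\]
Combine this with the uniform Benedicks--Carleson bound $|c_m|\ge K e^{-\gamma m}$ from Proposition~\ref{ubc}, choosing $\gamma<\beta\log\lambda_c/(4(1-\beta))$, and split into two cases according to whether $m$ is larger or smaller than $-4(1-\beta)\log q/\log\lambda_c$: the first bound handles large $m$, the second handles small $m$, and both yield $|c_m|\ge \tilde K q^\beta$. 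No distortion control is needed anywhere, because Proposition~\ref{now2} is already a statement about lengths of images, not derivatives.
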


\begin{proof} The existence of $I_{n_0(x)}$ satisfying Claim 1 follows from  the fact that $[-q,q]$ is a nice interval. Let $[a,b]=J_{n_0(x)}\supset I_{n_0(x)}$ be the largest interval such that $f^{n_0(x)}$ is a diffeomorphism on $(a,b)$. In particular 
there are $n_a, n_b < n_0(x)$ such that $f^{n_a}(a)\in \{1,-1,c\}$ and 
$f^{n_b}(b)\in \{1,-1,c\}$. Suppose $f^{n_b}(b)=c$. We are going to show that 
$|f^{n_0(x)}b-c|\geq \tilde{K} q^{\beta}$. The proof of the analogous  statement for $a$  is similar.    By Claim 1   there is $d \in I_{n_0(x)}$ such that $f^{n_0(x)}(d)=c$ and, moreover,
$f^{n_b}(d)\not\in [-q,q]$, so either $[-q,c]\subset [f^{n_b}(d),c]=f^{n_b}[d,b]$ or
$[c,q]\subset [c,f^{n_b}(d)]=f^{n_b}[b,d]$. Since  $(f^{n_b}(d),c)$ is a $*(n_0(x)-n_b)$
interval \cite{symmetric}, by  Proposition ~\ref{now2}  and Proposition~ \ref{unce}, we have 
\begin{equation} \label{esum} |f^{n_0(x)}(b)-c|=|f^{n_0(x)-n_b}(c)-c|\geq K_1 \lambda_c^{(n_0(x)-n_b)/4}|c-f^{n_b}(d)|\geq K_1 \lambda_c^{(n_0(x)-n_b)/4}q \, ,
\end{equation}
where $K_1$ is uniform on a $C^3$ neighbourhood of $f_0$. Choose
$$0< \gamma < \frac{\beta \log \lambda_c}{4(1-\beta)}\, .
$$
Reducing this neighbourhood, if necessary, we have by Proposition \ref{ubc} that 
\begin{equation} \label{uniformbc} 
|f^{n_0(x)}(b)-c|=|f^{n_0(x)-n_b}(c)-c|\geq Ke^{-\gamma (n_0(x)-n_b)}\, .
\end{equation}
We have two cases. If $n_0(x)-n_b > -4(1-\beta)\log q/\log \lambda_c$ then,  by  (\ref{esum}), we easily obtain $|f^{n_0(x)}(b)-c|\geq K_1 q^{\beta}.$
Otherwise $n_0(x)-n_b \leq  -4(1-\beta)\log q/\log \lambda_c$, so by  (\ref{uniformbc}), we get 
$$
|f^{n_0(x)}(b)-c|=|f^{n_0(x)-n_b}(c)-c|\geq Ke^{-\gamma (n_0(x)-n_b)}
\geq Ke^{- \frac{\beta \log \lambda_c}{4(1-\beta)}(n_0(x)-n_b)} \geq K q^\beta\, .
$$
Choose $\tilde{K}= \min (K,K_1)$.
If $|f^{n_b}(b)|=1$, then $f^{n_0(x)}(b)=-1$. Choose $\delta$ such that  $\delta^{-\beta} \geq \tilde{K}$. Then
$$
|f^{n_0(x)}(b)-c|=1\geq \tilde{K} \delta^\beta\geq \tilde{K} q^\beta\, .
$$
\end{proof}

\begin{corollary}[Uniformity of $C_1$ and $\rho$] \label{inv}
Let $f_0$ be a symmetric $S$-unimodal map satisfying the topological slow recurrence condition \eqref{tsr}. There exist $\rho > 1$ and $\epsilon > 0$  with the following property: For every $C_1 \in (0,1)$ there exists $\delta > 0$ 
so that, for every symmetric  $S$-unimodal map $f$  in the topological class of $f_0$ 
such that $|f-f_0|_{C^3}< \epsilon$, and for every nice interval $[-q,q]$ of $f$ 
such that $q < \delta$, if $x \not\in [-q,q]$ and $n\geq 1$ is the first entrance time 
of $x$ in $[-q,q]$, then 
$$|(f^n)'(x)|\geq C_1\rho^n\, .
$$
\end{corollary}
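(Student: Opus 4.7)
The plan is to combine Proposition~\ref{fe} (which provides definite Koebe space around $[-q,q]$) with the Koebe principle (exploiting the negative Schwarzian derivative) and Proposition~\ref{now3} applied to a suitable preimage of $c$ sitting inside the interval $I_{n_0(x)}$.

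First I would fix $\beta \in (0,1)$, say $\beta = 1/2$, and invoke Proposition~\ref{fe} to obtain $\tilde K > 0$, $\epsilon_0 > 0$, and $\delta_0 > 0$ such that whenever $f$ is a symmetric $S$-unimodal map in the topological class of $f_0$ with $|f-f_0|_{C^3} < \epsilon_0$, and $[-q,q]$ is a nice interval with $q < \delta_0$, then for any $x \not\in [-q,q]$ with first return time $n = n_0(x)$ to $[-q,q]$, there exist intervals $I_n \subset J_n$ such that $f^n: J_n \to f^n(J_n)$ is a diffeomorphism, $f^n(I_n) = [-q,q]$, and each connected component of $f^n(J_n) \setminus \{c\}$ has length at least $\tilde K q^\beta$. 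In particular $x \in I_n$, and by Claim~1 of Proposition~\ref{fe} there is a point $d \in I_n$ with $f^n(d) = c$.

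Next I would apply the Koebe principle. Since $f$ has negative Schwarzian derivative, and $f^n: J_n \to f^n(J_n)$ is a diffeomorphism whose image $f^n(J_n)$ contains $[-q,q]$ with a collar of length at least $\tilde K q^\beta$ on each side, the distortion of $f^n$ on $I_n$ is controlled by a function of the Koebe space ratio
\[
\tau = \frac{\tilde K q^\beta}{2q} = \frac{\tilde K}{2}\, q^{\beta-1}.
\]
Since $\beta < 1$, we have $\tau \to \infty$ as $q \to 0$, so the Koebe distortion bound tends to $1$. Thus, given any $C_1 \in (0,1)$, we may shrink $\delta \in (0,\delta_0)$ to ensure that
\[
\frac{|(f^n)'(x)|}{|(f^n)'(d)|} \geq C_1 \quad \text{for all } x,d \in I_n.
\]

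Now I would obtain uniform exponential expansion at $d$ from Proposition~\ref{now3}. By Proposition~\ref{unce} (uniform Collet-Eckmann) the constants $K$ and $\lambda_c$ in~\eqref{topce} may be taken independent of $f$ in a $C^3$-neighbourhood of $f_0$; and the quantities $m, M$ satisfying $m|x-c| \leq |f'(x)| \leq M|x-c|$ are also clearly uniform on such a neighbourhood (shrinking $\epsilon_0$ if necessary). Hence $K_1 = (Km/4M)^{1/2}$ from Proposition~\ref{now2} is uniform, yielding $|f^n(c)-f^n(b)|/|c-b| \geq K_1 \lambda_c^{n/4}$ on every $*(n)$ interval. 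Combined with the uniform lower bound on $|f'(-1)|^{1/2}$, the right-hand side of~\eqref{losecontrol} admits a uniform lower bound $\rho > 1$ (one may take, e.g., $\rho = \min(\lambda_c^{1/8}, |f'_0(-1)|^{1/4})$ after further restricting $\epsilon_0$ and noting that $K_1^{1/n} \to 1$ and that only finitely many $n$ are exceptional). Proposition~\ref{now3} then gives $|(f^n)'(d)| \geq \rho^n$, and combining with the distortion bound yields
\[
|(f^n)'(x)| \geq C_1 |(f^n)'(d)| \geq C_1 \rho^n,
\]
as required.

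The main obstacle is the uniformity of $\rho > 1$ across the $C^3$-neighbourhood. This is not automatic from Proposition~\ref{now3}, since the infimum over all $*(n)$ intervals in~\eqref{losecontrol} must be controlled uniformly in $n$ and in $f$; handling small $n$ requires noting that the number of $*(n)$ intervals with $n \leq N_0$ is finite and depends continuously on $f$, while for large $n$ one uses the uniform Collet-Eckmann estimate from Proposition~\ref{unce} to absorb the factor $K_1^{1/n}$. Everything else is a routine application of the Koebe principle, once Proposition~\ref{fe} has delivered the necessary polynomial collar $q^\beta \gg q$.
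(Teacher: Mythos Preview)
Your proof is correct and follows essentially the same route as the paper: invoke Proposition~\ref{fe} with $\beta=1/2$ to get a collar of size $\tilde K q^{1/2}\gg q$, apply the Koebe principle to bound the distortion on $I_n$ by $C_1^{-1}$ once $\delta$ is small enough, and then use Proposition~\ref{now3} at the point $d\in I_n$ with $f^n(d)=c$ to get $|(f^n)'(d)|\ge\rho^n$. One small slip: Proposition~\ref{fe} gives the collar around $\{c\}$, not around $[-q,q]$, so the Koebe space on each side of $[-q,q]$ is $\tilde K q^{1/2}-q$ rather than $\tilde K q^{1/2}$; this is of course harmless since $q\ll q^{1/2}$. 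Your discussion of the uniformity of $\rho$ (handling small $n$ by continuity and large $n$ via the uniform $K_1,\lambda_c$ from Proposition~\ref{unce}) is more explicit than the paper, which simply asserts that Proposition~\ref{now3} holds with a uniform $\rho$ on a $C^3$-neighbourhood.
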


\begin{proof} By Proposition \ref{unce}, we can find  $\epsilon_0 > 0$ and $\rho> 1$ such that Proposition ~\ref{now3} holds for every $f$ such that $|f-f_0|_{C^3}< \epsilon_0$, with $f$ in the topological class of $f_0$. 
Take  $\beta=1/2$,  and let $\epsilon<\epsilon_0$, $\delta$ be as in Proposition ~\ref{fe}. Reducing $\delta$ if necessary, we have that if $q < \delta$ then each connected component of $f^n(J_n(x))\setminus [-q,q]$ is far larger than $q$. In particular by the Koebe lemma 
$$
\frac{(f^n)'(z)}{(f^n)'(w)}<C_1^{-1} \, , \, \forall z,w \in I_n(x)\, .
$$ 
But there exists $b \in I_n(x)$ such that $f^n(b)=c$, so  $(f^n)'(b) \geq \rho^n$. We conclude that $|(f^n)'(x)|\geq C_1\rho^n$.
\end{proof}

Finally, we shall need the following result:

\begin{corollary} [Uniformity of $c(\delta)$ and $\sigma$]\label{mane}
Let $f_0$ be a symmetric
$S$-unimodal map satisfying the topological slow recurrence condition
\eqref{tsr}. There exists
\footnote{The analogue of $\sigma$ is called $\lambda_M$
in \cite{No85}.} $\sigma > 1$ such that for every $\delta>0$ there exist
 $c(\delta) > 0$ and  $\epsilon > 0$  with the following property: For
every symmetric  $S$-unimodal map $f$  in the topological
class of $f_0$ such that $|f-f_0|_{C^3}< \epsilon$,  if $|f^i(x)|> \delta$
for $0\leq i< n$ then
$$|(f^n)'(x)|\geq c(\delta) \sigma^n\, .$$
\end{corollary}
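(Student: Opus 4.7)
The plan is to mimic the proof of the analogous single-map statement (cf.\ \cite[Proposition 12]{symmetric}, or Ma\~n\'e's hyperbolicity theorem \cite[Theorem III.3.3]{dMvS}), while tracking the dependence of every constant in order to verify uniformity in $f$. I would take $\sigma \in (1, \rho)$, where $\rho > 1$ is the uniform expansion constant from Proposition~\ref{now3}, and for each $\delta > 0$ I would fix $q = q(\delta) \in (0, \delta/2)$ small enough that $[-q, q]$ is a nice interval (uniformly for every symmetric $S$-unimodal $f$ in the topological class of $f_0$ with $|f - f_0|_{C^3} < \epsilon$), and that Corollary~\ref{inv} applies with some uniform $C_1 \in (0,1)$ and Proposition~\ref{fe} provides a definite Koebe buffer around $[-q, q]$.

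Given $x$ with $|f^i(x)| > \delta$ for $0 \leq i < n$ (and hence $f^i(x) \notin [-q, q]$ for those $i$), let $N \in \{n, n+1, \ldots\} \cup \{\infty\}$ be the first entry time of $x$ into $[-q, q]$. The easy case is $N = n$, in which Corollary~\ref{inv} immediately yields $|(f^n)'(x)| \geq C_1 \rho^n \geq C_1 \sigma^n$. For $n < N < \infty$, I would apply the Koebe principle on the pullback interval $J_N \ni x$, which maps diffeomorphically onto $[-q, q]$ under $f^N$: the Koebe space is provided by the niceness of $[-q, q]$ (Proposition~\ref{fe}), giving a uniform distortion constant $C_K$ for both $f^N$ on $J_N$ and $f^{N-n}$ on $f^n(J_N)$. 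Since $f^n$ is monotone on $J_N$, this gives $|(f^n)'(x)|$ comparable to $|f^n(J_N)|/|J_N|$, and both lengths can be controlled by Corollary~\ref{inv} applied to $x$ (with entry time $N$) and $y=f^n(x)$ (with entry time $N-n$), combined with the observation that for $\delta$ small (depending only on $f_0$), the quadratic shape of $f$ near $c$ forces $f(y) \notin (-\delta,\delta)$ whenever $y \in (-\delta,\delta)$ is close to $c$, so two consecutive iterates of $x$ cannot both lie in $[-\delta,\delta]$ and the ``annulus'' iterates in $(-\delta,\delta) \setminus [-q,q]$ are isolated; this sparsity allows upper-bounding $|(f^{N-n})'(y)|$ by $C\rho^{N-n}$ (uniformly) rather than the trivial $M^{N-n}$.

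The remaining case, $N = \infty$, would be handled by a density argument: for each fixed $n$, the derivative $x \mapsto (f^n)'(x)$ is continuous, and the set of points whose forward orbit eventually enters $[-q, q]$ is dense in $I$ (since the forward orbit of Lebesgue-a.e.\ point equidistributes with respect to the unique absolutely continuous invariant measure, whose support meets $[-q, q]$); the bound obtained in the previous case then passes to the limit with the same uniform constants.

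The main obstacle will be to verify, while tracing through the above argument, that the Koebe distortion constant $C_K$ and the implicit constants hidden in the ``annulus iterates are isolated'' step are genuinely uniform in $f$ close to $f_0$. This reduces to the uniformity, established earlier in this section, of $\rho$, $C_1$, and $q$ as functions of $\delta$ (Corollary~\ref{inv}), to the uniform niceness of $[-q, q]$ with definite buffer (Proposition~\ref{fe}), and to the uniform Benedicks-Carleson estimate (Proposition~\ref{ubc}) which ensures that $|c_1|$ is bounded below by a uniform constant, so that the ``no two consecutive iterates in $[-\delta,\delta]$'' mechanism kicks in for a uniformly chosen $\delta$.
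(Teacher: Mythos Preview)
Your argument has a genuine gap in the case $n < N < \infty$. You correctly reduce the problem to bounding $|(f^n)'(x)|$ from below via
\[
|(f^n)'(x)| = \frac{|(f^N)'(x)|}{|(f^{N-n})'(f^n(x))|}\, ,
\]
and Corollary~\ref{inv} indeed gives the lower bound $|(f^N)'(x)| \geq C_1 \rho^N$. But for the denominator you need an \emph{upper} bound on $|(f^{N-n})'(y)|$, and the ``sparsity of annulus iterates'' mechanism you invoke does not provide one: having few iterates near $c$ means few \emph{small} factors $|f'(f^j(y))|$, which pushes the product up, not down. Nothing prevents the orbit of $y=f^n(x)$ from spending all of its $N-n$ iterates in a region where $|f'|$ is close to $D=\sup|f'|>\rho$, giving $|(f^{N-n})'(y)|$ of order $D^{N-n}$; since $N-n$ is uncontrolled, the ratio can be made arbitrarily small. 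Equivalently, in your Koebe formulation, Corollary~\ref{inv} only yields upper bounds on both $|J_N|$ and $|f^n(J_N)|$, which says nothing about a lower bound on their ratio.

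The paper circumvents this difficulty by a different route. Instead of working with an arbitrary nice interval $[-q,q]$ and Koebe distortion, it chooses the nice interval $[-p_f,p_f]$ with $p_f$ a \emph{periodic} point (obtained by analytic continuation from a periodic point of $f_0$, with period $n_0$ and $|p_f|\geq \eta_{per}>0$ uniformly). It then takes the maximal interval $(a,b)\ni x$ on which $f^n$ is a diffeomorphism and no iterate enters $[-p_f,p_f]$; the endpoints $a,b$ have a first entry time $n_a, n_b \leq n$ into $[-p_f,p_f]$ (or hit $\pm 1$). For the first $n_a$ steps Corollary~\ref{inv} gives $|(f^{n_a})'(a)|\geq C_1\rho^{n_a}$; for the remaining $n-n_a$ steps, since $f^{n_a}(a)=\pm p_f$ lies on a periodic orbit, the derivative is controlled explicitly by $|(f^{n_0})'(p_f)|\geq \rho^{n_0}$ and $|f'|\geq C\eta_{per}$ along that orbit, giving $|(f^n)'(a)|\geq c(\delta)\rho^n$. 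The \emph{minimum principle} (from negative Schwarzian) then transfers this to $x$. The periodicity of the boundary is what replaces your missing upper bound, and the minimum principle replaces your Koebe step.
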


\begin{proof} 
By Proposition \ref{unce}, we can find  $\epsilon_0 > 0$
and $\rho> 1$ such that Proposition \ref{now3} holds for every
$f$ such that $|f-f_0|_{C^3}< \epsilon_0$, with $f$ in the topological
class of $f_0$. Using the same argument as in Proposition 3.9 in
\cite{No85}, we can show that for every periodic point $q$ such that
$f^n(q)=q$ we have $|(f^n)'(q)|\geq \rho^n$. Note that since $c$ is
recurrent by $f_0$, there exists a sequence of periodic points for $f_0$
converging to $c$. So given $\delta > 0$ there exists a periodic point
$p$ for $f_0$ such that $|p| < \delta$. Let $n_0$ be the prime period of
$p$. There exists $\epsilon_1 < \epsilon_0$ such that every map $f$
such that $|f-f_0|_{C^3}< \epsilon_1$ has an analytic continuation $p_f$
for $p$  such that $|p_f|< \delta$ and
$$\eta_{per}=\inf_{|f-f_0|_{C^3}< \epsilon_1} |p_f|>0\, .
$$
 Without loss of generality, we can assume that $|f^i(p_f)| \geq
|p_f|$ for every $i$. So $[-p_f,p_f]$ is a nice interval. Let $x
\not\in [-\delta,\delta]$ be  such that $|f^i(x)|> \delta$ for $0\leq
i< n$. If $f^n(x) \in [-p_f,p_f]$ we can use Corollary \ref{inv} to
conclude that $|(f^n)'(x)|\geq C_1\rho^n$. So assume that $f^n(x)
\not\in [-p_f,p_f]$. Let $(a,b)$ be the largest interval such that $x
\in (a,b)$ and $f^i(y) \not\in [-p_f,p_f]$ for every $0\leq i\leq n$
and $y \in (a,b)$. In particular, $f^n$ is a diffeomorphism on $(a,b)$,
and there exist $n_a, n_b\leq n$ such that
$|f^{n_a}(a)|,|f^{n_b}(b)|\in \{ |p_f|,1\}$. Without loss of
generality, we can assume that 
$|f^i(a)|, |f^{j}(b)|\not\in \{|p_f|,1\}$ 
for every $i < n_a$, $j < n_b$.  If $f^{n_a}(a)\in
\{-1,1\}$, then indeed $a \in \{-1,1\}$, so $|(f^n)'(a)|=|f'(-1)|^n$.
We have a similar statement for $b$. Otherwise either
$f^{n_a}(a)$ (respectively $f^{n_b}(b)$) or $-f^{n_a}(a)$ (respectively $-f^{n_b}(b)$)
is a periodic point with period $n_0$. Then $n_a$ and $n_b$ are the
first entry times of $a$ and $b$ in $[-p_f,p_f]$. By Corollary~\ref{inv}, we have
$$
|(f^{n_a})'(a)|\geq C_1\rho^{n_a} \mbox{ and }
 |(f^{n_b})'(b)|\geq C_1\rho^{n_b}
\, .
$$
Since $p_f$ is a periodic point of period $n_0$,
$|(f^{n_0})'(p_f)|\geq\rho^{n_0}$ and $f$ is symmetric and
quadratic, we have
\begin{align*}
|(f^{n-n_a})'(f^{n_a}(a))|&\geq 
\rho^{n-n_a-n_0} \min \{|f'(f^i(p_f))|, \ 0\leq i < n_0   \}^{n_0}\\
&\geq C^{n_0}|p_f|^{n_0}\rho^{n-n_a-n_0}\, ,
\end{align*}
so
$$
|(f^{n})'(a)|\geq C^{n_0}|p_f|^{n_0}\rho^{n-n_0} 
\geq C^{n_0}\eta_{per}^{n_0}\rho^{-n_0}\rho^{n} 
= c(\delta) \rho^{n}\, .
$$
We can obtain similarly $|(f^{n})'(b)|\geq c(\delta) \rho^{n}$.
In any case
$$
\min(|(f^{n})'(a)|,|(f^{n})'(b)|)\geq 
\min (1, c(\delta)) \cdot  \min (\rho,|f'(-1)|)^n\, .
$$
By the minimum principle
$$|(f^{n})'(x)| \geq \min(1, c(\delta))\cdot \min (\rho,|f'(-1)|)^n\, .
$$
So choose $\sigma= \min (\rho,|f'(-1)|)> 1$. 
\end{proof}

Summarising the results of this section, we have proved:

\begin{lemma}[Uniformity of constants in topological classes of
TSR maps]\label{est1aunif}  If $f_0$ is a symmetric 
$S$-unimodal $(\lambda_c(f_0),H_0(f_0))$-Collet-Eckmann map satisfying
topological slow recurrence \eqref{tsr}, 
for every $C_1 \in (0,1)$ there exists $\lambda_c \in (1, \lambda_c(f_0))$ so that for
any $\gamma>0$ there exists $H_0 > H_0(f_0)$ so that for
each $\rho \in (1, \lambda_c^{1/2})$,
there exists $\sigma >1$ and
$\delta_0>0$ so that for every 
$\delta\in  (0, \delta_0)$ there exist  $c(\delta)>0$
and $\epsilon >0$ so that the following holds
for each symmetric  $S$-unimodal  map $f$ topologically conjugated to $f_0$ and so that
$|f-f_0|_{C^3}< \epsilon$:
 
The map $f$ is $(\lambda_c,H_0)$-Collet-Eckmann and satisfies 
\eqref{BeCxtra} for $\gamma$, $\lambda_c$, and $H_0$.
For any
$y \in I$, if $j\geq 0$ is  minimal satisfying  $|f^j(y)|\leq \delta$, then 
 \begin{equation}
\label{estlaeq2'}|(f^j)'(y)|\geq C_1  \rho^j \, ,
\end{equation}
for any $x \in I$, if $j \ge 1$  is such that $|f^k(x)|>\delta$ 
for all $0 \le k < j$, then
\begin{equation}
\label{estlaeq1'} |(f^i)'(x)|\geq c(\delta) \sigma^i\, , \, \forall 0 \le i \le j 
\, . 
\end{equation}
\end{lemma}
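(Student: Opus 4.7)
The plan is to assemble this lemma as a compilation of Propositions~\ref{unce} and \ref{ubc} together with Corollaries~\ref{inv} and \ref{mane}, which have already been established individually. The only nontrivial bookkeeping concerns (a) absorbing the multiplicative constant $K$ in \eqref{topce} into a slight reduction of the exponential base (which forces $\lambda_c$ to be chosen strictly below $\lambda_c(f_0)$ and $H_0$ to be chosen large enough depending on $K$), and (b) verifying that all the neighbourhood sizes $\epsilon$ produced by the individual statements can be taken to be a common one by simply passing to the minimum.

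First I would apply Proposition~\ref{unce} to produce constants $\tilde\lambda_c>1$, $K>0$, and $\epsilon_1>0$ such that every symmetric $S$-unimodal $f$ in the topological class of $f_0$ with $|f-f_0|_{C^3}<\epsilon_1$ satisfies $|(f^j)'(c_1)|\ge K\tilde\lambda_c^j$ for all $j\ge 0$ and the topological–metric comparison \eqref{topmet}. Given $C_1\in(0,1)$, I would then fix $\lambda_c\in(1,\tilde\lambda_c)$ close enough to $\tilde\lambda_c$ and pick a first threshold $H_0^{(1)}$ so that $K\tilde\lambda_c^j\ge \lambda_c^j$ for all $j\ge H_0^{(1)}$; this gives the Collet-Eckmann bound in the form \eqref{00}. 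Next, given $\gamma>0$, Proposition~\ref{ubc} supplies $H_0^{(2)}$ and $\epsilon_2>0$ such that $|f^k(c)-c|\ge e^{-\gamma k}$ for $k\ge H_0^{(2)}$; taking $H_0:=\max(H_0^{(1)},H_0^{(2)},H_0(f_0)+1)$ yields \eqref{BeCxtra} with the prescribed $\gamma$ (noting that the role of \eqref{BeCxtra} versus \eqref{stBeC} is simply to constrain $\gamma$ in terms of $\log\lambda_c$, which the user may do freely since both $\gamma$ and $\lambda_c$ are available parameters).

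For the expansion estimates, I would feed this $\lambda_c$ into Corollary~\ref{inv} applied with the prescribed $C_1$: this produces $\rho>1$, and, for each $C_1$, a $\delta_0>0$ and $\epsilon_3>0$ such that for any nice interval $[-q,q]$ with $q<\delta_0$ contained in a $C^3$-neighbourhood of $f_0$ of radius $\epsilon_3$, the first-entry derivative bound \eqref{estlaeq2'} holds. Shrinking $\rho$ if necessary, I may assume $\rho<\lambda_c^{1/2}$ as required. The existence of a nice interval $[-q,q]\subset[-\delta,\delta]$ with $q$ close to $\delta$ follows from the density of periodic points (used already in Corollary~\ref{mane}) combined with the fact that all periodic orbits of a Collet-Eckmann map are repelling, which gives the needed ``niceness." Then Corollary~\ref{mane}, applied to the same deformation, produces $\sigma>1$ and, for each such $\delta<\delta_0$, constants $c(\delta)>0$ and $\epsilon_4>0$ delivering \eqref{estlaeq1'}.

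Finally, setting $\epsilon:=\min(\epsilon_1,\epsilon_2,\epsilon_3,\epsilon_4)$ and noting that all relevant constants ($\lambda_c$, $H_0$, $\rho$, $\sigma$, $c(\delta)$) were chosen uniformly on this common neighbourhood, the lemma follows. The main (minor) obstacle is matching the quantifier order: $\lambda_c$ must be fixed before $\gamma$ and $\rho$, since the Benedicks-Carleson condition constrains $\gamma$ relative to $\log\lambda_c$ and Corollary~\ref{inv} constrains $\rho$ relative to $\lambda_c^{1/2}$; once this dependency graph is respected, no additional work is needed beyond citation of the four preceding results.
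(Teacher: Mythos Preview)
Your proposal is correct and matches the paper's approach exactly: the paper presents this lemma as a straight summary (``Summarising the results of this section, we have proved:'') of Propositions~\ref{unce}, \ref{ubc} and Corollaries~\ref{inv}, \ref{mane}, with no additional argument. One minor quantifier point worth tightening: you fix $\lambda_c$ close to $\tilde\lambda_c$ \emph{before} invoking Corollary~\ref{inv}, but in fact $\lambda_c$ must also be chosen small enough that the intrinsic $\rho$ delivered by Proposition~\ref{now3} satisfies $\rho\ge\lambda_c^{1/2}$, so that every user-selected $\rho\in(1,\lambda_c^{1/2})$ is automatically dominated by it --- the paper flags precisely this constraint in the remark following the lemma (``we do not claim that $\lambda_c$ can be taken arbitrarily close to $\lambda_c(f_0)$\ldots because of the infimum in \eqref{losecontrol}'').
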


Comparing the above result to Lemma~\ref{est1a} we emphasize
that {\it we do not claim} that $\lambda_c$ can be taken arbitrarily
close to $\lambda_c(f_0)$ or $H_0$ close to $H_0(f_0)$, where
$\lambda_c(f_0)$, $H_0(f_0)$ are the best possible constants
for $f_0$.
So \eqref{BeCxtra} cannot be viewed strictly as a Benedicks-Carleson assumption. (This is mostly because of the infimum in the right-hand-side
of \eqref{losecontrol} from Proposition ~\ref{now3}.)
However, this does not matter since we are assuming the much stronger TSR assumption in any case (see also
\eqref{bd2} below), which implies that we can take $\gamma$ arbitrarily close to $0$
after $\lambda_c$ has been fixed.
The advantage of the notation introduced in   Lemma~\ref{est1aunif} is that
we can  use the estimates from Sections ~\ref{alphabdedproof}
and ~\ref{spectralstuff} directly, with the
same notation for the constants, for deformations
of  maps $f_0$ satisfying the assumptions of 	Lemma~\ref{est1aunif}.

\subsection{Transfer operators $\widehat \LL_t$, $\widehat \LL_{t,M}$
for a (TSR) smooth deformation $f_t$}
\label{Lt}

If $f_t$ is a $C^1$ one-parameter family  of   $S$-unimodal 
symmetric Collet-Eckmann maps $f_t$,
with a  non preperiodic critical point, 
Lemma~\ref{est1aunif} implies that all $f_t$ satisfy estimates
for uniform parameters $\lambda_c$ and $H_0$, and
satisfy the strengthened Benedicks-Carleson
condition  \eqref{BeCxtra} for some
$\gamma$. We can
associate
a tower $\hat f_t : \hat I_t \to \hat I_t$ to each $f_t$, choosing
small $\delta_t$ and intervals $B_{k,t}$ by
using the parameters $\lambda_c$, $H_0$, $\gamma$
as in Subsection ~ \ref{tower}, replacing $c_k$ by $c_{k,t}$. 
Then,
we can define spaces  $\BB^{H^1_1}_t$, and an operator $\widehat \LL_t$
in Subsection ~ \ref{gap}, replacing $f^k$
by $f^k_t$ and $c_k$ by $c_{k,t}$
 in \eqref{defban} and the definition
 of $\xi_{k,t}$. We summarize first the results which
follow from applying
Proposition~ \ref{mainprop} and Proposition~\ref{acip}
to each  $f_t$, in order to fix notation
(note however that 
we shall modify slightly
the lower parts of the tower maps $\hat f_t$ in Proposition~\ref{bottomok}):
There is $\Theta<1$ so that each operator
$\widehat \LL_{t}$ has essential spectral radius bounded by $\Theta$ on
$\BB_{t}=\BB_t^{H^1_1}$. Outside of a disc of radius $\theta_t<1$
 the spectrum of
$\widehat \LL_{t}$ on $\BB_t$  consists in a simple eigenvalue at $1$, with
a nonnegative eigenfunction $\hat \phi_t$, so that $(\hat\phi_t)_0$ belongs to
$H^2_1$.
Define $\Pi_t : \BB_t \to L^1(I)$ by
\begin{align}\label{defpit}
\Pi_t(\hat \psi)(x)&=\sum_{k \ge 0
, \varsigma\in \{+,-\}}
 \frac{\lambda^k}{|(f^{k}_t)'(f^{-k}_{t,\varsigma}(x))|} \psi_k(f^{-k}_{t,\varsigma}(x)) 
 \chi_{k,t}(x)
\, ,
\end{align}
where $\chi_{k,t}$ is defined like $\chi_k$ (see
Proposition~\ref{acim}), replacing $f^k$
by $f^k_t$ and
$c_k$ by $c_{k,t}$.
The fixed point of the dual of 
$\widehat \LL_t$ is  the  nonnegative measure 
$\nu$ on $\widehat I_t$,
absolutely continuous with respect to Lebesgue on 
$\widehat I_t$ whose density is $w(x,k)$.
If we normalise by requiring 
$\nu(\hat \phi_t)=1$,   the invariant density of
$f_t$ is just 
$\phi_t =\Pi_t (\hat \phi_t)$.
Lemma ~ \ref{truncspec} also holds
for $\widehat \LL_{t,M}$, using the weak norm
$\|\cdot \|_{\BB^{L^1}}$.
This gives
$\hat \phi_{t,M}$, $\nu_{t,M}=\nu_M$, and $\kappa_{t,M}$.
If $\sigma(f_t)$, $C_1(f_t)$ and
$c(\delta, f_t)$ from Lemma~\ref{est1a} applied to
$f_t$ are  uniform in $t$, then all objects constructed are uniform in $t$
(including the $H^2_1$ norms of $(\hat \phi_t)_0$ and
$(\hat \phi_{t,M})_0$ for $r\in (1,2)$).

There is of course some flexibility in choosing the intervals
$B_{k,t}$ and the functions $\xi_{k,t}$.
It is tempting, in order to get conjugated tower dynamics $\hat f_t : \hat I_t \to \hat I_t$, to choose  $B_{k,t}=h_t(B_{k,0})$
and $\xi_{k,t}=\xi_{k,0}\circ h_t^{-1}$,
where the homeomorphisms $h_t$ are given by
Lemma ~ \ref{robustchaos}. Then, in order to prove
Theorem~\ref{linresp} on linear response,
one would need additional information on the $h_t$
(for example, but not only,
the fact that $\partial_t h_t(x)=\alpha(x)$
at all points $x$). 
We shall work  instead
with truncated operators $\LL_{t, M}$,  disregarding the
top part of the tower via Lemma~\ref{truncspec}, and
{\it artificially forcing the lower
parts of the towers} associated to the various $f_t$
to {\it coincide.}
This is going to be possible in view of the following consequence
of  Lemmas~\ref{first} and ~\ref{first'''}:

\begin{proposition}[Controlling the truncated tower]\label{bottomok}
Let $f_t$ be a $C^1$  deformation  of  $S$-unimodal maps $f_t$
satisfying the Benedicks-Carleson
condition \eqref{BeCxtra} for $\gamma_0=\gamma$. 
Let $\hat f_0 : \hat I \to \hat I$ be a tower associated
to $f_0$ as in Section~ \ref{tower},
for some $\delta>0$
and $3\gamma_0/2 < \beta_1 < \beta_2 < 2 \gamma_0$.
Let $\alpha_s$ be the  solution of the TCE 
\eqref{tce} for $f_s$
and $v_s= \partial_t f_t|_{t=s}$,
with $\alpha_s(c)=0$, given by
\footnote{Recall Remark ~ \ref{defhor}.} Theorem ~ \ref{alphabded}.
Fix 
$$
\frac{3\gamma_0}{2} <\tilde \beta_1 < \beta_1 < \beta_2 < \tilde \beta_2 < 2 \gamma_0 \, .
$$
Then,  for any $M \ge 1$,
and for any $t$ so that 
\begin{equation}\label{tcond}
 \sup_{ |s| \le t} |\alpha_s(c_k)| |t| < \min( (e^{- \tilde \beta_1 k}-e^{- \beta_1 k}) ,
 (e^{-  \beta_2 k}-e^{- \tilde \beta_2 k}))\, ,
1 \le k \le M \, ,
\end{equation}
one can construct the  tower maps $\hat f_t:\hat I_t\to
\hat I_t$, the Banach spaces $\BB_t^{H^1_1 }$,
$\BB^{L^1}$,   and the transfer
operators $\widehat \LL_t$, using 
parameters $\delta_t>0$, intervals $B_{k,t}$ admissible for
$\tilde \beta_1$, and $\tilde \beta_2$,
and  smooth cutoff functions $\xi_{k,t}$  
in such a way as to ensure
$$
\delta_t= \delta\, , \,\, \, 
\xi_{k,t}= \xi_k\, , \, \, \forall k \le M \, ,
$$
and, in addition,
so that all results of Section~\ref{spectralstuff} hold for $\widehat \LL_t$.
\end{proposition}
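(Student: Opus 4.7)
The plan is to transfer the lower levels ($k\le M$) of the $f_0$-tower directly to the $f_t$-tower, while using the standard construction of Section~\ref{tower} for the upper levels. The slack between the parameters $(\beta_1,\beta_2)$ and $(\tilde\beta_1,\tilde\beta_2)$ absorbs the drift of $c_{k,t}$ from $c_k$ as $t$ varies.

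First I would apply Lemma~\ref{first} to each map in the family $(f_s)_{|s|\le|t|}$ and combine with the mean-value argument from the proof of Lemma~\ref{first'''} to deduce
\[
|c_{k,t}-c_k| = \Bigl|\int_0^t \alpha_s(c_{k,s})\,ds\Bigr|
\le \sup_{|s|\le|t|} |\alpha_s(c_k)|\cdot |t|\cdot (1+o(1))
\]
as $t\to 0$, where the $o(1)$ correction comes from comparing $\alpha_s(c_{k,s})$ to $\alpha_s(c_k)$ and is controlled by equicontinuity of the family $\{\alpha_s\}$ (guaranteed by Theorem~\ref{alphabded} together with Lemma~\ref{est1aunif}). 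Hypothesis \eqref{tcond} then gives, for every $1\le k\le M$,
\[
|c_{k,t}-c_k| < \min\bigl(e^{-\tilde\beta_1 k}-e^{-\beta_1 k},\ e^{-\beta_2 k}-e^{-\tilde\beta_2 k}\bigr).
\]

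Next, set $\delta_t:=\delta$, and for $1\le k\le M$ let $V_k^\pm:=f_0^{-k}_\pm(B_k)$ be the two monotone pieces of $\supp(\xi_{k-1})$ for the $f_0$-tower. For $t$ sufficiently small, $V_k^\pm$ is contained in the corresponding monotonicity interval of $f_t^k$ near $c$ (by $C^1$-closeness), so $f_t^k$ restricts to a diffeomorphism on each $V_k^\pm$. Define $B_{k,t}$ to be the smallest interval containing $f_t^k(V_k^+)\cup f_t^k(V_k^-)$; this interval contains $c_{k,t}$. Under the symmetry assumption on $f_t$ used in Theorem~\ref{linresp}, $V_k^-=-V_k^+$ forces $f_t^k(V_k^+)=f_t^k(V_k^-)$, and therefore the two $f_t$-inverse branches satisfy $f_t^{-k}_\pm(B_{k,t})=V_k^\pm$. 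Consequently $\xi_{k-1,t}:=\xi_{k-1}$ is a legitimate cutoff function for the $f_t$-tower in the sense of Definition~\ref{defxi}. The admissibility
\[
[c_{k,t}-e^{-\tilde\beta_2 k},c_{k,t}+e^{-\tilde\beta_2 k}]\subset B_{k,t}\subset [c_{k,t}-e^{-\tilde\beta_1 k},c_{k,t}+e^{-\tilde\beta_1 k}]
\]
then follows from the triangle inequality using Step~1 and the admissibility of $B_k=B_{k,0}$ with respect to $(\beta_1,\beta_2)$ in the $f_0$-tower. For $k>M$ I would apply the standard construction of Section~\ref{tower} directly to $f_t$ with parameters $\tilde\beta_1,\tilde\beta_2$, which is legitimate by Lemma~\ref{est1aunif}.

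The remaining claim -- that all results of Section~\ref{spectralstuff} continue to hold for $\widehat \LL_t$ -- is routine: the Lasota-Yorke estimate, the commutation $\LL_t\Pi_t=\Pi_t\widehat \LL_t$, the bound on the essential spectral radius, and the simplicity of $1$ as an eigenvalue all depend only on the admissibility parameters of the tower and on the hyperbolicity constants of the underlying map, and both are uniform in $t$ by Lemma~\ref{est1aunif}. The main technical obstacle is the compatibility step for $\xi_{k-1,t}=\xi_{k-1}$, which hinges on the identification $f_t^k(V_k^+)=f_t^k(V_k^-)$; this is where symmetry of $f_t$ enters, and it is the reason that the truncation level $M$ is coupled to $t$ through the quantitative condition~\eqref{tcond} rather than being a free spectral-theoretic parameter.
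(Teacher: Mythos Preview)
Your overall strategy coincides with the paper's: bound the drift $|c_{k,t}-c_k|$ via Lemma~\ref{first'''} and absorb it in the slack between $(\beta_1,\beta_2)$ and $(\tilde\beta_1,\tilde\beta_2)$. The paper's own argument is extremely terse --- it just writes the two triangle inequalities $|a_k-c_{k,t}|\lessgtr |a_k-c_k|\pm|c_{k,t}-c_k|$, invokes the proof of Lemma~\ref{first'''}, and declares that the claim follows --- so your attempt to flesh out the construction is reasonable.

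Two issues, though. First, your appeal to equicontinuity of $\{\alpha_s\}$ is both unneeded and unsupported: the paper explicitly remarks (just after Proposition~\ref{htC1}) that equicontinuity is only \emph{expected}, not established. You do not need it here; the proof of Lemma~\ref{first'''} already gives $|c_{k,t}-c_k|\le\sup_{|s|\le|t|}|\alpha_s(c_{k,s})|\,|t|$ directly from the mean value theorem, and the uniform bound $\sup_{x,s}|\alpha_s(x)|\le L$ controls this without any modulus-of-continuity information.

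Second, and more seriously, your explicit $B_{k,t}$ is defective. Since $f_0^k$ folds at $c$, the set $V_k^+=f_0^{-k}_+(B_k)$ is the one-sided interval $[c,p_k]$ (the branch $f_0^{-k}_+$ only sees the part of $B_k$ lying on one side of $c_k$); by symmetry $f_t^k(V_k^+)=f_t^k(V_k^-)$ is then a one-sided interval with $c_{k,t}$ as an endpoint. Your $B_{k,t}$, the smallest interval containing their union, therefore has $c_{k,t}$ as a \emph{boundary} point and cannot contain the two-sided neighbourhood $[c_{k,t}-e^{-\tilde\beta_2 k},\,c_{k,t}+e^{-\tilde\beta_2 k}]$ required for admissibility. (Relatedly, $f_t^k$ is not a diffeomorphism on $V_k^\pm$, contrary to what you assert, since $c\in V_k^\pm$ is critical.) The constraint $\xi_{k-1,t}=\xi_{k-1}$ in fact only pins down the endpoint of $B_{k,t}$ on the side of $c_{k,t}$ lying in $f_t^k(J_{+,t})$; the other endpoint must be supplied separately --- for instance keep the original $a_k$, whose admissibility is exactly what the paper's triangle-inequality computation verifies.
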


If $f_0$ enjoys TSR then, up to taking smaller $\epsilon$,
Lemma~\ref{first'''}
implies that
$$\sup_{|x|\le \delta, |s|\le \epsilon} |\alpha_s(x)| |\epsilon|< \infty\, ,$$
so that we can exploit the above proposition.

\begin{proof}[Proof of Proposition ~ \ref{bottomok}]
Recall $h_t$ as given by \eqref{starstar} and recall
Lemma~\ref{first}.
By the proof of Lemma \ref{first'''},  we have
\begin{align*}
|a_k-h_t(c_k)|&\le
|a_k-c_k| +|h_t(c_k)- h_0(c_k)|\\
&\le e^{-\beta_1 k} +  \sup_{|s|\le |t|} |\alpha_s(c_k)| |t| \, ,
\end{align*}
and
\begin{align*}
|a_k-h_t(c_k)|&\ge
|a_k-c_k| -|h_t(c_k)- h_0(c_k)|\\
&\ge e^{-\beta_2 k} -  \sup_{|s|\le |t|} |\alpha_s(c_k)| |t| \, .
\end{align*}
The claim of Proposition ~ \ref{bottomok} follows.
\end{proof}

\section{Proof of  linear response}
\label{finalproof}

In this section, we prove Theorem ~ \ref{linresp}.
Let $f_t$ satisfy the assumptions of  the theorem.
We suppose in addition that the critical point is not preperiodic
(the proof is much easier if it is).
Applying Lemma ~\ref{est1aunif}, we fix $\epsilon >0$
and constants $\gamma$, $\lambda_c$, $\sigma$, $C_1$, $\rho$,
$\delta$, and $c(\delta)$ and we choose $3\gamma/2<\beta_1<\beta_2<2\gamma$.
By Lemma ~\ref{est1aunif},
we may assume that the strong Benedicks-Carleson
condition \eqref{BeCxtra} holds, and in some places 
in the proof below 
we may require a stronger upper bound on $\gamma$.

Constructing a tower, Banach space, and transfer operator
for each $f_t$ as in Section~\ref{spectralstuff},
the invariant density of $f_t$  can  be written as
$\phi_t =\Pi_t( \hat \phi_t)$,
where  $\Pi_t$ was defined in
\eqref{defpit},  and where $\hat \phi_t$ is the nonnegative
and normalised fixed point of $\widehat \LL_t$ on $\BB_{t}^{H^1_1}$
given by Proposition \ref{acip} applied to $f_t$.
It will be convenient 
to work with truncated transfer operators
$\widehat \LL_{t,M}$, recalling Subsection ~ \ref{trunk}, 
in particular Lemma ~ \ref{truncspec}, which gives  $\hat \phi_{t, M}$.  We shall in fact require the 
lower part of
the towers of $f_t$ for small enough $t$, up to
$M=M(t)$ as given by
Proposition~\ref{bottomok} to coincide with that of $f_0$.

When the meaning is clear, we shall remove $0$
from the notation, writing,
e.g., $\Pi $, $\hat \phi$, and $\hat \phi_M$, instead
of $\Pi_0$, $\hat\phi_0$, and $\hat \phi_{0,M}$.

We start with the decomposition
\begin{equation}\label{decc}
\phi_t - \phi=\Pi_t (\hat \phi_t - \hat \phi_{t, M}) +
 \Pi (\hat \phi_{M}-\hat \phi)
+ \Pi_t (\hat \phi_{t, M})- \Pi (\hat \phi_{ M}) \, .
\end{equation}
Note that  if $f^k|_{[c,y]}$ is injective then
\begin{equation}\label{glu2}
\int_{c_k}^{f^k(y)} \frac{|\psi_k(f^{-k}_+(z))|}{|(f^k)'(f^{-k}_+(z))|}\, dz
=\int_c^y |\psi_k(x)| \, dx \, .
\end{equation}
Lemma~\ref{truncspec} implies that, for large enough $M$ (uniformly in $t$)
\begin{align}\label{glu1}
 \|\Pi_t(\hat \phi_t - \hat \phi_{t, M})\|_{L^1(I)}
&\le C 
\| \hat \phi_t - \hat \phi_{t, M}\|_{\BB^{L^1}}\le C  \tau_M^{\eta}  \, ,
\forall |t|< \epsilon  \, .
\end{align}
Fix $\zeta >0$,  then \eqref{glu1}  implies
\begin{align}
\nonumber 
\max(\| \Pi_t (\hat \phi_t - \hat \phi_{t, M})\|_{L^1(I)},&
\| \Pi(\hat \phi - \hat \phi_{M})\|_{L^1(I)})
\le C |t|^{1+\zeta} \, , \\
\label{bd1}&\quad\forall t \mbox{ so that }\, 
(\tau_M^{\eta})^{\frac{1}{1+\zeta} } < |t|< \epsilon \, .
\end{align}

It is now sufficient to estimate the third term in the right-hand-side
of \eqref{decc} for  $t$ and $M=M(t)$ satisfying
\eqref{bd1}. 

For this, in order to apply
Proposition ~ \ref{bottomok}, and noting that the
right-hand-side of \eqref{tcond} is $\ge C e^{-2\gamma M}$, 
we want $t$ and $M$ to satisfy
\begin{equation}\label{bd2'}
\sup_{k,|s|< \epsilon} |\alpha_s(c_k)||t| < C e^{- 2\gamma M} \, .
\end{equation}
(Recall that
$\sup_{k,|s|< \epsilon} |\alpha_s(c_k)|\le L$
by Lemma~\ref{first'''}.)
In several places  below 
we shall require a  stronger version of \eqref{bd2'}, of the form
\begin{equation}\label{bd2}
|t| < e^{-\Gamma \gamma M}\, ,
\end{equation}
where $\Gamma>2$ is large (but uniformly bounded over the
argument).
Since
$\tau_M < \lambda_c^{-M/2}e^{3M\gamma}$
(recalling Lemma~\ref{truncspec})
and $\lambda <e^{\gamma}$
(by \eqref{48}), we see that \eqref{bd1} and \eqref{bd2} are compatible
if $\epsilon$ is  small enough and
\begin{equation}\label{finalcond}
\frac{1}{2} \log \lambda_c > \gamma \bigl (  \frac{1+\Gamma(1+\zeta)}{\eta} + 3
\bigr )\, .
\end{equation}
By our TSR assumption
and Lemma~\ref{est1aunif}, we may indeed require that $\gamma$ is small
enough for the above Benedicks-Carleson
condition to hold, even
if  $\Gamma>2$ is large.

We shall call pairs $(M,t)$ so that
$|t|< \epsilon$ and  \eqref{bd1} and \eqref{bd2}
hold {\it admissible pairs.}
In the remainder of this section, $(M,t)$ will always be an
admissible pair, and we shall work with the towers and operators
given by Proposition~\ref{bottomok} for a given such pair.

The key decomposition for an admissible pair is then 
\begin{align}\label{decompfinal}
 \Pi_t (\hat \phi_{t, M})- \Pi (\hat \phi_{M})
&=\Pi_t ( \hat \phi_{t,M} -\hat \phi_{M}  )
+\Pi_t  (\hat \phi_{M}) -\Pi  (\hat \phi_{M}) \, .
\end{align}

Before we start with the proof, let us briefly sketch it:
The term  $ \hat \phi_{t,M} -\hat \phi_{M}$, will 
be handled using  spectral perturbation-type methods.
This is the content of Steps ~1 and ~2  below, the outcome
of which are claims \eqref{claim1}, \eqref{formula1},
and \eqref{formula1''}.
(Horizontality is used here to get uniform
estimates, in view of Proposition~\ref{bottomok}
but also, e.g., in Lemma~\ref{goodlemma}.)

The other term requires the analysis of $\Pi_t - \Pi_0$. 
This will produce derivatives of the ``spikes,"
i.e., of functions of the type $(x-c_k)^{-1/2}$
(recall the definition of $\Pi_t$ and see Lemma~\ref{rootsing}). 
Since  $\partial_t(x-h_t(c_k))^{-1/2}$
is not integrable, this will require working with
$\int A (\Pi_t  \hat \phi_t -\Pi_0  \hat \phi_t ) \, dx$,
with $A$ a $C^1$ function and integrating by parts, as
well as using again horizontality.
We perform this analysis in Step~3 of the proof, which yields
\eqref{formula2}.

\medskip

{\bf Step 1: The first term of \eqref{decompfinal}: perturbation theory
via resolvents}

(Recall that $(M,t)$ is an admissible pair.)
In order to get a formula for the limit
as $t\to 0$ (in a suitable norm) of the  first term of \eqref{decompfinal} divided by $t$, we shall first analyse  $(\hat \phi_{t,M}-\hat \phi_M)/t$, and then
see how $\Pi_t$ enters in the picture.

Since $\nu_{M}(\hat \phi_{M})=1$, 
we have
$$
\hat \phi_{t,M} \nu_{t,M}(\hat \phi_{M})-\hat \phi_{M} 
=
\hat \phi_{t,M}-\hat \phi_{M} +
\hat \phi_{t,M}( \nu_{t,M}(\hat \phi_M)- \nu_M(\hat \phi_M))\, .
$$
Now,  Lemma~\ref{truncspec} applied to
$f$ and $f_t$ implies that
\begin{align*}
| \nu_{t,M}(\hat \phi_M)- \nu_M(\hat \phi_M)|
&\le 2 \max( \|\nu_{t,M}-\nu_t\|_{(\BB^{H^1_1})^*}, \|\nu_M-\nu\|_{(\BB^{H^1_1})*}) \|\hat \phi_M\|_{\BB^{H^1_1}}\\
&\le C \tau_M^{\eta} \|\hat \phi_M\|_{\BB^{H^1_1}}\, .
\end{align*}
Our choices imply that $C \tau_M^{\eta} =O(|t|^{1+\zeta})$ while
  $\|\hat \phi_M\|_{\BB^{H^1_1}}$ is uniformly bounded, e.g., by the proof of Lemma ~\ref{truncspec}.
Therefore, to study 
$\hat \phi_{t,M}-\hat \phi_M$, it
suffices to estimate
$\hat \phi_{t,M} \nu_{t,M}(\hat \phi_{M})-\hat \phi_{M}$, that we
shall express as
a difference of spectral projectors.

Next, set  
$$
\widehat \QQ_{t,M}=\widehat \QQ_{t,M}(z)= z-\widehat \LL_{t,M}\, ,
\quad
\widehat \QQ_{M}=\widehat \QQ_{M}(z)= z-\widehat \LL_{M}\, ,
$$
recall 
$\PPP_M$ from the proof
of Lemma~\ref{truncspec}, and denote by 
$$
\PPP_{t,M}(\hat \psi)=
\hat \phi_{t,M}  \nu_{t,M}(\hat \psi)\, ,
$$ 
the  spectral projector corresponding to the maximal eigenvalue
of $\widehat \LL_{t,M}$.  Using
$$
\widehat \QQ_{t,M}^{-1}- \widehat \QQ_M^{-1}= \widehat \QQ_{t,M}^{-1} (\widehat \LL_{t,M} -\widehat \LL_M)
 \widehat \QQ_{M}^{-1}\, ,
 \mbox{ and  }
 \QQ_M^{-1} (\hat \phi_{M})= 
\frac{\hat \phi_{M}}{z-\kappa_{M}}\, ,
$$
we rewrite
$\hat \phi_{t,M}\nu_{t,M}(\hat \phi_M)-\hat \phi_{M}  
=(\PPP_{t,M}-\PPP_M)(\hat \phi_{M})$
as follows:
\begin{align}\label{idea}
\hat \phi_{t,M}\nu_{t,M}(\hat \phi_M)-
\hat \phi_{M} &=
- \frac{1}{2 i \pi} \oint  \frac{\widehat \QQ_{t,M}^{-1}(z)} {z-\kappa_{M}} 
(\widehat \LL_{t,M} -\widehat \LL_M) (\hat \phi_{M})\, dz\\
\nonumber &=
(\kappa_{M}  -\widehat \LL_{t,M})^{-1} (\id-\PPP_{t,M})(\widehat \LL_{t,M}-\widehat \LL_M) (\hat \phi_{M}) \, ,
\end{align}
where the  contour is a circle centered at $1$,
outside of the disc of radius $\max(\theta_0, \theta_t)$
(using the notation from Subsection~ \ref{Lt}).

We are going to use again  the arguments in \cite{kellerliverani}.
By uniformity of the constants in
Lemma~\ref{est1aunif},  the Lasota-Yorke estimates
\footnote{We use that the constant
$c(\delta, f_t)$ associated to $f_t$ by Lemma ~ \ref{est1aunif}
does not depend on $t$
and that 
$\inf_{\delta \ge \delta_0} c(\delta)>0$ for any $\delta_0>0$.} in
the proofs of Proposition ~ \ref{mainprop} 
and Lemma ~ \ref{truncspec} say that there exist
$\epsilon >0$ and $C\ge 1$ 
so that,  for  all  $|t|\le \epsilon$, all $M$, 
all $j$,
\begin{align}\label{hand0}
&\|\widehat \LL_{t,M}^j(\hat \psi)\|_{\BB^{L^1}}  \le C \|\hat \psi\|_{\BB^{L^1}}
\, , \quad
\|\widehat \LL_{t,M}^j (\hat \psi )\|_{\BB^{H^1_1}} \le C  \Theta^{-j}\|\hat \psi\|_{\BB^{H^1_1}}
+C\|\hat \psi\|_{\BB^{L^1}}\, , \\
\label{hand0'}
& \|\widehat \LL_{M}^j(\hat \psi)\|_{\BB^{L^1}}  \le C \|\hat \psi\|_{\BB^{L^1}}
\, , \quad
\|\widehat \LL_{M}^j (\hat \psi )\|_{\BB^{H^1_1}} \le C  \Theta^{-j}\|\hat \psi\|_{\BB^{H^1_1}}
+C \|\hat \psi\|_{\BB^{L^1}}\, .
\end{align}

In Step~2   we shall find 
$\widetilde C \ge 1$ and $\tilde \eta >0$
so that for each admissible pair  $(M,t)$ 
\begin{equation}\label{missarg}
\|\widehat \LL_{t,M}(\hat \psi)-\widehat \LL_M(\hat \psi)\|_{\BB^{L^1}}
\le \widetilde C |t|^{\tilde \eta}  \|\hat \psi\|_{\BB}\, , \, 
\forall
\hat \psi \in \BB \, .
\end{equation}
We are not exactly in the setting of \cite{kellerliverani}, since
we have a ``moving target" $\widehat \LL_{M(t)}$ as
$t\to 0$.
However, since the right-hand-side of \eqref{missarg} does not
depend on
$M$, 
setting
$$
\NN_{t,M}:=(\kappa_{M} -\widehat \LL_{t,M})^{-1}(\id-\PPP_{t,M})- 
(\kappa_{M} -\widehat \LL_M)^{-1}(\id -\PPP_{M})\, ,
$$
then
\eqref{hand0}--\eqref{hand0'} and (\ref{missarg})
imply, by a small modification of the
proofs of \cite[Theorem~1, Corollary ~1]{kellerliverani}, that 
there exist $\widehat C\ge 1$ and  $\hat\eta >0$   so that for 
all admissible pairs $(M,t)$
\begin{equation}\label{limm}
|| \NN_{t,M} \|_{\BB^{H^1_1}}
\le \widehat C \, ,
\quad \| \NN_{t,M}(\hat \psi) \|_{\BB^{L^1}}
\le \widehat C |t|^{\tilde \eta \hat \eta} \|\hat \psi\|_{\BB^{H^1_1}}\, .
\end{equation}

In Step 2, we shall show that  there exist
$C>0$ and $\DD_M\in \BB=\BB^{H^1_1}$ 
with 
$$
\int_I \DD_{M,0}\, dx =0\, ,\, \,
\DD_{M,k}=0\, , \forall k \ge 1\, ,  \mbox{ and }
\| \DD_{M} \|_{\BB }\le C  
 \, ,
$$
and $\tilde \zeta>0$, so that
for all admissible pairs $(M,t)$
\begin{equation}\label{almostder}
\|\widehat \LL_{t,M}(\hat \phi_{M})-\widehat \LL_M(\hat \phi_M) - t  \DD_M \|_{\BB}\le  C |t|^{1+\tilde \zeta} \, .
\end{equation}
Writing 
$$
(\kappa_{M}-\widehat \LL_{t,M})^{-1}(\id - \PPP_{t,M})
= 
\NN_{t,M} 
+(\kappa_{M}-\widehat \LL_M)^{-1}(\id -\PPP_M)\, ,
$$
we see that  \eqref{almostder} together with
\eqref{newKL'}
from the proof of Lemma~\ref{truncspec} and
\eqref{idea} 
imply
\begin{align}
\nonumber
\hat \phi_{t,M}-\hat \phi_{M} &=
[\NN_{t,M} +(\kappa_{M}-\widehat \LL_M)^{-1}(\id - \PPP_M)](t  \DD_M + O_{\BB}(|t|^{1+\tilde \zeta})) \\
\label{lllast} &= t   \NN_{t,M}(\DD_M) +t(\kappa_{M}-\widehat \LL_M)^{-1}(\id - \PPP_M)( \DD_M)
+ \Delta O_{\BB^{H^1_1}}(|t|^{1+\tilde \zeta}) \, .
\end{align}

Note for further use (in \eqref{notenough} below) that, since 
$\|\hat \psi\|_{C^0}:=\sup_k \sup |\psi_k|\le C \|\hat \psi\|_{H^1_1}$, 
the bound
\eqref{lllast} with the first inequality in \eqref{limm} imply 
\comment{$L^p$ estimate for $p>1$ would be enough}
\begin{equation}\label{notenough'}
\|\hat \phi_{t,M}-\hat \phi_{M} \|_{C^0} =O(t)\, ,
\mbox{ as $t\to 0$, uniformly in admissible
pairs $(M,t)$.}
\end{equation}

 \smallskip
Recalling from \eqref{hatY} the definition 
$\hat Y$, we shall see in Step 2 that 
the following expression defines an element of $\BB^{H^1_1}$
\begin{equation}\label{dammit'}
\DD=-(\TT_0(\widehat \LL (\hat Y \hat \phi))'\, ,
\end{equation}
 and that, in addition
$ \lim_{M \to \infty}\| \DD-\DD_M\|_{H^1_1}=0$.
More precisely, there is
 $\zeta'>0$ so that for admissible pairs $(M,t)\to (\infty,0)$
\begin{equation}\label{dammit}
\| \DD-\DD_M\|_{H^1_1}=
\| \DD_0-\DD_{M,0}\|_{H^1_1(I)} =O(|t|^{\zeta'}) \, .
\end{equation}
Note  that $\DD_k=0$ for $k\ge 1$,
and that $\int \DD_0\, dx=0$, so that $\nu(\DD)=0$. Similarly, $\nu(\DD_M)=0$.
Therefore,
by the spectral properties of
$\widehat \LL$ on $\BB^{H^{1}_1}$ from Propositions~\ref{mainprop} and ~\ref{acip}, 
we have that
 $$
 (\id - \widehat\LL)^{-1}(\DD)\in \BB^{H^{1}_1}
\, ,\quad (\id - \widehat\LL)^{-1}(\DD_M)\in \BB^{H^{1}_1}\, .
 $$

Recalling $\NN_M$ from \eqref{forrlater'} in
the proof of Lemma~\ref{truncspec}, the estimate
\eqref{forrlater''} and our condition \eqref{bd1} on $M$ give
\begin{equation}
\|\NN_M (\hat \psi)\|_{\BB^{L^1}}\le \widehat C |t|^{1+\zeta} \|\hat \psi\|_{\BB^{H^1_1}}\, .
\end{equation}
In particular, since $\| \DD_{M} \|_{\BB }\le C $, we get
$\lim_{t\to 0}\| \NN_M (\DD_M)\|_{\BB^{L^1}}=0$, exponentially
in $M$.
Therefore, recalling that $\nu(\DD)=\nu(\DD_M)=0$, and using
\eqref{dammit},  there exists $\zeta''>0$
so that for  admissible  $(M,t)\to (\infty,0)$, 
\begin{align*}
&\bigl \|
(\kappa_M-\widehat \LL_M)^{-1} (\id-\PPP_M) (\DD_M)
-(\id-\widehat \LL)^{-1}  (\DD)\bigr \|_{\BB^{L^1}}\\
&\qquad\qquad\le
  \|\NN_M(\DD_M)\|_{\BB^{L^1}}
+ \|(\id-\widehat \LL)^{-1}  (\DD-\DD_M)\|_{\BB^{H^1_1}}=O(|t|^{\zeta''})\, .
\end{align*}
We may now conclude the first part of Step 1: 
Dividing \eqref{lllast} by $t$, letting $t\to 0$, 
and applying the second bound of \eqref{limm} gives $\hat \zeta >0$
so that 
(using again  $\nu(\DD)=0$) 
\begin{equation}\label{claim1}
\bigl \|
\frac{1}{t}(\hat \phi_{t,M(t)}-\hat \phi_{M(t)}) -
 (\id-\widehat \LL)^{-1}  (\DD)
 \bigr  \|_{\BB^{L^1}}=O(|t|^{\hat \zeta})
 \mbox{ as $t\to 0$.}
 \end{equation}

It remains to assess the effect of composition
by $\Pi_t$ in the first term of \eqref{decompfinal} divided by $t$.
We claim that  \eqref{claim1} implies
\begin{equation}\label{formula0}
\lim_{t \to 0}
\|\frac{1}{t}
\Pi_t [ \hat \phi_{t,M} -\hat \phi_{M}  ]-
\Pi_0((\id - \widehat \LL)^{-1} (\DD))\|_{L^1(I)}=0\, .
\end{equation}
To prove \eqref{formula0}, we start from the decomposition
\begin{equation}\label{abovv}
\frac{1}{t}
\Pi_t (\hat \phi_{t,M}-\hat \phi_M)=
\frac{1}{t}
\Pi_0(\hat \phi_{t,M}-\hat \phi_M)+
\frac{1}{t}
(\Pi_t-\Pi_0)(\hat \phi_{t,M}-\hat \phi_M)\, .
\end{equation}
 Note that \eqref{glu2} implies 
$
\|\Pi_0 \TT_M (\hat \psi)\|_{L^1}\le C 
\|\hat \psi\|_{\BB^{L^1} }
$.
Therefore, since $\nu(\DD)=0$,
 \eqref{claim1} takes care of the first term in \eqref{abovv}, 
 and it suffices
to show that the
second term in \eqref{abovv} tends to zero 
in $L^1(I)$ as $t\to 0$.

Recall  that Lemma~\ref{est1aunif} allows us to take
a larger value of $\Gamma$ in \eqref{bd2}
if necessary.
Estimate \eqref{Psi} in Step 3  implies 
$$
\|(\Pi_t-\Pi_0) (\hat \psi)\|_{L^1(I)}
\le C \sqrt t \|\hat \psi\|_{C^0}\, .
$$
Therefore, by  \eqref{notenough'}, we have
\begin{equation}\label{notenough}
\|(\Pi_t-\Pi_0)(\hat \phi_{t,M}-\hat \phi_M)\|_{L^1}
\le  C \sqrt t \|\hat \phi_{t,M}-\hat \phi_M\|_{C^0}=o(t)\, ,
\end{equation}
proving \eqref{formula0}.
\smallskip

Finally, \eqref{formula0} immediately implies that
\begin{equation}\label{formula1}
\lim_{t \to 0}
\frac{1}{t}\int A 
\Pi_t [ \hat \phi_{t,M} -\hat \phi_{M}] \, dx=
-\int A
\Pi_0((\id - \widehat \LL)^{-1}[\TT_0(\widehat \LL (\hat Y \hat \phi))'])\, dx\, .
\end{equation}
In other words,
\begin{equation}\label{formula1'}
\lim_{t \to 0}\frac{1}{t}
\int (A -A\circ f)
\Pi_t [ \hat \phi_{t,M} -\hat \phi_{M}] \, dx=
-\int A
(\TT_0(\widehat \LL (\hat Y \hat \phi)))'\, dx\, ,
\end{equation}
where we used  \eqref{commute}.
If $A$ is $C^1$, we can integrate by parts, and we find
\begin{equation}\label{formula1''}
\lim_{t\to 0}
\frac{1}{t}
\int (A-A\circ f) 
\Pi_t [ \hat \phi_{t,M} -\hat \phi_{M}]\, dx=
\int A'
 \TT_0(\widehat \LL (\hat Y \hat \phi))\, dx\, .
\end{equation}

\medskip

{\bf Step 2: The first term of \eqref{decompfinal}:
Computing $\lim\frac{1}{t}( \widehat \LL_{t,M}-\widehat \LL_{M})(\hat \phi_{M})$.}

In this step, we prove  \eqref{missarg}, (\ref{almostder}), 
and \eqref{dammit'},  \eqref{dammit}, for admissible pairs $(M,t)$.
The following estimates will play a crucial part in the argument
(their proof is given in Appendix \ref{app3},
it uses the fact that $t\mapsto f_t\in C^3$ is a $C^2$ map):

\begin{lemma}
 [Taylor series 
 for $f^{-k}_{\pm}(x)-f^{-k}_{t,\pm}(x)$]
 \label{goodlemma}
Let $f_t$ satisfy the assumptions of Theorem~\ref{linresp}. 
Recall the functions $Y_{k,t}$ 
from \eqref{defvk},  the maps $f^{-k}_{t,\pm}$ from \eqref{deff+},
and the smooth cutoff functions $\xi_k$
from Definition~\ref{defxi}. Then 
there is $C>0$ so that for any $k\ge H_0$ and
any $|s| \le \epsilon$,  if  \eqref{bd2'} holds, then
\begin{equation}\label{claim0}
 \sup_{y \in  \widetilde I_k}\frac{|Y_{k,s}(y)|}{|(f^k_s)'(y)|} \le C \frac{e^{2\gamma k}}
 {|(f^{k-1}_s)'(c_{1,s})|^{1/2}}
\, .
\end{equation}
In addition, for all $k\ge 1$, we have
\begin{align}\label{claim12}
&\sup_{x \in f^k_s(\widetilde I_k)}
\biggl |
\partial_x 
\bigl (\frac{Y_{k,s}(f^{-k}_{s,\pm}(x))}{(f^k_s)'(f^{-k}_{s,\pm}(x))} \bigr )
\biggr | \le C \frac{e^{3\gamma k}}
 {|(f^{k-1}_s)'(c_{1,s})|^{1/2}}\, ,
\\
\label{claim22}
&\sup_{x \in f^k_s(\widetilde I_k)}
\biggl |
\partial^2_x \bigl (\frac{Y_{k,s}(f^{-k}_{s,\pm}(x))}{(f^k_s)'(f^{-k}_{s,\pm}(x))} \bigr )
\biggr | \le C \frac{e^{6\gamma k}}
 {|(f^{k-1}_s)'(c_{1,s})|^{1/2}}
 \, .
\end{align}
Finally, for all $k \le M$ and all
$x\in f^k_s(\widetilde I_k)$
\begin{align}\label{claim3}
&\biggl | f^{-k}_{\pm}(x)-f^{-k}_{t,\pm}(x)
-t\frac{Y_k(f^{-k}_\pm(x))}{(f^k)'(f^{-k}_\pm(x))}
\biggr | \le
C |t|^2  e^{4\gamma k} 
 \, ,
\end{align}
and, for the same $k$ and $x$
\begin{align}\label{claim4}
&
\biggl |
\frac{1}{(f^k_t)'(f^{-k}_{\pm}(x))}-\frac{1}{(f^k)'(f^{-k}_{t,\pm}(x))}-
t \biggl ( \frac{Y_k(f^{-k}_\pm(x))}{(f^k)'(f^{-k}_\pm(x)) } \biggr )'
\biggr |
\le C |t|^2 e^{7\gamma k} \, .
\end{align}
\end{lemma}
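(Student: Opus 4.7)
My plan is to first recognize the key identity
\[
\frac{Y_{k,s}(y)}{(f_s^k)'(y)} \;=\; \sum_{\ell=0}^{k-1}\frac{v_s(f_s^\ell(y))}{(f_s^{\ell+1})'(y)} \;=\; w_{k,s}(y),
\]
where $w_{k,s}$ is the function from Proposition~\ref{abs} for the map $f_s$. The bound \eqref{claim0} is then essentially \eqref{est2beq'} applied to the fuzzy interval $\widetilde I_k$ of \eqref{tildeint} rather than the tower interval $I_k$ (the expansion and distortion lemmas from Subsection~\ref{tower'} carry over to $\widetilde I_k$ by Remark~\ref{overlap}); the horizontality of $v_s$ given by Remark~\ref{defhor} is what produces the crucial $e^{2\gamma k}$ factor, and Lemma~\ref{est1aunif} supplies uniformity of all constants for $|s|\le \epsilon$.

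For the spatial-derivative bounds \eqref{claim12} and \eqref{claim22}, I would use that
\[
\partial_x\Bigl[\frac{Y_{k,s}(f^{-k}_{s,\pm}(x))}{(f_s^k)'(f^{-k}_{s,\pm}(x))}\Bigr] \;=\; \frac{w_{k,s}'(y)}{(f_s^k)'(y)}, \qquad y = f^{-k}_{s,\pm}(x),
\]
so that the factor $1/|(f_s^k)'(y)| \le Ce^{\gamma k}|(f_s^{k-1})'(c_{1,s})|^{-1/2}$ from the inverse-branch chain rule (cf.~\eqref{basic2}) combines with a pointwise bound $|w_{k,s}'(y)|\le C e^{2\gamma k}$ on $\widetilde I_k$ to yield the $e^{3\gamma k}$ factor in \eqref{claim12}. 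To establish this pointwise bound on $w_{k,s}'$, I would differentiate the explicit sum for $w_{k,s}$ term by term: the cross term $v_s'(f_s^\ell y)/f_s'(f_s^\ell y)$ is uniformly bounded thanks to $v_s = X_s\circ f_s$ (so $v_s' = (X_s'\circ f_s)\cdot f_s'$), while the quotient-rule contribution is handled by the same analysis that underlies Lemma~\ref{rootsing} (especially the estimate \eqref{gluu} on $(f^{k})''/(f^{k})'$) together with Proposition~\ref{ubalpha}. The second spatial derivative in \eqref{claim22} is then controlled by iterating the same procedure, each extra $\partial_x$ costing at most a further $e^{2\gamma k}$ (as in the passage from \eqref{rootsing2} to \eqref{rootsing3}), giving the $e^{6\gamma k}$ of \eqref{claim22}.

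For the Taylor expansions \eqref{claim3} and \eqref{claim4}, I would implicitly differentiate the identity $f_t^k(f^{-k}_{t,\pm}(x))=x$ in $t$ to obtain
\[
\partial_t f^{-k}_{t,\pm}(x) \;=\; -\frac{Y_{k,t}(f^{-k}_{t,\pm}(x))}{(f_t^k)'(f^{-k}_{t,\pm}(x))},
\]
which identifies the linear part of the expansion of $f^{-k}_\pm(x)-f^{-k}_{t,\pm}(x)$. Since $t\mapsto f_t$ is $C^2$, Taylor's formula with integral remainder applies, and the error is controlled by $|t|^2\sup_{|s|\le |t|}|\partial_s^2 f^{-k}_{s,\pm}(x)|$. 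A second implicit differentiation produces terms in $Y_{k,s}$, $\partial_y Y_{k,s}$, $\partial_s Y_{k,s}$, $(f_s^k)'$ and $(f_s^k)''$; using \eqref{claim0} for the $Y_{k,s}/(f_s^k)'$ pieces, the $e^{2\gamma k}$ bound for $w_{k,s}'$ established above, the uniform distortion estimates of Lemma~\ref{est1aunif}, and the coincidence of the first $M$ levels of the towers of $f_s$ (guaranteed by admissibility \eqref{bd2'} and Proposition~\ref{bottomok}), I obtain the $e^{4\gamma k}$ factor of \eqref{claim3}. Estimate \eqref{claim4} is then handled by splitting the left-hand-side as $A(t)-B(t)$ with $A(t)=1/(f_t^k)'(f^{-k}_\pm(x))$ (whose $t$-derivative at $0$ is obtained from $\partial_t(f_t^k)'(y)=\partial_y Y_{k,t}(y)$) and $B(t)=1/(f^k)'(f^{-k}_{t,\pm}(x))$ (whose $t$-derivative at $0$ comes from the chain rule and the formula for $\partial_t f^{-k}_{t,\pm}$), and bounding the second $t$-derivative of $A-B$ as before; the extra spatial derivative built into \eqref{claim4} costs one further $e^{2\gamma k}$, giving $e^{4\gamma k}\cdot e^{3\gamma k}=e^{7\gamma k}$.

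The main technical obstacle will be the careful bookkeeping of the many $e^{\gamma k}$ factors generated by near-critical distortion and by each spatial or $t$-differentiation: some of these losses must be offset using horizontality (which is precisely what prevents \eqref{claim0} from degenerating into a much worse $\sim\lambda_c^{k/2}$ bound), and one must check that the estimates from Section~\ref{alphabdedproof}---particularly Lemma~\ref{cd}, Proposition~\ref{ubalpha}, Corollary~\ref{errorii}, and Lemma~\ref{rootsing}---extend cleanly to the fuzzy intervals $\widetilde I_k$ and remain uniform under $C^2$ deformation in $t$ via Lemma~\ref{est1aunif}.
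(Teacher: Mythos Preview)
Your proposal is correct and follows essentially the same route as the paper. You correctly identify $Y_{k,s}/(f_s^k)'=w_{k,s}$ and invoke \eqref{est2beq'} for \eqref{claim0}; for \eqref{claim12}--\eqref{claim22} you use the same chain-rule factorization \eqref{factor} and term-by-term differentiation of $w_k$, with horizontality controlling the dangerous $\ell=0$ contribution (the paper records $|\partial_y w_k|\le Ce^{3\gamma k}$, but your sharper $e^{2\gamma k}$ is in fact what the argument yields once the $\ell=0$ term's extra $|(f^{k-1})'(c_1)|^{-1/2}$ is kept); and for \eqref{claim3}--\eqref{claim4} you use implicit differentiation of $f_t^k\circ f^{-k}_{t,\pm}=\mathrm{id}$, exactly as in the paper's formulas \eqref{Phi}--\eqref{Phi''}. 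The only cosmetic difference is in \eqref{claim4}: the paper exploits the identity \eqref{derr}, which recognizes the left-hand side as $\partial_x$ of the quantity in \eqref{claim3} and thereby reduces \eqref{claim4} to differentiating \eqref{claim3}; your direct $A(t)-B(t)$ Taylor expansion achieves the same end (though you should double-check the sign of the linear term against \eqref{derr}, since the statement of \eqref{claim4} in the paper appears to carry a subscript swap).
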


We first prove  \eqref{missarg}.
If $j >M$ then 
$\widehat \LL_{t,M}(\hat \psi)(x,j)=\widehat \LL_M(\hat \psi)(x,j)
 =0$.
If $1 \le j\le M$, since $\xi_j=\xi_{j,t}$ (recall the construction
in Proposition~\ref{bottomok}), 
$$
\widehat \LL_{t,M}(\hat \psi)(x,j)-\widehat \LL_M(\hat \psi)(x,j)
 =0\, .
$$
Therefore, we need only worry about $j=0$.

Recall the definition \eqref{a} of 
$\widehat \LL_{t,M}(\hat \psi)(x,0)$.
The definition \eqref{hatY}  of $\hat Y_{s}$ 
(the shift in indices there mirrors that in \eqref{a}) together with
\eqref{Phi} and
\eqref{derr} from the
proof of Lemma~\ref{goodlemma}
imply the following:
Assume that
$\varphi$
is $C^1$ and supported in $\widetilde I_k$.
Then there exists $s(t) \in [0,t]$ so that
\begin{align*}
& \biggl | \frac{\varphi(f^{-k}_{+}(x))}{|(f^k)'(f^{-k}_{+}(x))|}- 
\frac{\varphi(f^{-k}_{t,+}(x))}{|(f^k_t)'(f^{-k}_{t,+}(x))|}
\biggr |\\
&\le 
|t| | \varphi(f^{-k}_{s,+}(x)) |
\biggl |
\bigl ( \frac{Y_{k,s}(f_{s,+}^{-k}(x))}{(f^k_s)'(f^{-k}_{s,+}(x)) }  
\bigr )' \biggr |
+|t| \frac{|\varphi'(f^{-k}_{s,+}(x))|}{|(f^k_s)'(f^{-k}_{s,+}(x))|} 
\biggl | \frac{Y_{k,s}(f^{-k}_{s,+}(x))}{(f^k_s)'(f^{-k}_{s,+}(x))}
\biggr | \, .
\end{align*}

Of course, the branch $f^{-k}_-$ is handled similarly.
Recall that $C^\infty$ is dense in $H^1_1$.
Therefore,
summing over the  inverse branches, 
and taking into
account the contribution of $(1-\xi_k)(f^{-k}_{t,+}(x))-(1-\xi_k)(f^{-k}_{+}(x))$
via \eqref{kappa'} or \eqref{dirtytrick}
(our assumptions imply that each $\psi_k'$
and $\xi_k'$ vanishes at
the boundary of its support), and averaging,
we get $C>0$ so that
for any $\hat \psi \in \BB$ and
any admissible pair $(M,t)$, using
 \eqref{claim0}
and \eqref{claim12} from Lemma~\ref{goodlemma} 
and the upper bound \eqref{48} on $\lambda$
\begin{equation}\label{????}
\|\widehat \LL_{t,M}(\hat \psi)-
\widehat \LL_{M}(\hat\psi) \|_{\BB^{L^1}}
\le C |t| e^{5\gamma M}\lambda^{2M} \|\hat \psi\|_{\BB^{H^1_1}} 
\le C |t| e^{7\gamma M} \|\hat \psi\|_{\BB^{H^1_1}}  \, .
\end{equation}
In view of \eqref{bd2}, this proves \eqref{missarg}.

Next, we show (\ref{almostder}).
Note that
\begin{align*}
& \varphi(f^{-k}_+(x)) 
\biggl ( \frac{Y_k(f^{-k}_+(x))}{(f^k)'(f^{-k}_+(x)) }  
\biggr )'
+  \frac{\varphi'(f^{-k}_+(x))}{(f^k)'(f^{-k}_+(x))}
\frac{Y_{k}(f^{-k}_{+}(x))}{(f^k)'(f^{-k}_{+}(x))}\\
&\qquad\qquad\qquad\qquad\qquad\qquad\qquad\qquad
=\biggl ( \frac{\varphi(f^{-k}_+(x)) Y_k(f^{-k}_+(x))}
{(f^k)'(f^{-k}_+(x))}\biggr)' \, ,
\end{align*}
and set (recall that $(\hat \phi_{M})_0'\in H^1_1$)
$$
\DD_M := -(\TT_0(\widehat \LL_M(\hat Y \hat \phi_{M})))'  \in \BB\, .
$$
Clearly
$\nu(\DD_M)=0$, integrating by parts. 
Since
Lemma ~\ref{truncspec}
implies that  $\hat \phi_M$ is an eigenvector of $\widehat \LL_M$
for an eigenvalue $\kappa_M$ close to $1$ 
(so that the $\lambda^k$ factor can be replaced by
$\kappa_M^k$, which is strictly smaller than
$C e^{2\gamma k}$ by our choices), we get,  
using \eqref{claim0} and
\eqref{claim12} from Lemma~\ref{goodlemma}, 
as well as Lemma~\ref{rootsing},
$ \sup_M\|\DD_M\|_{\BB^{L^1}} < \infty$.
 Up to increasing $\Gamma$ in \eqref{bd2}, 
 \eqref{claim12} and \eqref{claim22} 
imply that
$\sup_M \|\DD_{M}\|_{\BB^{ H^1_1}}<\infty$.

Using again that $(\hat \phi_{M})_0'\in H^1_1$, we may write the
average of the $t$-Taylor series  of order two of
 $$
  \frac{\hat \phi_{M,k-1}(f^{-k}_{+}(x))}{|(f^k)'(f^{-k}_{+}(x))|}- 
\frac{\hat \phi_{M,k-1}(f^{-k}_{t,+}(x))}{|(f^k_t)'(f^{-k}_{t,+}(x))|}\, ,
$$
and of its $x$-derivative. 
By \eqref{claim22},  \eqref{claim3}, and \eqref{claim4},
this gives
\begin{equation}\label{uselater}
\|\widehat \LL_{t,M}(\hat \phi_M)-\widehat \LL_{M}(\hat\phi_M) 
-t\DD_M \|_{\BB}
\le C e^{11 \gamma M} |t|^{1+\zeta} \,  .
\end{equation}
Since we can take $\Gamma$ in
\eqref{bd2} as large as necessary,  this
establishes \eqref{almostder}.

Set $\DD=-\TT_0(\widehat \LL(\hat Y \hat \phi))'$. Clearly,
$\nu(\DD)=0$, integrating by parts.
The estimates we proved imply that
$\|\DD_0-\DD_{M,0}\|_{H^1_1}\to 0$, exponentially
\comment{recheck}
fast 
as $M \to \infty$, and that $\DD \in \BB^{H^1_1}$. This shows
\eqref{dammit'} and \eqref{dammit}.
\medskip


{\bf Step 3: The second term of \eqref{decompfinal}: Estimating $\frac{1}{t}(\Pi_t-\Pi_0)(\hat \phi_M) \in (C^{1}(I))^*$}

In this step, the points are not necessarily falling
from the tower, so that  the analogues of the derivatives
in Lemma ~\ref{goodlemma} have nonintegrable spikes.
Therefore,
as already mentioned, we shall not
only require horizontality, but we shall
also need to perform integration by parts, using that
the observable $A$ is $C^1$.

As before, the index $k$ ranges between $1$ and $M$,
where $(M,t)$ is an admissible pair. We focus on the branch $f^{-k}_+$, the other one is handled in a similar way.

Note for further use that, recalling
 Proposition~\ref{bottomok},
Lemma \ref{rootsing}
implies that there exists $C$ so that for any 
admissible pair $(M,t)$ and any $1\le k\le M$
\begin{align}\label{Psi}
\Psi_{k,t}&:=
\sup_{z \in [c_k, c_{k,t}]}\int_{c_k}^z\frac{\lambda^k}{|(f^k)'(f^{-k}_+(x))|} \, dx
\le   \int_{c_k}^{c_{k,t}}\frac{C}{\sqrt{x-c_k}}\,  dx
\le C L |t|^{1/2}\, .
\end{align}
(We used  \eqref{Lbound} to get $|c_{k,t}-c_k|=|h_t(c_k)-h_0(c_k)|\le L |t|$.)
In particular,
\begin{equation}\label{inpart}
|f^{-k}_+(c_{k,t})- f^{-k}_+(c_k)|\le \Psi_{k,t} \le C |t|^{1/2}\, .
\end{equation}

Assume to fix ideas
that $c_k > c_{k,t}$, 
with $c_k$ and $c_{k,t}$  local maxima for $f^k$ and $f^k_t$, respectively
(the other possibilities are treated similarly and left
to the reader).

We   first study the points in $f^k(\mbox{supp}(\phi_{M,k}))$
for which $f^{-k}_+(x)$ exists 
but not $f^{-k}_{t,+}(x)$, i.e., the interval
$[c_{k,t}, c_k]$. This gives the following contribution: 
\begin{align}
\label{firstudy}- \int_{c_{k,t}}^{c_k}
A(x) &\frac{\lambda^k}{|(f^k)'(f^{-k}_+(x))|}  \phi_{M,k}(f^{-k}_+(x))
\, dx\\
\nonumber &= \int_{f^{-k}_+(c_{k,t})}^{c_0}
[(A(f^k(y)) -A(c_k)) + A(c_k)]\lambda^k \phi_{M,k}(y)\, dy\\
\nonumber &=  \int_{f^{-k}_+(c_{k,t})}^{c_0}
A'(z_{k,t}(y))  (c_k -f^k(y))\lambda^k \phi_{M,k}(y)\, dy\\
\label{singular}
&\qquad\qquad
+A(c_k)\int_{f^{-k}_+(c_{k,t})}^{c_0} \lambda^k  \phi_{M,k}(y) \, dy\, ,
\end{align}
where $z_{k,t}(y)\in [c_{k,t}, c_k]$, and we used that $f^{-k}_+$ is orientation
reversing. Now
\begin{align*}
&|\int_{f^{-k}_+(c_{k,t})}^{c_0}
A'(z_{k,t}(y))  (c_k -f^k(y))\lambda^k \phi_{M,k}(y)\, dy| \\
&\qquad \le \sup |A'|  |c_k -c_{k,t}|
\int_{c_{k,t}}^{c_k}  \frac{\lambda^k}{|(f^k)'(f^{-k}_+(x))|} \phi_{M,k}(f^{-k}_+(x)) \, dx\\
&\qquad\le C \sup |A'|  |c_k -c_{k,t}| \sqrt{c_k -c_{k,t}}    \\
&\qquad\le C \sup |A'| \sup_s |\alpha_s(c_k) |^{3/2} |t|^{3/2}\, ,
\end{align*}
where we used Lemma ~\ref{first'''} together 
with \eqref{Psi}.
Since there are $M$
terms and since $\lim_{t\to 0}M\sqrt{|t|}=0$ for admissible pairs
$(M,t)$, the relevant contribution of \eqref{firstudy} is 
fully contained
in the last line \eqref{singular} of \eqref{firstudy}.
(We shall see in a moment that \eqref{singular}
cancels out exactly with another term.)

Second, we need to consider
\begin{align}
\label{signja} &- \int_{-1}^{c_{k,t}}
\lambda^k A(x) \bigl (\frac{ \phi_{M,k}(f^{-k}_+(x))}
{|(f^k)'(f^{-k}_+(x))|}
-\frac{ \phi_{M,k}(f^{-k}_{t,+}(x))}{|(f^k_t)'(f^{-k}_{t,+}(x))|}\bigr ) 
\, dx\\
\label{other} &\qquad=-\int_{-1}^{c_{k,t}}
\lambda^k A'(x) 
[\tilde \phi_{M,k}(f^{-k}_+(x))-\tilde \phi_{M,k}(f^{-k}_{t,+}(x))]\, dx\\
\label{just} &\qquad\qquad\quad
+ \lambda^k [A(x) 
(\tilde \phi_{M,k}(f^{-k}_+(x))-\tilde \phi_{M,k}(f^{-k}_{t,+}(x))) ]|_{-1}^{c_{k,t}}\, .
\end{align}
where $\tilde \phi_{M,k}'= \phi_{M,k}$, $\tilde \phi_{M,k}(-1)=0$, and
we used that
$(f^k)'(f^{-k}_{+}(x))<0$ when integrating by parts.
One  term in \eqref{just} vanishes because of 
the support of $\tilde \phi_{M,k}$. The other term is
\begin{equation}\label{justt}
 \lambda^k A(c_{k,t})
(\tilde \phi_{M,k}( f^{-k}_+(c_{k,t}))-\tilde \phi_{M,k}(f^{-k}_{t,+}(c_{k,t})))
\, .
\end{equation}
Since $A(c_{k,t})=A(c_k)+ A'(z_{k,t}) (c_{k,t}-c_k)$
for $z_{k,t}\in [c_{k,t},c_k]$,  and since
$f^{-k}_{t,+}(c_{k,t})=c_0$, \eqref{justt} 
can be written as
$$
 \lambda^k
A(c_k) (\tilde \phi_{M,k}( f^{-k}_+(c_{k,t}))-\tilde \phi_{M,k}(c_0))
+H_{k,t}\, , $$
with $|H_{k,t}| \le C |t|^{3/2}$, uniformly
in $k\le M$ for admissible pairs $(M,t)$ (recall 
\eqref{Lbound} and \eqref{inpart}).
Summing over
$1\le k \le M$ and dividing
by $t$, we have proved that, as
$t \to 0$,  the contributions from \eqref{justt} cancel out exactly with the
singular terms from line \eqref{singular}.
(Recall that $(M,t)$ are admissible, in particular \eqref{bd2} holds.)
 
 The other term,  \eqref{other}, is
 \begin{align}
\nonumber & -\int_{-1}^{c_{k,t}}
\lambda^k A'(x) 
[\tilde \phi_{M,k}( f^{-k}_+(x)) - \tilde \phi_{M,k} (f^{-k}_{t,+}(x))]\, dx
\\
\label{otherr} &\qquad=
-\int_{-1}^{c_{k,t}}
\lambda^k A'(x)  \phi_{M,k}(f^{-k}_{u(t,x),+}(x))
[ f^{-k}_+(x)- f^{-k}_{t,+}(x)]\, dx\, ,
 \end{align}
for $u=u(t,x)\in [0,t]$.
To finish, we shall next prove that the sum over
$1\le k \le M$ of \eqref{otherr} divided by $t$
converges as $t\to 0$ and $(M,t)$ is an admissible pair.

Recalling the definition \eqref{defvk} of $Y_{k,s}$,
the
proof of Lemma~ \ref{goodlemma} (in particular \eqref{Phi}) implies that
there is  $s=s(t,x) \in [0,t]$ so that
\begin{align*}
f^{-k}_{t,+}(x)-f^{-k}_+(x)
&=-t \frac{Y_{k,s}(f^{-k}_{s,+}(x))}{(f^k_s)'(f^{-k}_{s,+}(x))}
=-t\sum_{j=1}^{k}
 \frac{X_s (f^j_s(f^{-k}_{s,+}(x)))}{(f^{j}_s)'(f^{-k}_{s,+}(x))} \, .
\end{align*}
Since $X$
is $C^1$, and since bounded distorsion holds for points which climb
(Lemma ~\ref{cd}),  we find (recall 
\eqref{est2beq'} in the proof of Proposition~ \ref{abs})
\begin{align*}
\biggl | \sum_{j=1}^{k}
 \frac{X_s (f^{j-k}_{s,+}(x))}{(f^{j}_s)'(f^{-k}_{s,+}(x))} &-
 \frac{1}{f'_s(f^{-k}_{s,+}(x))}\sum_{j=1}^{k} 
\frac{X_s (f^{j-1}_{s,+}(c_{1,s}))}{(f^{j-1}_s)'(c_{1,s})}
\biggr |\\
&\qquad\qquad \le C e^{2\gamma k} |(f_s^{k-1})'(c_{1,s})|^{-1/2}\, .
\end{align*}
Then, using horizontality, we find
$$
\sum_{j=1}^{k} 
\frac{X_s (f^{j-1}_{s,+}(c_{1,s}))}{(f^{j-1}_s)'(c_{1,s})}
=-
\frac{1}{(f^k_s)'(c_{1,s})}\sum_{\ell=k}^{\infty} 
\frac{X_s (f^{\ell}_s(c_{1,s}))}{(f^{\ell-k}_s)'(c_{k+1,s})}\, .
$$
The proof of Proposition~ \ref{abs} implies that the above expression
is bounded, uniformly in $k\le M$ and
admissible pairs $(M,t)$. 
(Here we use the uniform bounds
from Lemma~\ref{est1aunif}.)
Finally, recalling the properties of the support
of $ \phi_{M,k}$
\begin{align*}
|\int_{c_k-e^{-\beta_1 k}}^{c_{k,t}}\frac{1}{|(f_s')(f^{-k}_{s,+}(x))|}\, dx|
&\le C \int_{c_k-Ce^{-\beta_1 k}}^{c_{k,t}}\frac{1}{\sqrt {|x-c_{k,s}|}}\, dx\\
&= C \sqrt {|x-c_{k,s}|}|_{c_k-Ce^{-\beta_1 k}}^{c_{k,t}} \, .
\end{align*}
Summarizing, we have proved that \eqref{otherr} divided
by $t$ satisfies (recall also Lemma~\ref{first} and (\ref{bd2'}))
\begin{align*}
& |
\frac{1}{t}
\int_{-1}^{c_{k,t}} \lambda^k A'(x) 
 \phi_{M,k}(f^{-k}_{u(t,x),+}(x))
[ f^{-k}_+(x)- f^{-k}_{t,+}(x)]\, dx|\\
&\qquad\qquad\qquad=
|
\int_{-1}^{c_{k,t}} \lambda^k A'(x) 
\phi_{M,k}(f^{-k}_{u(t,x),+}(x)) \frac{Y_{k,s}(f^{-k}_{s,+}(x))}{(f^k_s)'(f^{-k}_{s,+}(x))}\, dx|
\\
&\qquad\qquad \qquad\le C 
\kappa_M^{-k} \sup| A'|\sup |  \phi_{M,0}|
e^{-\beta_1 k/2}\, .
\end{align*}
The bound
in the third line above
is  summable over $k\ge 1$, uniformly in $M$.

Since it is easy to check for each fixed $k$ that
\begin{align*}
&\lim_{t \to 0}
\int_{-1}^{c_{k,t}} \lambda^k  A'(x) 
 \phi_{M,k}(f^{-k}_{u(t,x),+}(x)) \frac{Y_{k,s}(f^{-k}_{s,+}(x))}{(f^k_s)'(f^{-k}_{s,+}(x))}\, dx\\
&\qquad\qquad\qquad\qquad=
\int_{-1}^{c_{k}} \lambda^k 
A'(x) \phi_k(f^{-k}_+(x))
\frac{Y_k(f^{-k}_+(x)) }{(f^k)'(f^{-k}_+(x)}\, dx\, ,
\end{align*}
we have proved that 
\begin{align*}
&\lim_{t \to 0}\frac{1}{t}
\int_I A(x)
\bigl (\Pi_t  ( \phi_{M}) -\Pi  ( \phi_{M}) \bigr )
(x)\, dx\\
\nonumber &\quad =-\sum_{k=1}^\infty
\sum_{\varsigma\in \{+,-\}}
\pm \int_{-1}^{c_{k}}
\lambda^k A'(x) \phi_k(f^{-k}_+(x))
\frac{Y_k(f^{-k}_\varsigma (x)) }{(f^k)'(f^{-k}_\varsigma(x)}\, dx
\\
\nonumber &\quad =-\sum_{k=1}^\infty
\sum_{\varsigma\in \{+,-\}}
\int_{-1}^{c_{k-1}}
A'(f(y)) 
\frac{\lambda^{k-1}}{|(f^{k-1})'(f^{-(k-1)}_\varsigma(y)|} 
\lambda (\phi_k\cdot Y_k)(f^{-(k-1)}_\varsigma(y)) 
 \, dy\, .
\end{align*}
(The sign in the second line above comes from 
in \eqref{signja}, that is, it is the sign of $(f^{k-1})'(f^{-(k-1)}_\varsigma(x)$.)
The fixed point property of $\hat \phi$
implies $\lambda\phi_{k+1}=\phi_k \xi_k$.
Therefore, setting $\hat\xi=(\xi_k)$, and recalling
the shift in indices in the definition 
\eqref{hatY} of $\hat Y$, we have
\begin{align}
\nonumber
\lim_{t \to 0}\frac{1}{t}
\int_I A(x)
\bigl (\Pi_t  (\hat \phi_{M}) -\Pi  (\hat \phi_{M}) \bigr )
(x)\, dx
 &=-
\int_I (A'\circ f )\cdot \Pi  (\hat \xi \hat Y \hat \phi) \, dy\\
\label{formula2}
&=
-\lambda
\int_I A' \cdot (\Pi \circ ( \id- \TT_0) \circ \widehat \LL) (\hat Y \hat \phi) \, dy
\, ,
\end{align}
where we used \eqref{commute}.
This ends Step 3 and the proof of Theorem ~ \ref{linresp}.


\begin{appendix}
\label{appp}

\section{Relating the conjugacies $h_t$ with the infinitesimal conjugacy $\alpha$}
\label{htC1etc}

We show here that $\alpha$ deserves
to be called an infinitesimal conjugacy.

\begin{proof}(Proof of Proposition~\ref{htC1}.)
Let $\alpha_t\colon[-1,1]\to \mathbb{R}$ be the unique continuous solution for the TCE
 $$v_t= \alpha_t\circ f_t - f'_t\cdot \alpha_t.$$
 
 Since the family $\{\alpha_t\}_{|t|< \epsilon}$ is equicontinuous and the solutions are unique, it is easy to see that $(t,x)\to \alpha_t(x)$ is a continuous and bounded function in $(-\epsilon,\epsilon)\times [-1,1]$. Note that $\alpha_t(-1)=\alpha_t(1)=0$ for every $t$. For each $x_0 \in [-1,1]$ $t_0 \in (-\epsilon,\epsilon)$,  the
Peano theorem ensures that the ODE
\begin{equation}\label{ode}
\partial u_s(t_0,x_0)|_{t=s}=\alpha_t(u_t(t_0,x_0))\, ,
\quad u_{t_0}(t_0,x_0)=x_0 \, 
\end{equation}
admits a $C^1$ solution $u_t(t_0,x_0)$. It is not difficult  to see that this solution is defined for every $t \in (-\epsilon,\epsilon)$. 
Since $f_t$ is a deformation, there exists an unique  conjugacy $h_t$ such that 
$$f_t\circ h_t = h_t\circ f_0\, .$$

If $x_{t_0}$ is an eventually periodic point for $f_{t_0}$, since all periodic points are hyperbolic there exists an analytic continuation $x_t$ for $x_0$. Then $x_t=h_t(x_0)$. An easy calculation shows that $u_t(t_0,x_0)=h_t(x_0)$ is a solution of the above ODE. \\

\noindent {\it Claim: } If $w_t$ is a solution of the ODE $\partial_t w_t=\alpha_t(w_t)$ and $w_{t_0}=h_{t_0}(x_0)$ for some $t_0$ and eventually periodic point $x_0$  then 
$w_t=h_t(x_0)$ for every $t$. Indeed, denote $w_t^n=f_t^n(w_t)$. Note that the TCE implies that $w_t^n$ and $f_t^n(h_t(x_0))$ are  also solutions of the ODE above.  Since $\alpha_t(x)$ is a bounded function
$$|w_t^n-f^n_t(h_t(x_0))|\leq  |w_t^n-w_{t_0}^n|+ |f_{t_0}^n(h_{t_0}(x_0))-f_t^n(h_t(x_0))|\leq 2\sup_{(t,x)}|\alpha_t(x)||t-t_0|\, .
$$
So if  $t$ is sufficiently close to $t_0$ then
$$|f_t^n(w_t)-f^n_t(h_t(x_0))|< \delta$$
for every $n$.  If $\delta$ is small enough, since the orbit of $h_t(x_0)$ by $f_t$ eventually lands on a repelling periodic point, it follows that $w_t=h_t(x_0)$ for $t$ close enough to $t_0$. This argument  implies that 
$$\{t\colon \   w_t=h_t(x_0)\}$$
is an open set in $(-\epsilon, \epsilon)$. Since it is obviously a closed set, it follows that $w_t=h_t(x_0)$ for every $t$. This finishes the proof of the claim. 

In particular this claim implies the uniqueness of the solution of the ODE when $x_0$ is an eventually periodic point.

Now let $x$ be a point that is not eventually periodic for $f_0$. We can find  sequences $p_n$, $q_n$ of eventually
periodic points  for $f_0$  such that
$$p_n< x < q_n$$
and $\lim_n p_n=\lim_n q_n =x.$
Let $w_t$ be a solution for $\partial_t w_t=\alpha_t(w_t)$ such that $w_0=x$. The claim above implies that 
$$h_t(p_n) < w_t < h_t(q_n)$$ 
for every $n$. Since $h_t$ is continuous we get $\lim_n h_t(p_n)=\lim_n h_t(q_n)= h_t(x)$, so $h_t(x)=w_t$, for every $t$.  
\end{proof}


\section{The Lasota-Yorke bound \cite[Sublemma]{BV} for Sobolev
norms and probabilistic operators}
\label{app2}

We adapt the argument of \cite[Sublemma]{BV} (see also \cite[Lemma 5.5]{V})
to complete the proof of Proposition~\ref{mainprop}, by showing
that there exists $C$, and for all $n$ there is $C(n)$, so that
\begin{equation*}
\|(\widehat \LL^n( \hat \psi))'_0\|_{L^1(I)} \le
C \Theta_0^{-n}
\|\hat \psi\|_{\BB} + C(n) \|\hat \psi\|_{\BB^{L^1}}\, .
\end{equation*}
Keeping in mind  \eqref{der0N} and \eqref{der00},
as well as the properties of the supports of the $\xi_j$ and $\psi_j$,
\eqref{finally}, \eqref{gluu}, \eqref{sobeb}, \eqref{dirtytrick}, and
the conditions \eqref{48}
on $\lambda$, 
one first obtains the following analogue of \cite[(4.14)--(4.15)]{BV} or 
\cite[(5.25)--(5.26)]{V}: There is a constant
$C$ and for each $n$ a constant $C(n)$ so that
for  each interval $A\subset E_0$
\begin{align}
\nonumber \int_A | (\widehat  \LL^n (\hat \psi))'_0(x)|\, dx
&\le C \frac{\Theta^{-n}}{c(\delta)}
\biggl (\sum_{\omega \in \Omega_0}
\int_{\omega} |\hat \psi'(x)|\, dx+ \|\hat \psi\|_{\BB^{H^1_1}}\biggr )\\
\label{recurs} &+ \frac{C(n)}{c(\delta)}\|\hat \psi\|_{\BB^{L^1}}
+ C \sum_{\ell=H(\delta)}^{n-1}\frac{\Theta^{-\ell}}{c(\delta)}\sum_{\omega'
\in \Omega_\ell}
\int_{\omega'} |(\widehat \LL^{n-\ell}(\hat \psi)) '(x)|\, dx\, ,
\end{align}
while if the interval $A \subset (-\delta,\delta)$, we get a stronger estimate
(by Lemma~\ref{expiii})
where all factors $c(\delta)$ in the right-hand-side
above may be replaced by $1$.

In \eqref{recurs}, the intervals $\omega$ in $\Omega_0$
are either inverse images of $A$ in $E_0$ through branches of our probabilistic
version of $\hat f^n$ which always stay in $E_0$, or  inverse image of $A$ in $E_k$,
$k(\omega)\ge 0$,  through branches of the probabilistic  $\hat f^n$ which start from
$E_k$, climb to $E_{k+n-j}$, for $n-j-1$ iterations with $0\le j \le n-1$, then drop to level $E_0$ and stay there for the last $j$ iterations.
(The detailed analysis is slightly different
depending on whether $k(\omega)>N$ or $\le N$,
 for some $N$ to be chosen
much larger than $n$, in order to avoid dividing by small lengths $|U|$
in \eqref{sobeb}, see \cite{BV} or \cite{V} for details.)
The intervals $\omega\in \Omega_0$ in
$E_k$ for $k\ge 0$ which will drop from
$E_{k+n-j}$ for $k+n-j\ge H(\delta)$ have intersection multiplicity at most $k+n-j$
by Remark ~\ref{overlap}. Since the corresponding $k+n-j$ factor is 
killed by
an exponential $\rho^{-(k+n-j)}$ factor implicit
in \eqref{recurs} (see \cite[Sublemma]{BV}), 
we may in fact glue overlapping intervals
together up to taking a slightly smaller $\Theta>1$.
The intervals $\omega\in \Omega_0$ whose orbits never
leave $E_0$ are disjoint by construction. Each of them meets
at most one of the (just grouped) intervals which go through 
$E_{k+n-j}$, and we can glue them together at the cost of
replacing $C$ by $C+C=2C$.

The intervals $\omega'\in \Omega_\ell$ 
in \eqref{recurs} are inverse images of $A$ in $E_0$ 
via the branches of our probabilistic
version of $\hat f^\ell$ which climb the tower up to some
level $k=k(\omega')$ with $H(\delta)\le k < \ell <n<N$, fall to level $E_0$, and then stay in $E_0$ for the remaining $\ell-k-1$  iterations.
Remark ~\ref{overlap} about the maximum overlap  of fuzzy monotonicity
intervals $\tilde I_k$ ensures that the intersection multiplicity
of the intervals in $\Omega_\ell$ dropping
from level $k\ge H(\delta)$ is at most $k\le \ell$.
Since $\ell \Theta^{-\ell}$ may be replaced by $\Theta^{-\ell}$ for
$\ell \ge H(\delta)$, up to
taking a slightly smaller $\Theta$, we may 
regroup 
overlapping intervals in $\Omega_\ell$.

If $\Omega_\ell$ is empty for $\ell\ge 1$, we are done.
Otherwise, to perform the  inductive step, let us rename
$\Omega_\ell=\Omega_\ell^1$ for $\ell \ge 0$. 
Exploiting Lemma~\ref{expiii} to see that we get at
most one factor $c(\delta)^{-1}$, we can then inductively conclude the argument,
just like in \cite[Sublemma]{BV} (see also \cite[Lemma 5.5]{V}).
The only difference with respect to the analysis in \cite{BV} is that the intervals
of $\Omega_0^{2}$ (after regrouping, which may
be done as above) may overlap with those of $\Omega_0^1$.
More generally, the intervals
of $\Omega_0^{m+1}$ may  intersect those of 
$\cup_{i=1}^m \Omega_0^i$. 
Since there are at most $n/H(\delta)$ inductive steps, 
the overlap factor $n/H(\delta)$ is negligible in front
of $C \Theta^{-n}$.

\section{Proof of Lemma ~ \ref{goodlemma} on Taylor expansions}
\label{app3}

As usual, we consider  $f^{-k}_+$, the other branch is similar.
The assumptions imply that $\partial_t f_t|_{t=s}= X_s \circ f_s$, 
where $X_s\circ f_s$ is $C^2$ and horizontal for $f_s$.

We prove \eqref{claim0} and \eqref{claim12}--\eqref{claim22}
for $s=0$, the general case then follows from 
Lemma~\ref{est1aunif}, using that for all
$H_0 \le k\le M$ so that \eqref{bd2'} holds, we may take $\xi_{k,t}=\xi_k$
by Proposition~\ref{bottomok}.

By horizontality, the  estimate
\eqref{est2beq'} in the proof of Proposition~\ref{abs} 
(using the notation $w_k(y)$ introduced there) gives
$C>0$ so that for
any $k\ge H_0$
\begin{equation}\label{byhor}
\sup_{y \in I_k}\frac{|Y_k(y)|}{|(f^k)'(y)|} =
 \biggl |
\sup_{y \in I_k}\sum_{j=1}^{k} \frac{X(f^{j}(y))}{(f^{j})'(y)}
\biggr |=
\sup_{y \in I_k} |w_k(y)|
\le C \frac{e^{2\gamma k}}{|(f^{k-1})'(c_1)|^{1/2}} \, ,
\end{equation}
proving \eqref{claim0}.

For the  claim \eqref{claim12} on the derivative,  note that
\begin{equation}\label{factor}
\partial_x
\frac{Y_k(f^{-k}_+(x))}{(f^k)'(f^{-k}_+(x))} =
\frac{1}
{(f^k)'(f^{-k}_+(x))}
\partial_y
\frac{Y_k(y)}{(f^k)'(y)}\, ,
\end{equation}
with
\begin{align}
\nonumber \partial_y \frac{Y_k(y)}{(f^k)'(y)} &=
\sum_{j=1}^{k} \partial_y \frac{X(f^{j}(y))}{(f^{j})'(y)}\\
\label{secondline}
&=\sum_{j=1}^{k} X'(f^{j}(y))
-
\sum_{j=1}^{k} X(f^{j}(y))
\sum_{\ell=0}^{j-1}\frac{f''(f^\ell(y))}{(f^{j-\ell})'(f^\ell(y))
f'(f^\ell(y))}\, .
\end{align}

We shall use the estimates in the proof of Lemma~ \ref{rootsing}:
For $y\in I_k$,  the bound \eqref{basic}
says that $|(f^{k-m})'(f^m(y))|\ge Ce^{-\gamma m}$,
for $1 \le m \le k-1$, the bound \eqref{basic2}
says that $|(f^m)'(y)|\ge C_2 e^{-\gamma k}
|(f^{m-1})'(c_1)| |(f^{k-1})'(c_1)|^{-1/2}$
for $1 \le m \le k$,
 while \eqref{firstnote} gives $|f'(f^\ell(y))|\ge C e^{-\gamma \ell}$ 
 for $1\le \ell \le k$.
 (These bounds do not use horizontality.)
 
 The second term in the right-hand-side of \eqref{secondline}
 can be decomposed as
 \begin{align}\label{seccond}
 &-\sum_{j=1}^{k} X(f^{j}(y))
\sum_{\ell=0}^{j-1}\frac{f''(f^\ell(y))}{(f^{j-\ell})'(f^{\ell}(y))
f'(f^\ell(y))}\\
\nonumber &\qquad=-
\frac{f''(y)}{f'(y)}
\sum_{j=1}^k  \frac{X(f^{j}(y))}{(f^{j})'(y)}
-\sum_{\ell=1}^{k-1}
\frac{f''(f^\ell(y))}{f'(f^\ell(y))}
\sum_{j=\ell+1}^k  \frac{X(f^{j}(y))}{(f^{j-\ell})'(f^\ell(y))}\, .
\end{align}
By \eqref{basic2} for $m=1$, combined
with \eqref{claim0} (which holds by horizontality),
we find
$$
\left | \frac{f''(y)}{f'(y)}
\sum_{j=1}^k  \frac{X(f^{j}(y))}{(f^{j})'(y)} \right |
\le   C e^{3\gamma k}\, .
$$
The second term in the right-hand-side of \eqref{seccond} does not require horizontality, only 
\eqref{basic} and \eqref{firstnote}, which give
\begin{equation}\label{firsterm}
\sum_{\ell=1}^{k-1}
\frac{|f''(f^\ell(y))|}{|f'(f^\ell(y))|}
\sum_{j=\ell+1}^k  \frac{|X(f^{j}(y))|}{|(f^{j-\ell})'(f^{\ell}(y))|}
\le C e^{2\gamma k} \, .
\end{equation}
 Remembering \eqref{factor}, and using again \eqref{basic2}
 (for $m=k$), we have proved \eqref{claim12}.

The proof of  \eqref{claim22}
is similar. We start by noting that
$\partial^2_x
\frac{Y_k(f^{-k}_+(x))}{(f^k)'(f^{-k}_+(x))} =$
\begin{equation}
\label{factor'}
\partial_x \frac{1}
{(f^k)'(f^{-k}_+(x))}
\partial_y
\frac{Y_k(y)}{(f^k)'(y)}
+ \frac{1}
{((f^k)'(f^{-k}_+(x))^2}
\partial^2_y
\frac{Y_k(y)}{(f^k)'(y)}\, ,
\end{equation}
where $\partial^2_y \frac{Y_k(y)}{(f^k)'(y)}=$
\begin{align}
\label{der2}
&\sum_{j=1}^{k} X''(f^{j}(y)) (f^j)'(y)
-\sum_{j=1}^{k} X'(f^{j}(y))
\sum_{\ell=0}^{j-1}\frac{f''(f^\ell(y)) (f^\ell)'(y)}{
f'(f^\ell(y))}\\
\label{der22}&\qquad -
\sum_{j=1}^{k} X(f^{j}(y))
\biggl [
\sum_{\ell=0}^{j-1}\frac{f'''(f^\ell(y)) (f^\ell)'(y)}
{(f^{j-\ell})'(f^\ell(y)) f'(f^\ell(y))}
-\frac{f''(f^\ell(y))^2 (f^\ell)'(y)}
{(f^{j-\ell})'(f^\ell(y))( f'(f^\ell(y)))^2}\\
\label{der23}&\qquad\qquad\qquad\qquad-
\frac{f''(f^\ell(y))}{f'(f^\ell(y))}
\sum_{i=0}^{j-\ell-1}
\frac{f''(f^i(y)) }{(f^{j-\ell-i})'(f^{\ell+i}(y)) f'(f^{\ell+i}(y))}
\biggr ]
\, .
\end{align}
The first term in \eqref{factor'} is bounded
by $e^{5\gamma k}|(f^{k-1})'(c_1)|^{-1/2}$ by 
the proof of \eqref{claim12} and
\eqref{rootsing2} from Lemma~\ref{rootsing}.
The first term in \eqref{der2} is bounded by
$C |(f^{k-1})'(c_1)|^{1/2}$, in view of Lemma~\ref{cd}
and Lemma~\ref{sizeIj}. Hence, dividing
by $((f^k)'(f^{-k}_+(x))^2$, and using
Lemma~\ref{sizeIj}, the contribution of this term is
bounded by $C e^{2 \gamma k} |(f^{k-1})'(c_1)|^{-1/2}$.

Using again the same observations, we find that
the second term in \eqref{der2} is bounded
by $C e^{3\gamma k}$. Dividing
by $((f^k)'(f^{-k}_+(x))^2$,  we get a contribution 
bounded by $C e^{5 \gamma k} |(f^{k-1})'(c_1)|^{-1}$.

 For   \eqref{der22}, one must 
distinguish between the terms for $\ell=0$, for which 
horizontality  gives a bound 
$C e^{4 \gamma k} |(f^{k-1})'(c_1)|^{1/2}$
(note the factor $(f'(y))^2$ in the denominator),
and the  terms where $\ell \ge 1$,
for which a  straightforward estimate,
using the remarks above
(and in particular Lemma~\ref{cd}), gives
an upper bound of the form $C e^{3\gamma k} $.
Dividing
by $((f^k)'(f^{-k}_+(x))^2$,  we get a contribution 
bounded by $C e^{5 \gamma k}|(f^{k-1})'(c_1)|^{-1/2}$.

For  \eqref{der23}, if $\ell =0$ and $i=0$,
horizontality gives a bound 
$C e^{4 \gamma k} |(f^{k-1})'(c_1)|^{1/2}$, while
if $i+\ell \ge 1$, we get a bound
$C e^{4\gamma k} $.
Dividing
by $((f^k)'(f^{-k}_+(x))^2$,  we get
a contribution 
 $\le C e^{6 \gamma k}|(f^{k-1})'(c_1)|^{-1/2}$.
This ends the proof of \eqref{claim22}.

\smallskip

In view of the more complicated estimates to follow,
we notice the following pattern: The dangerous factors in the
above estimates are powers of $f'(y)$ in the denominator
and factors $(f^\ell)'(y)$ for large
$\ell$  in the numerator. The ``white knight" available
to fight them is a power of $(f^k)'(y)$ in the denominator. 
An additional such power appears each time we differentiate
with respect to $x$.
The
terms for which the power of $f'(y)$ in the denominator
exceeds that of $(f^k)'(y)$ in the numerator can be handled by
horizontality. The price to be paid for the control is a 
power of $e^{\gamma k}$.

\smallskip
We turn to \eqref{claim3} and
\eqref{claim4}. 
If $(x,t)\mapsto \Phi_t(x)\in I$ is a $C^1$ map on $I\times [-\epsilon, 
\epsilon]$
so that $x\mapsto \Phi_t(x)$ is invertible, then we have
\begin{equation}\label{Phi}
\partial_t \Phi^{-1}_t(x)|_{t=s}
= -\frac{(\partial_t \Phi_t|_{t=s}) \circ \Phi_{s}^{-1}(x)}
{(\partial_x \Phi_{s}) \circ \Phi_{s}^{-1}(x)}\, ,
\end{equation}
and
\begin{align}\label{Phi''}
&\partial^2_{tt} \Phi^{-1}_t(x)|_{t=s}
= \partial_t \frac{(\partial_t \Phi_t) \circ \Phi_{t}^{-1}(x)}
{(\partial_x \Phi_{t}) \circ \Phi_{t}^{-1}(x)}|_{t=s}\\
\nonumber
&\quad= \frac{1}
{(\partial_x \Phi_{s}) \circ \Phi_{s}^{-1}(x)}
\cdot \bigl (
\partial^2_{tt} \Phi_t|_{t=s} \circ \Phi_{s}^{-1}(x)
+ \partial^2_{xt} \Phi_t|_{t=s} \circ \Phi_{s}^{-1}(x)
\cdot \partial_t \Phi^{-1}_t(x)|_{t=s} \bigr )\\
\nonumber
&\quad-
\frac{(\partial_t \Phi_t|_{t=s}) \circ \Phi_{s}^{-1}(x)}
{(\partial_x \Phi_{s}) \circ \Phi_{s}^{-1}(x)}
\frac{(\partial^2_{tx}  \Phi_{s}|_{t=s}) \circ \Phi_{s}^{-1}(x)
+ (\partial^2_{xx} \Phi_{s}) \circ \Phi_{s}^{-1}(x)
\cdot  \partial_t \Phi^{-1}_t(x)|_{t=s}}
{(\partial_x \Phi_{s}) \circ \Phi_{s}^{-1}(x)} 
 \,
 .
\end{align}
Since 
 $t \mapsto f_t\in C^2(I)$ is $C^2$,  we may
 apply the above to $\Phi_t(x)=f^k_t(x)$
 restricted to a suitable domain. The right-hand-side
 of \eqref{Phi} is just 
 $Y_{k,s}(f^{-k}_{s,+}(x))/(f^k_s)'(f^{-k}_{s,+}(x))$.
 Then, a Taylor series
 of order $2$ gives 
\begin{align}
f^{-k}_{+}(x)-f^{-k}_{t,+}(x)
&=t\frac{ Y_k ( f^{-k}_+(x))}{(f^k)'(f^{-k}_+(x))}+t^2 F_k(x,s)\, ,
\end{align}
where $x$ is as in
\eqref{claim3} and  $s \in [0,t]$. 
In order to estimate $F_k(x,s)$, we look at the various terms
in \eqref{Phi''}.  The (identical) factors
$\partial_t \Phi_t^{-1} $
and 
$(\partial_t \Phi_t /\partial_x\Phi_t )\circ \Phi_t^{-1}$ can
be bounded by \eqref{claim0}.
Since
$\partial^2_{xt} \Phi_t|_{t=0}=\partial_y Y_k$,
the two terms containing this expression can  
be controlled, when divided
by $(\partial_x \Phi_t) \circ \Phi_t^{-1}$, respectively
by $e^{-2\gamma k} |(f^k)'(c_1)|^{1/2}$,
by using the ideas to bound \eqref{secondline}.
Analysing the term containing $\partial^2_{xx} \Phi_t$ is of the same
type as (but simpler than) what we did for $\partial^2_{yy} Y_k$,
and the available factor $|(f^k)'(c_1)|^{-1}$ gives the right control.
The only new expression is 
$$\partial^2_{tt} \Phi_t|_{t=s}(x)=
Z_{k,s}(x):= \partial_t Y_{k,t}(x)|_{t=s}=\lim_{t \to s} \frac{Y_{k,t}(x) - Y_{k,s}(x)}{t-s}\, .
$$
This 
involves functions such as  $f'$, $f''$, $f''$,
$X_s$, and $X'_s$, but also $\partial_t X_t$.
The dominant term contains a factor $|(f^{k})'(y)$,  which can be
controlled by $(\partial_x \Phi_s) \circ \Phi_s^{-1}$ in the
denominator.

Finally, using   
\begin{equation}
\label{derr}\frac{1}{(f^k_t)'(f^{-k}_{+}(x))}- 
\frac{1}{(f^k)'(f^{-k}_{t,+}(x))}=(f^{-k}_{t,+}(x)-f^{-k}_+(x))'\, ,
\end{equation}
we find
\begin{align}
 \frac{1}{(f^k_t)'(f^{-k}_{+}(x))}- 
\frac{1}{(f^k)'(f^{-k}_{t,+}(x))}
&=t\biggl ( \frac{ Y_k ( f^{-k}_+(x))}{(f^k)'(f^{-k}_+(x))}\biggr )'
+t^2  G_k(x,s)
\, ,
\end{align}
for $x$ as in \eqref{claim4} and $s\in [0,t]$.
The new derivatives appearing in $G_k$ are
$\partial^3_{xxx}\Phi$, $\partial^3_{txx}\Phi$ and
$\partial^3_{ttx}\Phi$ (but not $\partial ^3_{ttt}\Phi$,
which is a priori undefined).
The claimed estimates on   $\sup|G_k|$ 
  can be obtained by horizontality,
similarly
to those for $F_k$, using now the $x$-derivative of \eqref{Phi''}
and exploiting in addition to the previous remarks
the bound \eqref{claim22}. 
The cancellation pattern described above emerges again.
The computations are straightforward, although 
cumbersome to write, and left to the reader.

\end{appendix}

 \nocite{avila}
\nocite{sandst}

\bibliographystyle{plain}


\end{document}